  \definecolor{grassgreen}{RGB}{92,135,39}
  \definecolor{lightgreen}{HTML}{07a56d}
  \definecolor{coolgray}{HTML}{829798}
  \definecolor{solidblue}{HTML}{454f78}
  \definecolor{lightblue}{cmyk}{0.88,0.44,0,0.07}
\newcolumntype{E}{>{\collectcell\usermacro}c<{\endcollectcell}}
\newcommand\usermacro[1]{\pgfmathparse{10000*#1}\pgfmathprintnumber\pgfmathresult}
\patchcmd{\ALG@step}{\addtocounter{ALG@line}{1}}{\refstepcounter{ALG@line}}{}{}
\newcommand{\ALG@lineautorefname}{Step}
\DeclareMathAlphabet{\mathup}{OT1}{\familydefault}{m}{n}
\DeclareSymbolFont{yhlargesymbols}{OMX}{yhex}{m}{n}
\DeclareMathAccent{\wideparen}{\mathord}{yhlargesymbols}{"F3}
\definecolor{grassgreen}{RGB}{92,135,39}
\crefname{algocf}{alg.}{algs.}
\Crefname{algocf}{Algorithm}{Algorithms}
\newcommand*{\Perm}[2]{{}^{#1}\!P_{#2}}  
\newcommand{\Oh}[1]{\mathcal{O}{\left(#1\right)}}
\newcommand{\logdet}[1]{\log\det\left(#1\right)}                    
\newcommand{\brlog}[1]{\log\left(#1\right)}                 
\newcommand{\del}[2]{\frac{\partial{#1}}{\partial{#2}}}             
\newcommand{\mat}[1]{\mathbf{{#1}}}                                 
\renewcommand{\vec}[1]{\mathbf{{#1}}}                                 
\DeclareMathOperator*{\argmin}{arg\,min}  
\DeclareMathOperator*{\argmax}{arg\,max}  
\newcommand{\proj}{\mathit{P}}       
\newcommand{\Proj}[2]{\proj_{#1}{\left(#2\right)}}       
\newcommand*{\opt}{^{\mkern-1.5mu\mathrm{opt}}}               
\newcommand*{\tran}{^{\mkern-1.5mu\mathsf{T}}}                
\newcommand*{\adj}{^{\mkern-1.5mu\mathsf{*}}}                 
\newcommand*{\inv}{^{\mkern-1.5mu\mathsf{-1}}}                
\newcommand{\domain}{\mathcal{D}}                             
\newcommand{\trace}{\mathrm{Tr}}                              
\newcommand{\Trace}[1]{\trace \left(#1\right)}                
\newcommand{\abs}[1]{\left| {#1} \right|}                  
\newcommand{\wnorm}[2]{\left\| {#1} \right\|_{#2}}            
\newcommand\restr[2]{{ \left.\kern-\nulldelimiterspace        
                     {#1}\vphantom{\big|} \right|_{#2}}}
\newcommand{\Rnum}{\mathbb{R}}  
\newcommand{\Znum}{\mathbb{Z}}  
\newcommand{\xcont}{u}                             
\renewcommand{\O}{\mathcal{O}}                                   
\newcommand{\ObsOperCont}{\O} 
\newcommand{\ObsOperind}[1]{\ObsOperCont_{#1}}
\newcommand{\y}{\mathbf{y}}                        
\newcommand{\obs}{\y}                              
\newcommand{\param}{\vec{\theta}}                  
\newcommand{\iparam}{\param}                       
\newcommand{\iparprior}{\iparam_{\rm pr}}          
\newcommand{\iparb}{\iparprior}                    
\newcommand{\iparpost}{\iparam_{\rm post}^\obs}    
\newcommand{\ipara}{\iparpost}                     
\newcommand{\Nparam}{{N_{\rm p}}}
\newcommand{\Nobs}{\textsc{N}_{\rm obs}}                        
\newcommand{\Nens}{\textsc{N}_{\rm ens}}                        
\newcommand{\nobs}{n_{t}}                                  
\newcommand{\nobstimes}{\nobs}                             
\newcommand{\Nsens}{N}                         
\newcommand{\Cparamprior}{\mat\Gamma_{{\rm pr}}}                
\newcommand{\Cparampost}{\mat\Gamma_{{\rm post}}}               
\newcommand{\Cobsnoise}{\mat{\Gamma}_{ {\rm noise}}}            
\newcommand{\Cparampriormat}{\Cparamprior}                      
\newcommand{\Cparampostmat}{\Cparampost}                        
\newcommand{\F}{\mathbf{F}}                                      
\newcommand{\Find}[2]{\F_{#1,#2}}
\newcommand{\Sol}{\mathcal{S}}  
\newcommand{\Solind}[2]{\Sol_{#1,#2}}
\newcommand{\utilityfunc}{\mathcal{U}}
\newcommand{\prob}{\mathbb{P}}                                   
\newcommand{\Prob}[1]{\prob{\left(#1\right)}}                    
\newcommand{\CondProb}[2]{\prob\left(#1\,|\,#2 \right)}  
\newcommand{\nCondProb}[3]{\prob^{(#1)}\left(#2\,|\,#3 \right)}  
\newcommand{\Var}{\mathsf{var}}                                  
\newcommand{\brVar}[1]{\Var\Bigl(#1\Bigr)}                         
\newcommand{\GM}[2]{\mathcal{N}\!\left( {#1}, {#2}\right)}       
\newcommand{\Expect}[2]{\mathbb{E}_{#1}{\left[ #2 \right]} }     
\newcommand{\baseline}{b}
\newcommand{\hyperparam}{p}     
\newcommand{\hyperparamvec}{\vec{\hyperparam}}     
\newcommand{\design}{\vec{\zeta}}                                       
\newcommand{\designvec}{\boldsymbol{\zeta}}                                       
\newcommand{\designdomain}{\Omega_{\design}}         
\newcommand{\directsum}[3]{\bigoplus\limits_{#1}^{#2}{\left(#3\right)}}
\newcommand\reallywidehat[1]{%
\savestack{\tmpbox}{\stretchto{%
  \scaleto{%
    \scalerel*[\widthof{\ensuremath{#1}}]{\kern-.6pt\bigwedge\kern-.6pt}%
    {\rule[-\textheight/2]{1ex}{\textheight}}
  }{\textheight}%
}{0.5ex}}%
\stackon[1pt]{#1}{\tmpbox}%
}
\newcommand{\commentout}[1]{\iffalse {#1} \fi}
\newcommand{\fulltitle}{A Probabilistic Approach to Trajectory-Based Optimal Experimental Design}
\newcommand{\shorttitle}{Trajectory-Based OED}
\newcommand{\fundingtext}{%
  This material is based upon 
  work supported by the U.S. Department of Energy, 
  under contract number DE-AC02-06CH11357, with funding through, 
  (1) Office of Science, Office of Advanced Scientific Computing  Research: 
  Scientific Discovery through Advanced Computing (SciDAC) Program through the FASTMath Institute; and
  (2) Competitive Portfolios Project on Energy Efficient Computing: A Holistic Methodology.
  %
  %
  %
}
\crefname{hypothesis}{Hypothesis}{Hypotheses}
\crefname{lemma}{Lemma}{Lemmas}
\patchcmd{\SetTagPlusEndMark}{\$}{}{}{}
\patchcmd{\SetTagPlusEndMark}{\$}{}{}{}
\title{\fulltitle\thanks{Submitted to the editors \today.
\funding{\fundingtext}
}}
\author{Ahmed Attia\thanks{Mathematics and Computer Science Division,
                   Argonne National Lab, USA
                  ( \email{aattia@anl.gov} ).}
}
  \let\namerefOld\nameref
  \renewcommand{\nameref}[1]{\textit{\namerefOld{#1}}}
\begin{document}
  \maketitle

  \begin{abstract}
  We present a novel probabilistic approach for optimal path experimental design. In this approach a discrete path optimization problem is defined on a static navigation mesh, and trajectories are modeled as random variables governed by a parametric Markov policy. The discrete path optimization problem is then replaced with an equivalent stochastic optimization problem over the policy parameters, resulting in an optimal probability model that samples estimates of the optimal discrete path. This approach enables exploration of the utility function’s distribution tail and treats the utility function of the design as a black box, making it applicable to linear and nonlinear inverse problems and beyond experimental design. Numerical verification and analysis are carried out by using a parameter identification problem widely used in model-based optimal experimental design, namely a two-dimensional time-dependent advection diffusion problem in which the initial condition is the inference target. Experiments use both coarse and fine navigation meshes, with either a single moving sensor or a group of seven coordinated sensors, and the proposed approach is evaluated under D-, A-, and E-optimality criteria.
\end{abstract}


  \begin{keywords}
    Path experimental design,
    Probabilistic optimization, 
    Trajectory optimization 
    Bayesian inference. 
  \end{keywords}

  \begin{AMS}
    62K05, 35Q62, 62F15, 35R30, 35Q93, 65C60, 93E35
  \end{AMS}

\section{Introduction} 
  \label{sec:introduction}
  Simulation models are critical for predicting the behavioral patterns of physical phenomena given 
  the initial condition or the model state. 
  These models, however, require calibration based on snapshots (observations) of 
  the physical phenomena of concern.
  Inverse problems enable estimating the unknown model parameters from partial 
  and noisy observations \cite{law2015data,asch2016data,attia2017hybrid,attia2017reduced}.
  Such observational data can be obtained from static observational sensors 
  or moving sensors such as relocatable sensors and land and aerial vehicles such as drones and robots. 
  Inverse problems underpin many applications, and their solution 
  quality depends on data quality, 
  making optimal data acquisition essential for accurate inference and prediction.

  Model-based optimal experimental design (OED) \cite{FedorovLee00,Pazman86,Pukelsheim93}
  is the general mathematical formalism 
  for optimizing the configuration of an experimental 
  or observational setup in simulation systems.
  An experimental design $\design\in\designdomain$ 
  encodes 
  the decision variables such as the spatial distribution of observational instruments, 
  the temporal frequency of data acquisition, 
  or the trajectory of moving sensors.
  OED problems generally seek a design that ``optimizes'' some 
  ``utility function'' $\utilityfunc(\design)$ that quantifies 
  the quality of the design $\design$. 
  The process includes \emph{minimizing} the variance of an inference parameter estimator 
  or \emph{maximizing} the expected information gain  
  \cite{Alexanderian21,attia2018goal,huan2014gradient}. 

  Unlike OED for static sensor placement 
  \cite{huan2024optimal,Alexanderian21}, 
  trajectory OED for mobile sensors remains underexplored.
  Most existing trajectory OED formulations \cite{tricaud2008d,Ucinski05,rafajlowicz1986optimum} 
  employ a parametrized form of the trajectory, such as ordinary differential equations,  
  and follow a gradient-based optimization approach to
  tune the parameters of the trajectory.
  A recent development pursues this gradient-based, parametrized route for
  infinite-dimensional Bayesian inverse problems governed by PDEs \cite{neuberger2026path}.
  Although valuable, parametrized trajectory development confines solutions to a specific trajectory family, limiting applicability.
  Moreover, incorporating hard constraints such as those imposed 
  by the domain is a significant challenge.
  Additionally, this approach mandates development of utility- and observation-specific gradients.

  Path planning for mobile sensors has been extensively investigated 
  in applications such as robotics and autonomous vehicles; 
  see, for example, \cite{lin2022review,latombe2012robot,lavalle2006planning,karur2021survey,gasparetto2015path,choset2005principles}.
  These methods are application-specific, however, and are not directly 
  applicable to model-based OED.
  For example, graph-based path optimization \cite{hart1968formal} methods 
  focus primarily on finding a minimum-cost path between two specific 
  locations (nodes on a graph). 
  While such methods can in principle be adapted to model-based OED, they 
  require the initial and final points to be specified, and they require 
  an OED utility function that balances information gain and path length. 
  Without such balance, classical shortest-path formulations would not 
  penalize longer paths and could favor arbitrarily long paths to maximize 
  the information gain.

  We propose a probabilistic path optimization framework 
  with black-box utility functions, 
  enabling out-of-the-box use with arbitrary criteria and observation patterns.
  The feasible domain of the mobile sensor is encoded as a directed graph 
  (a navigation mesh), and the trajectory of the sensor is modeled as a 
  random variable governed by a parametric Markov policy. 
  The initial position of the sensor is drawn from a categorical distribution 
  over graph vertices, and subsequent transitions are drawn from per-vertex 
  categorical distributions parameterized by the policy.
  The original discrete path OED problem is then reformulated as a stochastic 
  optimization problem over the policy parameters, which we solve using a 
  REINFORCE-style stochastic gradient with an optimal baseline for variance 
  reduction.
  Rather than providing only a single optimal path, the resulting 
  probabilistic policy samples near-optimal paths, thereby enabling 
  exploration of the tail (for example, the upper tail for maximization) 
  of the OED utility function distribution.

  \subsection*{Contributions}
    The main contributions of this work are summarized as follows.
    \begin{enumerate}[leftmargin=*]
      \item We define the path OED problem as a discrete optimization problem 
        over a graph. 
        The discrete path problem is then reformulated as a stochastic optimization problem 
        over probabilistic policy parameters, yielding an optimal probabilistic model.
      \item We propose three probabilistic policy models 
        based on first- and higher-order Markov chains, 
        analyze their complexity–cost trade-offs, 
        and derive stochastic optimization gradients with variance reduction.
      \item We provide an efficient algorithmic approach for solving the probabilistic 
        path optimization problem with emphasis on scalability to large-scale 
        inverse problems. 
      \item All developments in this work are made publicly available
        through PyOED \cite{chowdhary2025pyoed}.
    \end{enumerate}

  The rest of the paper is structured as follows.
  \Cref{sec:background} presents an overview of the mathematical background of standard OED, 
  and the generic probabilistic optimization framework.
  \Cref{sec:problem_formulation} describes the proposed path OED approach.
  Numerical results are 
  in \Cref{sec:numerical_experiments}, and 
  concluding remarks are given in \Cref{sec:conclusions}.

\section{Background: Model-Based OED} 
  \label{sec:background}
    An OED problem takes the  
    form
    \begin{equation}\label{eqn:OED_optimization}
        \designvec\opt \in \argmax_{\designvec \in \designdomain}\, \utilityfunc(\designvec) \,,
    \end{equation}
    where $\designvec\in \designdomain$ is an experimental design variable that encodes tunable 
    configurations of the system. 
    Here, $\utilityfunc$ is a utility function---possibly application-dependent---that 
    quantifies the quality of the design and generally requires multiple evaluations 
    of the dynamical system in model-based OED problems.
    Typical choices of  $\utilityfunc$ in Bayesian inference applications
    include the alphabetic criteria, such as A- and D-optimality \cite{Ucinski05,attia2018goal}, 
    which target minimizing posterior uncertainty of the inference parameter.

    The  design $\designvec$ in many 
    OED problems is binary 
    or discrete because it is typically associated with observational/experimental data that 
    is generally defined on a finite domain. 
    Solving the  OED optimization problem \eqref{eqn:OED_optimization} exactly is thus 
    a major challenge.
    Greedy and exchange-type approaches \cite{fedorov2013theory,wynn1972results} 
    have been traditionally followed for approximating the solution of 
    \eqref{eqn:OED_optimization} because of their simplicity and practical behavior. 
    Convergence guarantees and scalability of such algorithms, however, are limited.
    Thus the optimization problem \eqref{eqn:OED_optimization} is often
    replaced with an alternative formulation based on continuous relaxation of the design 
    $\designvec$. The relaxed formulation is then solved by following a gradient-based
    optimization approach, and the resulting design is rounded to approximate the solution 
    of the original problem \eqref{eqn:OED_optimization}, 
    as highlighted in \cite{attia2022optimal}.
    This approach requires developing the gradient of the utility function 
    with respect to the relaxed design $\nabla_{\designvec}\utilityfunc$ and thus requires the 
    utility function and any additional constraints such as sparsity-enforcing constraints, 
    to be differentiable with respect to the relaxed design $\designvec$.
    %

    \subsection{Probabilistic Optimization Approach}
    \label{subsec:probabilistic_optimization_approach}
      We adopt the 
      probabilistic optimization 
      approach \cite{attia2022stochastic,attia2025robust}, which replaces \eqref{eqn:OED_optimization}
      with the policy optimization problem: 
      \begin{equation}\label{eqn:probabilisitic_optimization}
          \hyperparamvec\opt \in \argmax_{\hyperparamvec} 
            \Expect{\designvec\sim\CondProb{\designvec}{\hyperparamvec}}{\utilityfunc(\designvec)} \,,
      \end{equation}
      where $\designvec$ is modeled as a random variable with 
      a parametric policy $\CondProb{\designvec}{\hyperparamvec}$. 
      A gradient ascent approach solves \eqref{eqn:probabilisitic_optimization}, 
      yielding parameters $\hyperparamvec\opt$ of an optimal policy 
      from which 
      $\designvec\sim\CondProb{\designvec}{\hyperparamvec\opt}$ 
      are sampled to approximate the original OED solution  \eqref{eqn:OED_optimization}.
      A stochastic approximation  of the gradient 
      $\nabla_{\hyperparamvec}\Expect{}{\utilityfunc(\design)}$ 
      is employed to solve \eqref{eqn:probabilisitic_optimization}:
      \begin{subequations}\label{eqn:generic_gradient}
        \begin{align}
          \vec{g}
            &=\nabla_{\hyperparamvec}
              \Expect{\designvec\sim\CondProb{\designvec}{\hyperparamvec}}{\utilityfunc(\designvec)} 
              =  
                \Expect{\designvec\sim\CondProb{\designvec}{\hyperparamvec}}{
                  \utilityfunc(\designvec) \nabla_{\hyperparamvec} \log{\CondProb{\designvec}{\hyperparamvec}}
                }  \label{eqn:full_gradient} \\
          &\approx 
            \frac{1}{\Nens} \sum_{k=1}^{\Nens} 
              \utilityfunc(\designvec[k]) \, \nabla_{\hyperparamvec} \log{\CondProb{\designvec[k]}{\hyperparamvec}} 
            := 
            \widehat{\vec{g}}
              \,, 
        \label{eqn:generic_stochastic_gradient}
        \end{align}
        with 
        $
          \designvec[k] \sim \CondProb{\designvec[k]}{\hyperparamvec}\,, \, k=1,\ldots, \Nens \,, 
        $ sampled from the parametric policy.
        Because only evaluations of $\utilityfunc$ at sampled designs are required, the probabilistic
        approach \eqref{eqn:probabilisitic_optimization} is applicable to black-box 
        objectives $\utilityfunc$.
      \end{subequations}

      The identity \eqref{eqn:full_gradient} follows by interchanging the 
      gradient with the (finite) sum defining the expectation and applying 
      the log-derivative identity. Specifically, since $\designvec$ ranges 
      over the finite set of feasible designs,
      \begin{equation*}
        \nabla_{\hyperparamvec} 
          \sum_{\designvec} \utilityfunc(\designvec)\, \CondProb{\designvec}{\hyperparamvec}
        = \sum_{\designvec} \utilityfunc(\designvec)\, 
             \nabla_{\hyperparamvec} \CondProb{\designvec}{\hyperparamvec}
        = \sum_{\designvec} \utilityfunc(\designvec)\, 
             \CondProb{\designvec}{\hyperparamvec}\, 
             \nabla_{\hyperparamvec} \log \CondProb{\designvec}{\hyperparamvec} \,,
      \end{equation*}
      which recovers \eqref{eqn:full_gradient} on rewriting the last sum as 
      an expectation. The same identity holds when $\designvec$ is a 
      continuous random variable, with the sums replaced by integrals.
      This score-function form of the gradient underlies the probabilistic OED 
      approach pursued in \cite{attia2022stochastic,attia2024probabilistic,attia2025robust}.

      This probabilistic approach requires a policy $\CondProb{\designvec}{\hyperparamvec}$ 
      that models the design $\designvec$ and design constraints \cite{attia2024probabilistic}, 
      along with a sampling procedure to generate $\designvec[k] \sim \CondProb{\designvec[k]}{\hyperparamvec}$, 
      and the gradient of the 
      log-probability $\nabla_{\hyperparamvec} \log{\CondProb{\designvec[k]}{\hyperparamvec}}$ 
      for stochastic optimization.

\section{Probabilistic Approach to Optimal Path OED}
  \label{sec:problem_formulation}
      We formulate the trajectory OED problem as a discrete graph 
      optimization (\Cref{prbl:path_oed_problem}), 
      where nodes represent feasible sensor locations and edges encode allowable 
      motion between them.
      \begin{problem}{Discrete Path OED Problem on a Graph}{path_oed_problem}
        Consider a navigation mesh represented by a directed graph $G=(V, E)$,  
        with vertices $V=\{v_1, \ldots, v_{N}\}$, 
        and arcs 
        $E\subseteq V \times V$.
        The path OED problem seeks a trajectory 
        $\designvec=\left(\design_1, \ldots, \design_{n}\right)  \in V^{n}$ 
        that solves
        \begin{equation}
          \label{eqn:path_OED}
          \designvec\opt\in 
            \argmax_{\designvec=\left(\design_1, \ldots, \design_{n}\right)  \in V^{n}}
              {\utilityfunc(\designvec)} \,,
        \end{equation}
        where $V^{n}=V\times\cdots\times V$ 
        contains  all possible trajectories of length $n$, and 
        $\utilityfunc(\designvec)$ is a utility function that quantifies the quality of a path $\designvec$.
      \end{problem}

      Note the following about \Cref{prbl:path_oed_problem}. 
      (1) Although related to navigation mesh path planning \cite{brandao2021explaining}, 
      this framework optimizes path OED objectives rather than shortest 
      paths between prescribed nodes.
      (2) The directed graph encodes admissible single-step transitions 
      between candidate sensor locations. Preservation of this single-step 
      constraint by the trajectory policies considered in this work is 
      addressed in \Cref{subsubsec:lag_weights}.
      (3) \Cref{prbl:path_oed_problem} is abstract and does not 
      prescribe trajectory–model synchronization; 
      when the design specifies an observational path, 
      graph transitions correspond to motion between candidate locations 
      over successive observation times. 
      Path synchronization is application-specific and is addressed in \Cref{sec:numerical_experiments}.
      (4) The utility function $\utilityfunc$ is assumed to be 
      a black box function that can be defined, for example,
      based on the information gain aggregated over the path.

      Standard discrete solution methods for \Cref{prbl:path_oed_problem}
      are infeasible due 
      to exponential design-space growth,  
      motivating the probabilistic formulation \eqref{eqn:probabilisitic_optimization}.

    Leveraging \eqref{eqn:probabilisitic_optimization} for path OED, requires 
    a parametric policy $\CondProb{\designvec}{\hyperparamvec}$ that defines the 
    path probability. 
    We propose three  policies in \Cref{subsec:the_probabilistic_models}. 
    A probabilistic optimization approach that utilizes these policies 
    is then described in \Cref{subsec:the_probabilistic_optimization}.

    \subsection{Probabilistic Policies for the Experimental Path}
    \label{subsec:the_probabilistic_models}
      The experimental path $\designvec$ 
      defines the sensor locations 
      at successive time instances. 
      At any time instance $t_{k}$, a moving sensor can exist at 
      one and only one location (vertex $v_i$).
      For the first-order policy of \Cref{subsubsec:first_order_MC_policy}, 
      the sensor can then move at the next time instance $t_{k+1}$ to a 
      vertex $v_j$ only if there is an edge connecting $v_i$ to $v_j$, that 
      is, $(v_i, v_j)\in E$. This single-step adjacency constraint is relaxed 
      by the higher-order policies of \Cref{subsubsec:higher_order_MC_policy}; 
      see \Cref{subsubsec:lag_weights}.
      A random trajectory on a graph can be modeled by using a discrete Markov chain (MC) with 
      the general probability model
      \begin{equation}\label{eqn:trajectory_probability}
        \Prob{\designvec} 
          = \Prob{\design_1, \design_2, \ldots, \design_{n-1}}
          = \Prob{\design_1} 
            \CondProb{\design_2}{\design_1} 
            \CondProb{\design_3}{\design_1, \design_2} \ldots 
            \CondProb{\design_{n}}{\design_{1}\ldots \design_{n-1}} \,,
      \end{equation}
      where
      the initial distribution $\Prob{\design_1}$ describes the probability of starting the 
      trajectory at any vertex $v_i$. 
      The conditional transition probability 
      $\CondProb{\design_{n}}{\design_{1}\ldots \design_{n-1}}$  
      describes the 
      probability of moving from $\design_{n-1}$ to $\design_{n}$ 
      given the full history $(\design_1, \ldots, \design_{n-1})$. 

      While a first-order chain assumes that the next location depends only on the current one, 
      high-order chains capture dependencies on multiple previous steps.
      We first develop a first-order policy based on memoryless MCs 
      in \Cref{subsubsec:first_order_MC_policy}, 
      then extend the discussion to higher-order MC models in \Cref{subsubsec:higher_order_MC_policy}.
      An  example of the proposed policies
      is given in the supplementary material (see \Cref{app:sec:Example}). 

      \subsubsection{First-Order Policy}
      \label{subsubsec:first_order_MC_policy}
        A memoryless
        MC assumes that 
        the conditional transition probability is given by
        $ 
          \CondProb{\design_{n}}{\design_{n-1}\ldots \design_1} 
            = \CondProb{\design_{n}}{\design_{n-1}}  
        $,  
        reducing \eqref{eqn:trajectory_probability} to
        \begin{equation}\label{eqn:first_order_trajectory_distribution}
          \Prob{\designvec} 
            = \Prob{\design_1} 
              \CondProb{\design_2}{\design_1} 
              \CondProb{\design_3}{\design_2} \ldots 
              \CondProb{\design_{n}}{\design_{n-1}}
            = \Prob{\design_1} \prod_{t=1}^{n-1} {\CondProb{\designvec_{t+1}}{\design_{i}}} \,. 
        \end{equation}

        Here ${\CondProb{\design_{i+1}}{\design_{i}}}$ describes 
        the probability of moving along the arc 
        $(v_{i}, v_{i+1})$ given that the sensor is at $v_i$.
        The matrix 
        $\mat{P}$ with $[\mat{P}]_{ij}
          =\CondProb{\designvec_{t+1}=v_j}{\designvec_{t}=v_i}
        $,  for any time $t$ and for all $v_i, v_j \in V$, 
        is called the transition probability matrix.
        
        The first-order trajectory distribution \eqref{eqn:first_order_trajectory_distribution} 
        requires modeling both the initial and the transition distribution.
        These models are developed next.
        
        \paragraph{Modeling the initial distribution}
          The initial distribution is given by
          \begin{equation}\label{eqn:initial_distribution}
            \boldsymbol{\pi} = (\pi_1, \pi_2, \ldots, \pi_{N}) \,;  
              \quad \text{such that}\quad  0\leq \pi_i \leq 1 , \,\, \text{ and } \,\,
              \sum_{i=1}^{N} \pi_i = 1 \,,
          \end{equation}
          where $\pi_i=\Prob{\design_1=v_i}$ is the probability that the 
          trajectory starts at $v_i\in V$.
          Starting at any node can be modeled as a Bernoulli experiment (start at the node or not). 
          Since only one location is allowed, 
          the initial distribution can be defined by employing first-order inclusion probabilities 
          \cite{sunter1986solutions}. 
          Specifically, assuming that each node $v_i$ is associated with a 
          Bernoulli probability 
          $\hyperparam_i\in(0, 1)$,  
          the initial distribution 
          is defined as
          \begin{equation}\label{eqn:initial_distribution_inclusion}
            \pi_i \equiv \Prob{\design_1=v_i} 
              = \frac{w_i}{\sum_{j=1}^{N} {w_j}} 
                \,; \quad 
                w_i = \frac{\hyperparam_i}{1-\hyperparam_i}   
                \,; \quad 
                \hyperparam_i\in (0, 1) 
                \,,\quad  
                i=1, 2, \ldots, \Nsens 
              \,,
          \end{equation}
          where, by construction, the inclusion probabilities satisfy
          $\sum_{i=1}^{N} \pi_i = 1$.
          Because the inclusion probabilities depend on the weights only
          through the normalization in \eqref{eqn:initial_distribution_inclusion},
          the map $\hyperparamvec\mapsto\pi$ is invariant under a common
          rescaling $w_i\mapsto k\,w_i$ of all weights---equivalently, along the
          corresponding curve in $\hyperparamvec$---so this over-parameterization
          leaves the policy, and hence the expected utility, unchanged, and thus
          does not affect the optimization.

          Initial parameters of the optimization procedure can be set, e.g., to
          $\hyperparam_i=0.5$, with preferences for certain nodes adjusted as needed.
          Note that in \eqref{eqn:initial_distribution_inclusion} and 
          throughout \Cref{subsec:the_probabilistic_models} 
          we assume 
          non-degenerate parameters $\hyperparam_i\in(0, 1)$.
          The case of boundary (degenerate) configurations
          $\hyperparam_i\in\{0,1\}$ are addressed uniformly in
          \Cref{subsubsec:degenerate_case}.

        \paragraph{Modeling the transition distribution}
          %
          The conditional transition distribution
          \begin{equation}\label{eqn:transition_distribution}
            \boldsymbol{\pi}^{(i)} = (\pi_1^{i}, \ldots, \pi_{N}^{i}) \,;  
              \quad 
              \sum_{j=1}^{N} \pi_j^{i} = 1  \,,\quad
              0\leq \pi^{i}_j \leq 1   \,,\quad  
              \pi^{i}_j =0 \text{ for } (v_i, v_j)\notin E  
              \,,
          \end{equation}
          defines $\pi_{j}^{i}=\CondProb{v_j}{v_i}\equiv\CondProb{\design_{t+1}=v_j}{\design_{t}=v_i}$,  
          the probability of moving in one step from 
          $v_i$ to $v_j$. 
          Assuming a stationary chain, the transition probability $\pi_{j}^{i}$ 
          is independent of time or the position of $(v_i,\, v_j)$ in the trajectory.

          The transition distribution \eqref{eqn:transition_distribution} 
          can be modeled by following the same approach
          employed for modeling the initial distribution. 
          Specifically, we associate with each vertex $v_i$ a vector of Bernoulli 
          probabilities---modeling the possibility of either to transition or not--- 
          of moving to another node $v_j$. 
          Since moving to a node $v_j$ such that $(v_i, v_j)\notin E$ 
          has to be zero, we need to model only the probability of moving to a node $v_j$ in the 
          out-neighborhood $\EuScript{N}(v_i) := \{v_j \in V : (v_i, v_j) \in E\}$ of $v_i$. 
          Specifically, by letting 
          \begin{equation}
            \hyperparamvec^{(i)}=(\hyperparam_1^{i}, \ldots, \hyperparam_{N_i}^{i} ) \,; \qquad
              0\leq \hyperparam_{j}^{i} \leq 1 \,\forall\, v_j \in \EuScript{N}(v_i)\,,\quad 
              \hyperparam_{j}^{i} = 0  \,\forall\, v_j \notin \EuScript{N}(v_i)\,, 
          \end{equation}
          the transition probabilities associated with node $v_i$ are defined as
          \begin{equation}\label{eqn:transition_probabilities}
            \pi^{i}_j 
            = \CondProb{\designvec_{t+1}\!=\!v_j}{\designvec_{t}\!=\!v_i} 
              = \frac{w_j^{i}}{\sum_{j=1}^{N_i} w_j^{i}} \,; \quad 
                w_j^{i} = \frac{\hyperparam_j^{i}}{1-\hyperparam_j^{i}} 
                \,; \quad 
                \hyperparam_{j}^{i}\in (0, 1) 
                \,. 
          \end{equation}
          As with the initial distribution, we assume non-degenerate
          transition parameters $\hyperparam^{i}_{j}\in(0, 1)$ here;
          boundary cases $\hyperparam^{i}_{j}\in\{0,1\}$ are addressed in
          \Cref{subsubsec:degenerate_case}.

        Note that $\hyperparam_i$ is the probability of starting from 
        $v_i$ irrespective of the other nodes.  
        Conversely, $\pi_i$ is the probability that a trajectory exclusively starts at $v_i$.
        Similarly, given that the sensor is at $v_{i}$, $\pi^{i}_{j}$ is 
        the probability of moving exclusively to
        $v_{j}$.
        These distinctions reflect that a sensor can occupy or move to only one node at a time.

        \Cref{defn:first_order_path_model} formalizes the proposed first-order path policy. 

        \begin{defn}{
            First-Order 
            Path Policy
          }{first_order_path_model}
          The probability distribution of a path 
          $\designvec=(\design_1, \design_2, \ldots, \design_{n})$ of fixed size $n$, 
          parametrized by the parameter $\hyperparamvec \in (0, 1)^{N(N+1)}$,
          is given  by
          \begin{subequations}\label{eqn:first_order_trajectory_distribution_full}
           {\allowdisplaybreaks
            \begin{align}
              \label{eqn:first_order_trajectory_distribution_pmf}
              &\Prob{\designvec}
                = \Prob{\design_1} 
                  \prod_{t=1}^{n-1} {\CondProb{\designvec_{t+1}}{\designvec_{t}}} \,, \\
              \label{eqn:first_order_trajectory_distribution_param}
              &\hyperparamvec
                = 
                    \Bigl(
                    \underbrace{
                      \hyperparam_1, \ldots, \hyperparam_{N}
                    }_{\text{initial parameters}}; \, \,\, 
                    \underbrace{
                      \hyperparam^{1}_1, \ldots, \hyperparam^{1}_{N};\, 
                      \, \ldots\, ;\,\, 
                    \hyperparam^{N}_1, \ldots, \hyperparam^{N}_{N} 
                    }_{\text{transition parameters}}
                  \,\, \Bigr)  \,, \\
                &\Prob{\design_1\!=\!v_i} = \pi_i 
                  = \frac{w_i}{\sum\limits_{j=1}^{N}{w_j}} \,; \quad
                    w_i = \frac{\hyperparam_i}{1-\hyperparam_i} \,, \, \quad i=1, \ldots, \Nsens  \,,
                      \label{eqn:first_order_trajectory_distribution_init_pmf}
                      \\
                  &\CondProb{\designvec_{t+1}\!=\!v_j}{\design_t\!=\!v_i}
                  = \pi^{i}_{j} 
                  = \frac{w^{i}_{j}}{\sum\limits_{k=1}^{N}{w^{i}_{k}}} \,; \,\,
                    w^{i}_{j} =\frac{\hyperparam^{i}_{j}}{1-\hyperparam^{i}_{j}} 
                      \,, 
                      \begin{matrix}
                        & i = 1,\ldots,N \,,  \\[-1.0pt] 
                        & j = 1,\ldots,N  \,.
                      \end{matrix}
                      \label{eqn:first_order_trajectory_distribution_transition_pmf}
            \end{align}
            }
          \end{subequations}
        \end{defn}

        Although \Cref{defn:first_order_path_model} defines 
        $\hyperparamvec \in(0, 1)^{N^2+N}$, in practice 
        the effective parameter space is much smaller since each node connects only to a few neighbors.
        Thus, one  needs to store only the nonzero parameters, 
        reducing the parameter space to $N(C+1)$, where $C$ is the maximum neighborhood cardinality.

        The gradient of the log-probability 
        $\nabla_{\hyperparamvec}{\brlog{\Prob{\designvec}}}$ 
        is required for the probabilistic optimization approach and is summarized by  
        \Cref{proposition:first_order_gradient}.

        \begin{proposition}\label{proposition:first_order_gradient}
          The policy 
          \eqref{eqn:first_order_trajectory_distribution_full}
          has the following gradient of log-probability:
          \begin{subequations}\label{eqn:first_order_trajectory_distribution_grad_log_prob_full}
            \begin{equation}\label{eqn:first_order_trajectory_distribution_grad_log_prob_init}
              \nabla_{\hyperparamvec}{\brlog{\Prob{\designvec}}}
                = \nabla_{\hyperparamvec}{\brlog{\Prob{\design_1}}}
                  + \sum_{i=1}^{n-1}  \nabla_{\hyperparamvec}{\brlog{\CondProb{\design_{i+1}}{\design_{i}}}}
                  \,,
            \end{equation}
            with elementwise components of the gradient vector given by
            {\allowdisplaybreaks
            \begin{align}
              \del{\log \pi_i} {\hyperparam_j}
                 &= 
                  \del{\log \Prob{\design_1=v_{i}}} {\hyperparam_j}
                  = 
                 \frac{1}{\left(1\!-\!\hyperparam_j\right)^2} \left(
                      \frac{\delta_{ij}}{w_j} 
                     - \frac{ 1 }{\sum\limits_{k=1}^{N}{w_k}} 
                   \right) \,, 
                   \label{eqn:first_order_trajectory_distribution_grad_log_prob_initial} 
                   \\
                \del{ \log \pi^{i}_{j} }{\hyperparam^{l}_{m}}
                  &= \del{\log \CondProb{\designvec_{t+1}=v_j}{\designvec_{t}=v_i}}{\hyperparam^{l}_{m}}
                  = \frac{\delta_{il}}{\left(1\!-\!\hyperparam^{l}_{m}\right)^2} \left(
                      \frac{\delta_{jm}}{w^{l}_{m}} 
                      - \frac{ 1 }{\sum\limits_{k=1}^{N}{w^{l}_{k}}} 
                     \right) \,,
                   \label{eqn:first_order_trajectory_distribution_grad_log_prob_transition} 
            \end{align}
            }
            where $\delta_{ij}$ is the Kronecker delta function with $\delta_{ij}=1$ 
            when $i=j$ and  $0$ otherwise.
          \end{subequations}
        \end{proposition}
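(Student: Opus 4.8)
The plan is to exploit the product structure of the policy \eqref{eqn:first_order_trajectory_distribution_pmf} and reduce the computation to two elementary one-variable derivatives. First I would take the logarithm of \eqref{eqn:first_order_trajectory_distribution_pmf}, turning the product into the sum $\brlog{\Prob{\designvec}} = \brlog{\Prob{\design_1}} + \sum_{t=1}^{n-1}\brlog{\CondProb{\design_{t+1}}{\design_t}}$; applying $\nabla_{\hyperparamvec}$ and using linearity of the gradient gives \eqref{eqn:first_order_trajectory_distribution_grad_log_prob_init} at once. At this step it is worth recording the structural observation that the initial factor $\Prob{\design_1}$ is a function of the initial block $(\hyperparam_1,\ldots,\hyperparam_N)$ of the parameter vector \eqref{eqn:first_order_trajectory_distribution_param} only, while each transition factor $\CondProb{\design_{t+1}}{\design_t}$ with $\design_t = v_i$ depends only on the block $\hyperparamvec^{(i)} = (\hyperparam^i_1,\ldots,\hyperparam^i_N)$; hence the gradient decouples across the blocks and every cross-block partial derivative vanishes. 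This decoupling is exactly what produces the indicator $\delta_{il}$ in \eqref{eqn:first_order_trajectory_distribution_grad_log_prob_transition} and the fact that \eqref{eqn:first_order_trajectory_distribution_grad_log_prob_initial} involves only the initial parameters.

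Next I would compute the two per-component derivatives by the chain rule through the reparametrization $w = \hyperparam/(1-\hyperparam)$. For the initial distribution, write $\log\pi_i = \log w_i - \log\sum_{k=1}^{N} w_k$, so that $\del{\log\pi_i}{\hyperparam_j} = \del{\log\pi_i}{w_j}\,\del{w_j}{\hyperparam_j}$. The inner factor is $\del{\log\pi_i}{w_j} = \delta_{ij}/w_j - 1/\sum_{k} w_k$ (the $\delta_{ij}$ arising from $\partial\log w_i/\partial w_j$), and the outer factor is $\del{w_j}{\hyperparam_j} = [(1-\hyperparam_j)+\hyperparam_j]/(1-\hyperparam_j)^2 = 1/(1-\hyperparam_j)^2$; multiplying the two yields \eqref{eqn:first_order_trajectory_distribution_grad_log_prob_initial}. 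The transition case is handled identically with $\log\pi^i_j = \log w^i_j - \log\sum_{k} w^i_k$ and the same reparametrization, the only new ingredient being that $\pi^i_j$ is constant in $\hyperparamvec^{(l)}$ whenever $l\neq i$, which contributes the indicator $\delta_{il}$ and hence \eqref{eqn:first_order_trajectory_distribution_grad_log_prob_transition}.

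Finally I would verify that every expression above is well defined. The standing assumption $\hyperparam_\bullet\in(0,1)$ keeps each $w_\bullet$ finite and strictly positive, so each normalizing sum is strictly positive and every logarithm has a positive argument; the entries that are forced to zero by $(v_i,v_j)\notin V$ (equivalently $\hyperparam^i_j = 0$ for $v_j\notin\EuScript{N}(v_i)$) are simply dropped from the effective parameter vector and are never differentiated. There is no genuine obstacle in this argument — it is routine logarithmic differentiation — and the only point demanding attention is the index bookkeeping: tracking the Kronecker delta $\delta_{ij}$ from $\partial\log w_i/\partial w_j$ in the initial block, and, in the transition block, keeping the block-selection delta $\delta_{il}$ separate from the within-block delta $\delta_{jm}$, since $\delta_{il}$ alone (not $\delta_{il}\delta_{jm}$) multiplies the subtracted normalization term.
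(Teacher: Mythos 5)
Your proposal is correct and follows essentially the same route as the paper's proof: logarithmic differentiation of the product form, exploiting the fact that the initial and transition factors depend on disjoint parameter blocks (giving the $\delta_{il}$), and the chain rule through the reparametrization $w=\hyperparam/(1-\hyperparam)$ with $\del{w}{\hyperparam}=1/(1-\hyperparam)^2$, yielding \eqref{eqn:first_order_trajectory_distribution_grad_log_prob_initial} and \eqref{eqn:first_order_trajectory_distribution_grad_log_prob_transition} exactly as in the paper. The only difference is presentational—you state the block-decoupling and well-definedness checks more explicitly—so no further changes are needed.
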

        \begin{proof}
          See \Cref{app:sec:gradients_proofs}.
        \end{proof}

        The first-order policy (\Cref{defn:first_order_path_model}) is memoryless, assuming the next node depends only on the current one. 
        Trajectories may benefit from memory, which can be captured by using higher-order Markov 
        chains \cite[Chapter 6]{ching2006markov} as discussed in \Cref{subsubsec:higher_order_MC_policy}.

      \subsubsection{Higher-Order Path Policies}
      \label{subsubsec:higher_order_MC_policy}
        The $k$th-order MC satisfies
        \begin{equation}\label{eqn:kth_order_MC}
          \CondProb{\design_{n}}{
            \design_{1},
            \design_{2},
            \ldots, 
            \design_{n-2}, 
            \design_{n-1} 
          }
          = \CondProb{\design_{n}}{
            \design_{n-k},
            \design_{n-k+1}
            \ldots, 
            \design_{n-2}, 
            \design_{n-1} 
          } \,, \,\, k\in\Znum^{+} \,,
        \end{equation}
        where the case of $k=1$ reduces to the first-order (memoryless) 
        MC \eqref{eqn:first_order_trajectory_distribution}.
       
        Standard higher-order MC is rarely used in practice because 
        the size of the transition tensor \eqref{eqn:kth_order_MC} 
        is $\Oh{N^{k}}$, which grows exponentially 
        with the order $k$.
        Practical approaches, such as the Raftery model \cite{raftery1985model}, extend a first-order 
        model by introducing a single additional parameter for each lag: 
        \begin{equation}\label{eqn:Raftery_model}
          \CondProb{\design_{n}}{
            \design_{n-k},
            \design_{n-k+1}
            \ldots, 
            \design_{n-2}, 
            \design_{n-1} 
          } 
            = \sum_{i=1}^{k}{\lambda_{i}\, \CondProb{\design_{n}}{\design_{n-i}}} \,; \quad 
              \sum_{i=1}^{k} \lambda_{i} = 1 \,,
        \end{equation}
        where $\CondProb{\design_{n}}{\design_{n-i}}$ is the one-step transition 
        probability \eqref{eqn:transition_probabilities}.
        This is similar to the first-order autoregressive model, where a linear combination 
        is taken over subsequent lags.
        A generalization of \eqref{eqn:Raftery_model} was developed in 
        \cite{ching2004higher} to allow the transition probability 
        $\CondProb{\design_{n}}{\design_{n-i}}$ to be different for each lag,
        affording more flexibility.
        
        In the remainder of \Cref{subsubsec:higher_order_MC_policy} we develop two 
        probabilistic policies for the trajectory based on the 
        Raftery model and its generalization.

        \paragraph{Path policy based on the Raftery model}
          Here the path probability \eqref{eqn:trajectory_probability} 
          is 
          \begin{equation}\label{eqn:kth-order-trajectory}\begin{aligned}
            \Prob{\designvec} 
              &= \Prob{\design_{1}, \ldots, \design_{n} } 
               = \CondProb{\design_{n}}{\design_{1}, \ldots, \design_{n-1}}
                  \Prob{\design_{1},  \ldots, \design_{n-1}}
                \\
                &= \Prob{\design_{1}, \design_{2}, \ldots, \design_{k}} 
                \prod_{t=k}^{n-1} 
                  \CondProb{
                    \designvec_{t+1}
                  }{
                      \designvec_{t-k+1},
                      \ldots, 
                      \designvec_{t-1}, 
                      \designvec_{t}
                  }
                \,,
          \end{aligned}
          \end{equation}
          where the conditional probability rule is applied repeatedly, and 
          \eqref{eqn:kth_order_MC} is used to obtain \eqref{eqn:kth-order-trajectory}.
          We employ the first-order policy to model the initial distribution 
          (over $k$ steps) 
          $\Prob{\design_{1}, \ldots, \design_{k}}$.
          The parameters $\hyperparamvec$ of this model would be the same as 
          \eqref{eqn:first_order_trajectory_distribution_param} 
          in addition to the lag parameters $\lambda_{1}, \ldots, \lambda_{k}$ as formalized by 
          \Cref{defn:higher_order_path_model}.

          \begin{defn}{Higher-Order Path Policy}{higher_order_path_model}
            For a positive integer 
            $k\in\Znum^{+}$, the $k$th-order probability 
            distribution of a path 
            $\designvec=(\design_1, \ldots, \design_{n})$ of fixed size $n>k$,
            parametrized by 
            $\hyperparamvec \in (0, 1)^{N(N+1) + k}$, is
            \begin{subequations}\label{eqn:higher_order_trajectory_distribution}
              {\allowdisplaybreaks
              \begin{align}
                &\Prob{\designvec}
                  = \Prob{\design_1} 
                      \prod_{t=1}^{k-1}{
                      \left(\CondProb{\designvec_{t+1}}{\designvec_{t}}
                      \right)}
                    \prod_{t=k}^{n-1} \left( 
                      \sum_{i=1}^{k} {\lambda_{i}\, \CondProb{\designvec_{t+1}}{\designvec_{t+1-i}} }
                      \right)
                   \,, 
                      \label{eqn:higher_order_trajectory_distribution_pmf}
                  \\ 
                  &\hyperparamvec
                  = 
                    \left(
                      \hyperparam_1, \ldots, \hyperparam_{N}; \, 
                      \hyperparam^{1}_1, \ldots, \hyperparam^{1}_{N},
                      \, \ldots\, 
                      \hyperparam^{N}_1, \ldots, \hyperparam^{N}_{N}; \, 
                      \lambda_1, \ldots, \lambda_{k} 
                    \right) \,; \, \sum_{i=1}^{k} \lambda_{i} = 1 \,, 
                   \label{eqn:higher_order_trajectory_distribution_param}
                   \\ 
                  &\Prob{\design_1\!=\!v_i}
                   =\pi_i 
                   = \frac{w_i}{\sum_{j=1}^{N}{w_j}} \,; \quad 
                      w_i=\frac{\hyperparam_i}{1-\hyperparam_i} \,, i=1, \ldots, \Nsens  \,,
                        \label{eqn:higher_order_trajectory_initial_state}
                        \\
                  &\CondProb{v_j}{v_i}
                  = \pi^{i}_{j} 
                  = \frac{w^{i}_{j}}{\sum_{k=1}^{N}{w^{i}_{k}}} \,; \quad 
                      w^{i}_{j}=\frac{\hyperparam^{i}_{j}}{1-\hyperparam^{i}_{j}} 
                        \,, 
                        \begin{matrix}
                          & i = 1,\ldots,N \,,  \\[-1.0pt] 
                          & j = 1,\ldots,N  \,,
                        \end{matrix}
                        \label{eqn:higher_order_trajectory_distribution_transition}
              \end{align}
              }
            \end{subequations}
            where 
            $
              \CondProb{\design_{t+z}=v_j}{\design_{t}=v_i}
              = \pi^{i}_{j} 
            $ for any positive integer $z$.
          \end{defn}

          \begin{proposition}\label{proposition:higher_order_gradient}
            The gradient of the log-policy given by 
            \eqref{eqn:higher_order_trajectory_distribution}
            is given by
            \begin{subequations}\label{eqn:higher_order_trajectory_distribution_grad_log_prob}
            \begin{equation}\label{eqn:higher_order_path_distribution_grad_log_prob_operator}\small
              \left(
                \del{}{\hyperparam_{1}}, \ldots, \del{}{\hyperparam_{\Nsens}};\,\, 
                \del{}{\hyperparam^{1}_{1}}, \ldots, \del{}{\hyperparam^{1}_{\Nsens}};\,\ldots;\, \,
                \del{}{\hyperparam^{\Nsens}_{1}}, \ldots, \del{}{\hyperparam^{\Nsens}_{\Nsens}};\, \,
                \del{}{\lambda_{1}}, \ldots, \del{}{\lambda_{k}}
              \right)\,  \log\Prob{\designvec} \,,
            \end{equation}
            with elementwise components given as follows:

              {\allowdisplaybreaks
              \begin{align}
                \del{\log \Prob{\designvec} } {\hyperparam_j}
                  &= \del{\log \Prob{\design_{1} }} {\hyperparam_j}  
                   \,, \label{eqn:higher_order_path_distribution_grad_log_prob_operator_initial}
                   \\
                  \del{\log \Prob{\designvec}}{\hyperparam^{l}_{m}}
                  &= \sum_{t=1}^{k-1} 
                    \del{\log \CondProb{\designvec_{t+1}}{\designvec_{t}}}{\hyperparam^{l}_{m}}
                    + 
                    \sum_{t=k}^{n-1} 
                      \frac{ 
                        \sum_{i=1}^{k} {
                            \lambda_{i}\, 
                              \del{\CondProb{\designvec_{t+1}}{\designvec_{t+1-i}}}{\hyperparam^{l}_{m}}  
                          }
                      }{
                          \sum_{i=1}^{k} {\lambda_{i}\, \CondProb{\designvec_{t+1}}{\designvec_{t+1-i}} }
                      }\label{eqn:higher_order_path_distribution_grad_log_prob_operator_transitions}
                      \\
                  \del{\log \Prob{\designvec}}{\lambda_{l}}
                  &= \sum_{t=k}^{n-1} 
                  \frac{
                        \CondProb{\designvec_{t+1}}{\designvec_{t+1-l}} 
                      }{
                        \sum_{i=1}^{k} {\lambda_{i}\, \CondProb{\designvec_{t+1}}{\designvec_{t+1-i}} }
                      }\,, \label{eqn:higher_order_path_distribution_grad_log_prob_operator_lag}
              \end{align}
              }
            \end{subequations}
            where
            $
              \del{\log \Prob{\design_1=v_i}}{\hyperparam_j} =
              \del{\log \pi_i} {\hyperparam_j}
            $
            is given by 
            \eqref{eqn:first_order_trajectory_distribution_grad_log_prob_initial};  
            $
              \del{\log \CondProb{\designvec_{t+z}=v_j}{\designvec_{t}=v_i}}{\hyperparam^{l}_{m}}
              = \del{\log \pi^{i}_{j}}{\hyperparam^{l}_{m}}
            $
            is given by 
            \eqref{eqn:first_order_trajectory_distribution_grad_log_prob_transition}  for any positive integer $z$;  
            and $\del{\pi^{i}_j}{\hyperparam^{l}_{m}} = \pi^{i}_j  \del{\log\pi^{i}_j}{\hyperparam^{l}_{m}} $. 
          \end{proposition}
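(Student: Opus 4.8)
The plan is to take logarithms in the $k$th-order pmf \eqref{eqn:higher_order_trajectory_distribution_pmf} to obtain
\[
  \log\Prob{\designvec}
    = \log\Prob{\design_1}
      + \sum_{t=1}^{k-1}\log\CondProb{\designvec_{t+1}}{\designvec_{t}}
      + \sum_{t=k}^{n-1}\log\!\left(\sum_{i=1}^{k}\lambda_{i}\,\CondProb{\designvec_{t+1}}{\designvec_{t+1-i}}\right),
\]
and then differentiate this expression separately with respect to each of the three disjoint parameter blocks in \eqref{eqn:higher_order_trajectory_distribution_param}: the initial parameters $\hyperparam_{j}$, the transition parameters $\hyperparam^{l}_{m}$, and the lag parameters $\lambda_{l}$. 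For the initial block, only the factor $\Prob{\design_1}$ in \eqref{eqn:higher_order_trajectory_initial_state} is a function of the $\hyperparam_{j}$ --- the one-step kernels \eqref{eqn:higher_order_trajectory_distribution_transition} depend solely on the $\hyperparam^{l}_{m}$ --- so every summand above has vanishing $\hyperparam_{j}$-derivative and we are left with $\del{\log\Prob{\designvec}}{\hyperparam_{j}}=\del{\log\Prob{\design_{1}}}{\hyperparam_{j}}$, whose componentwise value is exactly \eqref{eqn:first_order_trajectory_distribution_grad_log_prob_initial} from \Cref{proposition:first_order_gradient}.

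For the transition block the initial factor drops, the prefix $\sum_{t=1}^{k-1}\log\CondProb{\designvec_{t+1}}{\designvec_{t}}$ contributes the one-step terms $\sum_{t=1}^{k-1}\del{\log\CondProb{\designvec_{t+1}}{\designvec_{t}}}{\hyperparam^{l}_{m}}$ --- available verbatim from \eqref{eqn:first_order_trajectory_distribution_grad_log_prob_transition} --- and each mixture term is handled by the chain rule, $\del{}{\hyperparam^{l}_{m}}\log\!\bigl(\sum_{i}\lambda_{i}\CondProb{\designvec_{t+1}}{\designvec_{t+1-i}}\bigr)=\bigl(\sum_{i}\lambda_{i}\,\del{\CondProb{\designvec_{t+1}}{\designvec_{t+1-i}}}{\hyperparam^{l}_{m}}\bigr)\big/\bigl(\sum_{i}\lambda_{i}\CondProb{\designvec_{t+1}}{\designvec_{t+1-i}}\bigr)$, which gives \eqref{eqn:higher_order_path_distribution_grad_log_prob_operator_transitions}. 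The reduction that makes this work is the stationarity convention $\CondProb{\design_{t+z}=v_{j}}{\design_{t}=v_{i}}=\pi^{i}_{j}$ recorded in \Cref{defn:higher_order_path_model}: every lagged kernel is an entry of the same transition matrix, so its partial derivative coincides with the one-step partial from \Cref{proposition:first_order_gradient}, and the elementary identity $\del{\pi^{i}_{j}}{\hyperparam^{l}_{m}}=\pi^{i}_{j}\,\del{\log\pi^{i}_{j}}{\hyperparam^{l}_{m}}$ converts the log-derivative into the plain derivative needed in the numerator.

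For the lag block, only the mixture terms depend on $\lambda_{l}$, and since $\sum_{i}\lambda_{i}\CondProb{\designvec_{t+1}}{\designvec_{t+1-i}}$ is affine in $\lambda_{l}$ with coefficient $\CondProb{\designvec_{t+1}}{\designvec_{t+1-l}}$, the logarithmic derivative is immediate and yields \eqref{eqn:higher_order_path_distribution_grad_log_prob_operator_lag}; the pmf is differentiated formally, leaving the simplex constraint $\sum_{i}\lambda_{i}=1$ to be enforced by the optimizer, consistently with how the bound constraints $\hyperparam_{\bullet}\in(0,1)$ are handled elsewhere. Assembling the three blocks in the order dictated by \eqref{eqn:higher_order_path_distribution_grad_log_prob_operator} then produces the claimed gradient.

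The one place where care is genuinely required is the index bookkeeping in the transition block: the Kronecker-delta factor $\delta_{il}$ appearing in \eqref{eqn:first_order_trajectory_distribution_grad_log_prob_transition} means that a lag-$i$ summand contributes to $\del{}{\hyperparam^{l}_{m}}$ only when the source vertex $\designvec_{t+1-i}$ equals $v_{l}$, and when several lags inside the same factor visit $v_{l}$ their contributions accumulate additively within the quotient. Once this is made explicit, the remainder is the routine chain- and quotient-rule algebra, which is deferred to \Cref{app:sec:gradients_proofs}.
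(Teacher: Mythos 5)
Your proposal is correct and follows essentially the same route as the paper's proof: take the logarithm of \eqref{eqn:higher_order_trajectory_distribution_pmf}, observe that the three parameter blocks (initial, transition, lag) enter only through disjoint terms, and differentiate each block, reusing the first-order gradients from \Cref{proposition:first_order_gradient} and the quotient/chain rule for the mixture factors. The extra remarks on the stationarity convention and the $\delta_{il}$ bookkeeping are consistent elaborations rather than a different argument.
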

          \begin{proof}
            See \Cref{app:sec:gradients_proofs}.
          \end{proof}

        \paragraph{Path policy based on a generalized Raftery model}
          The original Raftery model \eqref{eqn:Raftery_model} freezes the transition 
          probabilities for all lags. 
          Specifically, as described by \eqref{eqn:higher_order_trajectory_distribution_transition},
          the transition probabilities are given by
          $
            \CondProb{\design_{*}\!=\!v_j}{\design_{*}\!=\!v_i}
          $
          irrespective of the lag, that is, irrespective of the order of the states 
          $\design_{*}$ in the trajectory or with respect to each other.
          This approach can be restrictive, however, especially 
          since in our application connectivity of the navigation mesh is sparse.
          Thus, for a given vertex the memory is mostly restricted to neighboring
          cells.
          Moreover, the time-dependent nature of predictive applications might not be sufficiently 
          captured by this model. 
          An alternative approach is to employ the lag-dependent conditional probability: 
          \begin{equation}\label{eqn:generalized_Raftery_model}
            \CondProb{\design_{n}}{
              \design_{n-k}
              \ldots, 
              \design_{n-2}, 
              \design_{n-1} 
            } 
              = \sum_{i=1}^{k}{\lambda_{i}\, 
                \nCondProb{i}{\design_{n}}{\design_{n-i}}} \,;\quad 
                \sum_{i=1}^{k} \lambda_{i} = 1 \,,
          \end{equation}
          where for each lag $l=1, \ldots, k$, the conditional transition probabilities form 
          a matrix $\mat{P}^{(l)}$ with
          $[\mat{P}^{(l)}]_{ij}=\{\nCondProb{l}{\designvec_{t+1}=v_j}{\designvec_{t}=v_i}\}$ 
          that satisfies the 
          conditions of a probability transition matrix; that is, the entries fall within $[0, 1]$ ,
          and each row adds 
          to $1$. 
          As suggested in 
          \cite{ching2004higher,ching2008higher}, we define 
          $\mat{P}^{(l)} = \prod_{i=1}^{l} \mat{P}$ for $l=1, \ldots, k$, where 
          $\mat{P}=\mat{P}^{(1)}$ is the first-order transition 
          probability matrix used in the original Raftery model \eqref{eqn:Raftery_model}.

          Note that this generalization does not change the number of parameters in the model since
          all transition probabilities depend on the first-order transition probability.
          The proposed path distribution is summarized by 
          \Cref{defn:generalized_higher_order_path_model}.

          \begin{defn}{
              Generalized Higher-Order Path Policy
          }{generalized_higher_order_path_model}
            For 
            $k\in\Znum^{+}$, the generalized $k$th-order probability 
            distribution of a path 
            $\designvec=(\design_1, \ldots, \design_{n})$ of fixed size $n>k$,
            parametrized by 
            $\hyperparamvec \in (0, 1)^{N(N+1) + k} $,
            is 
            \begin{subequations}\label{eqn:generalized_higher_order_trajectory_distribution}
              {\allowdisplaybreaks
              \begin{align}
                &\Prob{\designvec}
                  = \Prob{\design_1} 
                      \prod_{t=1}^{k-1}{
                        \left(
                            \CondProb{\designvec_{t+1}}{\designvec_{t}}
                        \right)
                      }
                    \, 
                    \prod_{t=k}^{n-1} \left( 
                      \sum_{i=1}^{k} {\lambda_{i}\, \nCondProb{i}{\designvec_{t+1}}{\designvec_{t+1-i}} }
                    \right)
                   \,, 
                      \label{eqn:generalized_higher_order_trajectory_distribution_pmf}
                  \\ 
                  &\hyperparamvec
                  = 
                    \left(
                      \hyperparam_1, \ldots, \hyperparam_{\Nsens}; \, 
                      \hyperparam^{1}_1, \ldots, \hyperparam^{1}_{N},
                      \, \ldots\, 
                      \hyperparam^{N}_1, \ldots, \hyperparam^{N}_{N}; \, 
                      \lambda_1, \ldots, \lambda_{k} 
                    \right) \,; \, \sum_{i=1}^{k} \lambda_{i} = 1  \,, 
                   \label{eqn:generalized_higher_order_trajectory_distribution_param}
                   \\ 
                  &\Prob{\design_1\!=\!v_i}
                   \equiv \pi_i 
                   = \frac{w_i}{\sum_{j=1}^{N}{w_j}} \,; \qquad 
                      w_i=\frac{\hyperparam_i}{1-\hyperparam_i} \,, 
                        \label{eqn:generalized_higher_order_trajectory_initial_state}
                        \\
                  &\CondProb{\designvec_{t+1}\!=\!v_j}{\designvec_{t}\!=\!v_i}
                  \equiv \pi^{i}_{j} 
                  = \frac{w^{i}_{j}}{\sum_{k=1}^{N}{w^{i}_{k}}} \,; \qquad \,\, 
                      w^{i}_{j}=\frac{\hyperparam^{i}_{j}}{1-\hyperparam^{i}_{j}} \,, 
                        \label{eqn:generalized_higher_order_trajectory_distribution_transition}
                        \\
                  &\nCondProb{r}{\designvec_{t}=v_j}{\designvec_{t-r}=v_i} 
                    = 
                    \sum_{k_{r-1}=1}^{N}
                    \cdots
                    \sum_{k_{2}=1}^{N}
                    \sum_{k_{1}=1}^{N}
                    \pi^{i}_{k_{r-1}} \pi^{k_{r-1}}_{k_{r-2}} \cdots  
                    \pi^{k_{2}}_{k_{1}} \pi^{k_{1}}_{k_{j}} 
                    \,.
                    \label{eqn:generalized_higher_order_transition}
              \end{align}
              }
            \end{subequations}
          \end{defn}
          
          While the $r$ summations in 
          \eqref{eqn:generalized_higher_order_transition} 
          are written over $N$ possible states, 
          in our application most of the terms vanish because of lack of connectivity
          in the navigation mesh.
          Each summation reduces to 
          the number of connected 
          nodes (via a path of length up to the predefined policy order $k$) to a given 
          node. 

          \begin{proposition}\label{proposition:generalized_higher_order_gradient}
            The gradient of the log-policy in 
            \eqref{eqn:generalized_higher_order_trajectory_distribution}
            is given by
            \begin{subequations}\label{eqn:generalized_higher_order_trajectory_distribution_grad_log_prob}
            \begin{equation}\label{eqn:generalized_higher_order_path_distribution_grad_log_prob_operator}\small
              \left(
                \del{}{\hyperparam_{1}}, \ldots, \del{}{\hyperparam_{\Nsens}};\,\, 
                \del{}{\hyperparam^{1}_{1}}, \ldots, \del{}{\hyperparam^{1}_{\Nsens}};\,\ldots;\, \,
                \del{}{\hyperparam^{\Nsens}_{1}}, \ldots, \del{}{\hyperparam^{\Nsens}_{\Nsens}};\, \,
                \del{}{\lambda_{1}}, \ldots, \del{}{\lambda_{k}}
              \right)\,  \log\Prob{\designvec} \,,
            \end{equation}
            with elementwise components given as follows:

            {\allowdisplaybreaks
            \begin{align}
              &\del{\log \Prob{\designvec} } {\hyperparam_j}
                = \del{\log \Prob{\design_{1}}} {\hyperparam_j}  
                   \,, \label{eqn:generalized_higher_order_path_distribution_grad_log_prob_operator_initial}
                 \\
                &\del{\log \Prob{\designvec}}{\hyperparam^{l}_{m}}
                = \sum_{t=1}^{k-1} 
                  \del{\log \CondProb{\designvec_{t+1}}{\designvec_{t}}}{\hyperparam^{l}_{m}}
                  + 
                  \sum_{t=k}^{n-1} 
                    \frac{ 
                      \sum_{i=1}^{k} {
                          \lambda_{i}\, 
                            \del{\nCondProb{i}{\designvec_{t+1}}{\designvec_{t+1-i}}}{\hyperparam^{l}_{m}}  
                        }
                    }{
                      \sum_{i=1}^{k} {\lambda_{i}\, \nCondProb{i}{\designvec_{t+1}}{\designvec_{t+1-i}} }
                    } 
                     \,, \label{eqn:generalized_higher_order_path_distribution_grad_log_prob_operator_transitions}
                    \\
                &\del{\log \Prob{\designvec}}{\lambda_{l}}
                = \sum_{t=k}^{n-1} 
                \frac{
                  \nCondProb{l}{\designvec_{t+1}}{\designvec_{t+1-l}} 
                    }{
                      \sum_{i=1}^{k} {\lambda_{i}\, \nCondProb{i}{\designvec_{t+1}}{\designvec_{t+1-i}} }
                    } \,,
                     \label{eqn:generalized_higher_order_path_distribution_grad_log_prob_operator_lag}
                  \\
                &\del{
                  \nCondProb{r}{\designvec_{t}\!=\!v_j}{\designvec_{t-r}\!=\!v_i} 
                }{\hyperparam^{l}_{m}} 
                  = 
                    \sum_{k_{r-1}=1}^{N}
                    \cdots
                    \sum_{k_{2}=1}^{N}
                    \sum_{k_{1}=1}^{N}
                  \del{\left( 
                    \pi^{i}_{k_{r-1}} \pi^{k_{r-1}}_{k_{r-2}} \cdots  
                      \pi^{k_{2}}_{k_{1}} \pi^{k_{1}}_{k_{j}}
                    \right)
}{\hyperparam^{l}_{m}} 
                 \,,
                     \label{eqn:generalized_higher_order_path_distribution_grad_log_prob_operator_higher_order_gradient}
            \end{align}
            }
            where 
            $
              \del{\log \pi_i} {\hyperparam_j}
            $
            is given by 
            \eqref{eqn:first_order_trajectory_distribution_grad_log_prob_initial} 
            and 
            $
              \del{\log \pi^{i}_{j}}{\hyperparam^{l}_{m}}
            $
            is given by 
            \eqref{eqn:first_order_trajectory_distribution_grad_log_prob_transition}.
            \end{subequations}
          \end{proposition}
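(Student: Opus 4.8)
The plan is to reduce everything to the logarithmic derivative of the policy density and then differentiate term by term, as in the proofs of \Cref{proposition:first_order_gradient} and \Cref{proposition:higher_order_gradient}. First I would take the logarithm of \eqref{eqn:generalized_higher_order_trajectory_distribution_pmf}, turning the product into the sum
\[
  \log\Prob{\designvec}
   = \log\Prob{\design_1}
     + \sum_{t=1}^{k-1}\log\CondProb{\designvec_{t+1}}{\designvec_{t}}
     + \sum_{t=k}^{n-1}\log\!\Bigl(
         \sum_{i=1}^{k}\lambda_{i}\,
           \nCondProb{i}{\designvec_{t+1}}{\designvec_{t+1-i}}
       \Bigr)\,,
\]
so that, by linearity of $\nabla_{\hyperparamvec}$, it suffices to differentiate each of the three summands with respect to the three parameter blocks in \eqref{eqn:generalized_higher_order_trajectory_distribution_param}.

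For the initial block $\{\hyperparam_j\}$, only the first summand depends on these parameters, since the one-step probabilities $\pi^{i}_{j}$ (and hence every multi-step probability $\nCondProb{i}{\cdot}{\cdot}$ built from them via \eqref{eqn:generalized_higher_order_transition}) involve only the transition parameters $\{\hyperparam^{l}_{m}\}$; this yields \eqref{eqn:generalized_higher_order_path_distribution_grad_log_prob_operator_initial}, with $\del{\log\Prob{\design_1}}{\hyperparam_j}$ taken verbatim from \eqref{eqn:first_order_trajectory_distribution_grad_log_prob_initial}. For the lag block $\{\lambda_l\}$, only the third summand contributes, and $\del{}{\lambda_l}\log\bigl(\sum_i\lambda_i a_i\bigr)=a_l/\sum_i\lambda_i a_i$ with $a_i=\nCondProb{i}{\designvec_{t+1}}{\designvec_{t+1-i}}$ gives \eqref{eqn:generalized_higher_order_path_distribution_grad_log_prob_operator_lag}. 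For the transition block $\{\hyperparam^{l}_{m}\}$ the first summand drops, the second contributes $\sum_{t=1}^{k-1}\del{\log\CondProb{\designvec_{t+1}}{\designvec_{t}}}{\hyperparam^{l}_{m}}$ with each term read off from \eqref{eqn:first_order_trajectory_distribution_grad_log_prob_transition}, and the chain rule for $\log$ applied to the third summand produces the ratio of $\sum_i\lambda_i\,\del{\nCondProb{i}{\cdot}{\cdot}}{\hyperparam^{l}_{m}}$ over $\sum_i\lambda_i\,\nCondProb{i}{\cdot}{\cdot}$ appearing in \eqref{eqn:generalized_higher_order_path_distribution_grad_log_prob_operator_transitions}.

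The only genuinely new ingredient is the derivative of the $r$-step probability \eqref{eqn:generalized_higher_order_transition} with respect to $\hyperparam^{l}_{m}$. I would apply the Leibniz product rule to each length-$r$ product $\pi^{i}_{k_{r-1}}\pi^{k_{r-1}}_{k_{r-2}}\cdots\pi^{k_{1}}_{j}$, rewrite $\del{\pi^{a}_{b}}{\hyperparam^{l}_{m}}=\pi^{a}_{b}\,\del{\log\pi^{a}_{b}}{\hyperparam^{l}_{m}}$ using the logarithmic-derivative identity noted in the proposition, and substitute \eqref{eqn:first_order_trajectory_distribution_grad_log_prob_transition}, whose Kronecker factor $\delta_{al}$ kills every term except the one in which the differentiated factor has upper index $l$; collecting the surviving contributions gives \eqref{eqn:generalized_higher_order_path_distribution_grad_log_prob_operator_higher_order_gradient}. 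I expect the main obstacle to be purely bookkeeping: tracking, in the $r$-fold Leibniz expansion, which nested summation index is pinned by the Kronecker deltas and doing this uniformly in $r$ and in the lag index. There is no analytic subtlety, and the sparsity remark following \Cref{defn:generalized_higher_order_path_model} explains why these nested sums collapse to small ones in practice.
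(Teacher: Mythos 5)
Your proposal is correct and follows essentially the same route as the paper's proof: take the logarithm of \eqref{eqn:generalized_higher_order_trajectory_distribution_pmf} to split it into the initial, first-order transition, and lag-combination terms, then differentiate blockwise, using the chain rule for the logarithm of the linear combination and differentiating the multi-step transition \eqref{eqn:generalized_higher_order_transition} under its nested sums. Your extra Leibniz/Kronecker-delta bookkeeping for the $r$-step product is a harmless elaboration of the final step, which the paper simply states as differentiating \eqref{eqn:generalized_higher_order_transition} term by term.
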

          \begin{proof}
            See \Cref{app:sec:gradients_proofs}.
          \end{proof}

      \subsubsection{Lag Weights and the Single-Step Constraint}
      \label{subsubsec:lag_weights}
        By construction, the higher-order Markov policies condition the next 
        location on a history of past nodes rather than on the immediately 
        preceding node alone. A consequence is that the support of these 
        policies is not restricted to single-step graph adjacency: 
        transitions $(\design_{n-1}, \design_n) \notin E$ can receive nonzero 
        probability even though they are infeasible under the first-order 
        policy.
        This is a modeling trade-off rather than a feature of the underlying 
        setup. In our advection diffusion experiments 
        (\Cref{sec:numerical_experiments}), the graph encodes direct one-step 
        motions between candidate sensor locations, so transitions across 
        non-adjacent nodes do not correspond to physically realizable sensor 
        moves.         In settings where the graph edges encode something other than 
        direct spatial adjacency, for instance admissible velocities or 
        maneuver primitives, the same relaxation can carry a physical 
        interpretation, but this is outside the scope of the present work.

        At the parameter level, the three policies are defined over
        the same nominal transition parameter space
        $\{\hyperparam^{i}_{j} : (v_i, v_j) \in V \times V\}$, but they
        differ in which coordinates are exposed to optimization. For the
        first-order policy (\Cref{defn:first_order_path_model}) and the
        generalized higher-order policy
        (\Cref{defn:generalized_higher_order_path_model}), the active
        transition coordinates in $\hyperparamvec$ are restricted to
        directly-adjacent arcs $(v_i, v_j) \in E$. Non-existent arcs
        are encoded as $\hyperparam^{i}_{j} = 0$ and excluded from
        $\hyperparamvec$, so they remain zero throughout the
        optimization. The Raftery policy
        (\Cref{defn:higher_order_path_model}) is parameterized over an
        enriched active set that includes, in addition to the
        directly-adjacent arcs, the transition coordinates
        $\hyperparam^{i}_{j}$ for pairs $(v_i, v_j) \notin E$ that lie
        on a path of length at most $k$ in $G$. These extended
        coordinates start at zero and are updated by the optimizer.
        They can therefore become positive during optimization, which is
        what allows the Raftery policy to assign positive probability to
        transitions $(\design_{n-1}, \design_n) \notin E$ at the
        distributional level. The parameter-level treatment of all
        these coordinates is given in
        \Cref{subsubsec:degenerate_case}.

        The lag weights control how past steps influence the next move, and 
        they provide the main mechanism by which we mitigate the relaxation 
        above.
        They can be fixed a priori rather than optimized; in that case they 
        are not part of the policy parameters $\hyperparamvec$, and the 
        corresponding gradient 
        \eqref{eqn:generalized_higher_order_path_distribution_grad_log_prob_operator_lag} 
        is not computed during optimization. 
        For example, 
        $
          \lambda_{i}=\frac{1}{k} 
        $
        enforces uniform influence. 
        Stronger influence from more recent steps can be achieved, for example, 
        by setting $\lambda_{i}=\gamma^{i-1}$ with a discount factor $0<\gamma\leq 1$, 
        as followed in policy optimization methods \cite{bertsekas1995neuro}.

        In 
        \Cref{defn:higher_order_path_model}
        the conditional probability 
        $\CondProb{\design_{n}}{
            \design_{n-k},
            \design_{n-k+1}
            \ldots, 
            \design_{n-2}, 
            \design_{n-1} 
          }$
        is nonzero when the transition probability 
        $\CondProb{\design_{n}}{\design_{n-i}}$ from any node in the past, 
        up to the defined order $\design_{n-i}, i=1, \ldots, k$, is nonzero.
        Similarly, 
        \Cref{defn:generalized_higher_order_path_model}
        assigns nonzero values for the conditional transition probability 
        when there is a path of length $i$ between 
        $\design_{n-i}$ and 
        $\design_{n}$ for any $i\leq k$.
        This means that these models can allow trajectories involving
        jumps between nodes without direct arcs connecting them. 
        This effect is expected to magnify with longer trajectories and/or higher orders,
        as shown in the numerical results in \Cref{sec:numerical_experiments}.
        Thus, we recommend modeling the lag parameters with 
        a decaying sequence of fixed values. 
        For example, to avoid tuning a discount factor, in 
        \Cref{sec:numerical_experiments} we consider the following normalized geometric model:
        \begin{equation}\label{eqn:decreasing_lag_weights}
          \lambda_{i} 
            = 
            \frac{\frac{1}{i}}{\sum_{i=1}^{k}{\frac{1}{i}}} \,, 
            \qquad i=1, 2, \ldots, k \,.
        \end{equation}
        %

      \subsubsection{Degenerate Parameter Values and Boundary Behavior}
      \label{subsubsec:degenerate_case}
        The probability models in
        \eqref{eqn:initial_distribution_inclusion} and
        \eqref{eqn:transition_probabilities}, and their counterparts in
        the path policies defined above, were stated under the
        non-degenerate assumption
        $\hyperparam_i,\hyperparam^{i}_{j}\in(0, 1)$. The corresponding
        score components 
        \eqref{eqn:first_order_trajectory_distribution_grad_log_prob_initial}
        and
        \eqref{eqn:first_order_trajectory_distribution_grad_log_prob_transition}
        are similarly undefined when a parameter reaches the boundary.
        Boundary configurations are however both meaningful at modeling
        time (fixed starting positions, disallowed nodes, absent arcs) and
        possible at runtime (the optimization may drive a parameter to
        the boundary). The discussion here consolidates the treatment of
        both cases.

        Following the conditional Bernoulli framework of
        \cite{attia2024probabilistic}, specialized to budget $1$ since
        the trajectory selects a single initial node out of $N$
        candidates, the initial probability model
        \eqref{eqn:initial_distribution_inclusion} extends to the closed
        domain $\hyperparamvec\in[0, 1]^{N}$ as
        \begin{equation}\label{eqn:initial_distribution_inclusion_general}
          \pi_i \equiv \Prob{\design_1=v_i}
            = \begin{cases}
                1                                 & , \quad I=\{i\} \,, \\
                0                                 & , \quad I\neq\emptyset \text{ and } i\notin I \,, \\
                \dfrac{w_i}{\sum_{j=1}^{N} {w_j}} & , \quad I=\emptyset \,,
              \end{cases}
        \end{equation}
        where the weights $w_i = \hyperparam_i / (1-\hyperparam_i)$ are
        as in \eqref{eqn:initial_distribution_inclusion} with the
        convention that $w_i = 0$ when $\hyperparam_i = 0$, and where
        the index sets
        $I=\{i \,|\, \hyperparam_i=1\}$,
        $O=\{i \,|\, \hyperparam_i=0\}$, and
        $V=\{1,\ldots,N\}\setminus(I\cup O)$ collect the degenerate-on,
        degenerate-off, and non-degenerate coordinates of the current
        iterate, respectively. The case $|I|\geq 2$ is excluded since
        it would violate the budget constraint $\sum_i \pi_i = 1$.
        See \cite[Theorem 3.2]{attia2024probabilistic} for the general
        construction.
        The transition parameters $\hyperparam^{i}_{j}$ are treated by
        the same construction applied to each row of the transition
        matrix. The score components
        \eqref{eqn:first_order_trajectory_distribution_grad_log_prob_initial}
        and
        \eqref{eqn:first_order_trajectory_distribution_grad_log_prob_transition}
        admit an analogous closed-domain extension that remains finite at
        the boundary, see \cite[Proposition 3.2]{attia2024probabilistic}
        for the construction.

        The apparent boundary singularities of the score components
        \eqref{eqn:first_order_trajectory_distribution_grad_log_prob_initial}
        and
        \eqref{eqn:first_order_trajectory_distribution_grad_log_prob_transition}
        are harmless under the closed-domain extension, by two
        complementary observations.
        A coordinate with $\hyperparam_i = 0$ assigns zero inclusion
        probability to the corresponding node $v_i$ through
        \eqref{eqn:initial_distribution_inclusion_general}. The trajectory
        therefore never starts at $v_i$, and the score component
        associated with this event is never evaluated against a sampled
        trajectory.
        A coordinate with $\hyperparam_i = 1$ forces $\hyperparam_j = 0$
        for all $j \neq i$ by the budget constraint. The closed-domain
        extension of the inclusion probability is then $\pi_i = 1$, its
        derivative vanishes, and the score is well-defined and equal to
        zero. This is a direct consequence of
        \cite[Proposition 3.2]{attia2024probabilistic} specialized to
        budget $1$.
        The same arguments apply row-by-row to the transition parameters
        $\hyperparam^{i}_{j}$.

        One may fix an initial parameter at $\hyperparam_i = 0$ or
        $\hyperparam_i = 1$ at the modeling stage, before optimization
        begins. For example, setting $\hyperparam_i = \delta_{i j}$
        for a chosen $j \in \{1, \ldots, N\}$ enforces all sampled
        trajectories to start at the node $v_{j}$. A non-allowable
        initial node and an absent transition arc are handled in the
        same way, by excluding the corresponding coordinate at the value
        zero. 
        Excluded coordinates are not part of $\hyperparamvec$ and
        are not updated by \Cref{alg:probabilistic_path_optimization},
        so they remain pinned at their assigned values throughout the
        optimization. An exception applies to the Raftery policy
        (\Cref{defn:higher_order_path_model}): the transition
        coordinates $\hyperparam^{i}_{j}$ for pairs
        $(v_i, v_j) \notin E$ that lie on a path of length at most $k$
        in $G$ are included in $\hyperparamvec$ at the value zero and
        may become nonzero during optimization, as described in
        \Cref{subsubsec:lag_weights}.

        A coordinate that is included in $\hyperparamvec$ may also be
        driven to the boundary by the optimization procedure itself.
        The closed-domain extension of the score keeps the gradient
        finite there, and the expected score component vanishes at the
        boundary. Such a coordinate therefore stays pinned at the
        boundary under the mean dynamics of
        \Cref{alg:probabilistic_path_optimization}.

      \subsubsection{Sampling Trajectories}
      \label{subsubsec:sampling}
        Sampling trajectories from the parametric policy is critical for 
        the probabilistic optimization approach. 
        %
        Sampling trajectories from first-order policy 
        (\Cref{defn:first_order_path_model}) is described by \Cref{alg:sampling_first_order}, 
        and sampling the higher-order policies 
        (\Cref{defn:higher_order_path_model}, \Cref{defn:generalized_higher_order_path_model}) 
        is described by \Cref{alg:sampling_higher_order}.

          \begin{algorithm}[!htbp]
            \caption{
              Sample trajectories from first-order policy model 
                (\Cref{defn:first_order_path_model}).
            }\label{alg:sampling_first_order}
            
            \begin{algorithmic}[1] 
           
              \Require{
                Distribution parameters 
                $\hyperparamvec$ 
                \eqref{eqn:first_order_trajectory_distribution_param}; 
                trajectory length $n$; and
                sample size $\Nens$.
              }
              \Ensure{
                A sample 
                $ \{
                    \designvec[i]
                    \in V^{n}
                      \sim \CondProb{\designvec}{\hyperparamvec}
                    | i=1, \ldots, \Nens
                \}
                $ drawn from \eqref{eqn:first_order_trajectory_distribution_pmf}
              }


              \State Initialize a sample $S=\{\}$.

              \For {$i$ $\gets 1$ to $\Nens$}
                
                \State Extract initial parameters 
                  $\left(\hyperparam_1, \ldots, \hyperparam_{\Nsens}\right)$ from $\hyperparamvec$  
                
                \State Compute initial inclusion probabilities 
                  $\left(\pi_1, \ldots, \pi_{\Nsens}\right)$ 
                    \Comment{Use \eqref{eqn:first_order_trajectory_distribution_init_pmf}}

                \State Sample initial vertex $\design_{1}[i]\in V$ with 
                  probabilities $\Prob{v_i}=\pi_i,\, i=1, \ldots, \Nsens$

                \For {$j$ $\gets 2$ to $n$} \label{algstep:sampling_transitions}
                  
                  \State Extract transition parameters 
                    $\left(\hyperparam^{z}_{1}, \ldots\hyperparam^{z}_{\Nsens}\right)$ 
                    from $\hyperparamvec$ with 
                    $\design_{j-1}[i]=v_{z}\in V$ 

                  \State Compute transition probabilities 
                    $\pi^{z}_{1}, \ldots\pi^{z}_{\Nsens}$ 
                    \Comment{Use \eqref{eqn:first_order_trajectory_distribution_transition_pmf}}
                  
                  \State Sample next vertex 
                    $\design_{j}[i]\in V$ 
                    with 
                    probabilities $\Prob{v_i}=\pi^{z}_{i},\, i=1, \ldots, \Nsens$ 

                \EndFor

                \State Update $S \gets S \cup \{\designvec[i]\}$
              \EndFor
         
              \State \Return {$S$ }
              
            \end{algorithmic}
          \end{algorithm}

          \begin{algorithm}[!htbp]
            \caption{
              Sample 
              higher-order policy models
                \Cref{defn:higher_order_path_model} 
                  (or \Cref{defn:generalized_higher_order_path_model}).
            }\label{alg:sampling_higher_order}
            
            \begin{algorithmic}[1] 
           
              \Require{
                Policy order $k>1$,
                parameters 
                $\hyperparamvec$ 
                \eqref{eqn:higher_order_trajectory_distribution_param}
                (or \eqref{eqn:generalized_higher_order_trajectory_distribution_param}); 
                trajectory length $n>k$ ; and
                sample size $\Nens$.
              }
              \Ensure{
                Sample 
                $ \{
                    \designvec[i] 
                    \in \!V^{n}\!
                      \sim\! \CondProb{\designvec}{\hyperparamvec}
                    | i=1, \ldots, \Nens
                \}
                $ 
                from \eqref{eqn:higher_order_trajectory_distribution_pmf} (or 
                      \eqref{eqn:generalized_higher_order_trajectory_distribution_pmf}).
              }


              \State Initialize a sample $S=\{\}$.

              \For {$i$ $\gets 1$ to $\Nens$}
                
                \State Sample initial $k$ nodes $\left(\design_{1}[i],\ldots, \design_{k}[i] \right)$ of the path 
                    \Comment{Use \Cref{alg:sampling_first_order}}

                \For {$j$ $\gets k+1$ to $n$} \label{algstep:higher_order_sampling_transitions}
                  
                  \State Extract transition parameters 
                    $\left(\hyperparam^{z}_{1}, \ldots\hyperparam^{z}_{\Nsens}\right)$ 
                    from $\hyperparamvec$ with 
                    $\design_{j-1}[i]=v_{z}\in V$ 

                  \State Compute transition probabilities 
                    $\pi^{z}_{1}, \ldots\pi^{z}_{\Nsens}$ 
                    \Comment{Use 
                    \eqref{eqn:higher_order_trajectory_distribution_transition}
                    (or \eqref{eqn:generalized_higher_order_trajectory_distribution_transition})
                    }
                  
                  \State Sample next vertex 
                    $\design_{j}[i]\in V$ 
                    with 
                    probabilities $\Prob{v_i}=\pi^{z}_{i},\, i=1, \ldots, \Nsens$ 

                \EndFor

                \State Update $S \gets S \cup \{\designvec[i]\}$
              \EndFor
         
              \State \Return {$S$ }
              
            \end{algorithmic}
          \end{algorithm}

        In Step \ref{algstep:sampling_transitions} of 
        \Cref{alg:sampling_first_order} transitions need to be evaluated only over the local 
        neighborhood of the current node, reducing the sampling
        computational cost.
        In contrast, sampling the higher-order policies 
        (Step \ref{algstep:higher_order_sampling_transitions} of \Cref{alg:sampling_higher_order}) 
        involves linear combinations of transitions up to order $k$, 
        requiring an extended neighborhood that can be precomputed offline based on graph connectivity 
        and the model order.

      \subsection{Probabilistic Path OED Problem}
      \label{subsec:the_probabilistic_optimization}
        We replace the discrete 
        path OED \Cref{prbl:path_oed_problem}
        with the probabilistic formulation given by \Cref{prbl:probabilistic_path_oed_problem}. 

        \begin{problem}{Probabilistic Path OED Problem on a Graph}{probabilistic_path_oed_problem}
          Given the setup in \Cref{prbl:path_oed_problem}, 
          the probabilistic path OED problem is
          \begin{equation}\label{eqn:trajectory_probabilistic_optimization}
            \hyperparamvec\opt \in \argmax_{\designvec\sim\CondProb{\designvec}{\hyperparamvec}} 
              \Expect{\designvec\sim\CondProb{\designvec}{\hyperparamvec}}{\utilityfunc(\designvec)} \,, 
          \end{equation}
          where $\CondProb{\designvec}{\hyperparamvec}$ is 
          given by \Cref{defn:first_order_path_model}, 
          \Cref{defn:higher_order_path_model}, or \Cref{defn:generalized_higher_order_path_model}.
        \end{problem}

        We numerically solve \eqref{eqn:trajectory_probabilistic_optimization} using 
        the stochastic gradient $\widehat{\vec{g}}$ given by 
        \eqref{eqn:generic_gradient}. 
        At each iteration, the parameters are updated by projected stochastic 
        gradient ascent (or descent for minimization), with a prescribed step size, 
        ensuring feasibility by projecting onto $\left[0, 1\right]^{\Nparam}$.
        The update step of the policy parameter is thus given by
        \begin{equation}\label{eqn:update_step}
          \hyperparamvec^{(\ell)} \gets \proj(\hyperparamvec^{(\ell-1)} + \eta^{(\ell)} \vec{\widehat{\vec{g}}}) \,,
        \end{equation}
        where $0 < \eta^{(\ell)}\leq 1$ is a positive step size (learning rate) at the 
        $\ell$th iteration of the optimization procedure and  
        the plus sign in \eqref{eqn:update_step} is replaced with a minus sign for 
        minimization (steepest descent). 
        As proposed in \cite{attia2024probabilistic}, we employ the scaling projector:
        \begin{equation}\label{eqn:scaling_projector}
          \Proj{}{\vec{g}} = s \, \vec{g}\,; \quad
          s = \min\{1, \min\limits_{i=1, \ldots, \Nparam} \{s_i\}\}\,; \quad
          s_i =
            \begin{cases}
              \frac{1-\hyperparam_i}{\abs{g_i}} & \text{ if } \hyperparam_i\pm g_i > 1 \\
              \frac{\hyperparam_i}{\abs{g_i}} & \text{ if } \hyperparam_i \pm g_i < 0 \\
              1 & \text{ otherwise} \,,
            \end{cases}
             \,,
        \end{equation}
        which restricts (by scaling) the updated parameter $\vec{g}\pm \hyperparamvec$ to the domain 
        $[0, 1]^{\Nsens}$ 
        and $\pm$ defines the projected gradient
        for both maximization and minimization, respectively.
        When the higher-order models are employed, the lag weights 
        are normalized $\lambda_i \gets \lambda_i / \sum_{i=1}^{k}{\lambda_i}$ 
        after applying the projection operator. 

        With limited sample sizes $\Nens$ due to expensive utility function evaluations, 
        stochastic gradient variance can be high, leading to unstable learning. 
        This is mitigated by introducing a constant baseline function $\baseline$ to 
        \eqref{eqn:trajectory_probabilistic_optimization} by replacing 
        $\utilityfunc$ with $\utilityfunc-\baseline$. 
        This does  not alter the optimal solution since 
        $
            \argmax_{\designvec\sim\CondProb{\designvec}{\hyperparamvec}} 
              \Expect{\designvec\sim\CondProb{\designvec}{\hyperparamvec}}{\utilityfunc(\designvec)} 
            = \argmax_{\designvec\sim\CondProb{\designvec}{\hyperparamvec}} 
              \Expect{\designvec\sim\CondProb{\designvec}{\hyperparamvec}}{\utilityfunc(\designvec)-\baseline} 
            = \argmax_{\designvec\sim\CondProb{\designvec}{\hyperparamvec}} 
              \Expect{\designvec\sim\CondProb{\designvec}{\hyperparamvec}}{\utilityfunc(\designvec)} -\baseline
        $. Moreover, the gradient satisfies 
        $
          \vec{g}^{b} =
              \nabla_{\hyperparamvec}
                \Expect{\designvec\sim\CondProb{\designvec}{\hyperparamvec}}{\utilityfunc{(\designvec)}-\baseline}
          =
              \nabla_{\hyperparamvec}
                \Expect{\designvec\sim\CondProb{\designvec}{\hyperparamvec}}{\utilityfunc{(\designvec)}} 
          = \vec{g}
        $.
        The stochastic estimate of the gradient, however, becomes
        \begin{subequations}\label{eqn:baseline_gradient_group}
          \begin{equation}\label{eqn:generic_stochastic_gradient_baseline}
            \widehat{\vec{g}}^{\rm b} = 
              \frac{1}{\Nens} \sum_{k=1}^{\Nens} 
                \left( \utilityfunc(\designvec[k]) - \baseline \right) 
                  \, \nabla_{\hyperparam} \log{\CondProb{\designvec[k]}{\hyperparam}} \,; 
                  \quad \designvec[k] \sim \CondProb{\designvec}{\hyperparamvec}\,.
          \end{equation}

          Both \eqref{eqn:generic_stochastic_gradient} 
          and \eqref{eqn:generic_stochastic_gradient_baseline} 
          are unbiased estimators of 
          $\vec{g}$ \eqref{eqn:full_gradient}. 
          By letting 
          \begin{equation}\label{eqn:d_vec}
            \vec{d} = \frac{1}{\Nens} \sum_{k=1}^{\Nens}
              \nabla_{\hyperparamvec}\log\CondProb{\designvec[k]}{\hyperparamvec} \,; \qquad 
              \designvec[k] \sim \CondProb{\designvec}{\hyperparamvec}  \,,
          \end{equation}
          we note that 
          $
            \widehat{\vec{g}}^{\rm b} 
            = \widehat{\vec{g}} - \baseline \, \vec{d}
          $,  and thus 
          $
            \Expect{}{\widehat{\vec{g}}} 
            = \Expect{}{\widehat{\vec{g}}^{\rm b}} 
            = \vec{g},
          $
          where we used the fact that $\Expect{}{\vec{d}}=\vec{0}$.
          Moreover, the total variance of the two stochastic estimators is such that
          $
            \brVar{\widehat{\vec{g}}^{\rm b}} = \brVar{\widehat{\vec{g}}} 
            - 2 \baseline \Expect{}{\widehat{\vec{g}}\tran \vec{d}} 
              + \baseline^2 \Expect{}{\widehat{\vec{d}}\tran \vec{d}}  
          $, which is a quadratic in the baseline $\baseline$. 
          The value of the baseline that minimizes the variance $\brVar{\widehat{\vec{g}}^{\rm b}}$ 
          is thus 
          \begin{equation}\label{eqn:general_optimal_baseline}
            \baseline\opt = 
            \left( 
              \Expect{}{\widehat{\vec{g}}\tran \vec{d}} 
            \right)/\left( 
              \Expect{}{\widehat{\vec{d}}\tran \vec{d}} 
            \right)
            \,. 
          \end{equation}

          The optimal baseline \eqref{eqn:general_optimal_baseline} applies to any parametric 
          policy  $\CondProb{\designvec}{\hyperparam}$ 
          and has a denominator that depends only on the policy, allowing closed-form evaluation, 
          while the numerator depends on the utility and is generally intractable analytically. 
          Because of correlations and combinatorial complexity, \eqref{eqn:general_optimal_baseline} is 
          efficiently estimated by 
          \begin{equation}\label{eqn:general_optimal_baseline_estimate}
            \begin{aligned}
              \baseline\opt 
              &\approx \frac{
                \sum_{i=1}^{N_{\rm b}} \left(\widehat{\vec{g}}[i]\right)\tran \vec{d}[i]
              }{
                \sum_{i=1}^{N_{\rm b}} \left(\vec{d}[i]\right)\tran \vec{d}[i]
              } \,, \\
              \vec{d}[i] &= \frac{
                  \sum_{j=1}^{\Nens} \nabla_{\hyperparamvec} \log\Prob{\designvec[i, j]} 
              }{\Nens} 
              \,; \quad 
              \widehat{\vec{g}}[i] = \frac{
                  \sum_{j=1}^{\Nens} \utilityfunc(\designvec[i, j]) 
                    \nabla_{\hyperparamvec} \log\Prob{\designvec[i, j]} 
              }{\Nens} 
              \,, 
            \end{aligned}
          \end{equation}
          where
          $\{\designvec[i, j] \sim \CondProb{\designvec}{\hyperparamvec}\,|\, i=1, 
            \ldots, N_{\rm b},\, j=1, \ldots, \Nens\}$. 
        \end{subequations}  
        
        \subsubsection{Complete Algorithmic Statement}
        \label{subsubsec:complete_algorithm}
          \Cref{alg:probabilistic_path_optimization}  provides 
          a complete algorithmic statement of the 
          proposed probabilistic path optimization approach.

          \begin{algorithm}[!htbp]
            \caption{Probabilistic Path OED Optimization Algorithm for Solving 
              \eqref{prbl:probabilistic_path_oed_problem}. 
            }
            \label{alg:probabilistic_path_optimization}
            \begin{algorithmic}[1]

              \Require{
                Utility function $\utilityfunc$; 
                probabilistic policy (%
                  either \eqref{eqn:first_order_trajectory_distribution_full},
                  \eqref{eqn:higher_order_trajectory_distribution}, or 
                  \eqref{eqn:generalized_higher_order_trajectory_distribution}
                );
                initial distribution parameter $\hyperparamvec^{(0)}$;
                stepsize schedule $\eta^{(\ell)}$;
                sample sizes $\Nens,\, N_{\rm opt}$.
              }
              
              \Ensure{$\designvec\opt$}

              \State{initialize $\ell = 0$}

              \While{Not Converged}
                \State{\label{algstep:random_sample}
                  Sample 
                  $\{\designvec[k]; k=1,\ldots,\Nens \} $
                  \Comment{
                    $
                      \begin{cases} 
                        \text{\Cref{alg:sampling_first_order} } & \text{ for \eqref{eqn:first_order_trajectory_distribution_full}}  \\ 
                        \text{\Cref{alg:sampling_higher_order} } & \text{ for \eqref{eqn:higher_order_trajectory_distribution} or \eqref{eqn:generalized_higher_order_trajectory_distribution} }  \\ 
                      \end{cases}
                    $ 
                  }
                }

                \State {\label{algstep:evaluate_utility}
                  Evaluate $\{\utilityfunc(\designvec[k]); k=1,\ldots,\Nens \} $
                }
                
                \State {\label{algstep:evaluate_grad_log_pmf}
                  Evaluate $\{\nabla_{\hyperparamvec} \log\CondProb{\designvec[k]}{\hyperparamvec}; k=1,\ldots,\Nens \} $
                \Comment{Based on the chosen policy}
                }

                \State {
                  Calculate $ \widehat{\vec{g}}^{(\ell)} \gets
                    \frac{1}{\Nens} \sum_{k=1}^{\Nens} 
              \utilityfunc(\designvec[k]) \, \nabla_{\hyperparam} \log{\CondProb{\designvec[k]}{\hyperparamvec}}$
                }
                
                \State{
                  Calculate $
                    \vec{d} \gets \frac{1}{\Nens} \sum_{k=1}^{\Nens}
                      \nabla_{\hyperparamvec}\log\CondProb{\designvec[k]}{\hyperparamvec} 
                  $ 
                  \Comment{
                    Use \eqref{eqn:d_vec}
                  }
                }

                \State{\label{algstep:optimal_baseline_estimate}
                  Calculate  
                  $
                    \baseline \gets \frac{
                      \left(\widehat{\vec{g}}^{(\ell)}\right)\tran \, \vec{d}
                    }{
                      \vec{d}\tran \, \vec{d} 
                    } 
                  $ 
                  \Comment{
                    Use \eqref{eqn:general_optimal_baseline_estimate} with $N_{\rm b}>1$ if needed
                  }
                }

                \State{\label{algstep:parameter_update}
                  Update
                  $ \hyperparamvec^{(\ell+1)}
                      = \hyperparamvec^{(\ell)} + \eta^{(\ell)} \Proj{}{ \widehat{\vec{g}}^{(\ell)} - \baseline \vec{d} }
                  $
                  \Comment{Use $\proj$ given by \eqref{eqn:scaling_projector}}
                }

                \State {
                  Update $\ell \leftarrow \ell+1$
                }

              \EndWhile

              \State{
                Set $\hyperparamvec\opt = \hyperparamvec^{(\ell)}$
              }

              \State{
                  Sample $\{\designvec[k]; k=1,\ldots,N_{\rm opt} \} $
                  with the optimal parameter $\hyperparamvec\opt$.
              }

              \State \Return{\label{algstep:optimum_design_return}
                $\designvec\opt $: the design with largest
                $\utilityfunc$ 
                value in the sample.
              }

            \end{algorithmic}
          \end{algorithm}

          \Cref{alg:probabilistic_path_optimization} is presented for maximization; 
          minimization is handled by flipping the parameter update sign in Step 
          \ref{algstep:parameter_update}, and the optimum 
          design $\design\opt$ in Step \ref{algstep:optimum_design_return} 
          is the one with smallest $\utilityfunc$ value. 
          For simplicity, the baseline is estimated 
          (Step \ref{algstep:optimal_baseline_estimate}) 
          by using the same samples as the stochastic gradient, 
          which is validated in \Cref{sec:numerical_experiments}, 
          although larger batch sizes may be used.
          Termination is ensured by standard stopping criteria, 
          such as iteration limits or projected-gradient tolerances, 
          as discussed in \Cref{sec:numerical_experiments}.

      \paragraph{Convergence of the algorithm}
        By design, the proposed approach enjoys all properties of the probabilistic 
        optimization approach for black-box binary optimization presented, for example,  
        in \cite{attia2022stochastic,attia2024probabilistic}. 
        Convergence is thus guaranteed in expectation, and the set of optimal solutions 
        of the original optimization problem
        is guaranteed to be a subset of the optimal set of parameters 
        of the probabilistic formulation.
        The output of \Cref{alg:probabilistic_path_optimization} is thus expected to explore the 
        upper tail of the distribution of the utility function $\utilityfunc$; however, 
        global optimality is not generally guaranteed.

    \subsubsection{Computational Considerations and Scalability}
    \label{subsubsec:computational_conciderations}
      The probabilistic approach requires only utility evaluations, 
      making it independent of the OED criterion and well 
      suited for graph-based path optimization with black-box objectives. 
      Its only requirement is a discrete navigation mesh of the domain.
      Here we analyze computational bottlenecks, 
      scalability, parallelization opportunities, 
      and limitations of \Cref{alg:probabilistic_path_optimization}. 
      Discussion of policy selection is deferred to 
      \Cref{sec:conclusions}.

      Each iteration of \Cref{alg:probabilistic_path_optimization} 
      is dominated by policy sampling (Step \ref{algstep:random_sample}), 
      utility $\utilityfunc$ evaluation (Step \ref{algstep:evaluate_utility}),
      and log-probability gradients (Step \ref{algstep:evaluate_grad_log_pmf}).
     Since $\utilityfunc$ is treated as a black box, acceleration techniques 
      such as randomization or surrogates apply but are beyond this work; 
      we therefore focus on the costs of sampling and gradient evaluation.

      \begin{figure}[!htbp]
        \centering
        \includegraphics[width=1.0\linewidth]{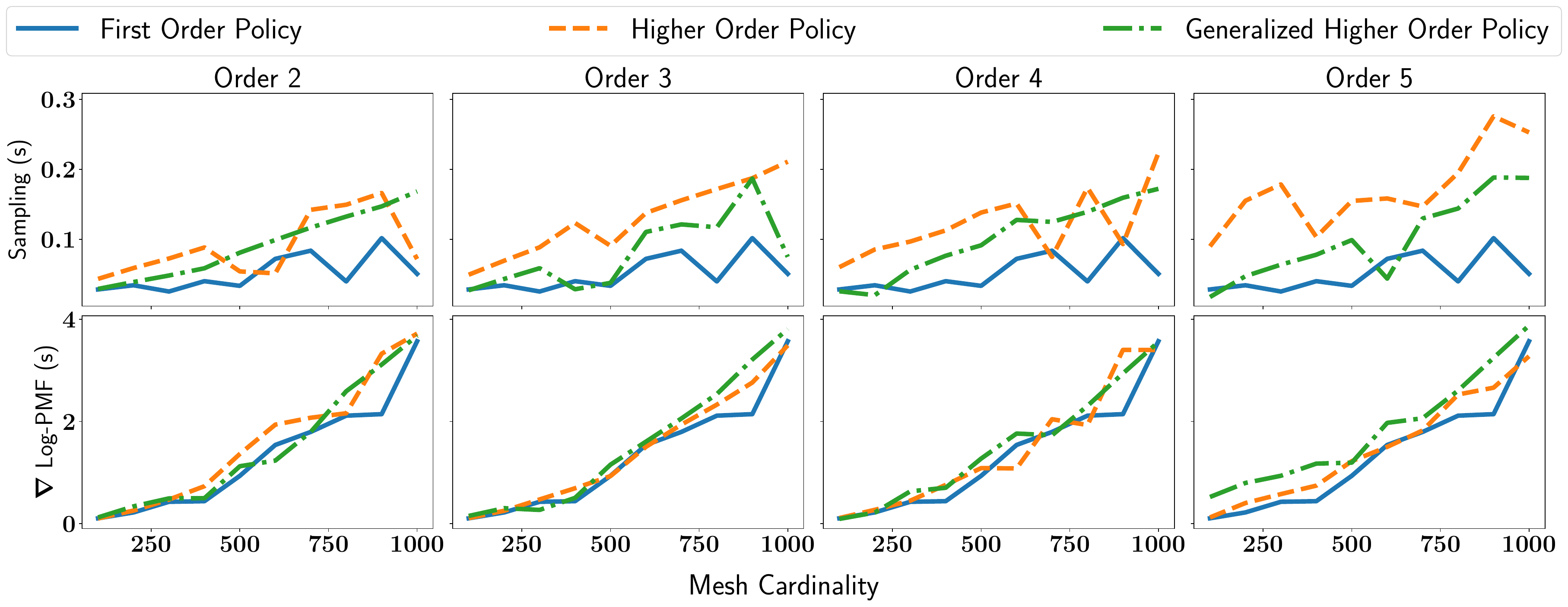}
        \caption{
          Wall times against mesh cardinality (number of nodes) 
          for sampling (top) and log-policy gradient (bottom) 
          across the three policy models, averaged over $32$ runs on an Apple M1 Mac 
          laptop without parallelization. 
          First-order, higher-order, 
          and generalized higher-order models correspond 
          to \Cref{defn:first_order_path_model}, 
          \Cref{defn:higher_order_path_model}, and 
          \Cref{defn:generalized_higher_order_path_model}, respectively.
          Results of the first-order policy are repeated for comparison.
        }
        \label{fig:policy_wall_times} 
      \end{figure}

      \Cref{fig:policy_wall_times} reports average wall times 
      for sampling and log-policy gradient across the three policy models. 
      Gradient evaluation (bottom row) is more expensive than sampling (top row), 
      and the higher-order policy incurs the highest sampling cost 
      due to transitions to all reachable nodes, 
      whereas the first-order and the generalized higher-order policies 
      restrict transitions to local or precomputed neighborhoods, 
      reducing cost.
      Additionally, log-policy gradient 
      scales almost linearly for the three proposed models.
      This applies to the first two policies. 
      For the third, gradient evaluation requires derivatives of higher-order 
      transitions \eqref{eqn:generalized_higher_order_transition}, 
      involving all paths up to order $k$ between nodes. 
      Since the mesh is static and $k$ is fixed, these paths can be precomputed, 
      and automatic differentiation can further aid scalability.

      Beyond utility evaluation, gradient computation can benefit the most from parallelization. 
      Stochastic gradients can be distributed across samples, and for each sample log-probability 
      gradients can be computed in parallel over path elements. 
      This approach is straightforward for the first two policies but requires extra care for
      the higher-order model due to derivative evaluation of higher-order transitions.

      A further consideration is the number of utility evaluations, which is the
      dominant cost when the utility is expensive. In the proposed approach this
      number equals the sample size $\Nens$ per iteration times the number of
      iterations. The optimal baseline allows a small batch size, here
      $N_{\rm b}=1$. In all experiments the budget is fixed at $\Nens=32$ samples
      over at most $300$ iterations, the same for every mesh resolution and sensor
      count. The design space itself is far larger. The coarse experiment alone
      admits $307{,}200$ feasible paths (\Cref{subsubsec:benchmark}), and the fine
      mesh is larger still. The number of utility evaluations is therefore set by
      the optimization budget and not by the size of the design space. When the
      utility is expensive this count is the limiting factor, and the black-box
      acceleration techniques noted above apply directly. A comparison with
      gradient-based, parametrized-path alternatives and the associated trade-offs
      is given in \Cref{sec:conclusions}.

\section{Numerical Experiments}
  \label{sec:numerical_experiments}
  We numerically test the proposed approach using 
  an advection-diffusion simulation common 
  in OED \cite{PetraStadler11,attia2018goal}, 
  presenting results from multiple experiments, 
  all conducted within the PyOED framework \cite{chowdhary2025pyoed}.

  \subsection{Experimental Setup}
  \label{subsec:advection_diffusion_results}
    The advection-diffusion model simulates the spatiotemporal evolution of a
    contaminant field $u=\xcont(\vec{x}, t)$ in a closed domain $\domain$.
    Given a navigation mesh, we seek 
    an optimal path for moving sensors to measure $u$.   
    The inference parameter is the initial contaminant distribution
    $\xcont(\vec{x}, 0)$, denoted $\iparam$ in the Bayesian inverse problem
    formulated below.
    %

    \paragraph{Model setup: advection-diffusion}
      The contaminant field $u$ 
      is governed by 
        \begin{equation}\label{eqn:advection_diffusion}
          \begin{aligned}
            \xcont_t - \kappa \Delta \xcont + \vec{v} \cdot \nabla \xcont &= 0     
              \quad \text{in } \domain \times [0,T],   \\
            \xcont(x,\,0) &= \iparam \quad \text{in } \domain,  \\
            \kappa \nabla \xcont \cdot \vec{n} &= 0  
              \quad \text{on } \partial \domain \times [0,T],
          \end{aligned}
      \end{equation}
      where $\kappa\!>\!0$ is the diffusivity and $T$ is the simulation final time. 
      The spatial domain is $\domain = [0,1]^2$ with 
      two rectangular interior regions representing buildings 
      where flow cannot enter. 
      The boundary $\partial \domain$ includes both the outer boundary 
      and the building walls. 
      Here $\vec{v}$ is the velocity (\Cref{fig:AD_Setup}, left) 
      that is obtained by solving a steady Navier--Stokes equation with 
      sidewall–driven flow. 
      The initial contaminant distribution shown in \Cref{fig:AD_Setup} 
      is used as the ground truth to create synthetic simulations.
      \begin{figure}[!htbp]
        \centering
        \includegraphics[width=0.225\linewidth]{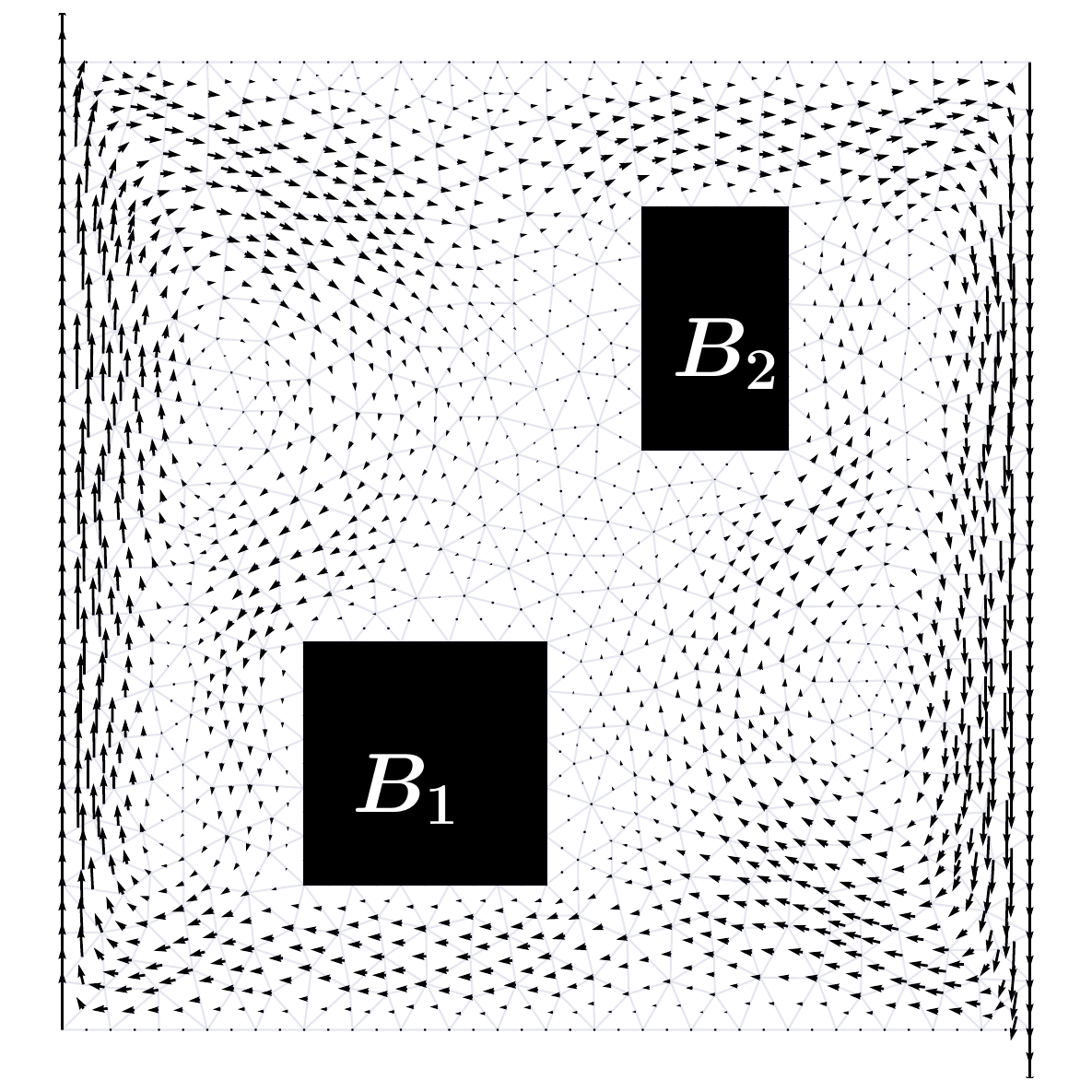}
        \includegraphics[width=0.245\linewidth]{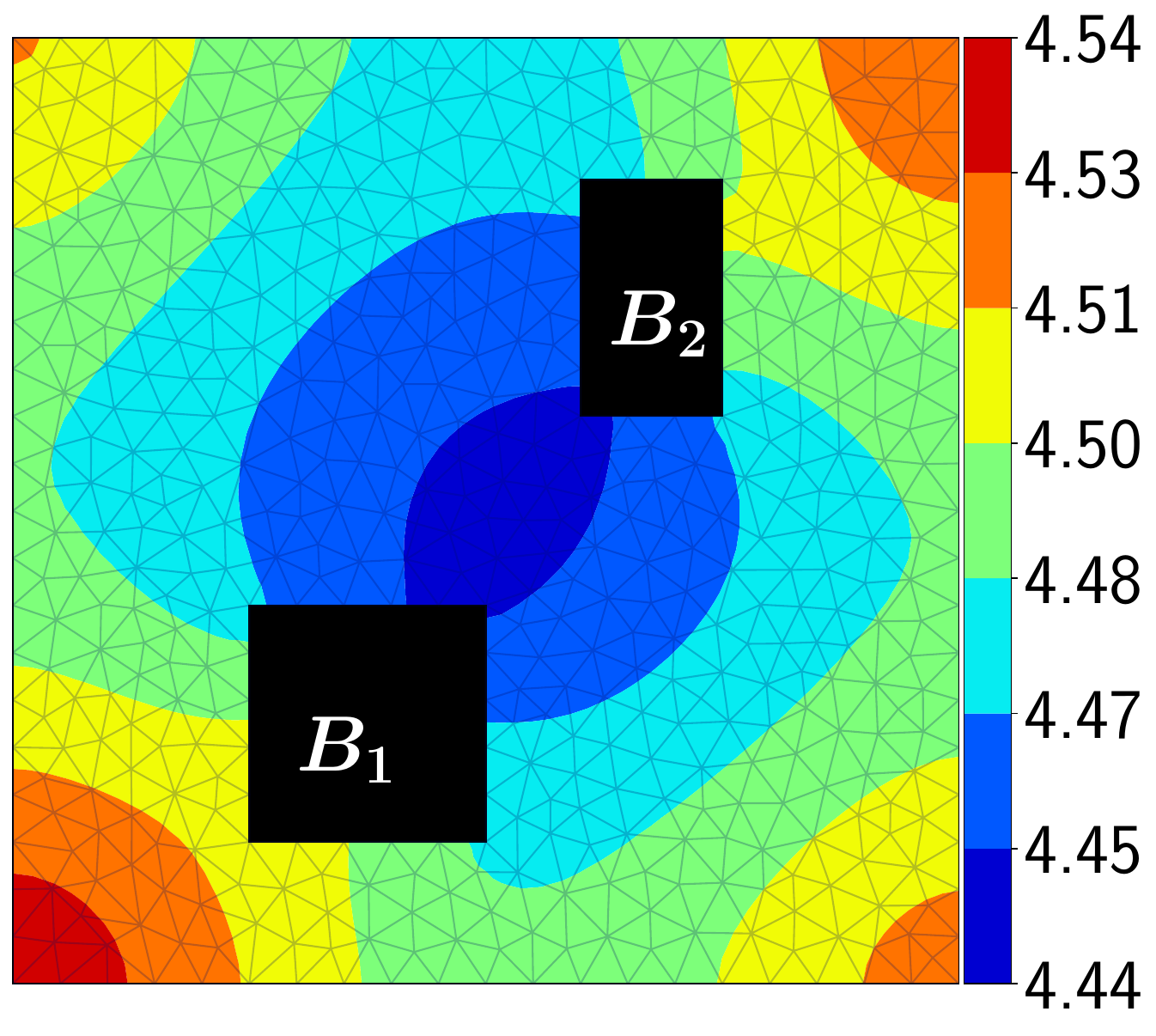}
        \includegraphics[width=0.245\linewidth]{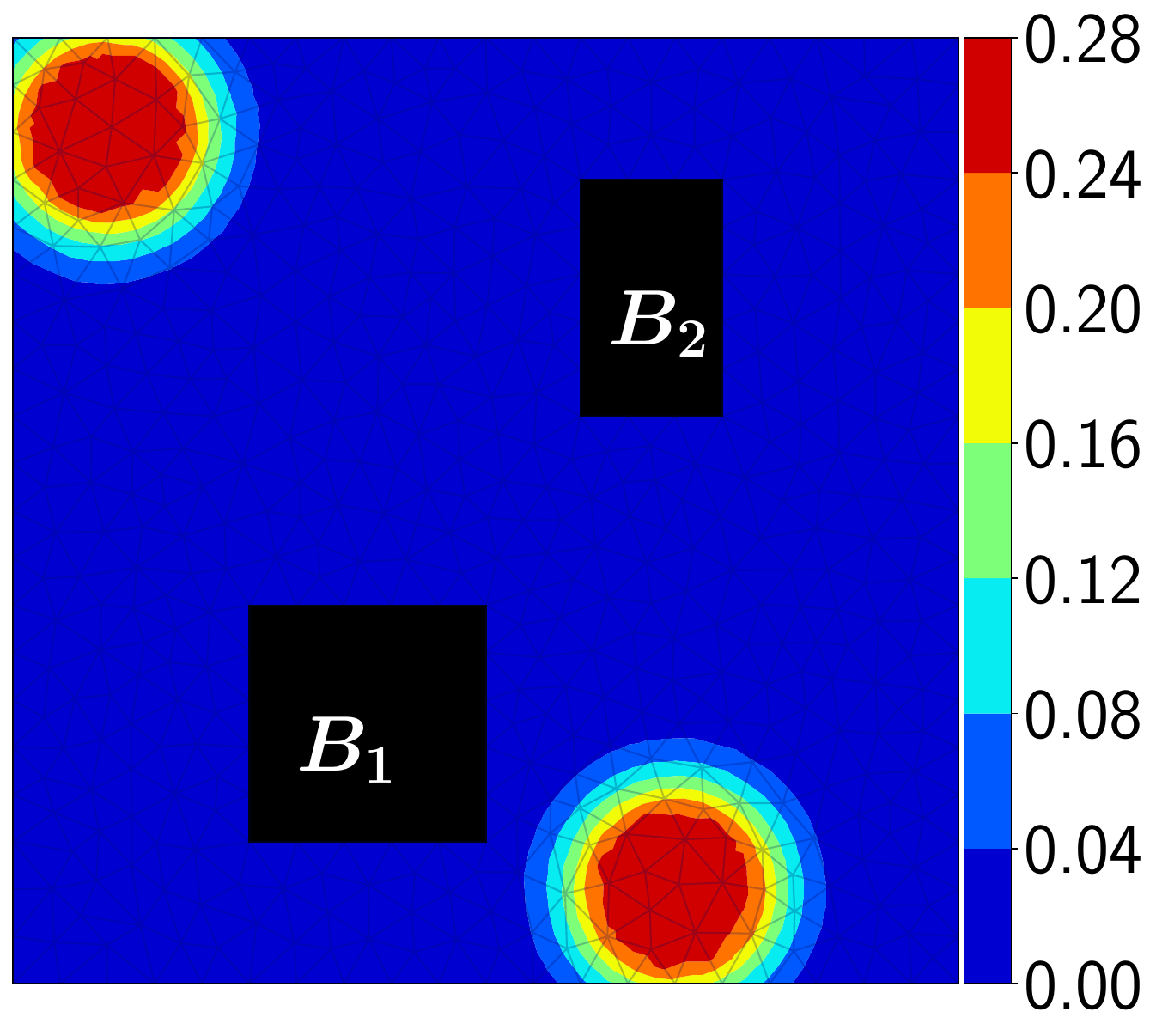}
        \includegraphics[width=0.245\linewidth]{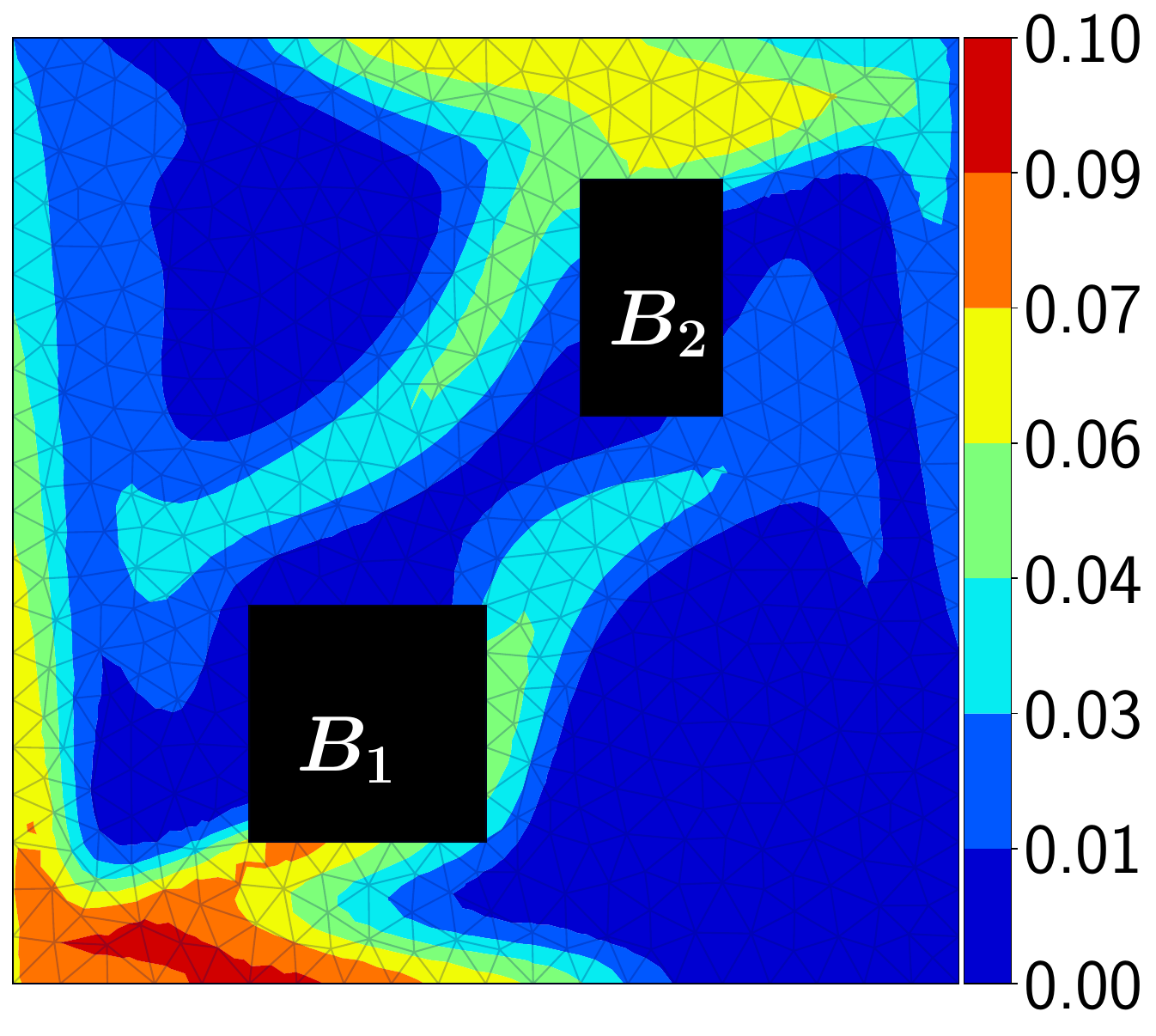}
        \caption{
          Advection-diffusion model \eqref{eqn:advection_diffusion}. 
          Panels from left to right are 
          (1) the constant velocity field;
          (2) the prior variance field, that is, the diagonal of the prior 
              covariance matrix;
          (3) ground truth of the inference parameter; 
            that is, the true initial condition;  and 
          (4) the model state at the final simulation time instance $T=3.6$.
          }
        \label{fig:AD_Setup} 
      \end{figure}
      %

    \paragraph{Forward operator, adjoint operator, and the prior}
      The forward operator $\F$ maps the model parameter $\iparam$, here the
      model initial condition, to the observation space.
      Specifically, $\F$ represents a forward simulation over the interval 
      $[0, T]$ followed by applying a restriction/observation operator 
      to extract concentrations $u$ at sensor locations at the observation
      times. 
      The observations are collected along the observation trajectory, and thus
      the observation operator is time-dependent and is tied to the path as described below.
      Following the standard decomposition (see, e.g.,~\cite{attia2022optimal}), the
      forward operator at time instance $t_k$ factors as
      $\Find{0}{k} = \ObsOperind{k}\,\Solind{0}{k}$.
      Here $\Solind{0}{k}$ is the solution operator that propagates the inference
      parameter $\iparam$ (the initial condition) over the time interval
      $[t_0, t_k]$ by solving~\eqref{eqn:advection_diffusion}, producing the model
      state at time $t_k$; this propagation is independent of the design.
      The observation operator $\ObsOperind{k}$ then evaluates the simulated state
      at the sensor locations at time $t_k$, and the dependence on the design, for
      example, follows from the fact that $\ObsOperind{k}\equiv\ObsOperind{k}(\design)$
      is determined by the spatial locations visited at time $t_k$.
      Here 
      $\F$ is linear, and the adjoint is defined as 
      $\F\adj \!= \!\mat{M}\inv\mat{F}\tran$, where
      $\mat{M}$ is the finite-element mass matrix.
      %
      The prior distribution of the parameter $\iparam$ is
      modeled by a Gaussian $\GM{\iparb}{\Cparampriormat}$, 
      where $\Cparampriormat$ is a discretization of $\mathcal{A}^{-2}$,
      with $\mathcal{A}$ being a Laplacian.
      The prior variance field 
      is plotted 
      in the second panel of \Cref{fig:AD_Setup}.

    \paragraph{Navigation mesh}
      We use two navigation meshes (\Cref{fig:navigation_meshes}): 
      a coarse mesh (left) for exact trajectory analysis 
      and a fine mesh (right). 
      Each node connects to its four nearest neighbors, approximating movement 
      in cardinal directions, 
      with no self-loops. 
      Navigation meshes can be designed to include additional connectivity constraints.

      \begin{figure}[!htbp]
        \centering
        \includegraphics[width=0.30\linewidth]{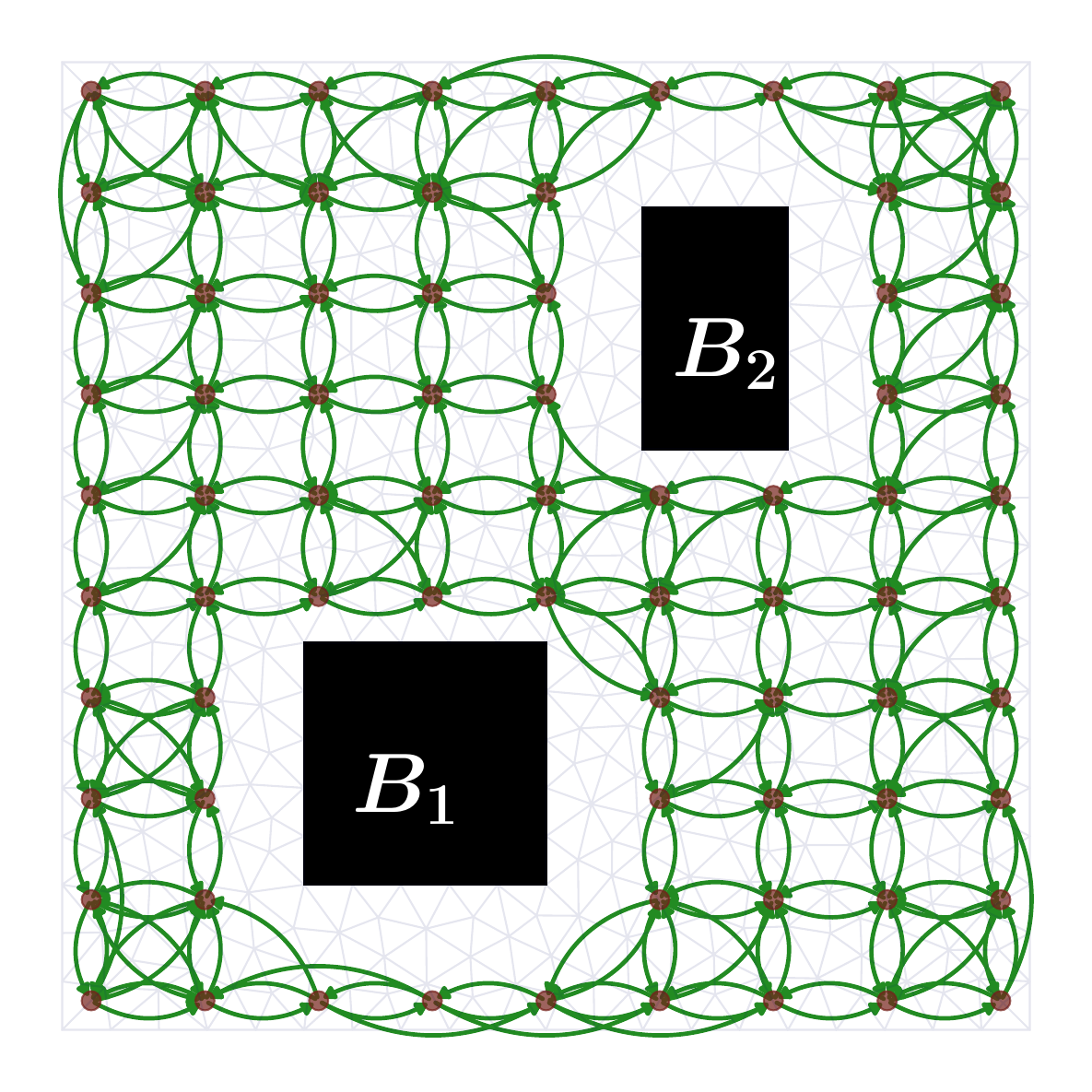}
        \quad 
        \includegraphics[width=0.30\linewidth]{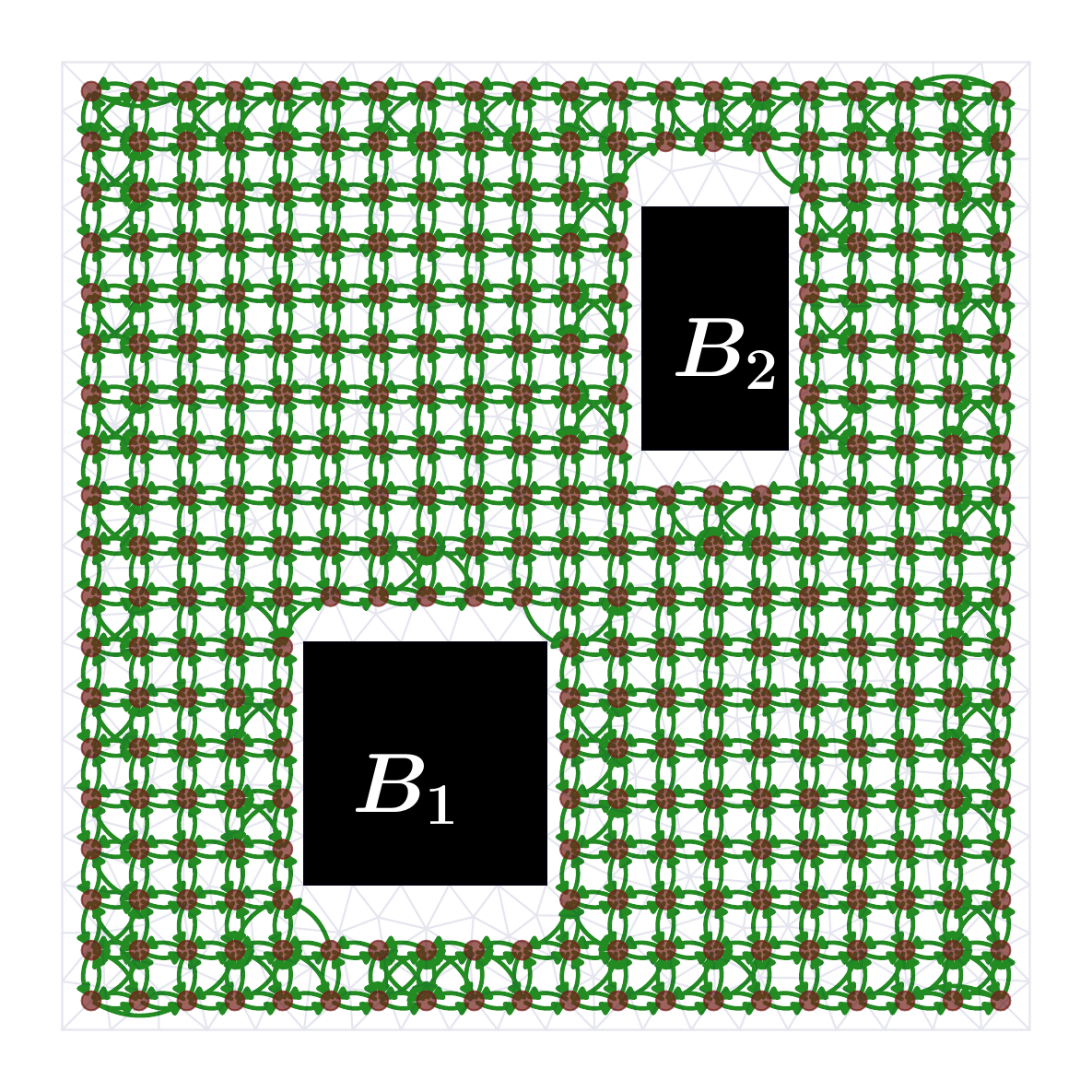}
        \caption{Navigation meshes used. 
          Left: a coarse mesh with $75$ candidate locations (graph vertices/nodes).
          Right: a fine navigation mesh with $332$ nodes.
          }
        \label{fig:navigation_meshes} 
      \end{figure}
      %

    \paragraph{Trajectory and observational setup}
      %
      An observation vector $\obs$ represents the concentration of the
      contaminant  observed along the trajectory.
      The observation times are $ i f \Delta t$, where  
      $\Delta t=0.2$ is the model simulation timestep;
      $f>0$ is the observation temporal frequency; 
      and $i=1, \ldots, n$.
      Thus, irrespective of $f$, the trajectory length (number of nodes) is $n$.
      In this setup, where each visited node corresponds to one observation time,
      the trajectory length $n$ coincides with the number of observation time
      instances, i.e., $n = \nobstimes$; in general, however, $n$ and $\nobstimes$
      could be viewed as independent quantities, allowing for trajectory nodes
      between observation times.
      For a group of $s$ sensors, the node on the path is the spatial gridpoint at the 
      center of the group, resulting in an observation vector of size 
      $\Nobs \!=\! s \!\times\! \nobstimes$.
      Here we show results obtained by using only one moving sensor ($s=1$) yielding scalar 
      observations along the trajectory.
      In the supplementary material (see \Cref{sup:sec:numerical_results})   
      we discuss numerical experiments carried out with $s=7$ moving sensors.
      %

    \paragraph{Observation frequency and path synchronization}
      Because the design $\design$ defines spatiotemporal observation locations, 
      the observational path must be synchronized with the model simulation times.
      The transition from one node to the next occurs at 
      $f \Delta t$. 
      In our setup we use observation frequency $f=7$ with a trajectory length $n=7$ nodes when 
      the coarse navigation mesh (\Cref{fig:navigation_meshes}, left) is used.
      When the fine navigation mesh (\Cref{fig:navigation_meshes}, right) is used, 
      the observation frequency is set to $f=1$, and the trajectory length is $n=19$ nodes.
      Thus, both trajectories end up at the same observation time matching the simulation 
      final time of $T=3.6$. 
      This enables matching the length of traveled paths in both cases to each other.

    %
    \paragraph{Bayesian inverse problem}
      The forward problem $\obs = \F(\iparam)+\vec{\epsilon}$ relates the inference 
      parameter $\iparam$ to the observations $\obs$. 
      The observation error $\vec{\epsilon}$ follows a Gaussian 
      distribution $\GM{\vec{0}}{\Cobsnoise}$, with
      $\Cobsnoise\!\in\! \Rnum^{\Nobs\times\Nobs}$ 
      describing spatiotemporal correlations. 
      For simplicity, we assume that observation errors are uncorrelated, 
      with diagonal $\Cobsnoise$. 
      Observation noise variances are set to 5\% of the maximum absolute 
      contaminant concentration at each observation point, 
      estimated from a simulation over $[0, T=3.6]$ using the 
      true model parameter; see~\Cref{fig:AD_Setup}.
      
      In this case 
      the posterior is Gaussian $\GM{\ipara}{\Cparampostmat}$   
      (see, e.g., \cite{attia2018goal,stuart2010inverse}) with
      \begin{equation}\label{eqn:Posterior_Params}
        \Cparampostmat = \left(\F \adj \Cobsnoise^{-1} \F 
          + \Cparampriormat^{-1} \right)^{-1} \,, \quad
        \ipara = \Cparampostmat \left( \Cparampriormat^{-1} \iparb 
          + \F\adj \Cobsnoise^{-1}\, \obs \right) \,.
      \end{equation}
      For OED, both factors in~\eqref{eqn:Posterior_Params} inherit the design
      dependence: $\F\equiv\F(\design)$ through the design-dependent observation
      operator $\ObsOperind{k}(\design)$ introduced above, and the noise
      covariance is generally block-diagonal,
      \begin{equation*}
        \Cobsnoise(\design) = \directsum{k=1}{\nobstimes}{\mat{\Gamma}_{\rm noise, k}(\design)} \,,
      \end{equation*}
      where each diagonal block
      $\mat{\Gamma}_{\rm noise, k}(\design)\in\Rnum^{s\times s}$
      characterizes the spatial correlation between the $s$ moving sensors at
      observation time $t_k$. 
      For a full treatment of correlated observational errors in the context of OED, 
      see, e.g., \cite{attia2022optimal}.
      In all experiments in this paper we assume, for
      simplicity, uncorrelated observation errors, so each block
      $\mat{\Gamma}_{\rm noise, k}(\design)$ is diagonal and $\Cobsnoise(\design)$
      is therefore an overall diagonal matrix. The 
      design-dependent posterior covariance matrix $\Cparampost(\design)$ in general reads
      \begin{equation}\label{eqn:Posterior_Params_design_dependent}
        \Cparampost(\design) = \bigl(\F(\design)\adj\,\Cobsnoise(\design)^{-1}\,\F(\design)
          + \Cparampriormat^{-1} \bigr)^{-1} \,.
      \end{equation}
      %

    \paragraph{Initial location and initial policy parameter}
      The initial policy parameter $\hyperparamvec^{(0)}$ passed to 
      \Cref{alg:probabilistic_path_optimization} encodes both the initial point of the trajectory 
      and the admissible navigation directions.
      A fixed start at node $v_i$ is enforced by setting 
      $\hyperparam_j=\delta_{ij}$, while multiple admissible starts are handled by 
      assigning nonzero values (e.g., $0.5$) to the corresponding entries and zero elsewhere.
      Here we allow the trajectory to start anywhere on the navigation mesh by 
      setting the initial distribution parameter to $\hyperparam_{i}=0.5, i=1, \ldots, \Nsens$. 
      Additional experiments employing a fixed starting point are discussed in 
      the supplementary material in \Cref{sup:sec:numerical_results}.
      The transition probabilities $\hyperparam^{i}_{j}$ in all experiments 
      are initialized to $0.5$ for all arcs in the navigation mesh.
      For higher-order policies, the initial lag weights are initialized to 
      $\lambda_i=1/k, i=1, \ldots, k, $ when the weights are allowed to be optimized; 
      otherwise they are modeled by \eqref{eqn:decreasing_lag_weights}.
      %

  \subsection{Utility Function and the OED Optimization Problem}
  \label{subsec:the_path_optimization_problems}
    In Bayesian OED an optimal design $\design\opt$ aims to minimize the uncertainty 
    in the solution of the inverse problem~\eqref{eqn:Posterior_Params_design_dependent}, 
    by minimizing a scalar summary of $\Cparampost(\design)$.
    Popular optimality criteria include the trace (A-optimal), 
    determinant or log-determinant (D-optimal), and  maximum eigenvalue (E-optimal) of 
    the posterior covariance.
    Here we seek D-optimal paths, with experiments targeting A- and E-optimal 
    paths deferred to \Cref{sup:subsec:Other_Utility_Functions}.
    Thus, the probabilistic OED optimization 
    \Cref{prbl:probabilistic_path_oed_problem} becomes 
    \begin{equation}\label{eqn:D-opt_probabilistic_optimization}
      \hyperparamvec\opt \in \argmin_{\designvec\sim\CondProb{\designvec}{\hyperparamvec}} 
        \Expect{\designvec\sim\CondProb{\designvec}{\hyperparamvec}}{\utilityfunc(\designvec)} 
        \,; \qquad \utilityfunc(\designvec) = \logdet{\Cparampost(\designvec)}
          \,.
    \end{equation}

    Unless stated otherwise, we use the stochastic gradient with optimal baseline estimate
    \eqref{eqn:baseline_gradient_group} with batch size $N_{\rm b}=1$.
    Justification of this choice is 
    discussed in \Cref{subsec:importance_of_baseline}.
    All experiments use a maximum of $300$ iterations, and \Cref{alg:probabilistic_path_optimization}
    terminates early if the parameter update norm 
    $\wnorm{\hyperparamvec^{(i+1)}-\hyperparamvec^{(i)}}{2}$
    falls below $10^{-12}$.

  \subsection{Results with Coarse Navigation Mesh and $1$ Moving Sensor}
  \label{subsec:numerical_results_coarse}
    This experiment employs the coarse navigation mesh in \Cref{fig:navigation_meshes} (left).
    The trajectory length is set to $n=7$ nodes (observation time points), and 
    the observation frequency is set to $3\Delta t$, with $\Delta t=0.2$ 
    being the model simulation step.
    This is a simplified setup that 
    allows us to enumerate all \emph{feasible} paths and thus 
    enables sketching the distribution of the utility function $\utilityfunc$,  
    evaluating the exact objective $\Expect{}{\utilityfunc}$ and 
    the exact gradient $\nabla_{\hyperparamvec} \Expect{}{\utilityfunc}$, 
    and finding the global optima for benchmarking. 

    \subsubsection{Benchmark: Utility Function Distribution and Global Optimum}
    \label{subsubsec:benchmark}
      The support cardinality---total number of feasible paths---in this case is $307,200$, 
      with objective values $\utilityfunc$ displayed in \Cref{fig:coarse_bruteforce} (left).
      The utility evaluation for each trajectory 
      took (on average) $0.3$ seconds, totaling 
      approximately $27$ hours to generate brute-force results on a non-parallel platform.
      The global optimum path is unique (shown in \Cref{fig:coarse_bruteforce}, right) with minimum value $-21714.37$. The corresponding posterior variance field (diagonal of $\Cparampost$ in \eqref{eqn:Posterior_Params}) exhibits a substantial reduction in uncertainty relative to the prior variance field in \Cref{fig:AD_Setup}.

      \begin{figure}[!htbp]
        \centering
        \includegraphics[width=0.60\linewidth]{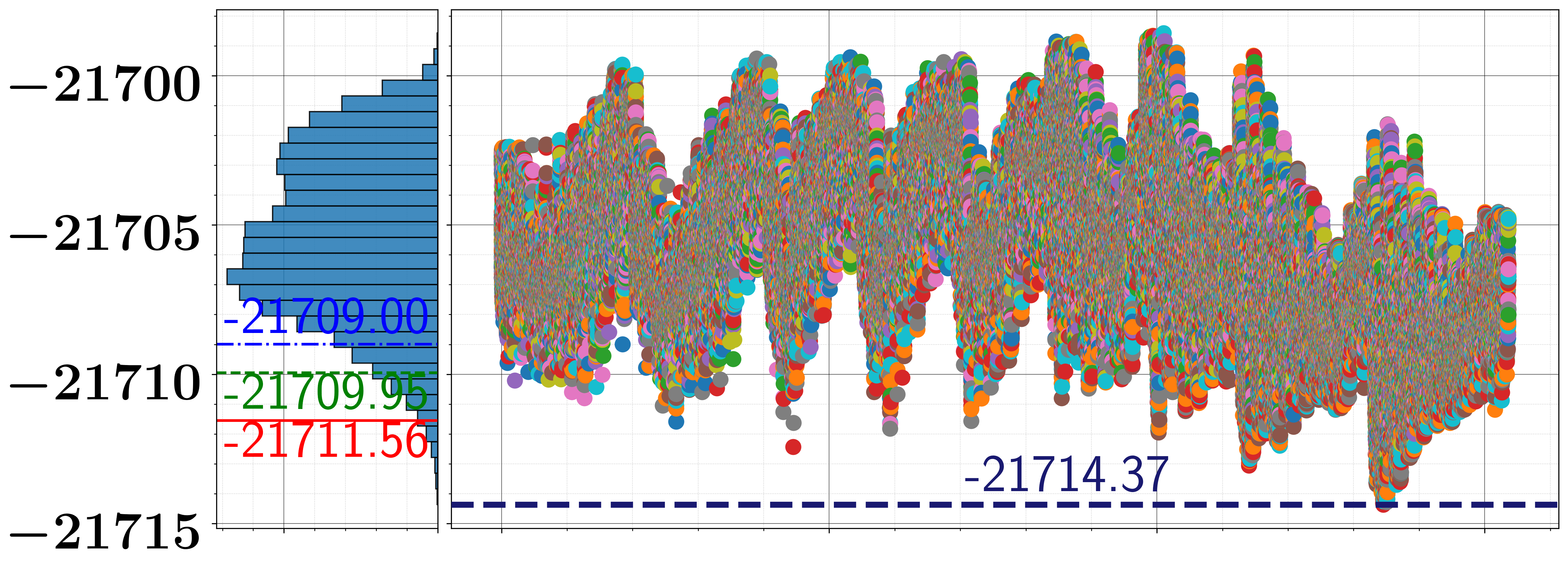}
          \quad
          \includegraphics[width=0.245\linewidth]{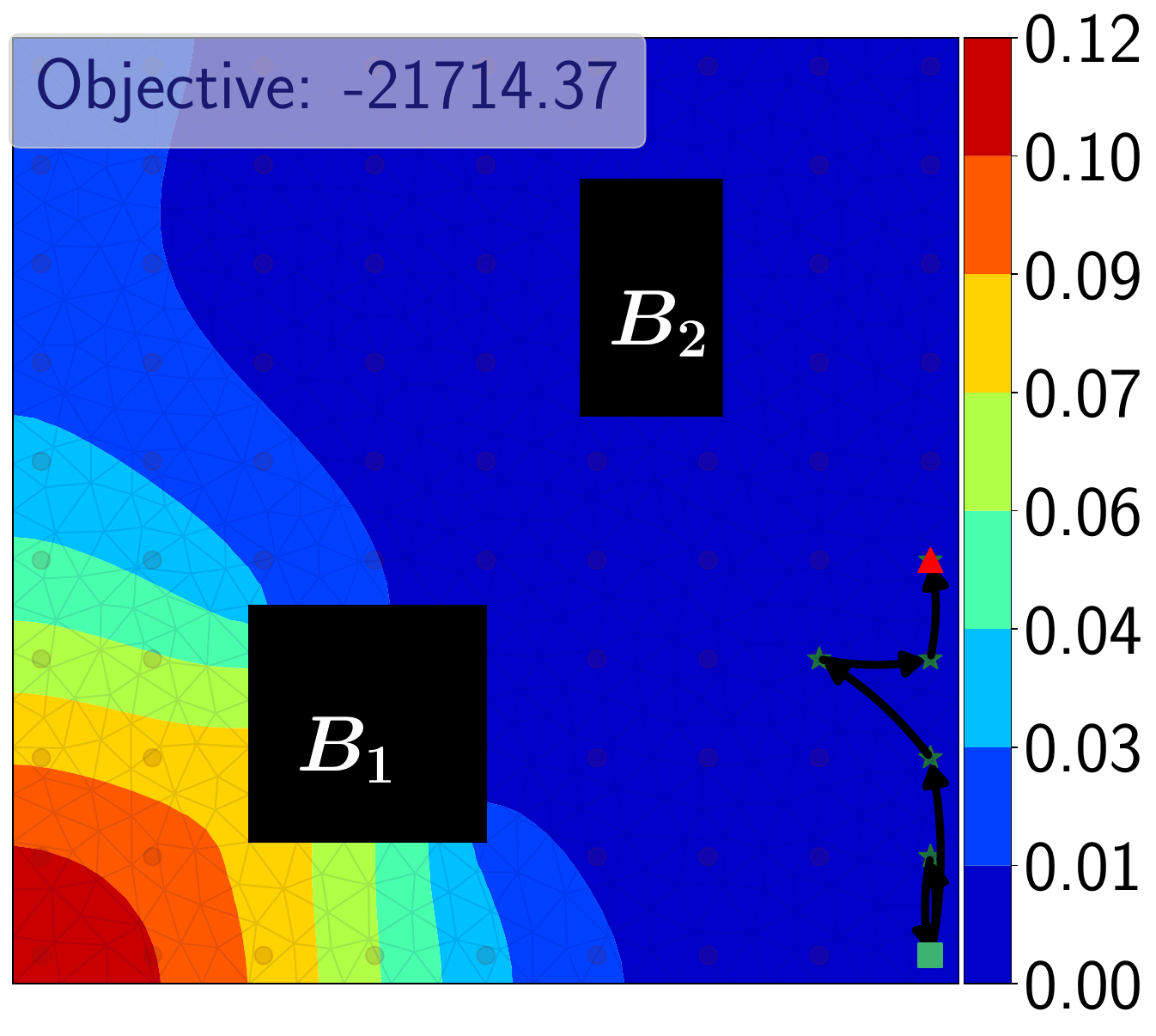}
        \caption{
          Brute-force results for the coarse experiment.
          Left: The marginal distribution of the utility function
          $\utilityfunc$ \eqref{eqn:D-opt_probabilistic_optimization} over all
          feasible paths, displayed as a horizontal histogram. The dashed
          horizontal lines mark the first, second, and third quartiles of the
          distribution.
          Middle: The utility values of all $307{,}200$ feasible paths, with
          each path assigned a unique index on the x-axis and its utility
          value plotted on the y-axis. The dashed horizontal line at
          $-21714.37$ marks the unique global optimum.
          Right: The global optimum path along with the corresponding
          posterior variance field.
        }
      \label{fig:coarse_bruteforce}
      \end{figure}

      The scatter plot in the middle panel of \Cref{fig:coarse_bruteforce}
      shows the utility values of all $307{,}200$ feasible paths against
      their path index. Although the global optimum (dashed line) is unique,
      a substantial number of paths attain near-optimal utility values, as
      also reflected by the concentration in the lower portion of the
      marginal distribution in the left panel. This observation motivates
      exploration of the lower tail (e.g., the lowest $1\%$ of values)
      rather than relying on a single best path, since sampling from a
      probabilistic policy can discover and exploit this rich set of
      near-optimal solutions.
      Accordingly, our goal is to learn a probabilistic policy that samples 
      trajectories with objective/utility values close to the global optimum.

    \subsubsection{Results with the First-Order Policy}
    \label{subsubsec:coarse_first_order_results}
      We start by discussing the results of 
      \Cref{alg:probabilistic_path_optimization} with the first-order policy 
      (\Cref{defn:first_order_path_model}).

      \Cref{fig:coarse_first_order_iterations_with_random_sample} (left) shows the 
      utility function $\utilityfunc$ value
      evaluated at the samples generated at each iteration of the optimization procedure.
      The algorithm identifies the rightmost side of the 
      domain---particularly near building $B_2$ and the lower-right corner---as  
       having higher chances of optimizing the objective when a path is initiated in these locations;
      see \Cref{fig:coarse_first_order_iterations_with_random_sample} (middle). 
      The optimal initial parameters (middle) are sparse, and the optimal transition 
      parameters (right) are likewise sparse across nodes, 
      highlighting preferred navigation directions.
      \Cref{fig:coarse_first_order_iterations_with_random_sample} (left) shows that  
      the policy updates rapidly reduce the average objective, 
      with most improvement in early iterations, suggesting early stopping is possible. 
      In contrast, uniform random sampling fails to capture the lower tail of the utility 
      distribution, even in this simplified setup.

      \begin{figure}[!htbp]
        \centering
        \includegraphics[width=0.52\linewidth]{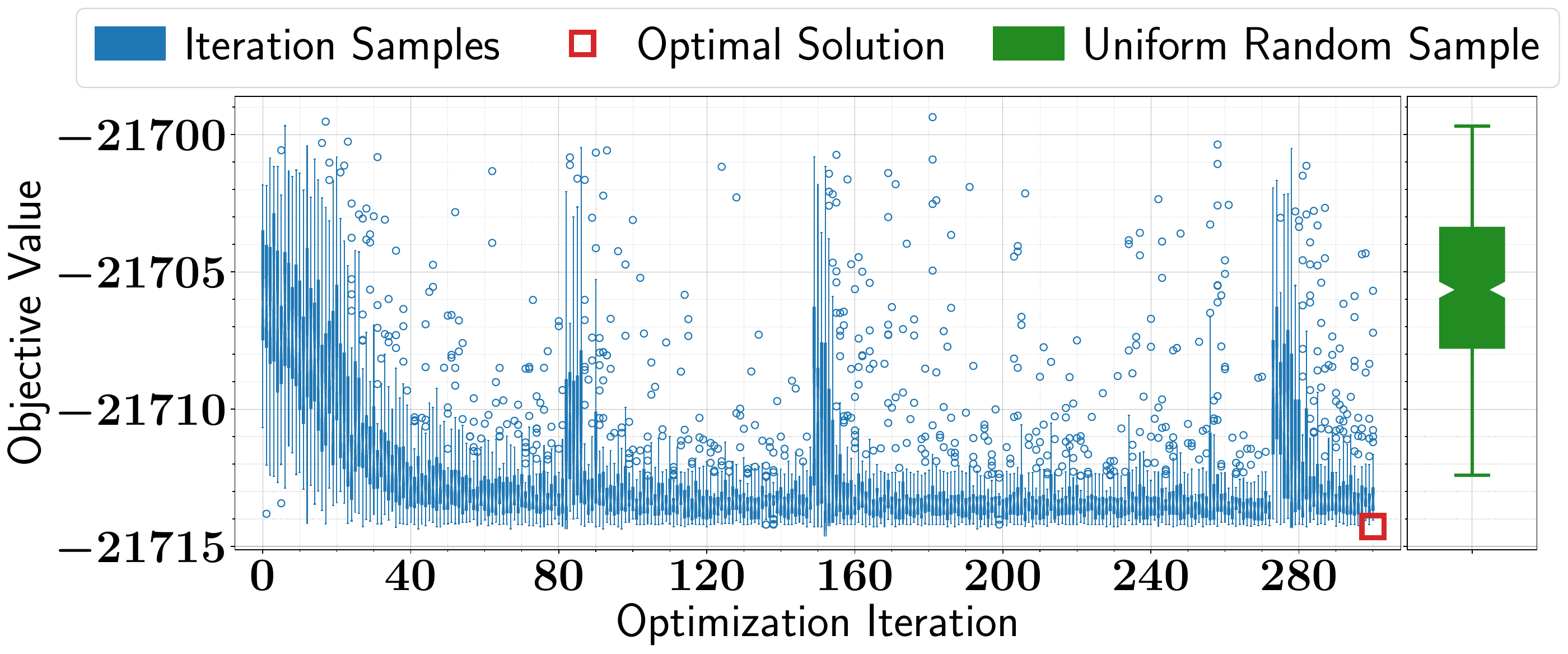}
        \includegraphics[width=0.23\linewidth]{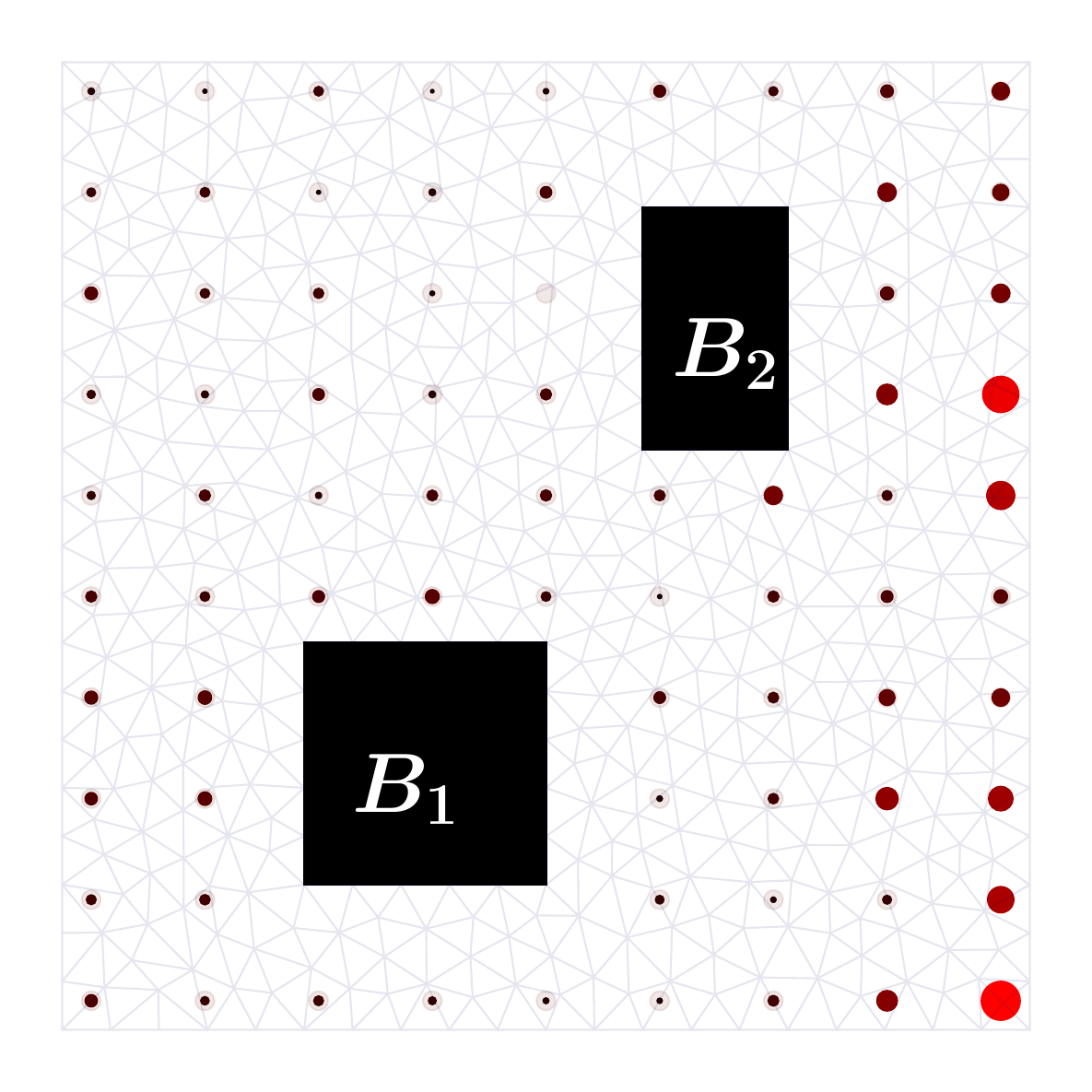}
        \includegraphics[width=0.23\linewidth]{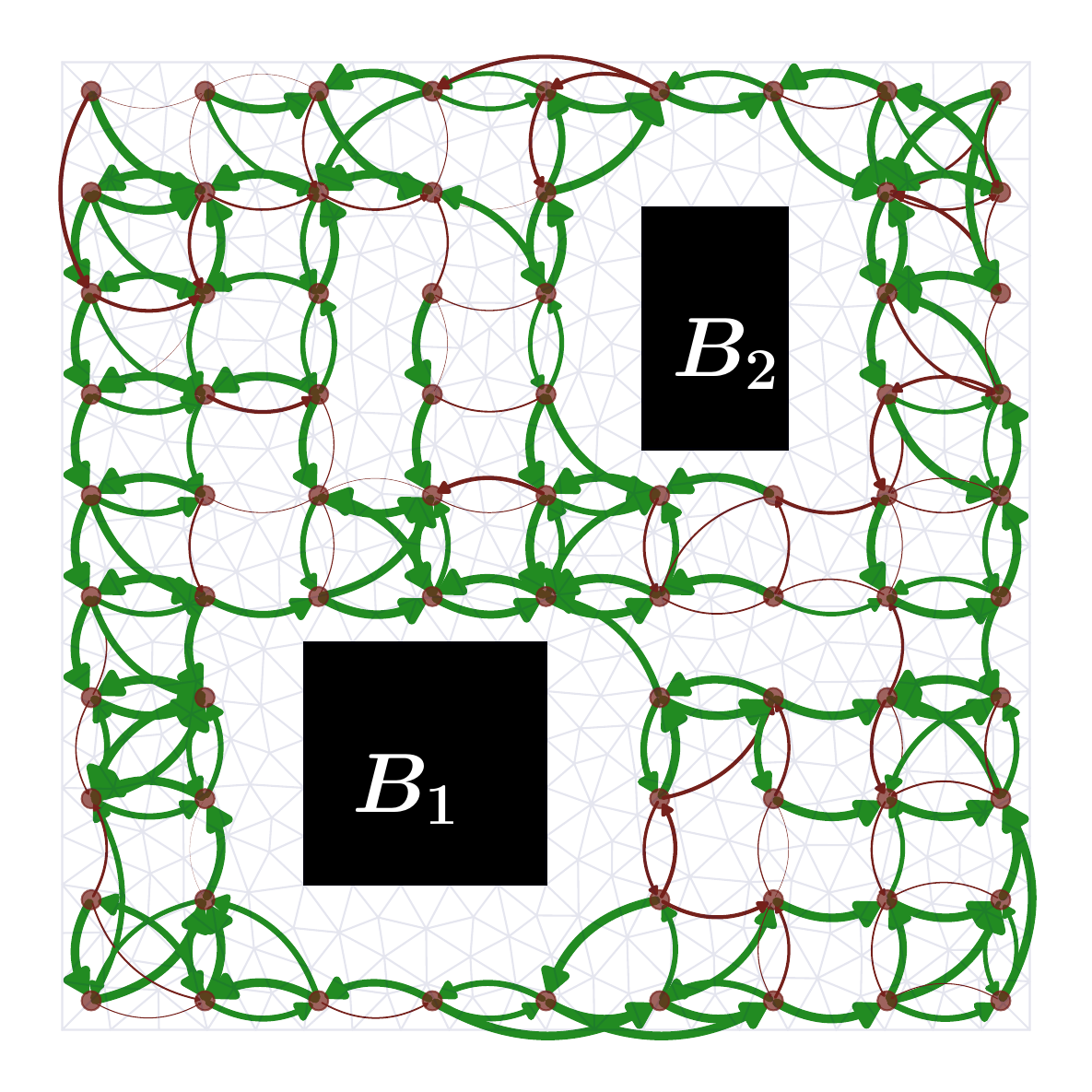}
        \caption{
          Results of \Cref{alg:probabilistic_path_optimization} with the first-order 
          policy (\Cref{defn:first_order_path_model}).
          Left: utility function $\utilityfunc$ 
            \eqref{eqn:D-opt_probabilistic_optimization} evaluated at the samples generated
            at each iteration,  
            with the optimal value 
            shown as a red circle.
            A boxplot of $\utilityfunc$ values corresponding to $500$ 
            uniformly sampled---with  parameters equal $0.5$; paths
            are also displayed.
          Middle: the optimal initial parameters $\pi_i$
            plotted as circles centered at the corresponding mesh nodes, with 
            the circle size proportional to the magnitude of the parameter. 
          Right: the optimal transition parameters $\hyperparam^{i}_{j}$ with 
            line widths proportional to the magnitude 
            of  $\hyperparam^{i}_{j}$ values. 
            Maroon indicates values below $0.5$, and green indicates otherwise.
        }\label{fig:coarse_first_order_iterations_with_random_sample}
      \end{figure}

      The solution (\Cref{fig:coarse_first_order_optimal_solution}, left) 
      returned by \Cref{alg:probabilistic_path_optimization} 
      is associated with 
      an objective value that is close (but not identical) to the global optimal 
      value as shown in \Cref{fig:coarse_first_order_optimal_solution} (left). 
      Moreover, the samples generated from the optimal policy explore the space 
      near the global optimal value, 
      as indicated by the samples shown in \Cref{fig:coarse_first_order_optimal_solution}.
      Additionally, the posterior velocity field given 
      the optimal trajectory and the other optimal policy samples 
      are almost identical to the posterior velocity 
      field associated with the global optimal solution shown in 
      \Cref{fig:coarse_bruteforce} (right).
      This 
      aligns well with our objective to generate a policy that explores the 
      lower tail of the utility distribution. 
      Although global optimality is not guaranteed, with more samples from the optimal policy 
      we were able to recover the global optimum; 
      the results are omitted for brevity. 

      \begin{figure}[!htbp]
        \includegraphics[width=0.245\linewidth]{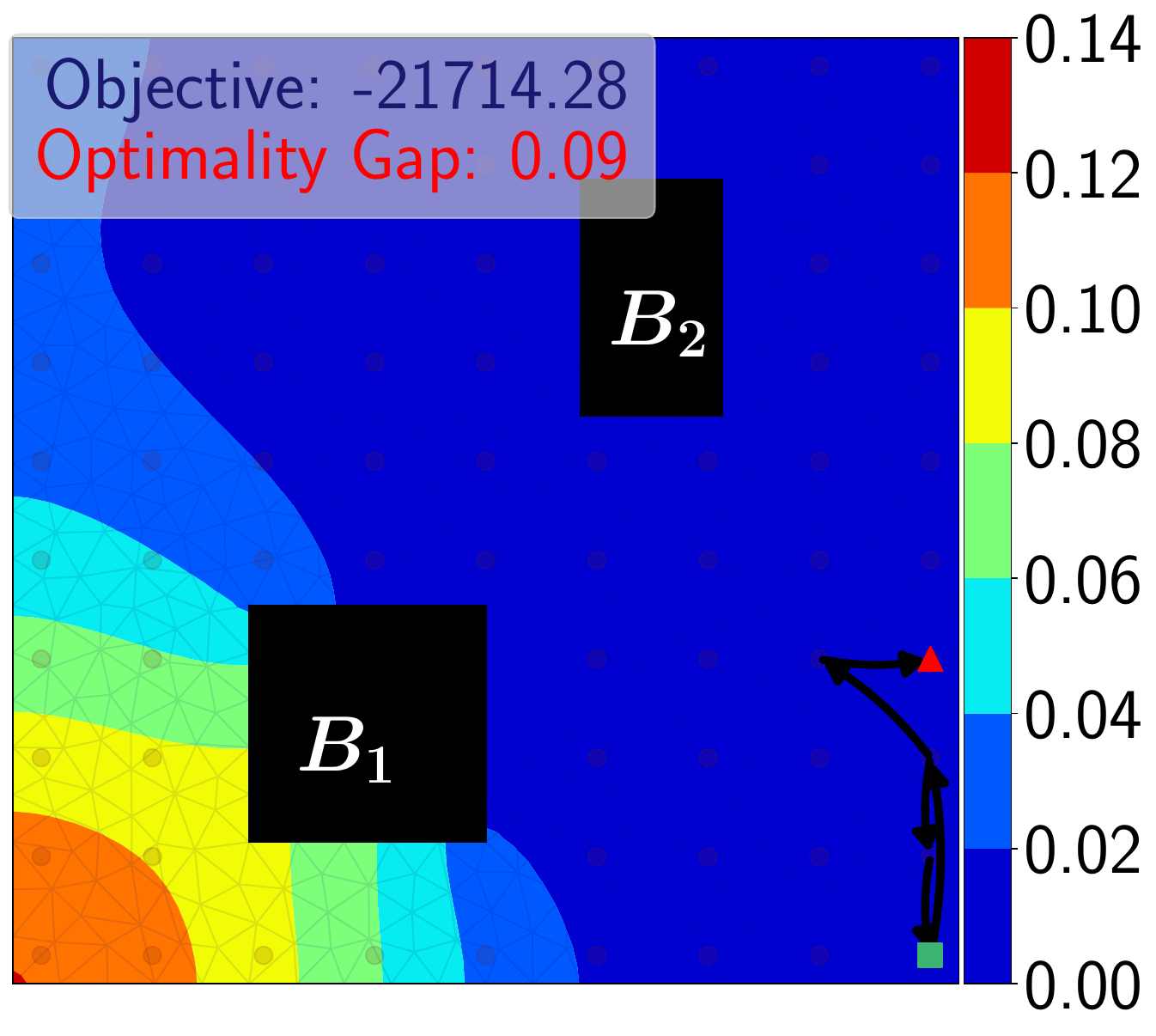}
        \includegraphics[width=0.245\linewidth]{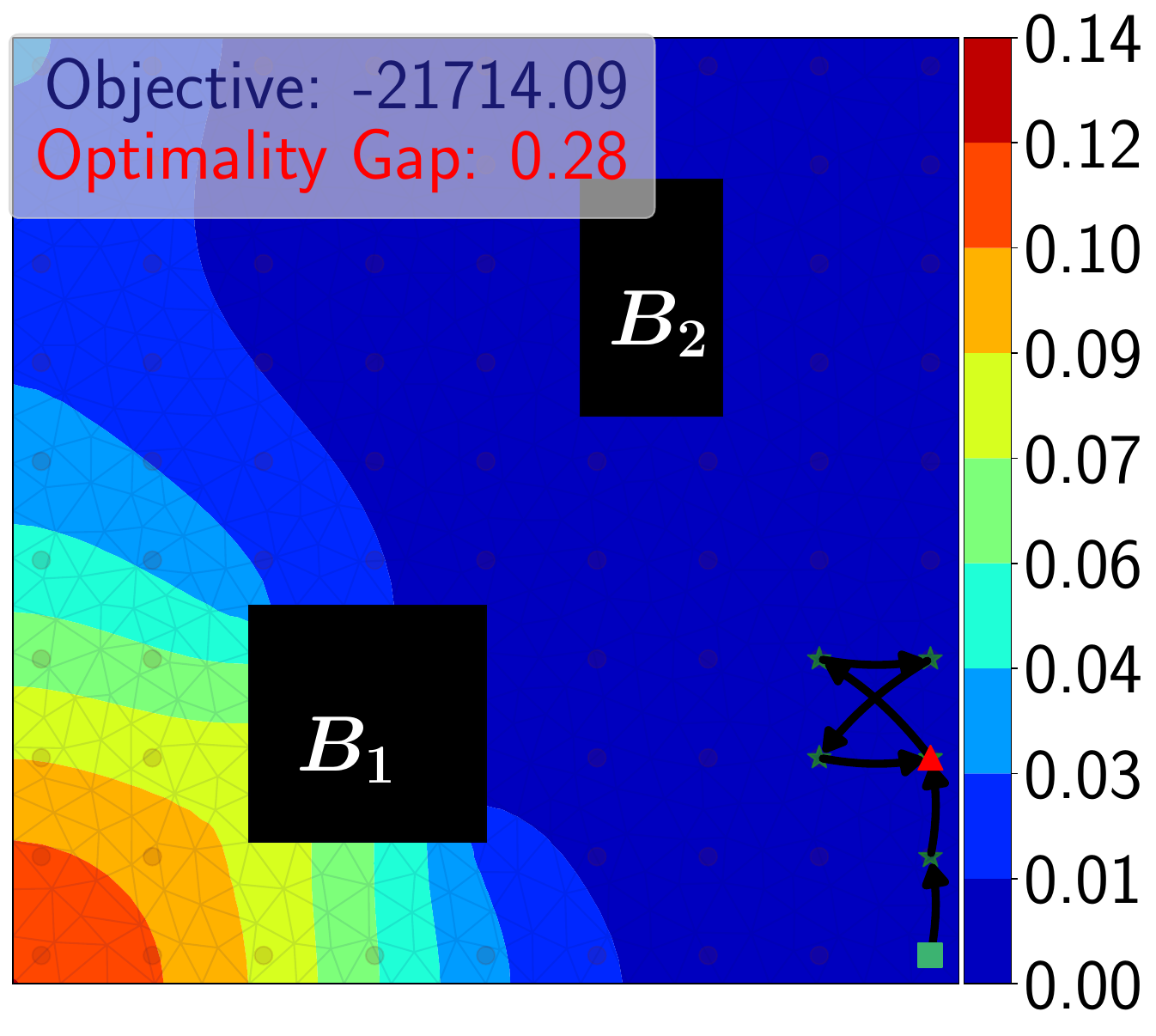}
        \includegraphics[width=0.245\linewidth]{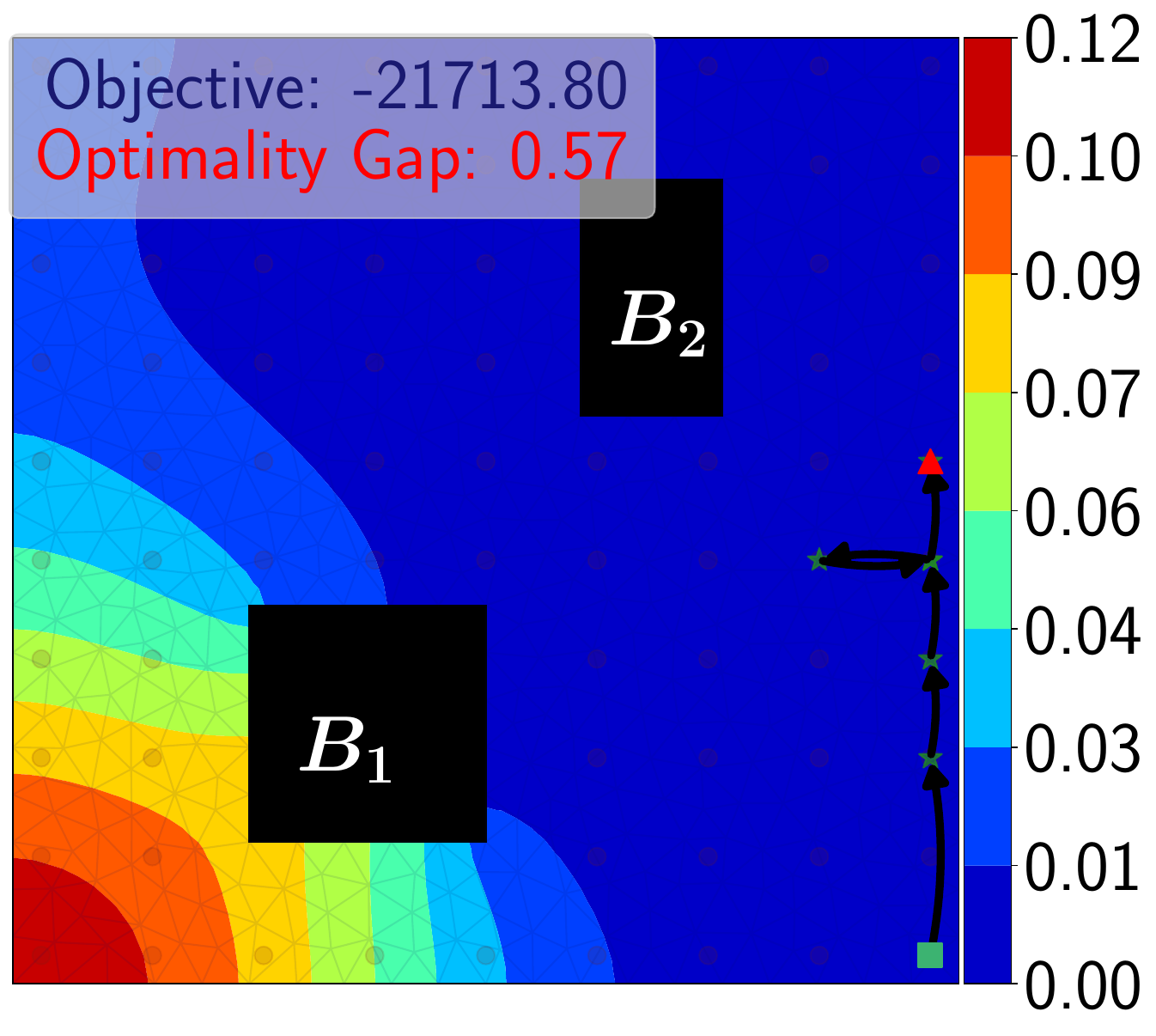}
        \includegraphics[width=0.245\linewidth]{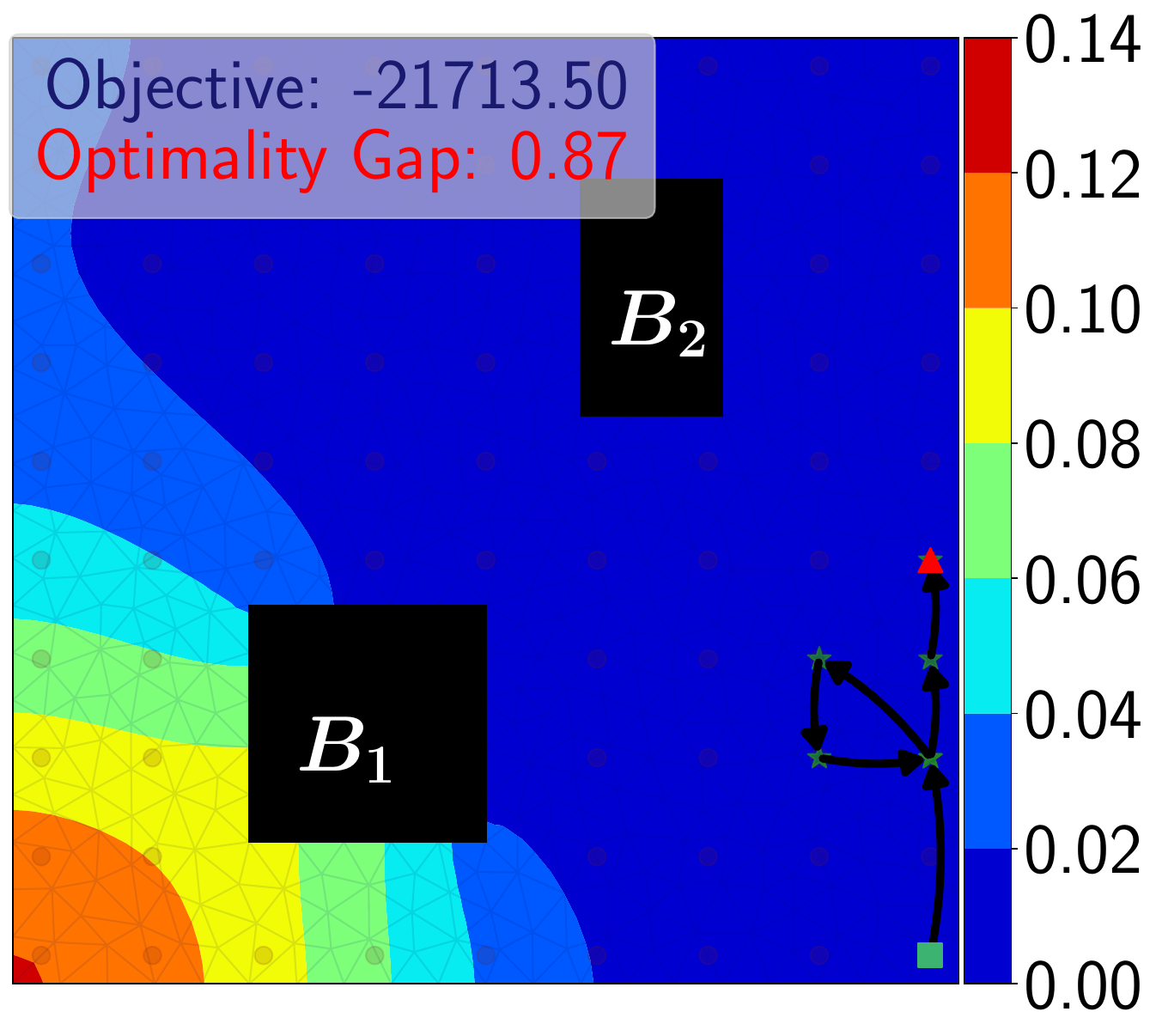}
        \caption{
          Optimal trajectory followed by $3$ samples from 
          the optimal policy in \Cref{fig:coarse_first_order_iterations_with_random_sample}.
        }\label{fig:coarse_first_order_optimal_solution}
      \end{figure}

    \subsubsection{Results with the Higher-Order Policies}
    \label{subsubsec:coarse_higher_order_results}
      We present results of \Cref{alg:probabilistic_path_optimization} with 
      higher-order policies 
      (\Cref{defn:higher_order_path_model}, \Cref{defn:generalized_higher_order_path_model}), 
      comparing optimized versus fixed lag weights (via \eqref{eqn:decreasing_lag_weights}) 
      for orders $k=3$ and $k=5$, respectively.  
      
      Results obtained by using 
      \Cref{defn:higher_order_path_model}
      with orders $k=3, k=5$, are shown in  
      \Cref{fig:coarse_higher_order_iterations_order_3} and 
      \Cref{fig:coarse_higher_order_iterations_order_5}, respectively.
      The results indicate that optimizing lag weights slightly increases 
      optimizer instability.
      While it explores the second percentile of the utility 
      distribution (see \Cref{fig:coarse_bruteforce}, left), 
      this higher-order model generally underperforms the first-order model
      except for higher-order $k=5$ and fixed lag weights.

      \begin{figure}[!htbp]
        \centering
        \includegraphics[width=0.53\linewidth]{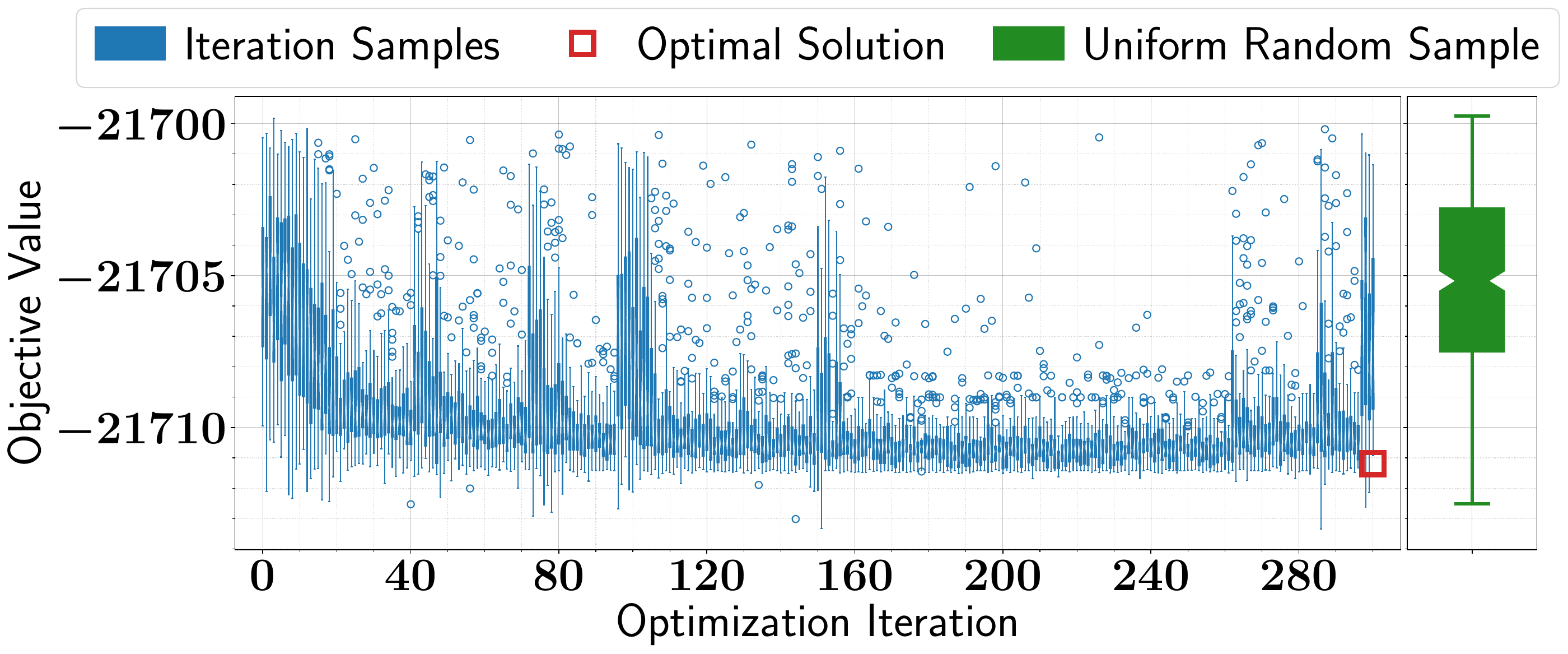}
        \includegraphics[width=0.215\linewidth]{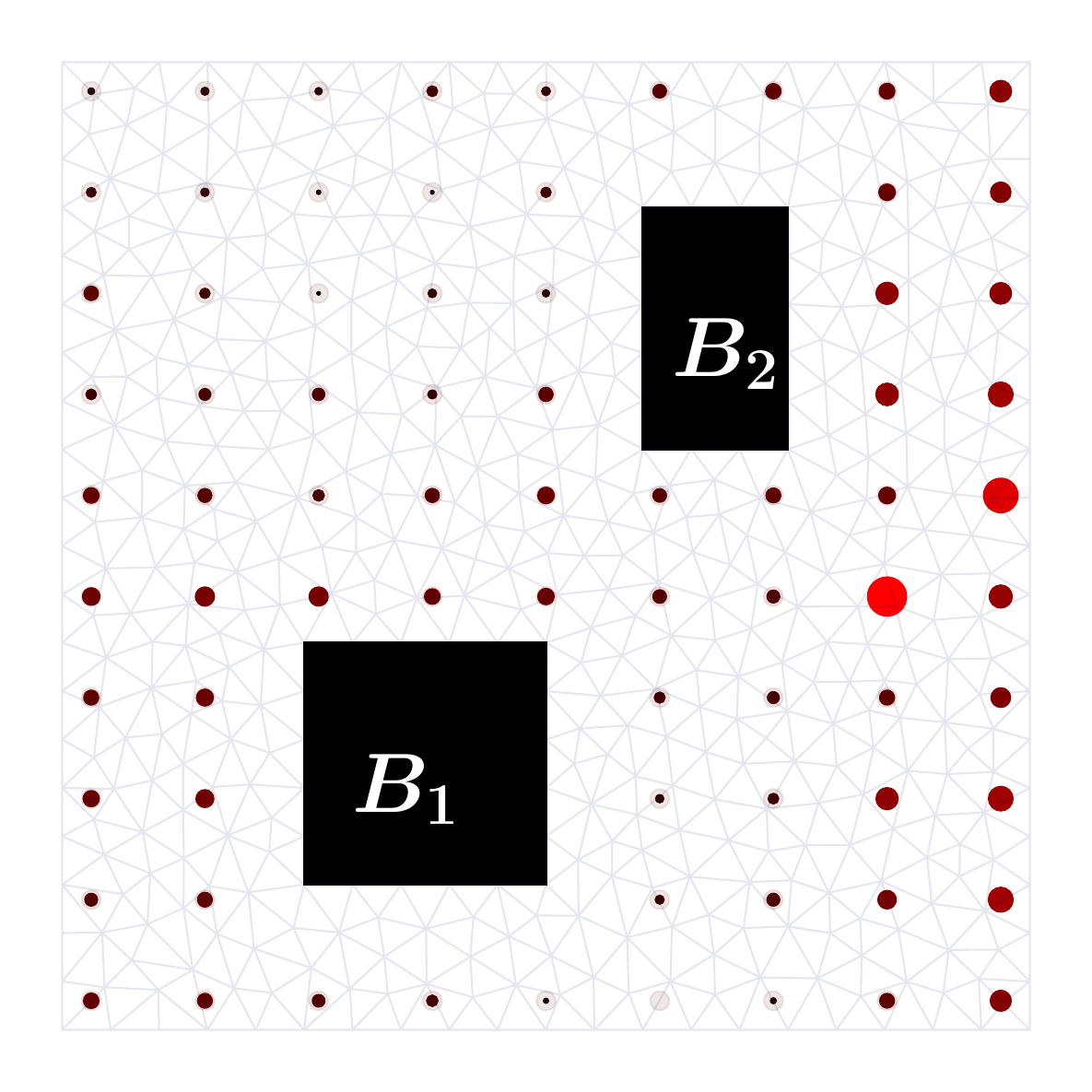}
        \includegraphics[width=0.235\linewidth]{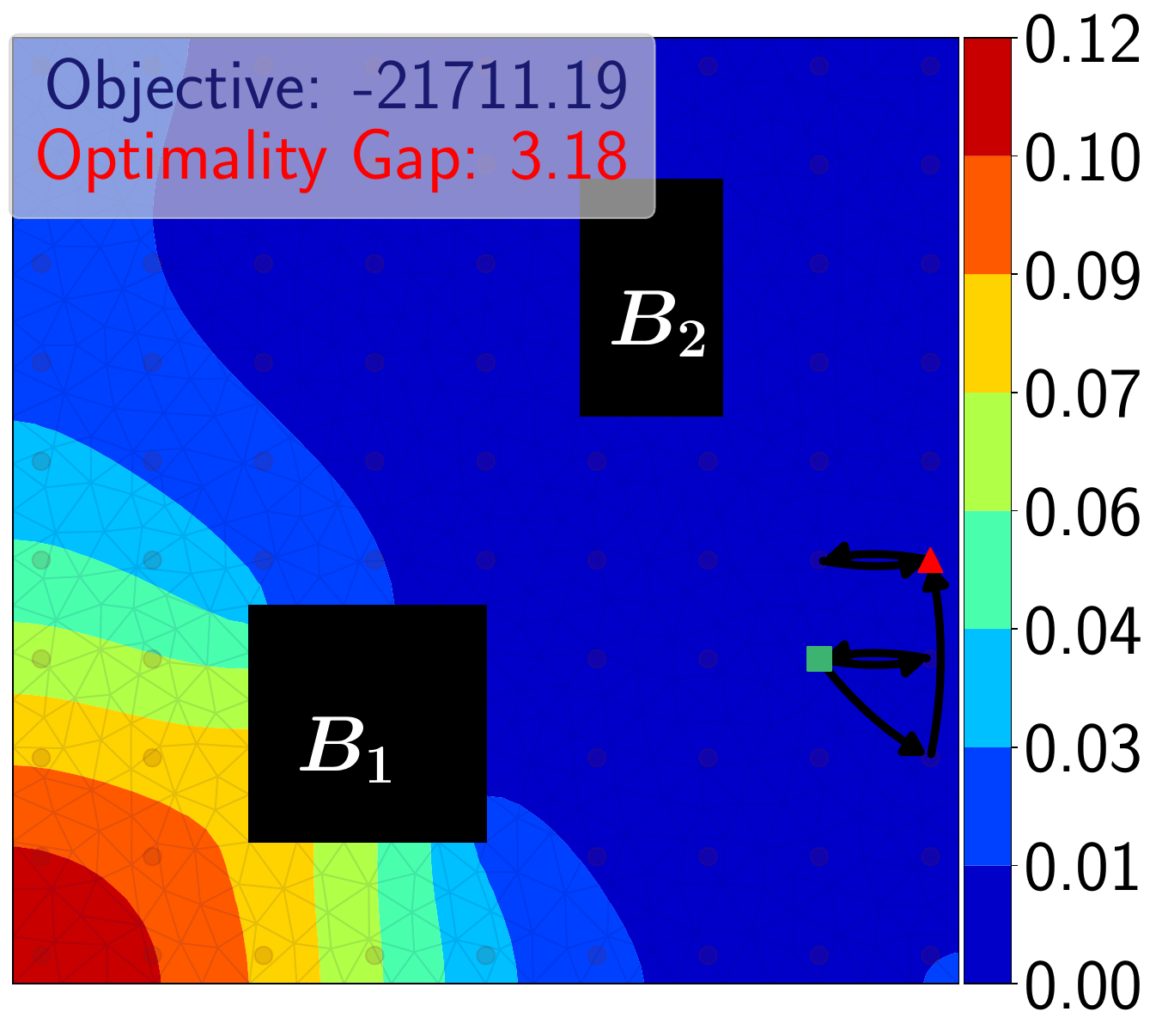}
        \includegraphics[width=0.53\linewidth]{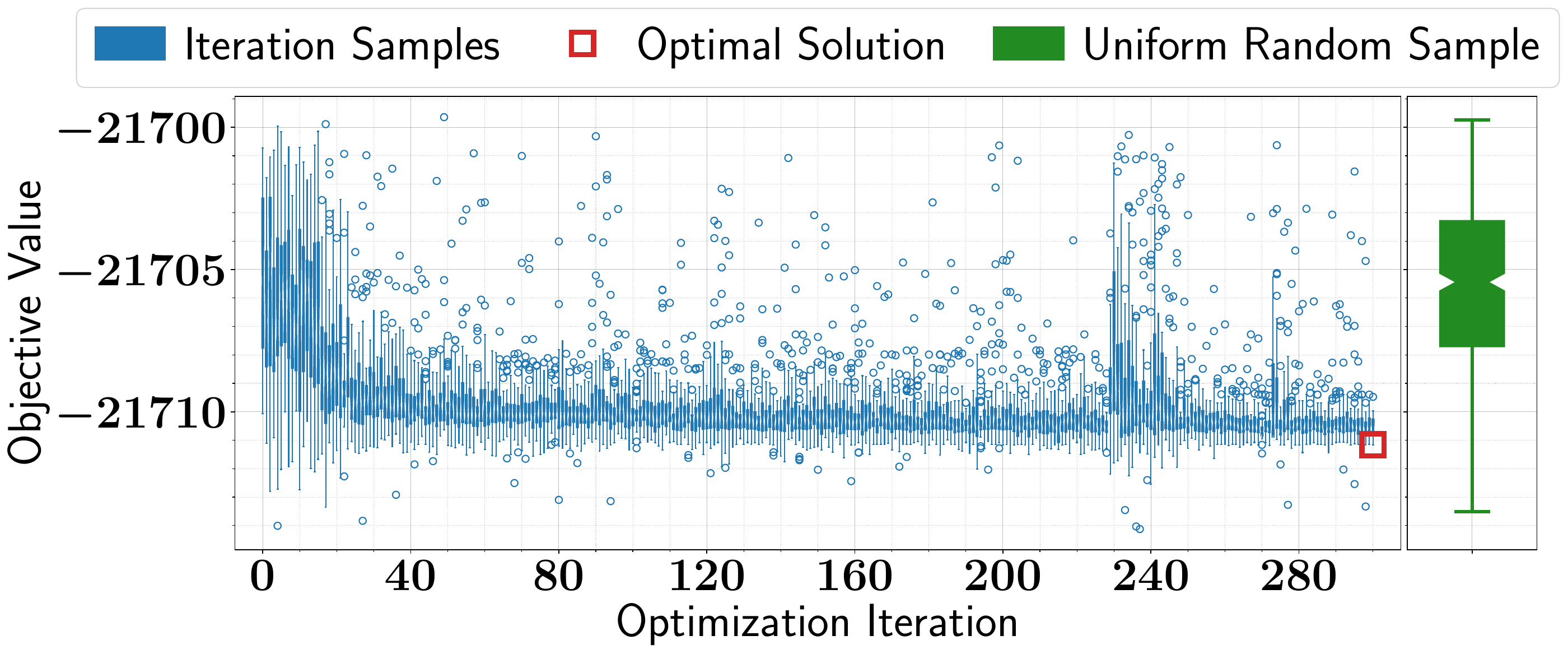}
        \includegraphics[width=0.215\linewidth]{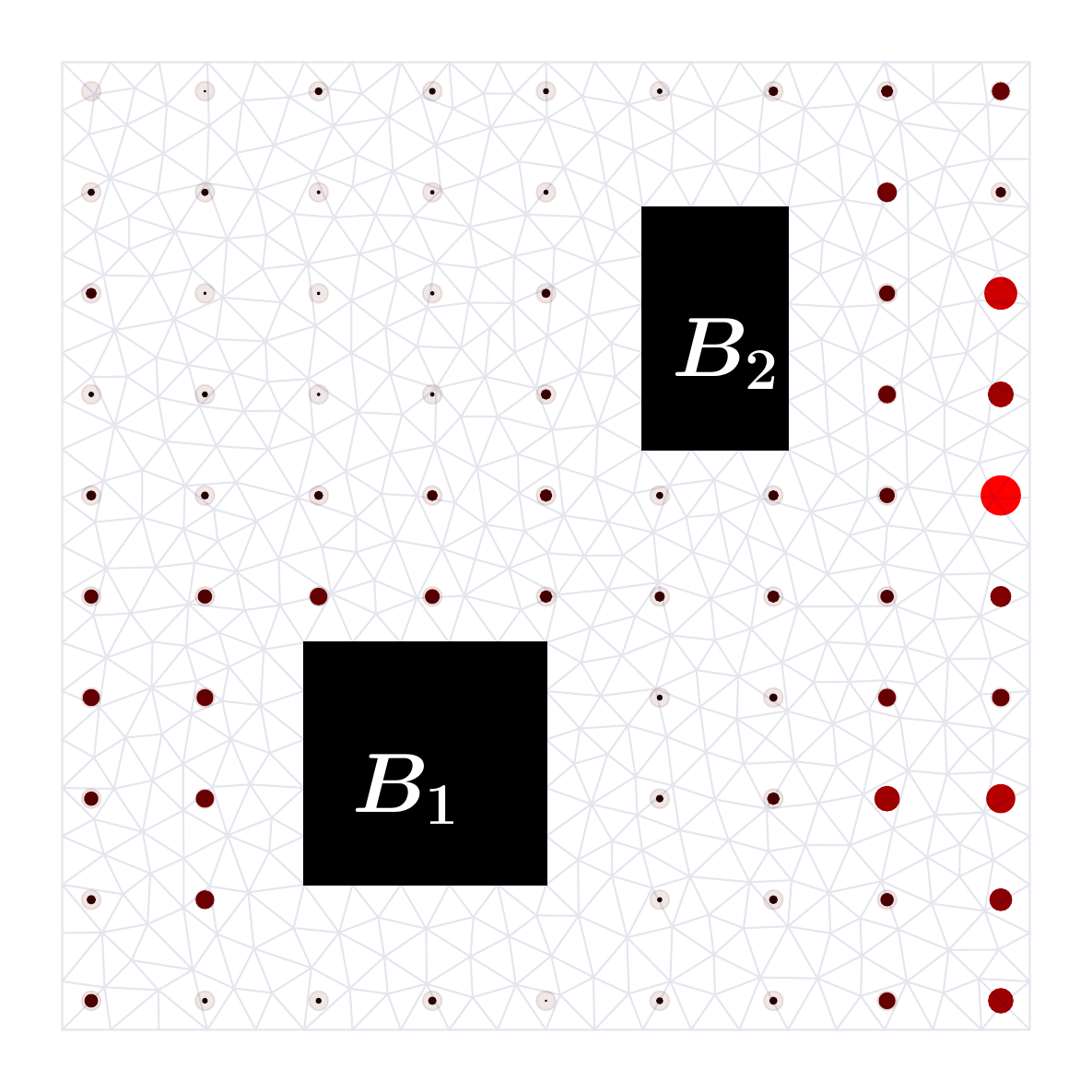}
        \includegraphics[width=0.235\linewidth]{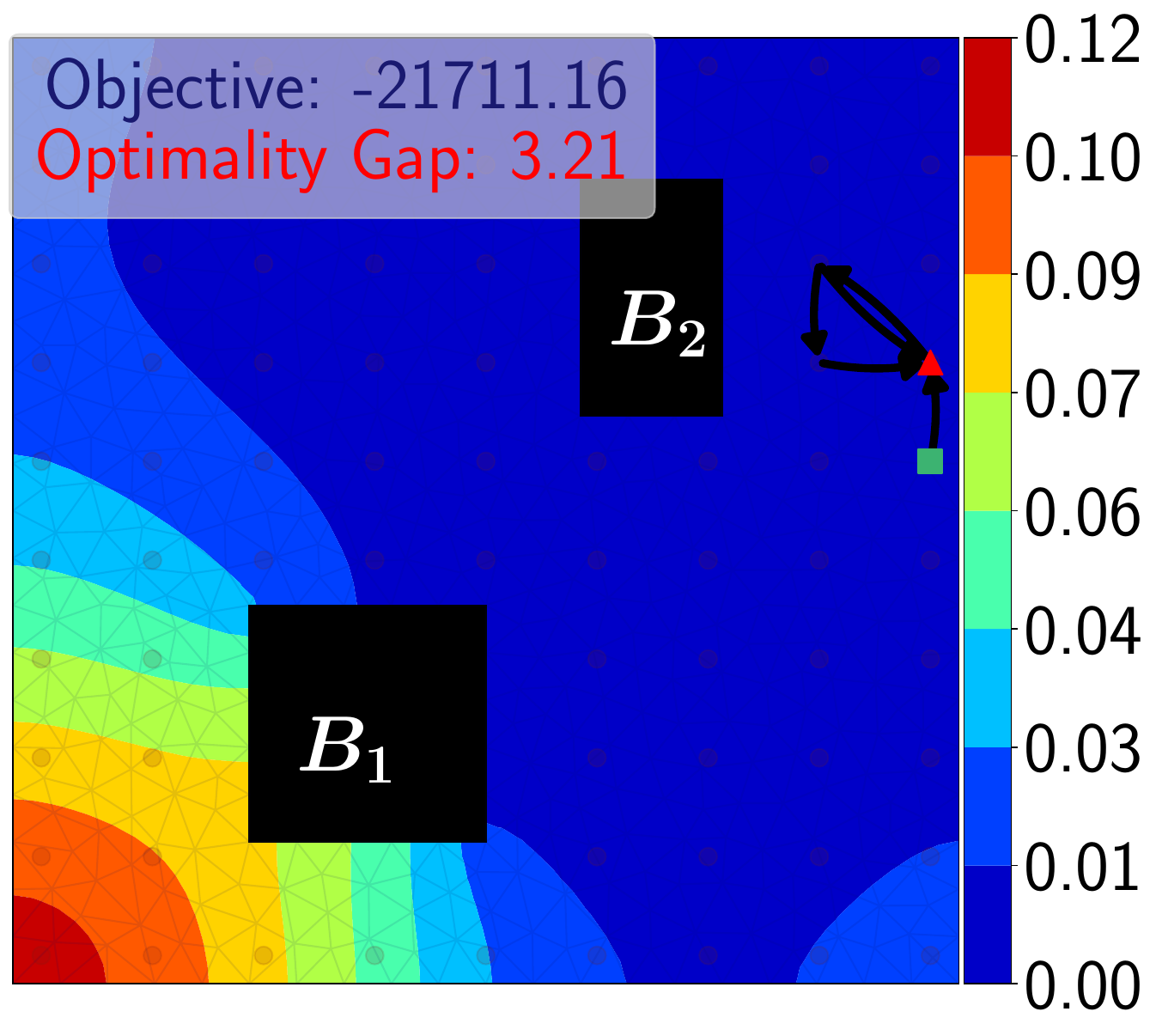}
        \caption{
          Results of \Cref{alg:probabilistic_path_optimization} with the higher-order policy 
          model \Cref{defn:higher_order_path_model} with order $k=3$.
          The first row shows results with lag weights being optimized, and 
          the second row shows results with lag weights modeled 
          by \eqref{eqn:decreasing_lag_weights}.
        }\label{fig:coarse_higher_order_iterations_order_3}
      \end{figure}
      \begin{figure}[!htbp]
        \centering
        \includegraphics[width=0.53\linewidth]{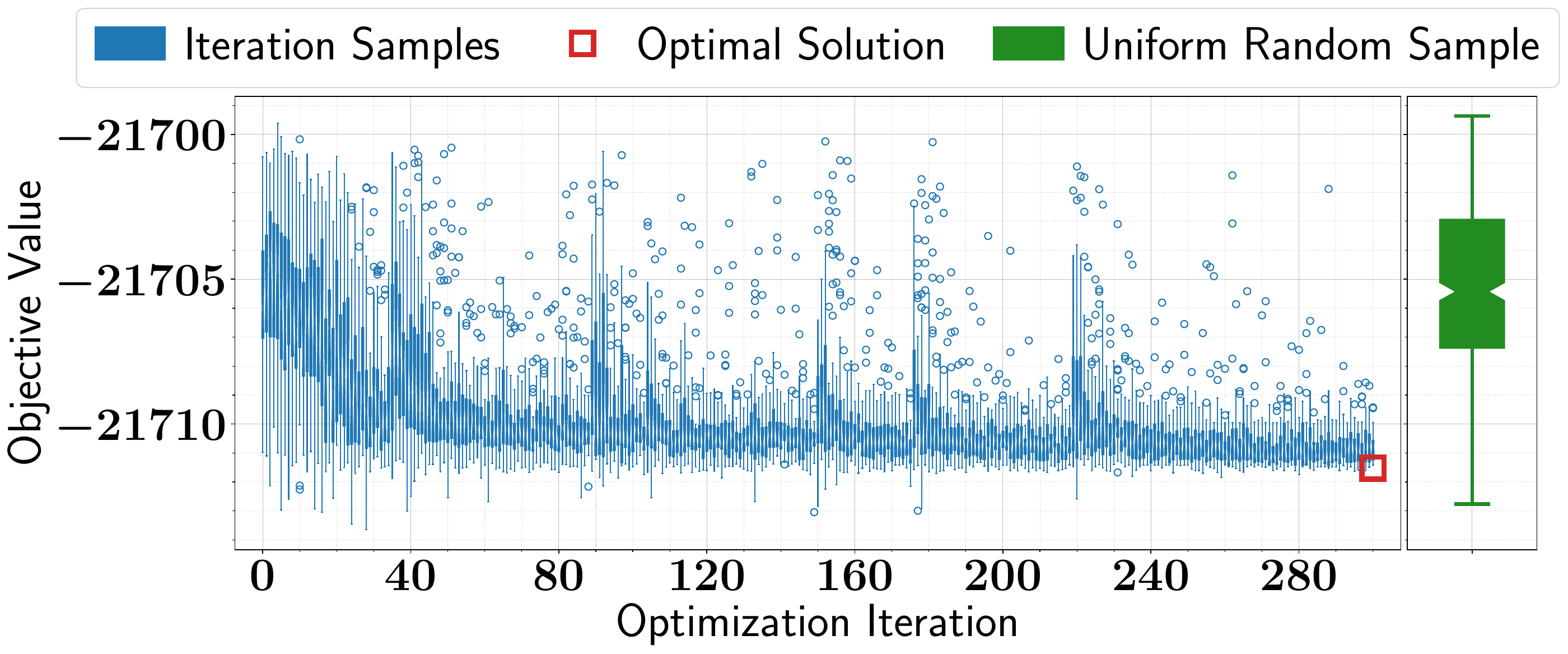}
        \includegraphics[width=0.215\linewidth]{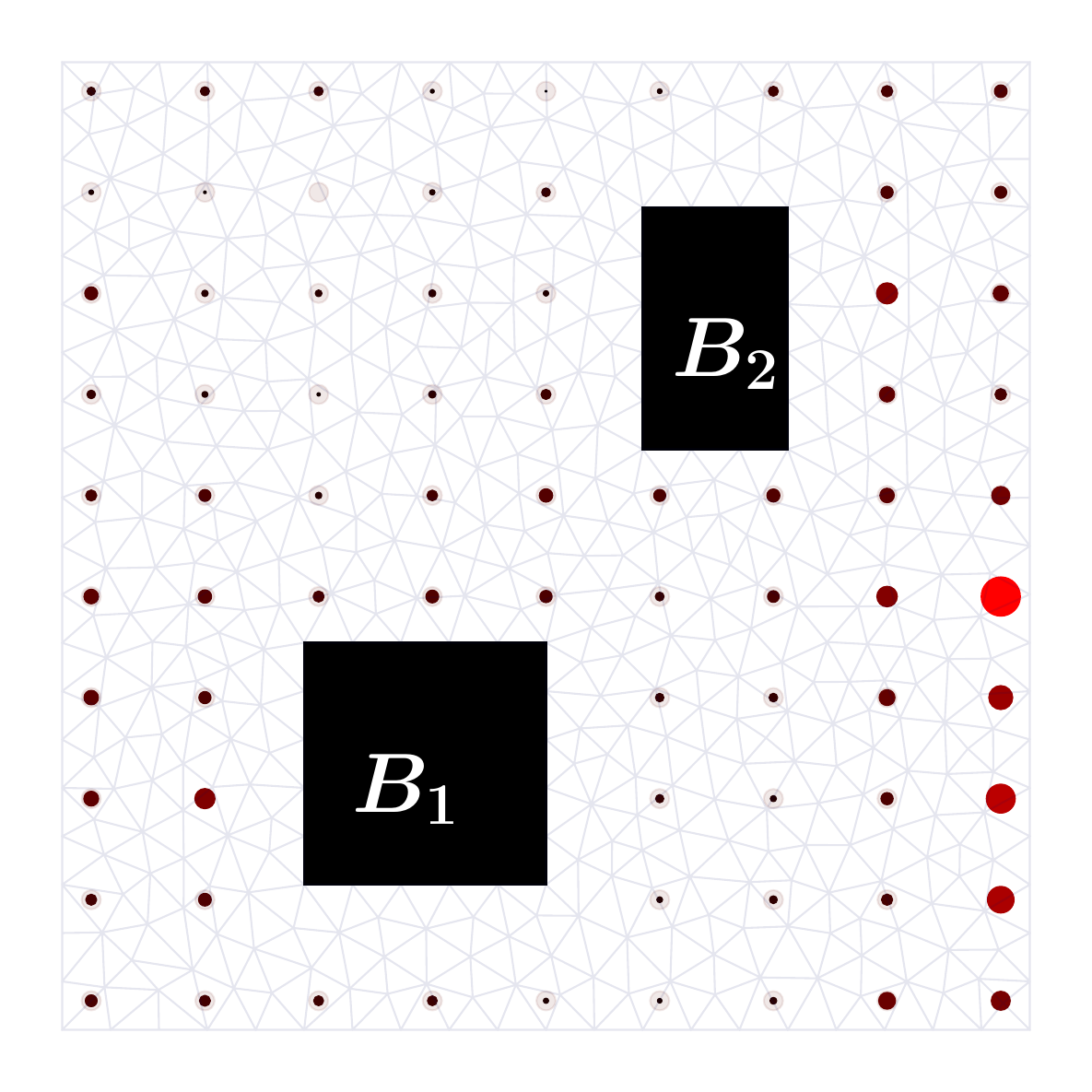}
        \includegraphics[width=0.235\linewidth]{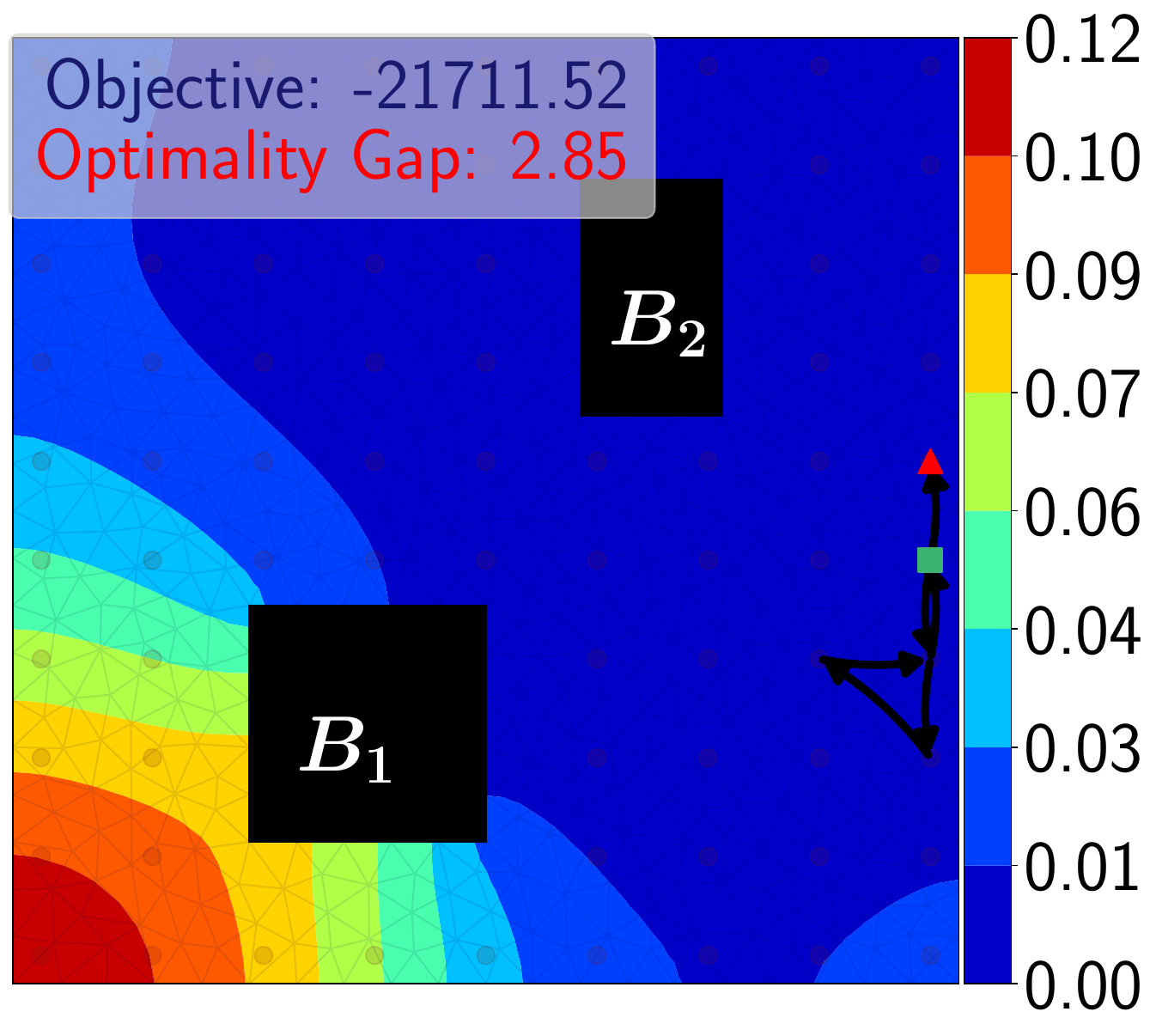}
        \includegraphics[width=0.53\linewidth]{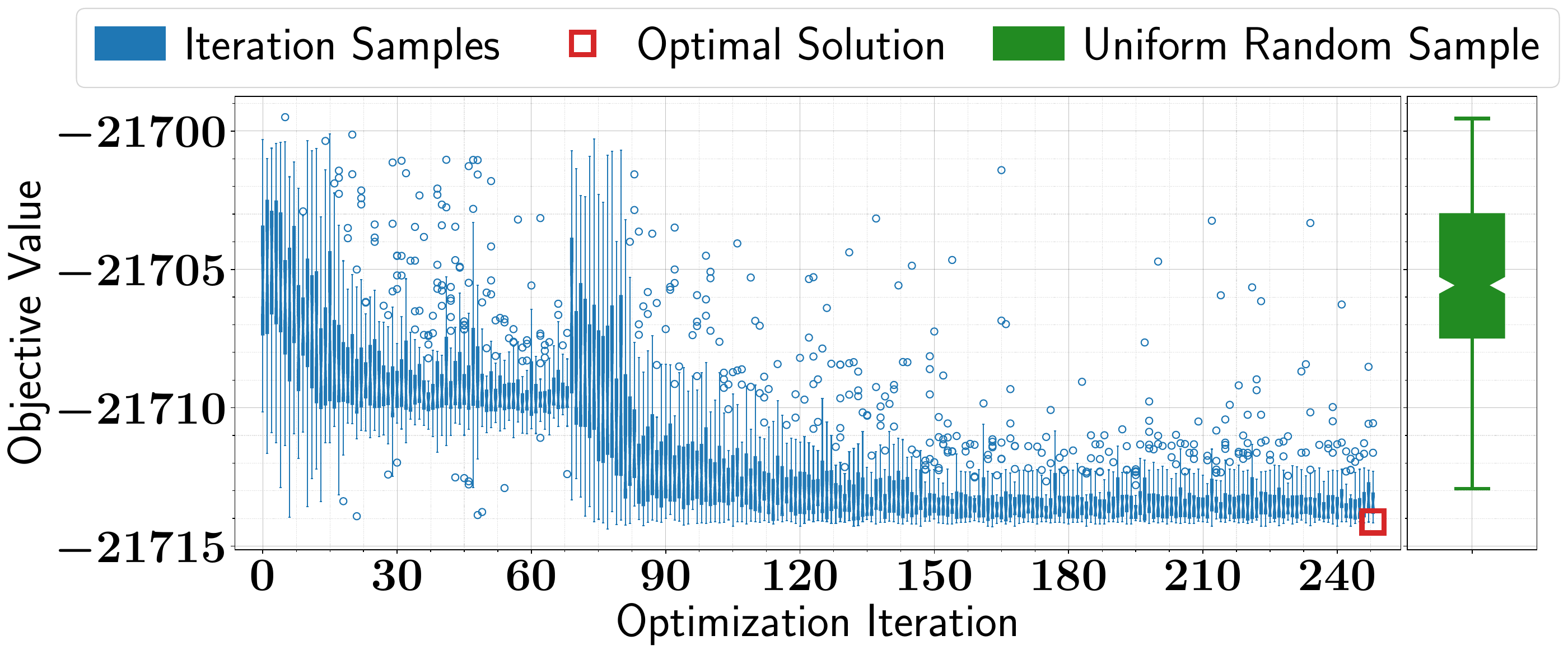}
        \includegraphics[width=0.215\linewidth]{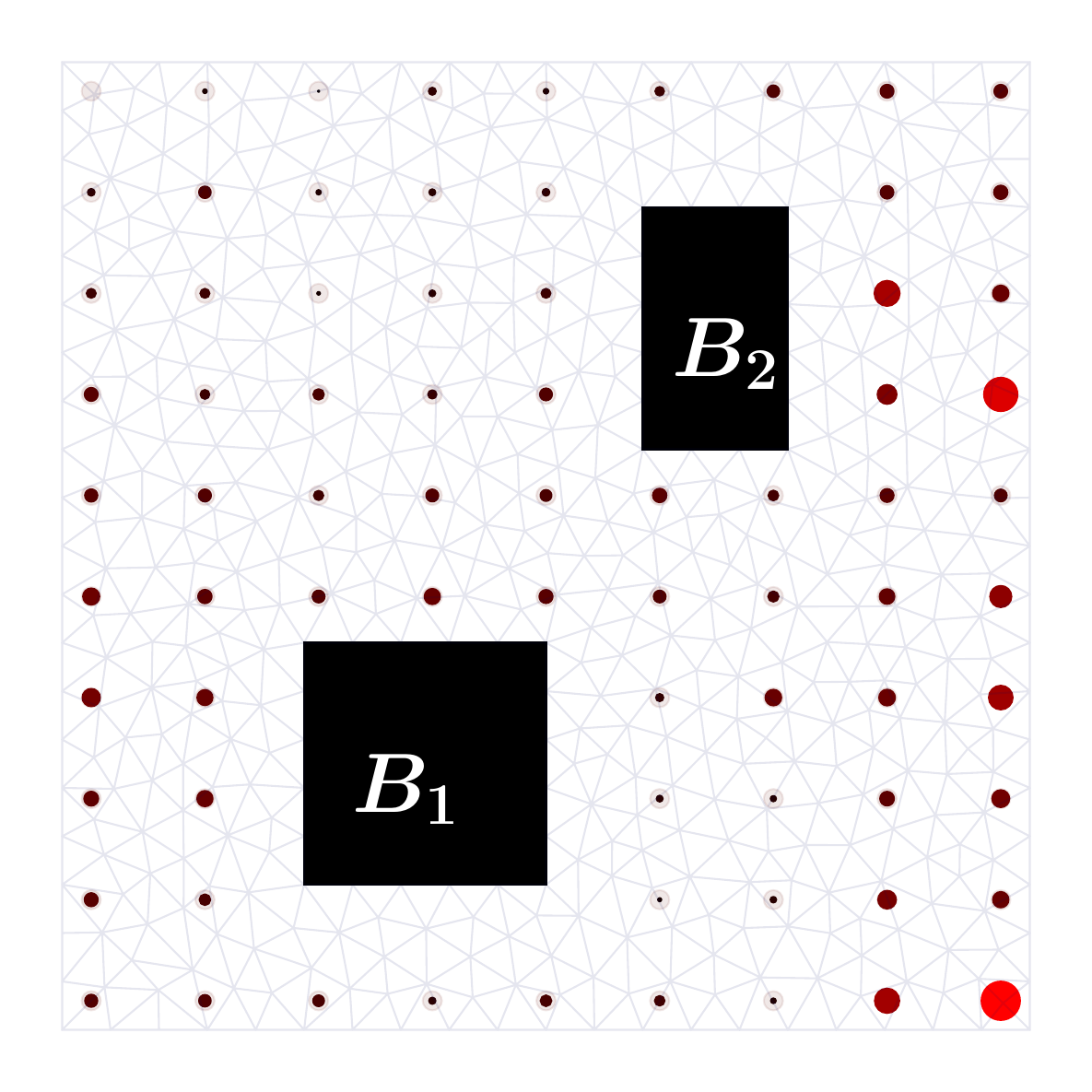}
        \includegraphics[width=0.235\linewidth]{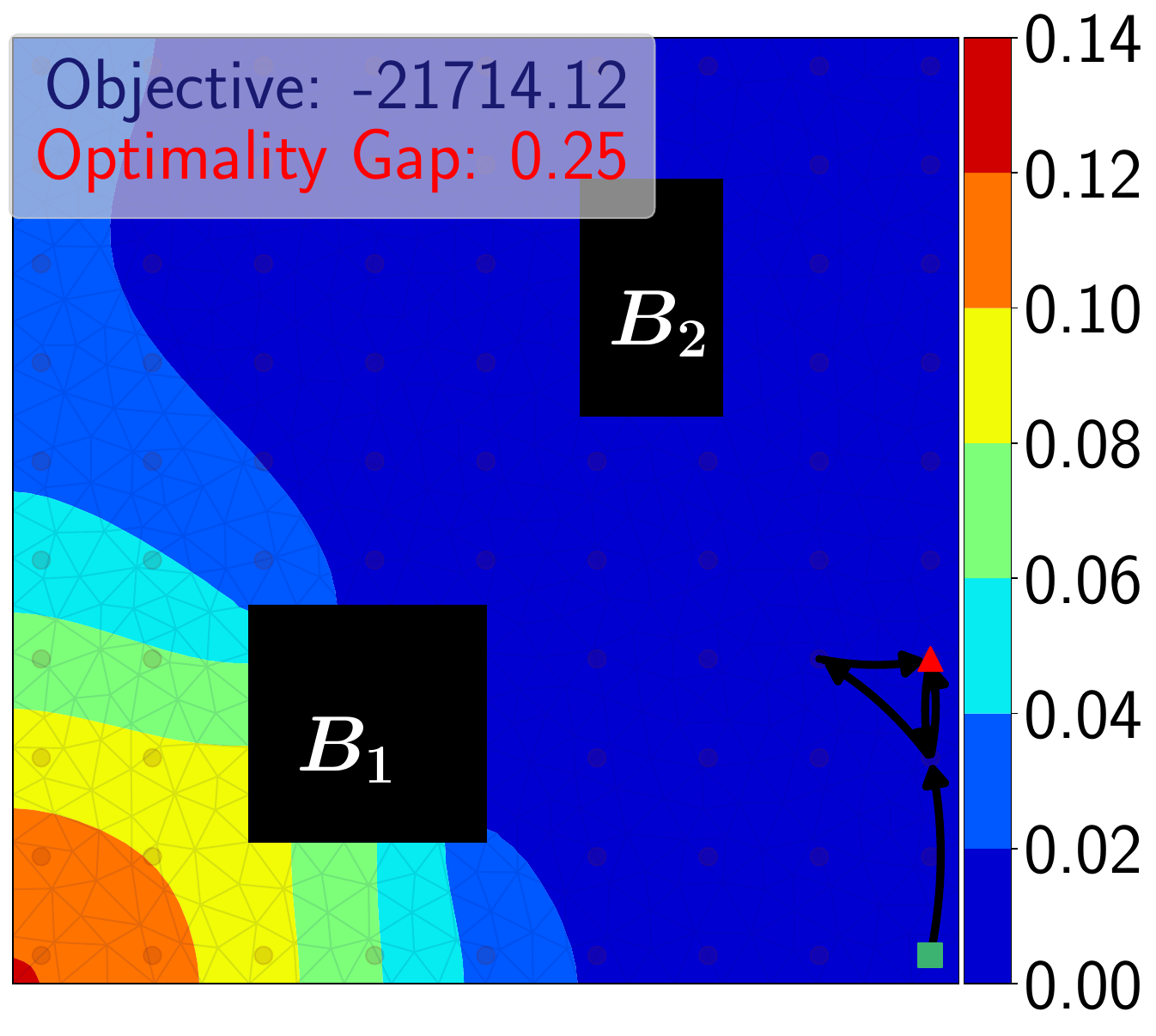}
        \caption{
          Similar to \Cref{fig:coarse_higher_order_iterations_order_3}.
          Here the policy order is set to $k=5$. 
        }\label{fig:coarse_higher_order_iterations_order_5}
      \end{figure}

      Results obtained by using 
      \Cref{defn:generalized_higher_order_path_model}
      with order $k=3$ and $k=5$ are shown in 
      \Cref{fig:coarse_generalized_higher_order_iterations_order_3} and 
      \Cref{fig:coarse_generalized_higher_order_iterations_order_5}, respectively.
      These results suggest that the two higher-order policy models proposed behave 
      similarly in this setup 
      and yield optimal paths associated with posterior 
      variance fields similar to that of the global optimum. 

      \begin{figure}[!htbp]
        \centering
        \includegraphics[width=0.53\linewidth]{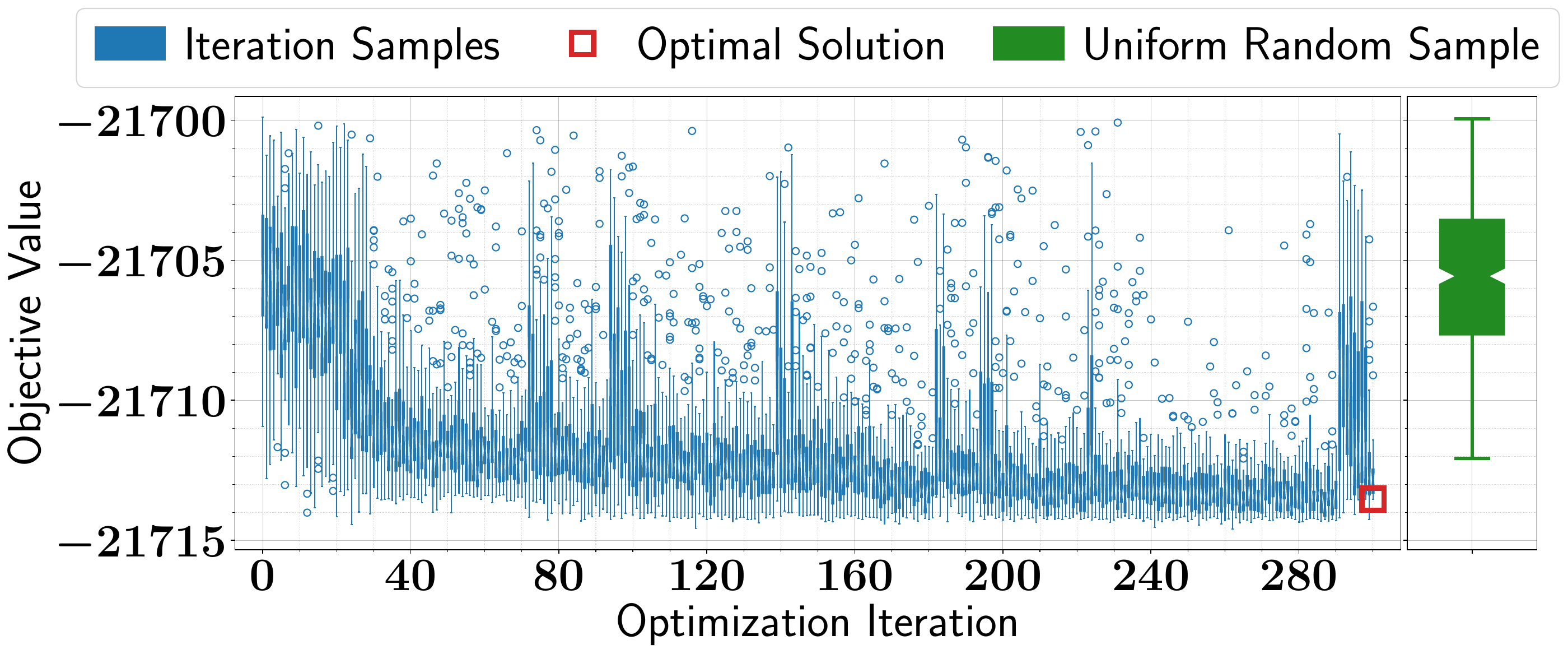}
        \includegraphics[width=0.215\linewidth]{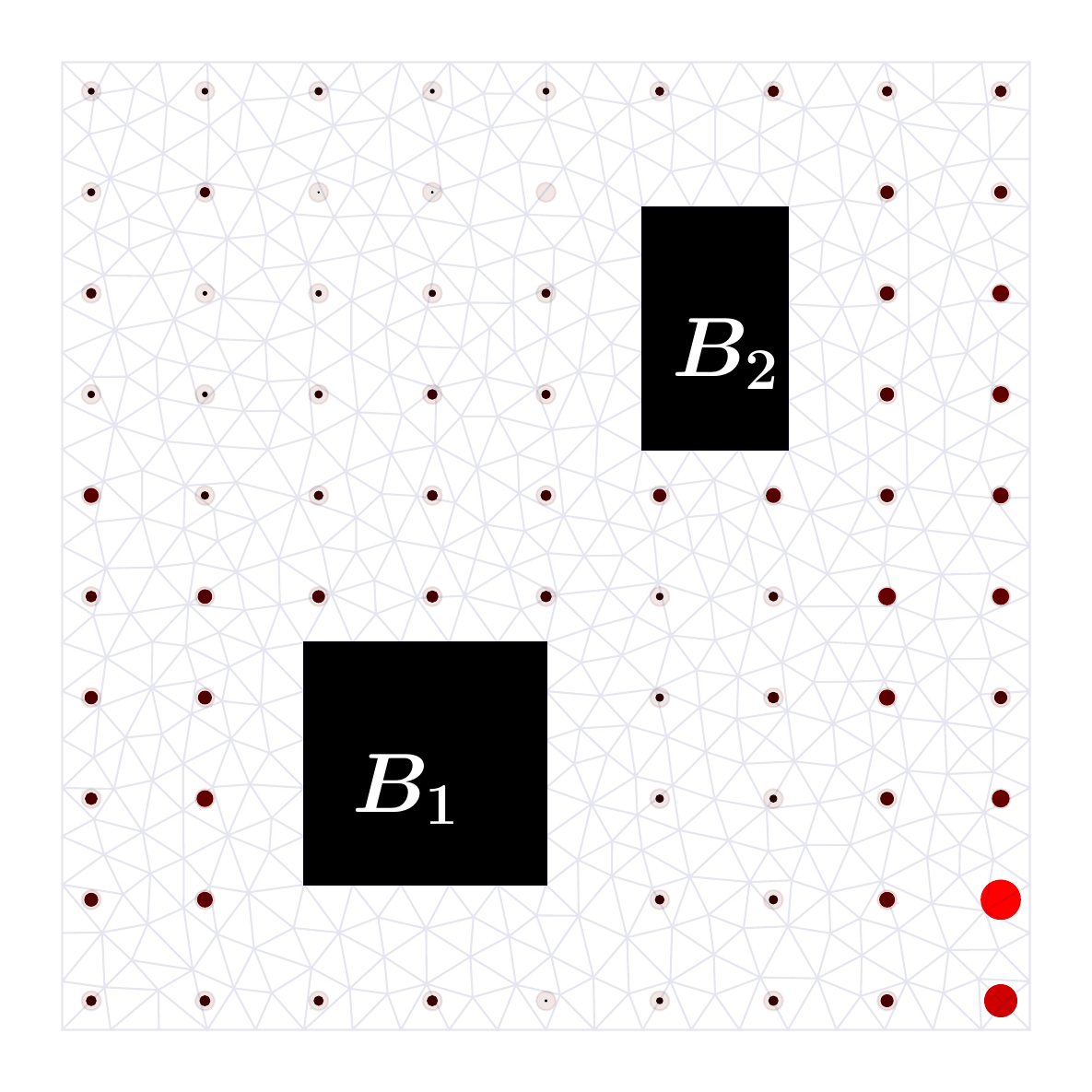}
        \includegraphics[width=0.235\linewidth]{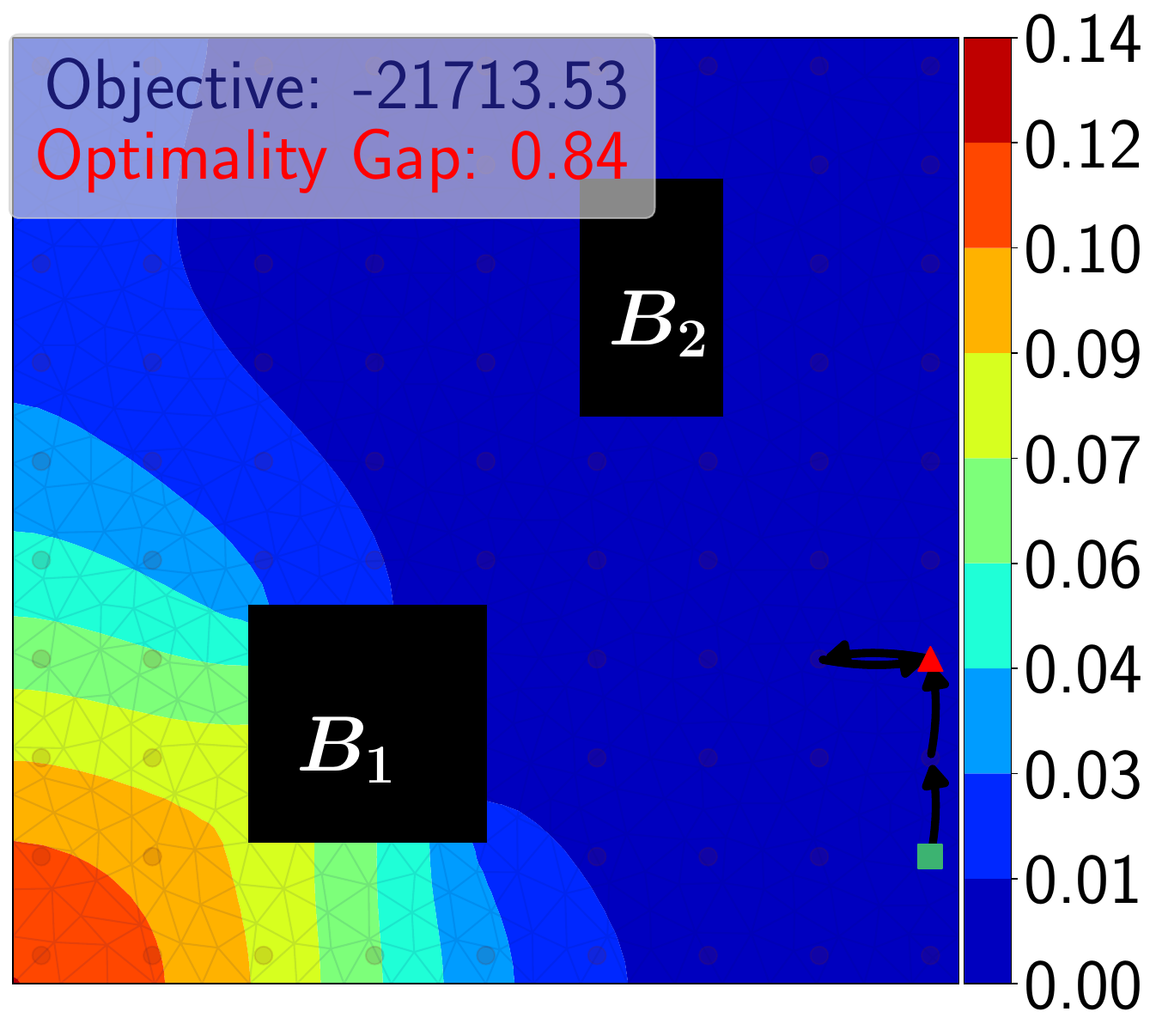}
        \includegraphics[width=0.53\linewidth]{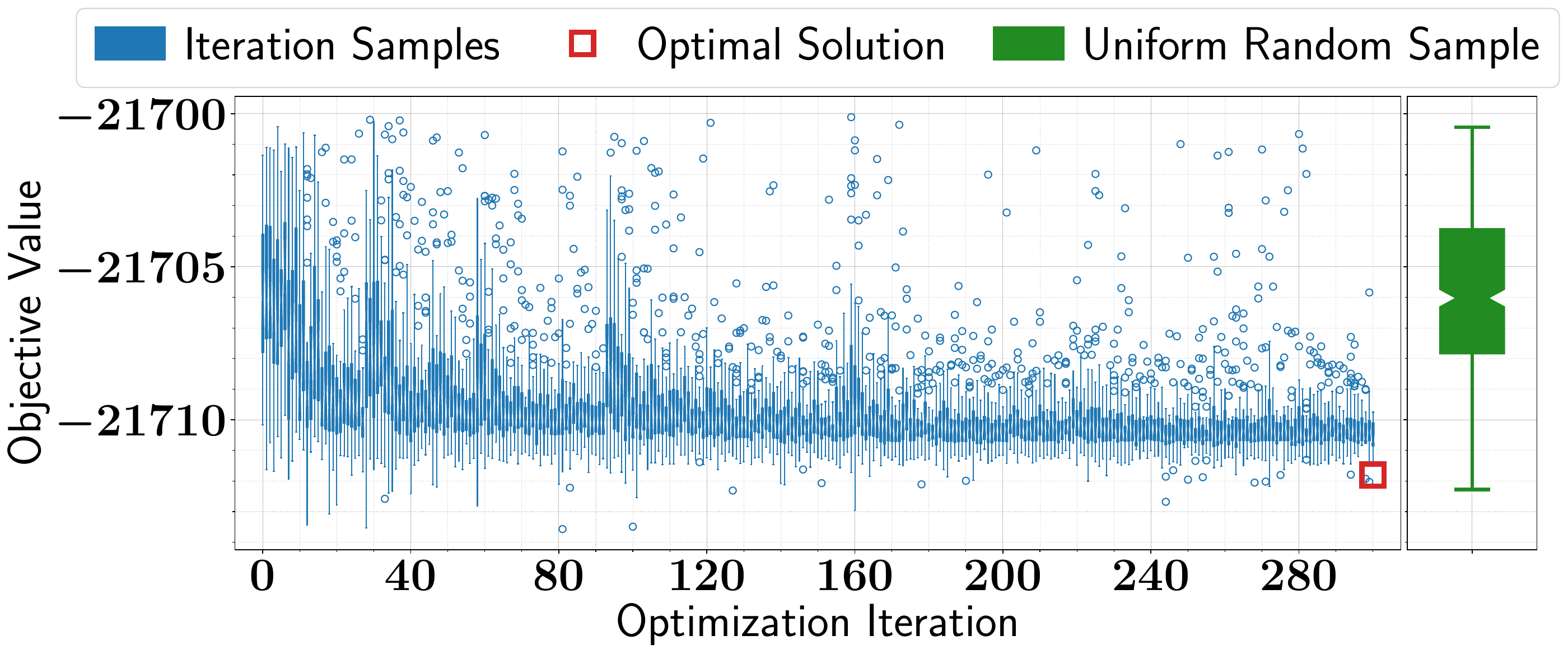}
        \includegraphics[width=0.215\linewidth]{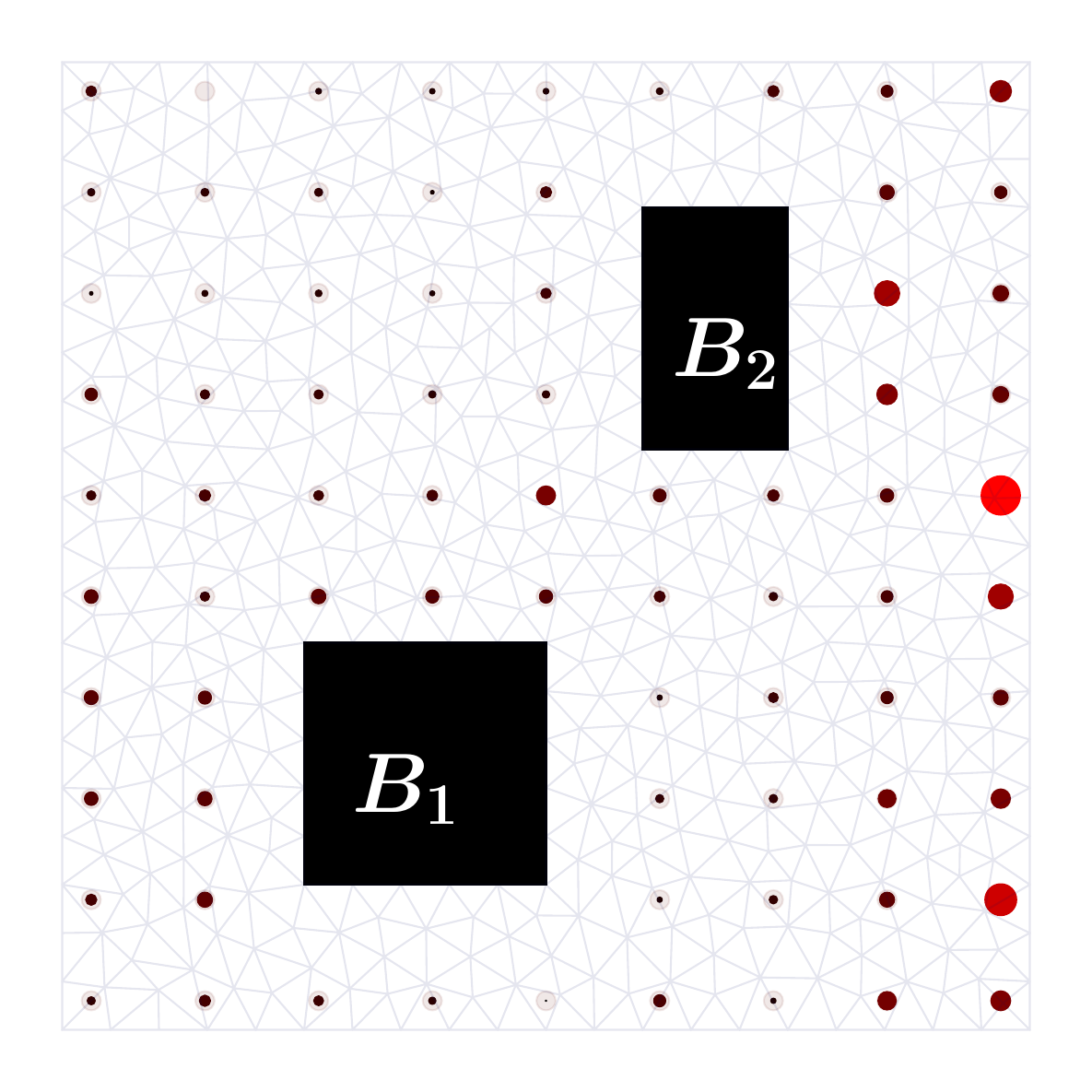}
        \includegraphics[width=0.235\linewidth]{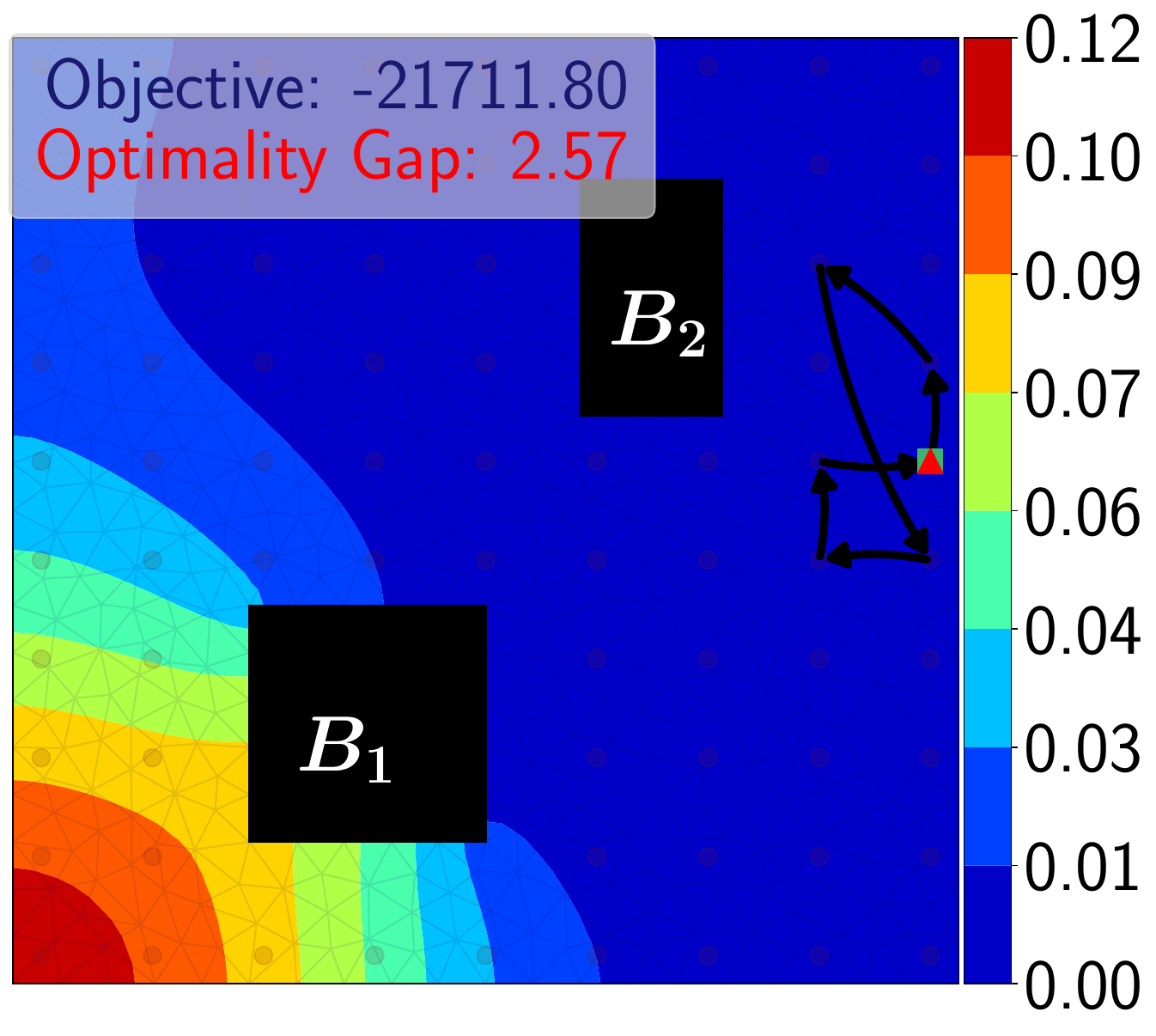}
        \caption{
          Similar to \Cref{fig:coarse_higher_order_iterations_order_3}.
          Here the higher-order policy (\Cref{defn:generalized_higher_order_path_model}) is used.
        }\label{fig:coarse_generalized_higher_order_iterations_order_3}
      \end{figure}
      \begin{figure}[!htbp]
        \centering
        \includegraphics[width=0.53\linewidth]{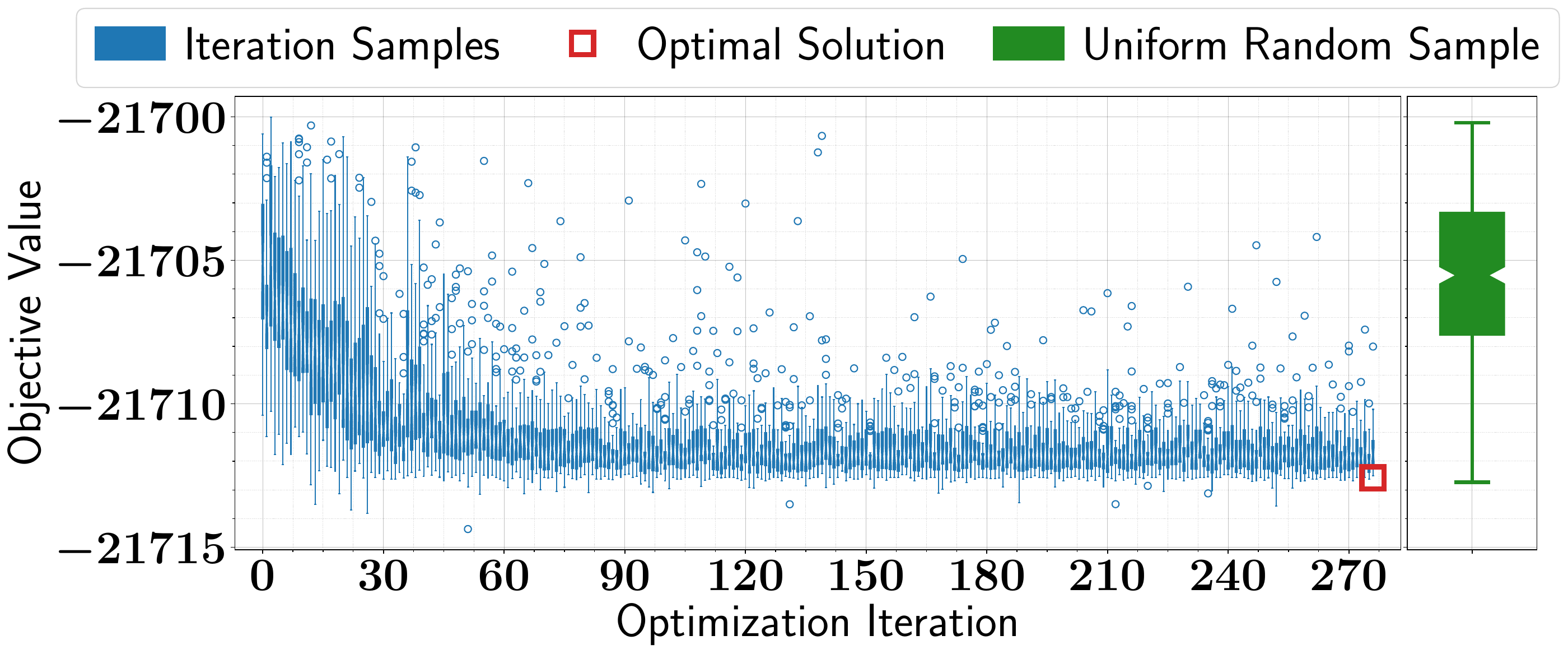}
        \includegraphics[width=0.215\linewidth]{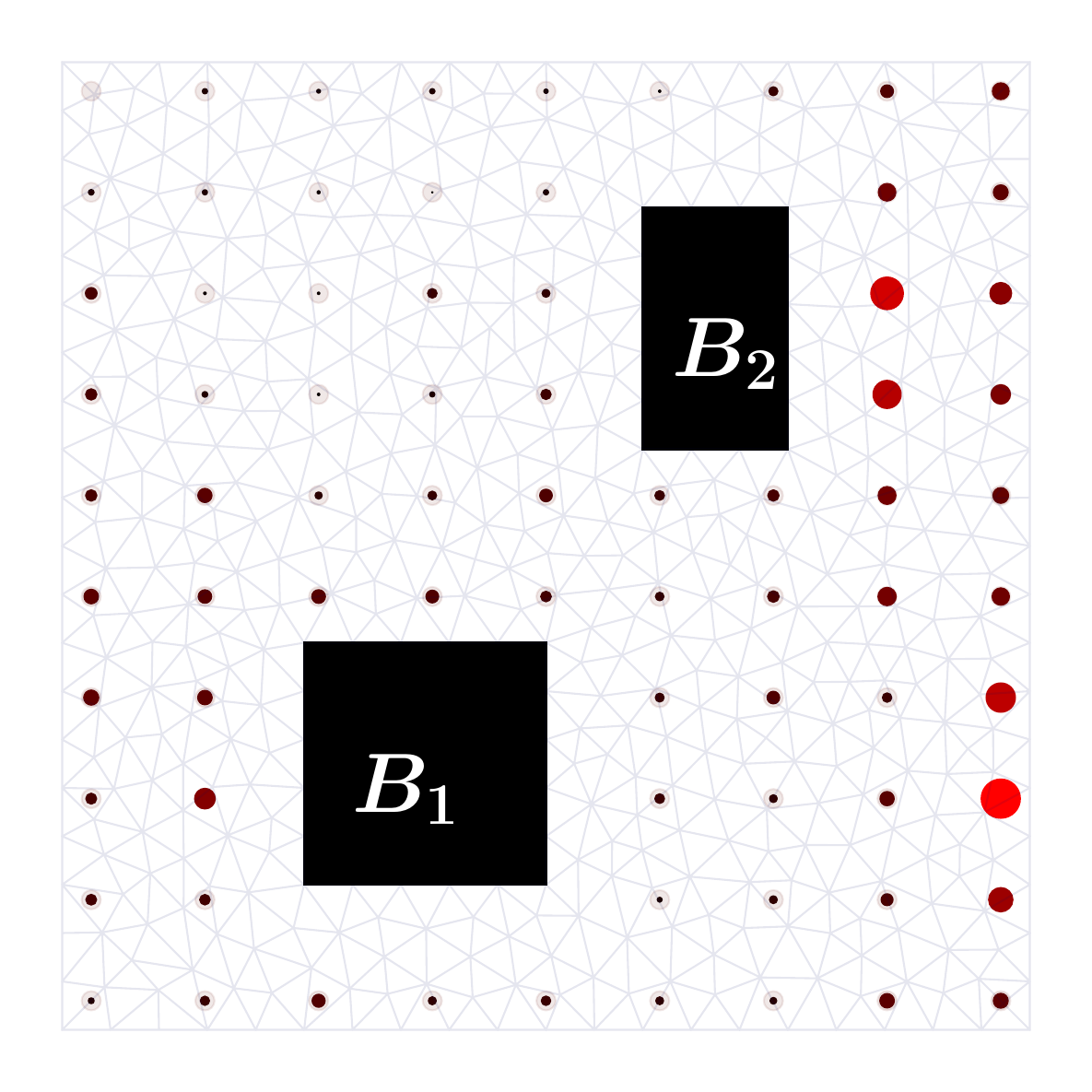}
        \includegraphics[width=0.235\linewidth]{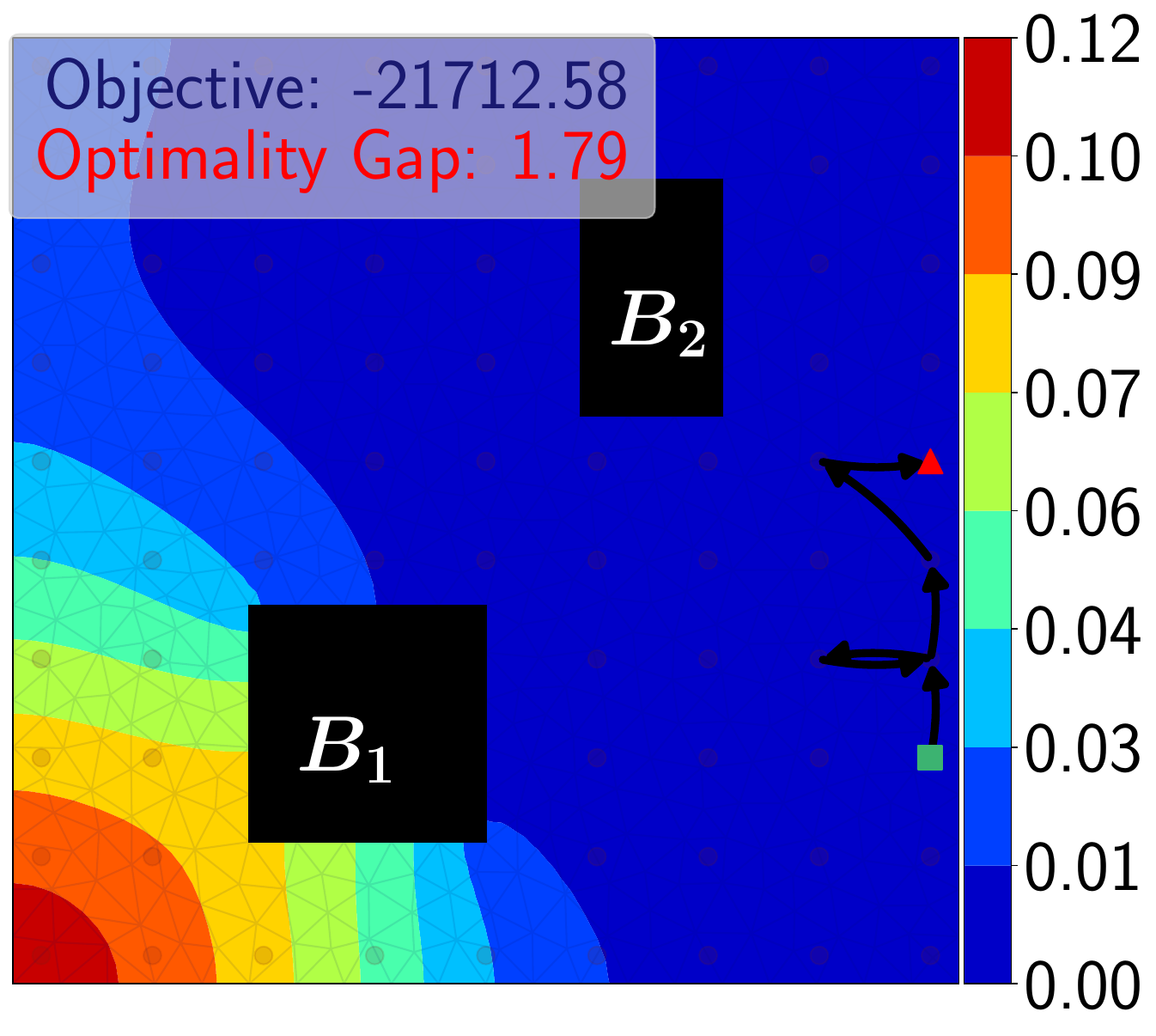}
        \includegraphics[width=0.53\linewidth]{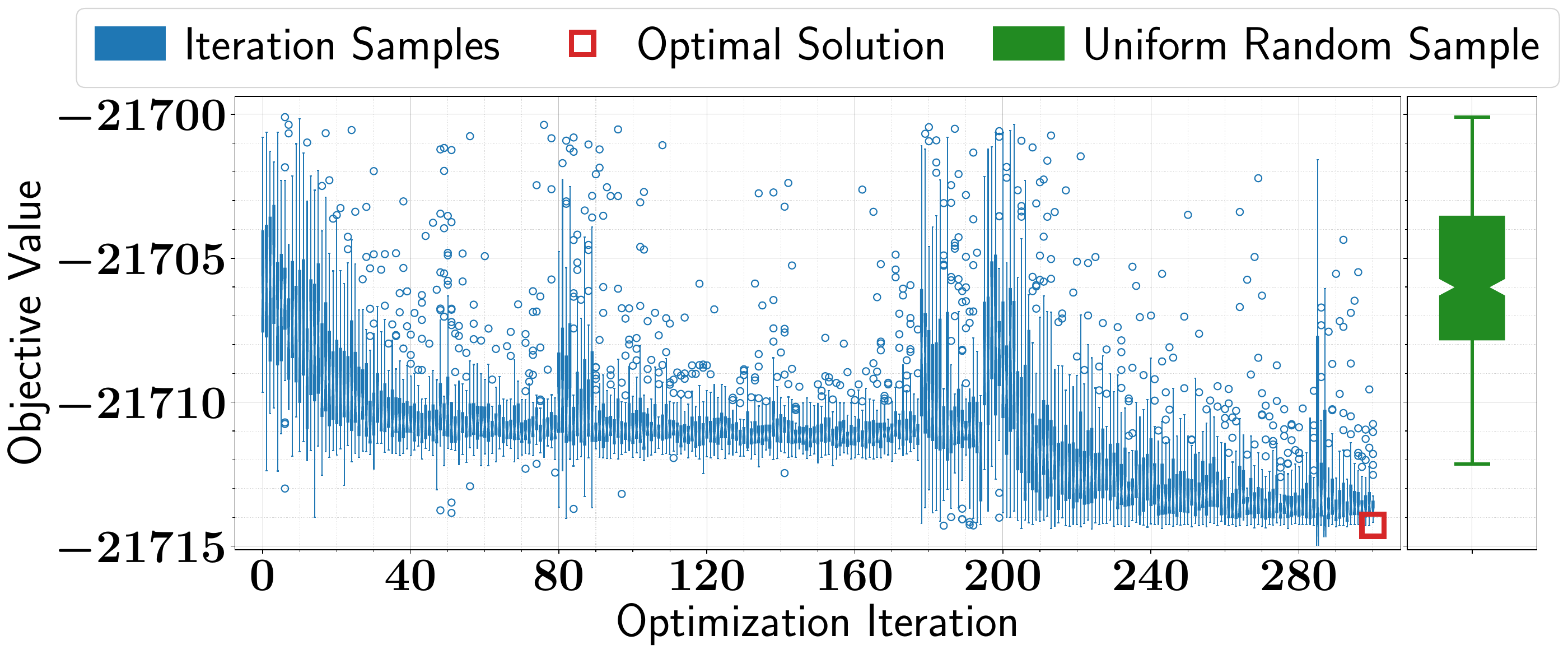}
        \includegraphics[width=0.215\linewidth]{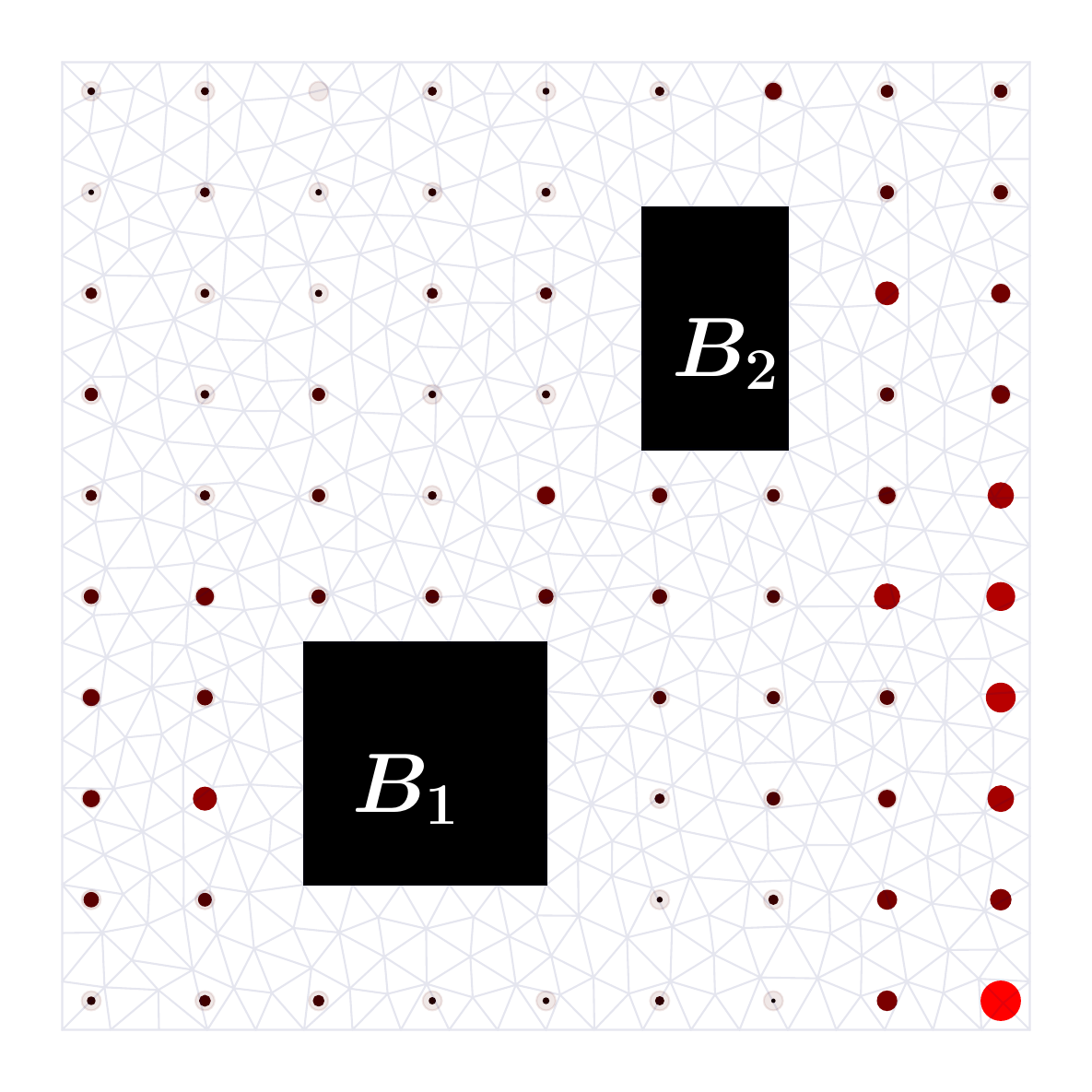}
        \includegraphics[width=0.235\linewidth]{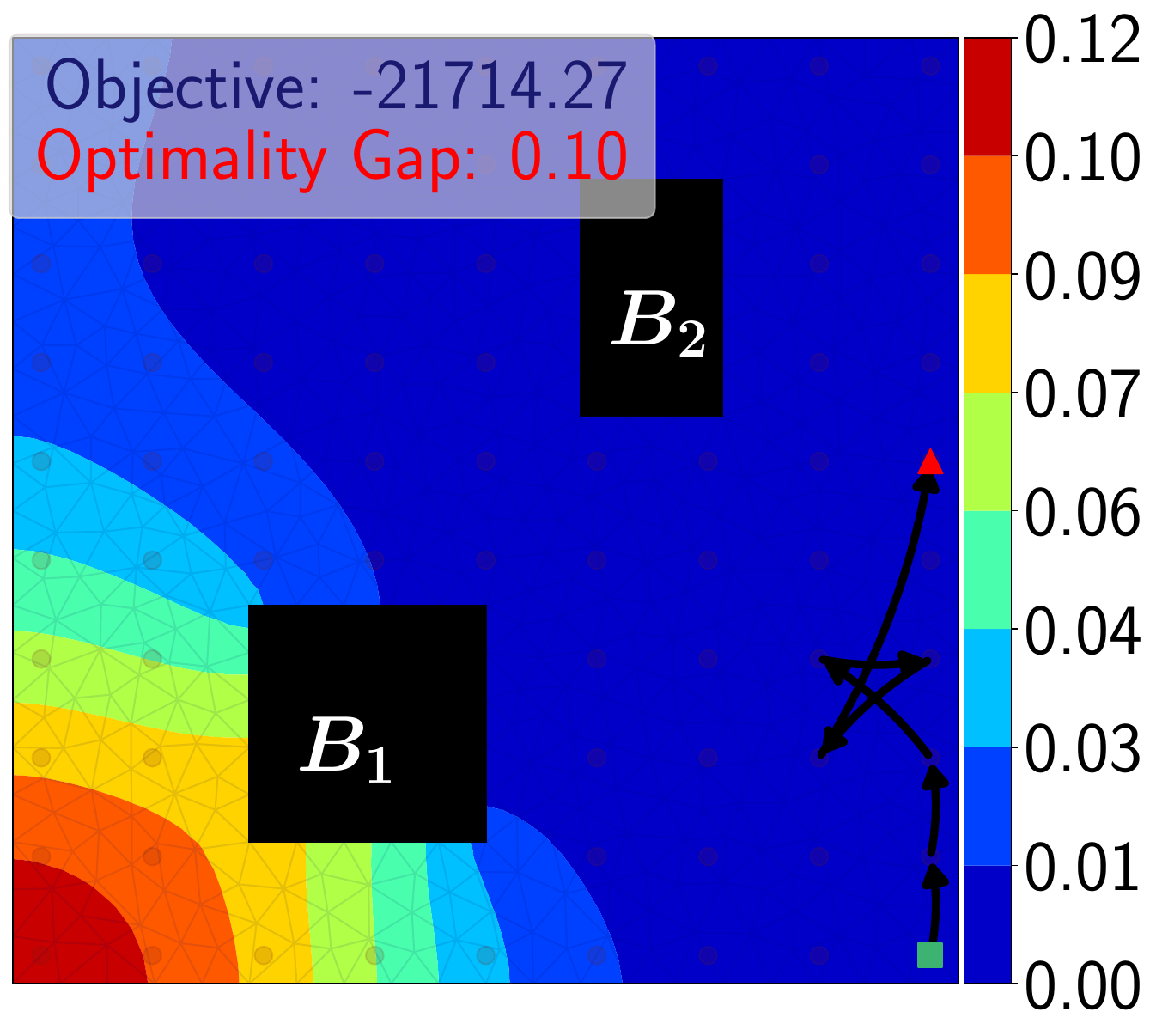}
        \caption{
          Similar to \Cref{fig:coarse_generalized_higher_order_iterations_order_3}.
          Here the policy order is set to $k=5$. 
        }\label{fig:coarse_generalized_higher_order_iterations_order_5}
      \end{figure}

  \subsection{On the Necessity of a Baseline}
  \label{subsec:importance_of_baseline}
    The baseline is critical for reducing the variability of the stochastic estimator 
    and enhancing optimization efficiency. 
    We empirically assess stochastic gradient quality, with and without a baseline. 
    We employ the same setup in \Cref{subsubsec:coarse_first_order_results}, and we 
    use the brute-force results in \Cref{subsubsec:benchmark} 
    to compute the exact gradient  $\vec{g}$ \eqref{eqn:full_gradient} 
    and the error of the stochastic gradient. 
    For fixed sample sizes, stochastic gradients $\widehat{\vec{g}}$ 
    with and without a baseline are 
    replicated $32$ times to estimate the errors relative to the 
    exact gradient $\vec{g}$, 
    across varying ensemble $\Nens$ and baseline batch $N_{\rm b}$ sizes.
    The errors and variances of the stochastic gradient estimator are summarized 
    in \Cref{fig:Gradient_Error_Analysis_UseSameSample} 
    and \Cref{fig:Gradient_Variance_Analysis_UseSameSample}, respectively.
    \Cref{fig:Gradient_Error_Analysis_UseSameSample} shows that the mean error is centered 
    around $0$ reflecting the unbiasedness of the stochastic gradient with or without a baseline.
    Increasing the sample size $\Nens$, by definition, reduces variability but at a 
    high computational cost. 
    In contrast, using a baseline greatly reduces variance even with 
    a batch size of $N_{\rm b}=1$, as shown in 
    \Cref{fig:Gradient_Variance_Analysis_UseSameSample}, and incurs no additional cost. 

    \begin{figure}[!htbp]
      \includegraphics[width=1.0\linewidth]{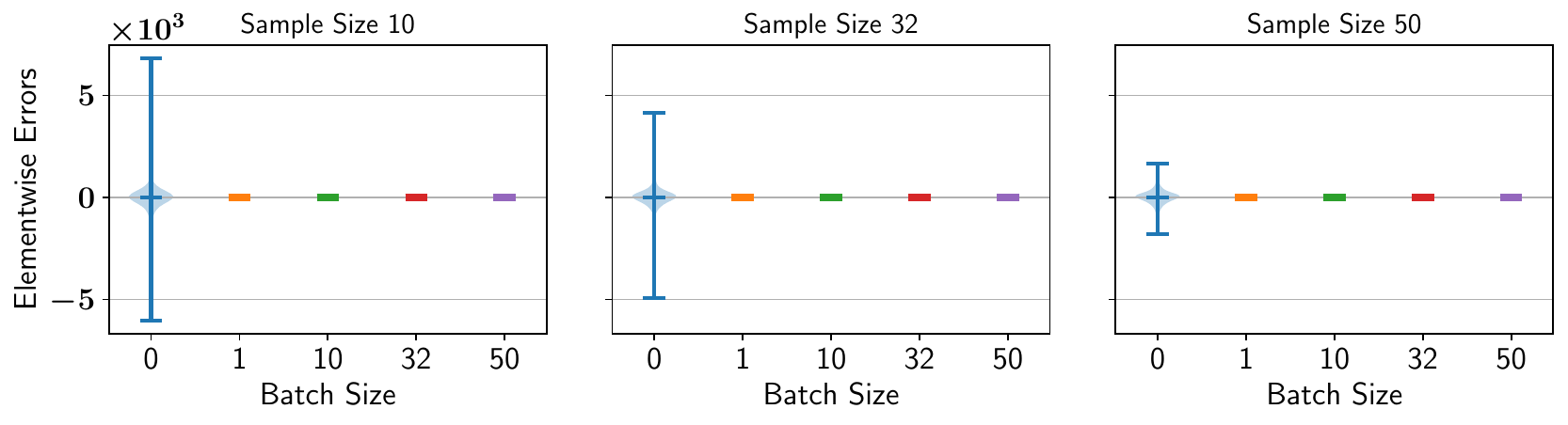}
      \caption{
        Stochastic gradient elementwise errors 
        $\widehat{\vec{g}}-\vec{g}$ averaged across $32$ replicas  
        for sample sizes $\Nens$ set to $10$ (left), $32$ (middle), 
        and $50$ (right), respectively.
        A uniformly sampled parameter instance $\hyperparamvec\in[0, 1]^{\Nparam}$ is used.
        For each sample size, the stochastic gradient is computed 
        without baseline $\baseline=0$ (batch size $0$) and 
        with optimal baseline estimate 
        \eqref{eqn:general_optimal_baseline_estimate} by using batch sizes 
        $N_{\rm b}=1, 10, 32, 50$, respectively.
      }\label{fig:Gradient_Error_Analysis_UseSameSample}
    \end{figure}
    \begin{figure}[!htbp]
      \centering
      \includegraphics[width=1.0\linewidth]{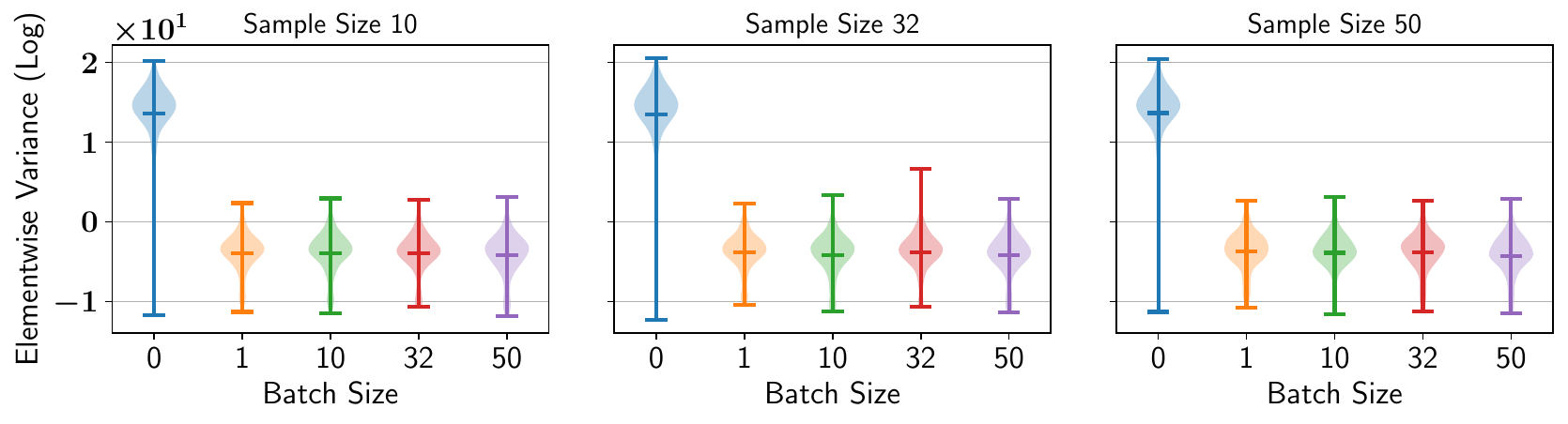}
      \caption{
        Similar to \Cref{fig:Gradient_Error_Analysis_UseSameSample}.
        Here elementwise variances (log scale) 
        are plotted. 
      }\label{fig:Gradient_Variance_Analysis_UseSameSample}
    \end{figure}

    To further illustrate the baseline’s impact, 
    \Cref{fig:coarse_first_order_iterations_with_random_sample_no_baseline} 
    shows results for the benchmark experiment 
    in \Cref{subsubsec:benchmark} without using a baseline. 
    With the small sample size $\Nens=32$, high gradient variance degrades policy updates, 
    preventing consistent reduction of the expected objective. 
    Although a near-optimal path (right) is retrieved as evident by the posterior variance, 
    this is incidental, as the initial distribution is poorly optimized (middle) 
    and the resulting policy remains highly variable (left).

    \begin{figure}
      \centering
      \includegraphics[width=0.53\linewidth]{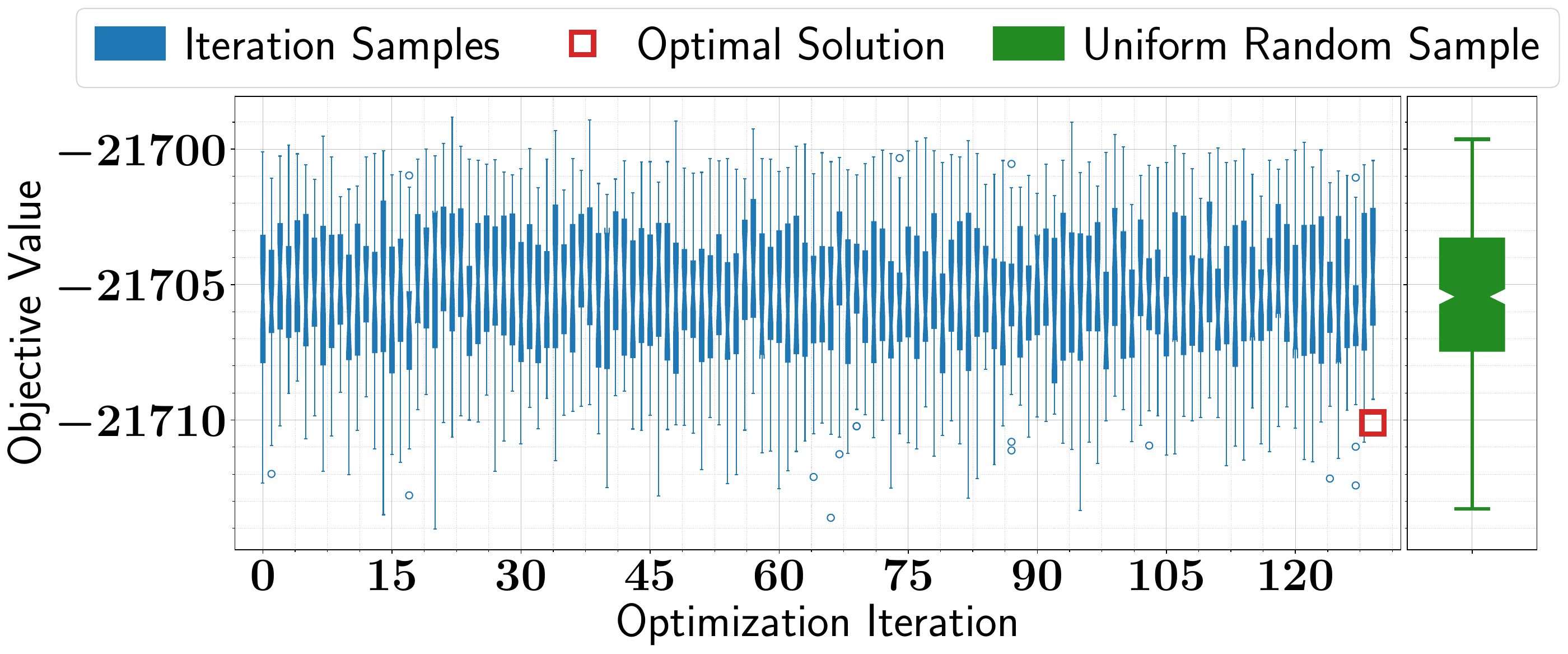}
      \includegraphics[width=0.215\linewidth]{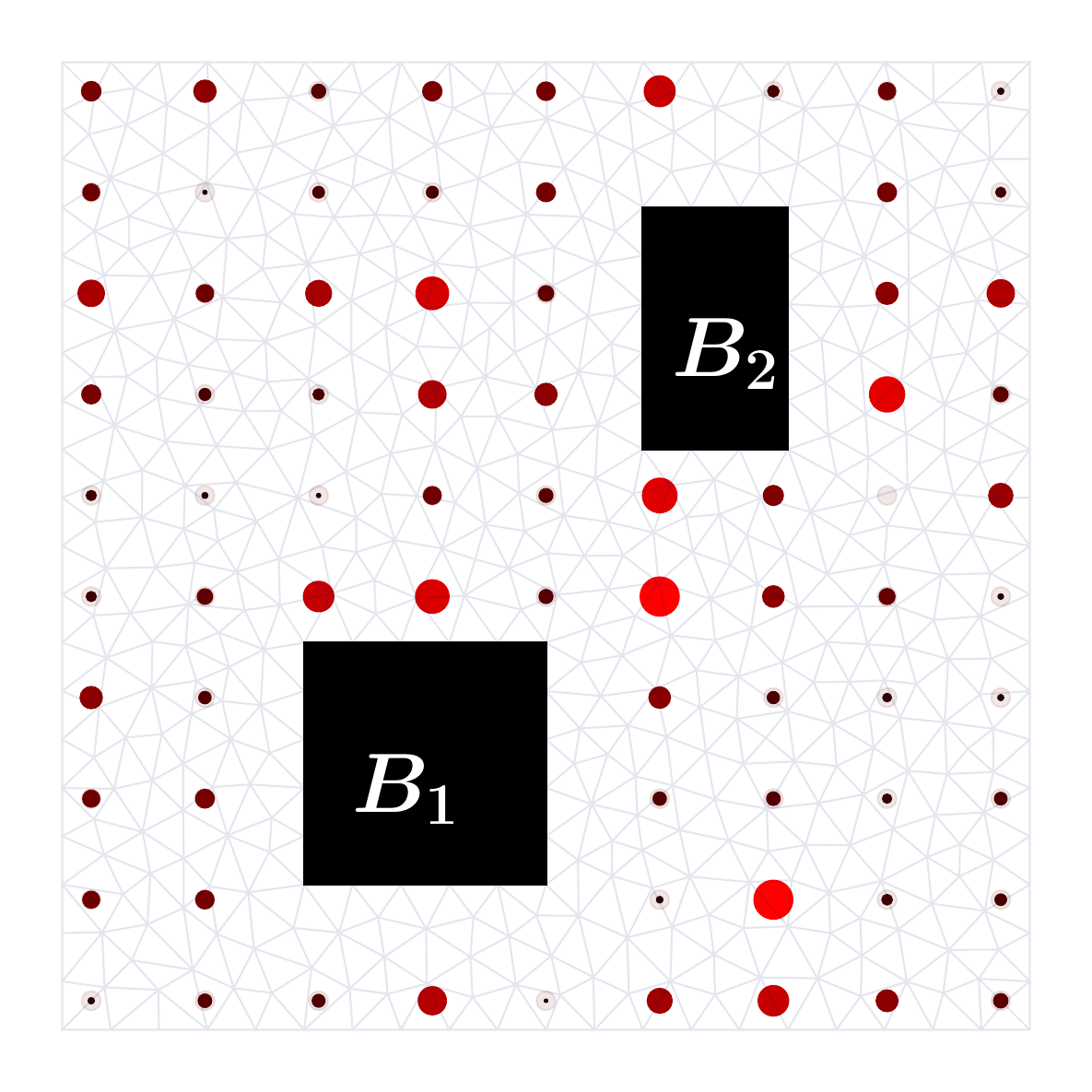}
      \includegraphics[width=0.235\linewidth]{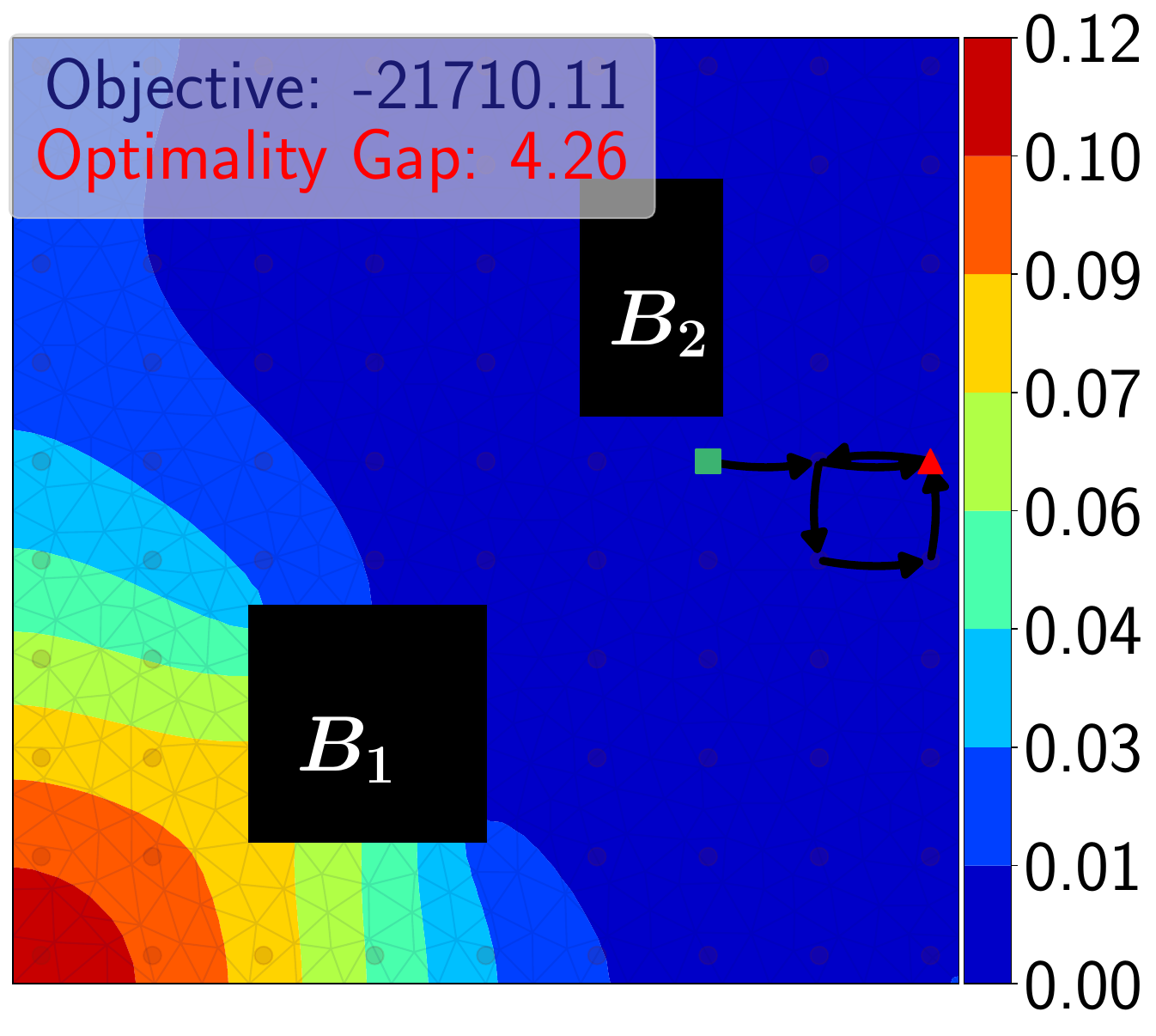}
      \caption{
        Results of the experiment in \Cref{subsubsec:coarse_first_order_results} without 
        employing a baseline.
      }\label{fig:coarse_first_order_iterations_with_random_sample_no_baseline}
    \end{figure}

    The results in \Cref{fig:Gradient_Variance_Analysis_UseSameSample} and 
    \Cref{fig:coarse_first_order_iterations_with_random_sample_no_baseline}
    clearly show that employing the baseline is critical for the performance of 
    the optimization procedure.

  \subsection{Results with Fine Navigation Mesh and $1$ Moving Sensor}
  \label{subsubsec:fine_mesh_results_unspecified_start_point}
    This experiment uses the fine navigation mesh (\Cref{fig:navigation_meshes}, right), 
    with trajectories of $n=19$ nodes and observation frequency $f=1$, yielding $19$ 
    scalar observations along the path.

    \begin{figure}[!htbp]
      \centering
      \includegraphics[width=0.53\linewidth]{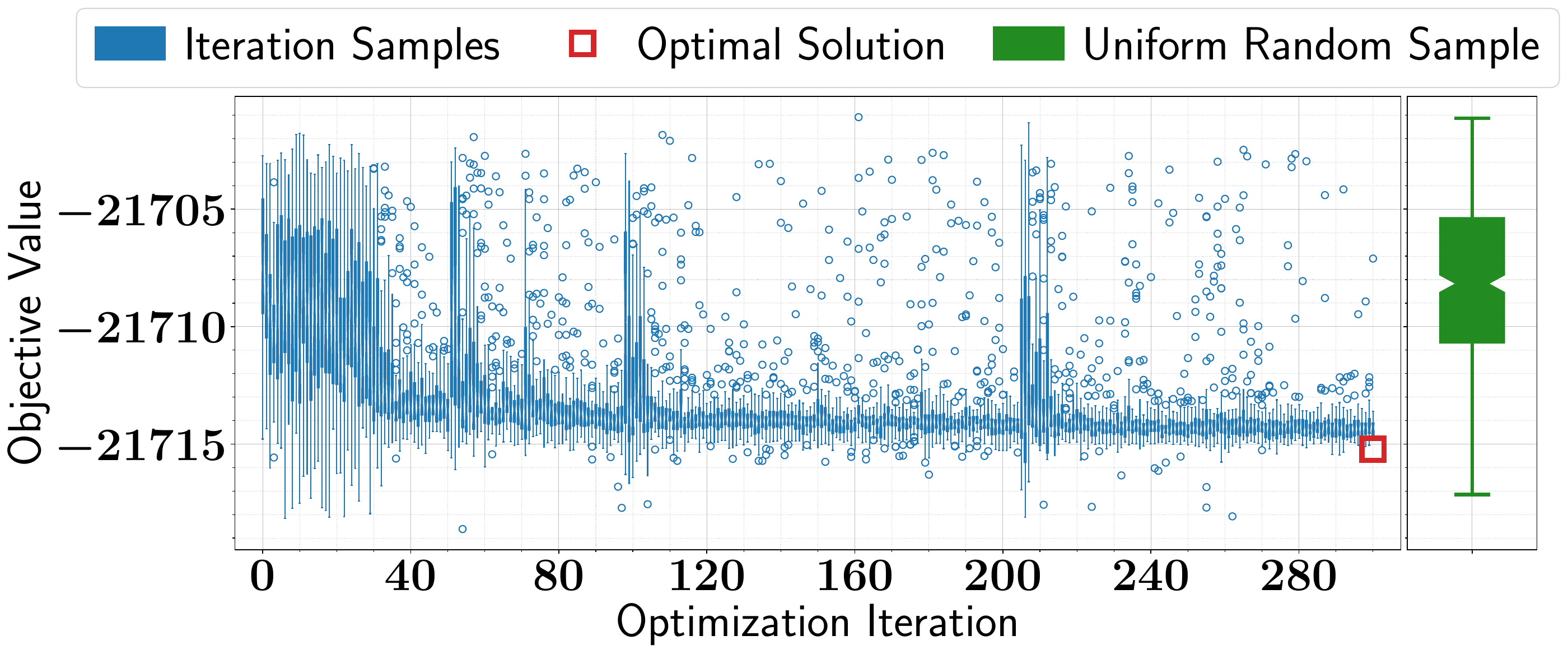}
      \includegraphics[width=0.215\linewidth]{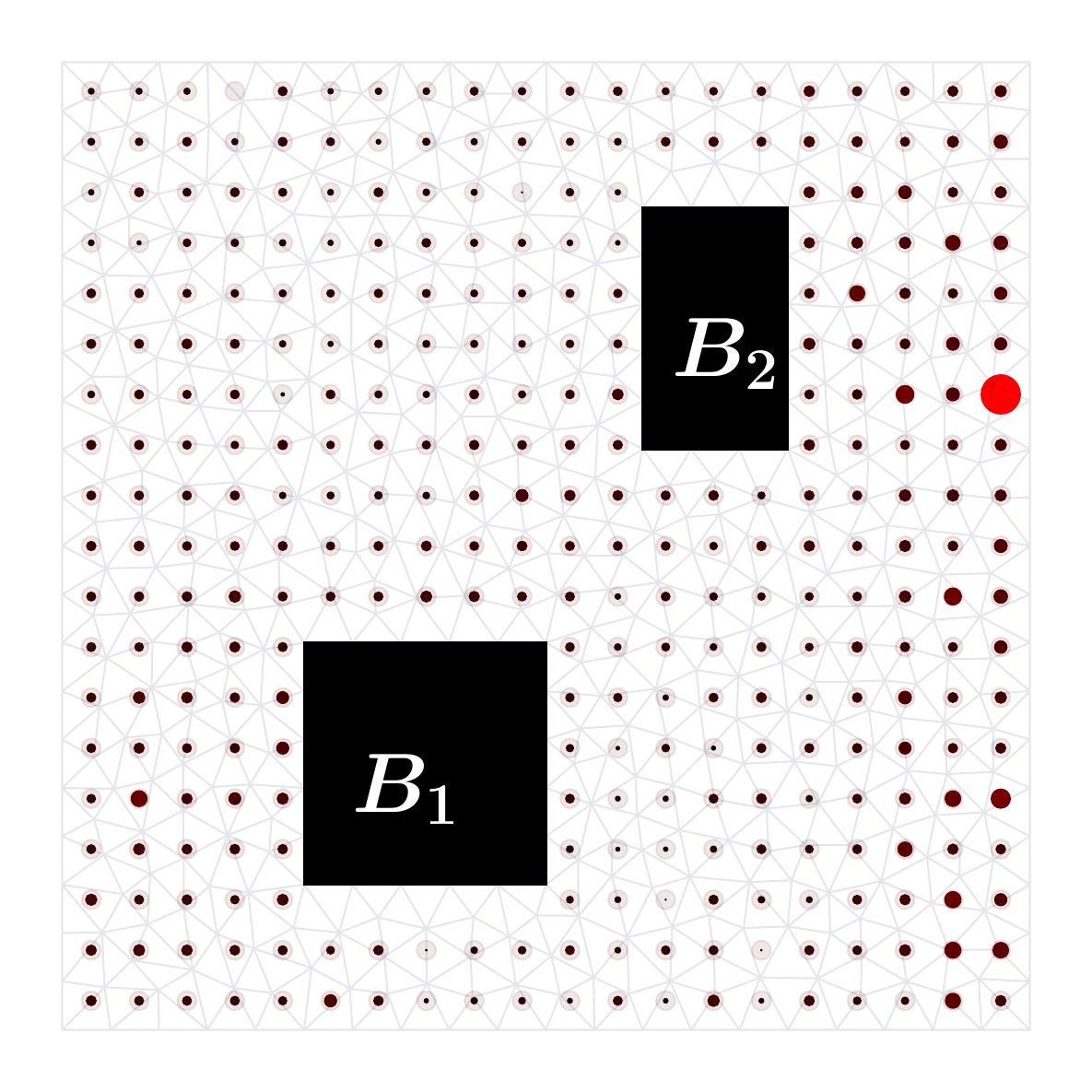}
      \includegraphics[width=0.235\linewidth]{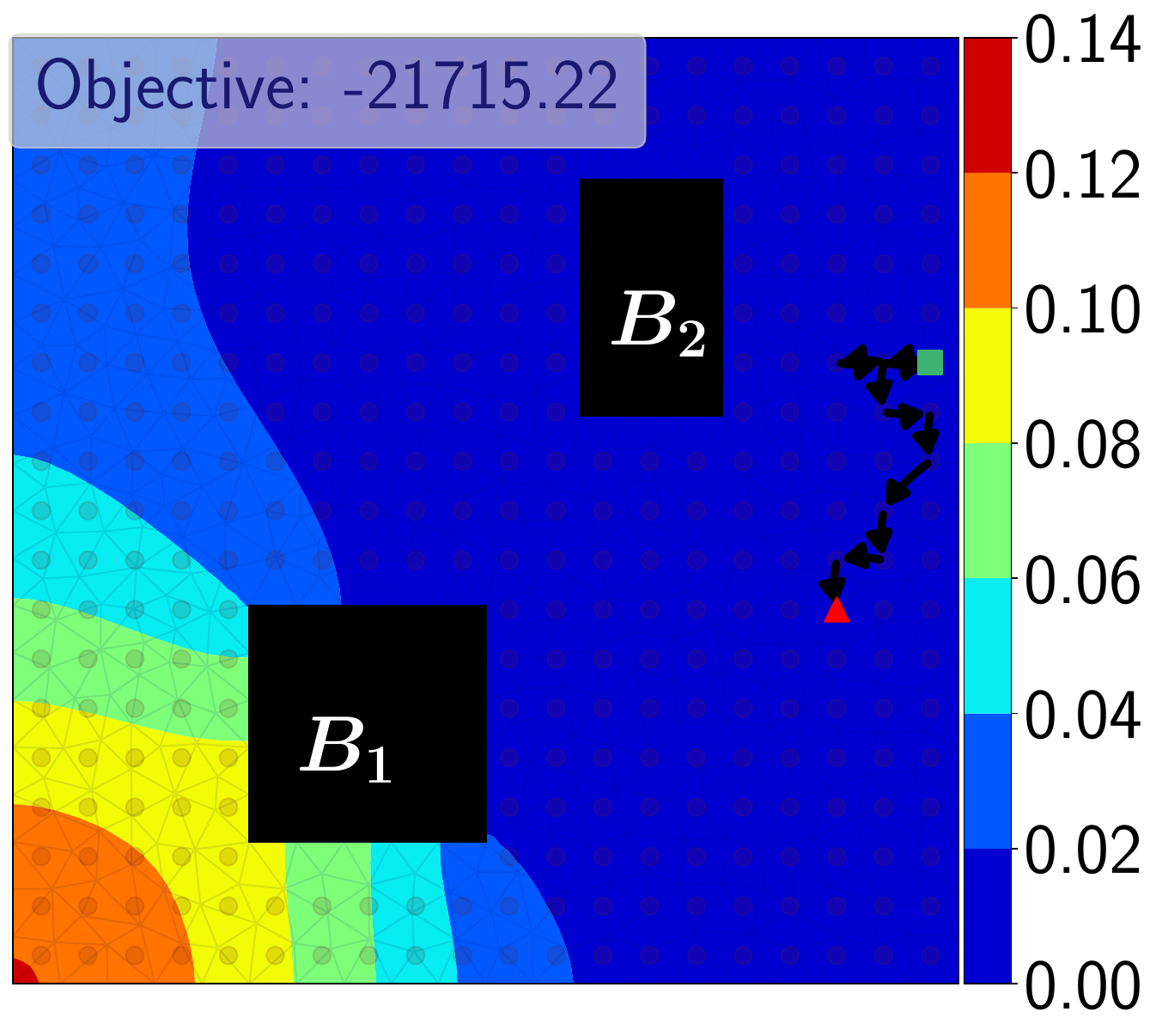}
      \caption{
        Results of \Cref{alg:probabilistic_path_optimization} with the first-order 
        policy (\Cref{defn:first_order_path_model}) applied to 
        the fine navigation mesh (\Cref{fig:navigation_meshes}, right) 
        with trajectory length of $n=19$.
      }\label{fig:fine_first_order_unspecified_start_point}
    \end{figure}

    Results obtained by using the 
    first-order policy (\Cref{defn:first_order_path_model}) are shown in 
    \Cref{fig:fine_first_order_unspecified_start_point}.
    Results obtained by using the higher-order policy (\Cref{defn:higher_order_path_model}) are shown in
    \Cref{fig:fine_higher_order_unspecified_start_point} 
    and \Cref{fig:fine_higher_order_unspecified_start_point_order_3_initial_param_and_traject}.
    Results obtained by using the generalized higher-order policy (\Cref{defn:generalized_higher_order_path_model}) are shown in
    \Cref{fig:fine_generalized_higher_order_unspecified_start_point} 
    and \Cref{fig:fine_generalized_higher_order_unspecified_start_point_order_3_initial_param_and_traject}.

    \begin{figure}[!htbp]
      \centering
      \includegraphics[width=0.495\linewidth]{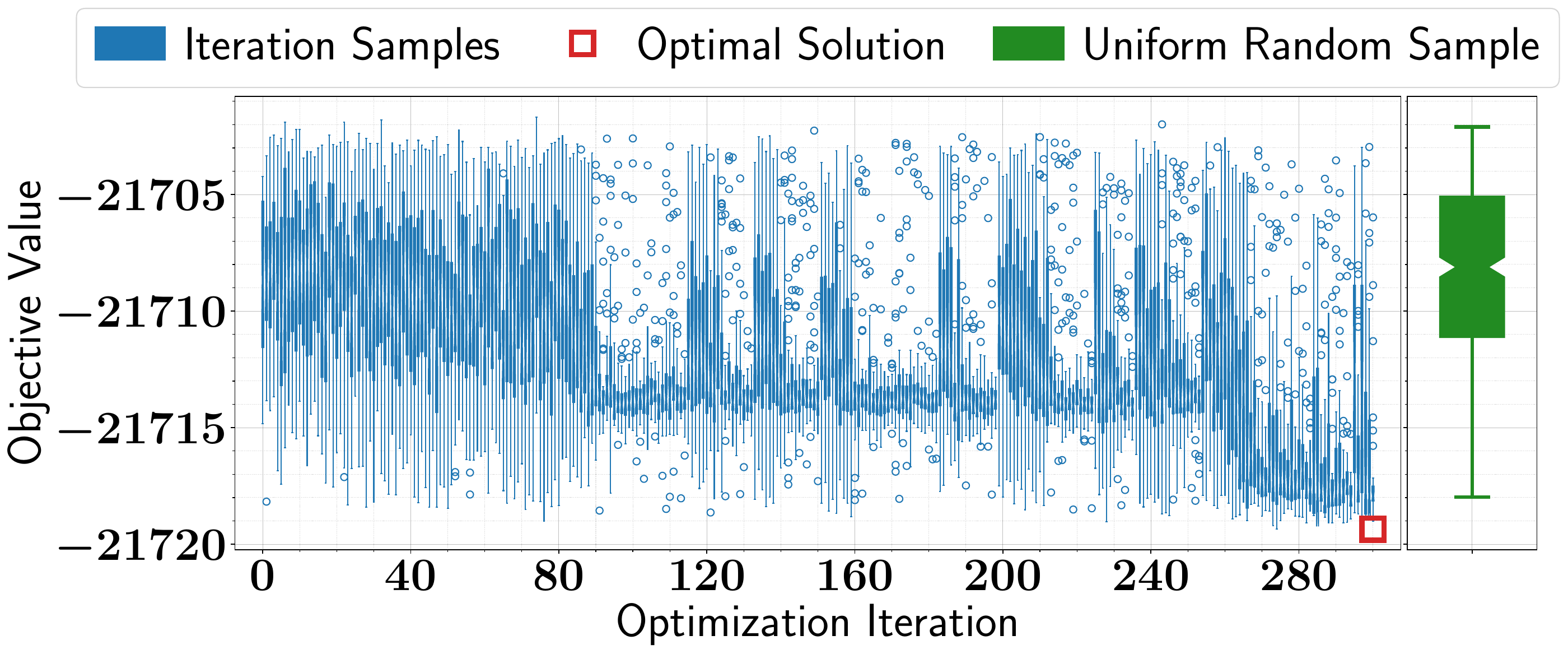}
      \includegraphics[width=0.495\linewidth]{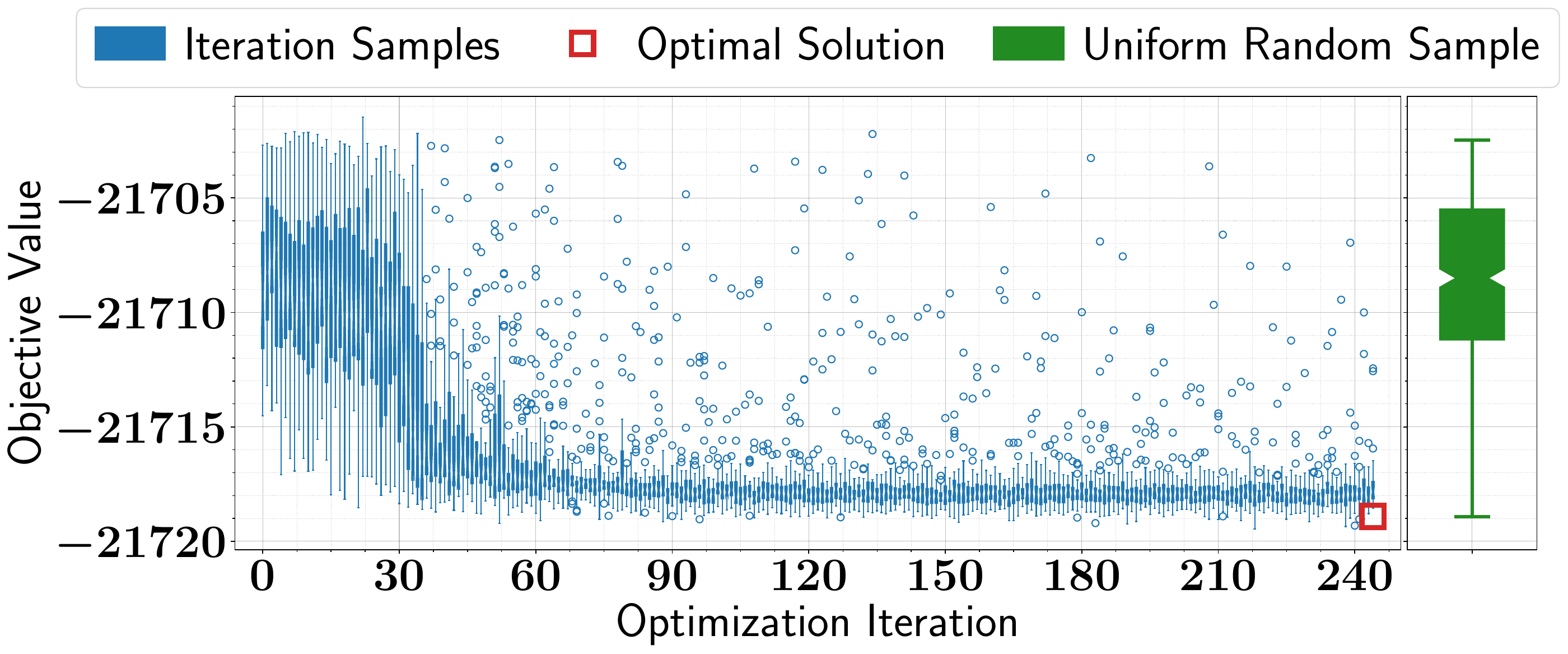}
      \includegraphics[width=0.495\linewidth]{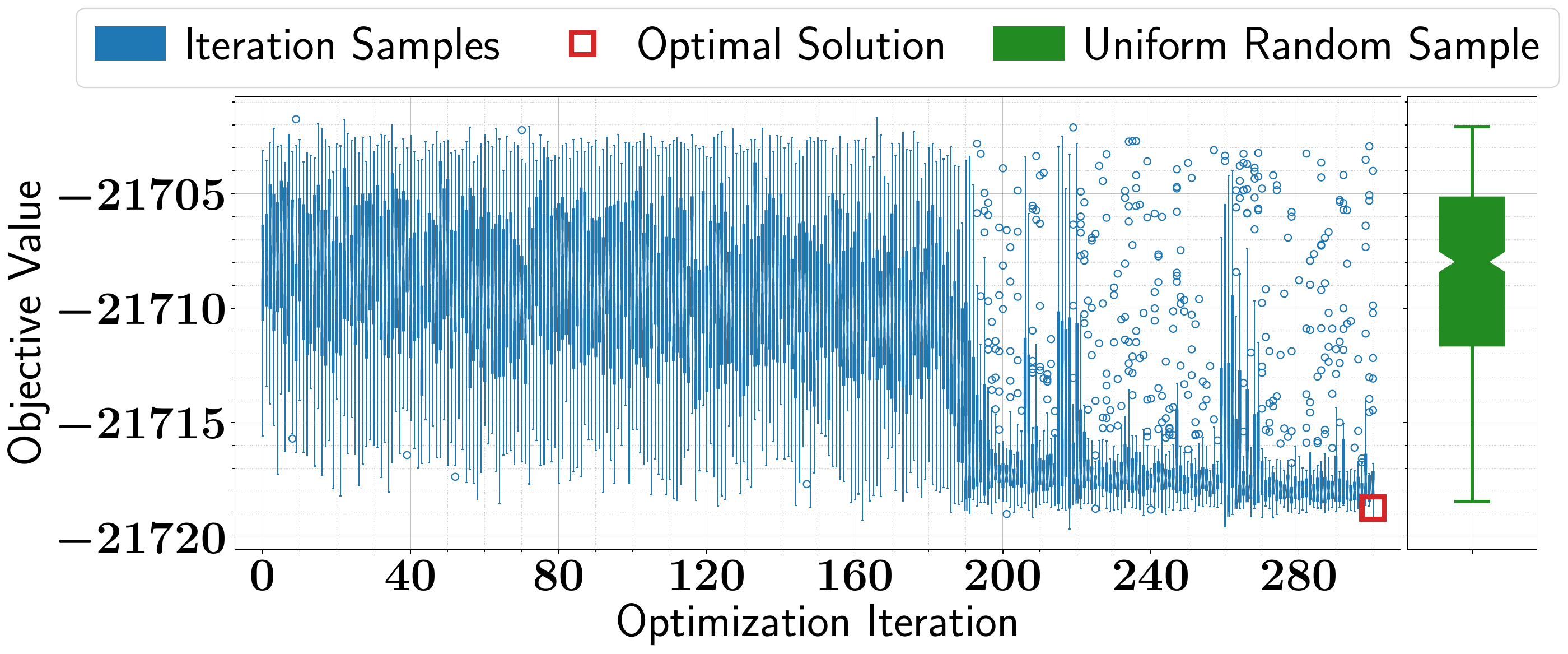}
      \includegraphics[width=0.495\linewidth]{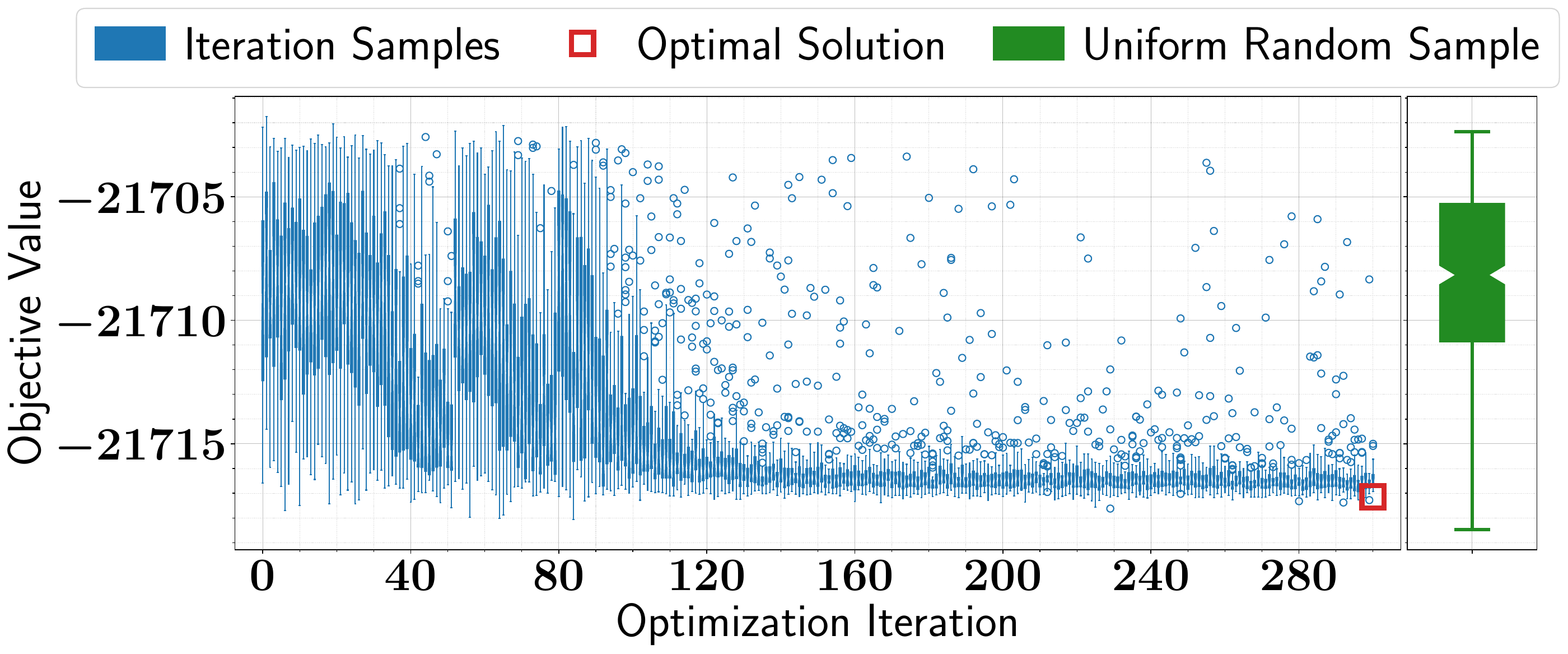}
      \caption{
        Results of \Cref{alg:probabilistic_path_optimization} with the 
        higher-order 
        policy (\Cref{defn:higher_order_path_model}) applied to 
        the fine navigation mesh (\Cref{fig:navigation_meshes}, right) 
        with trajectory length of $n=19$.
        Results are shown for policy order $k=3$ (first row) and $k=5$ (second row). 
        The first column shows results with lag weights being calibrated by the 
        optimization procedure, and 
        the second column shows results with lag weights modeled 
        by \eqref{eqn:decreasing_lag_weights}.
      }\label{fig:fine_higher_order_unspecified_start_point}
    \end{figure}
    \begin{figure}[!htbp]
      \begin{subfigure}[t]{0.24\linewidth}
        \includegraphics[width=0.91\linewidth]{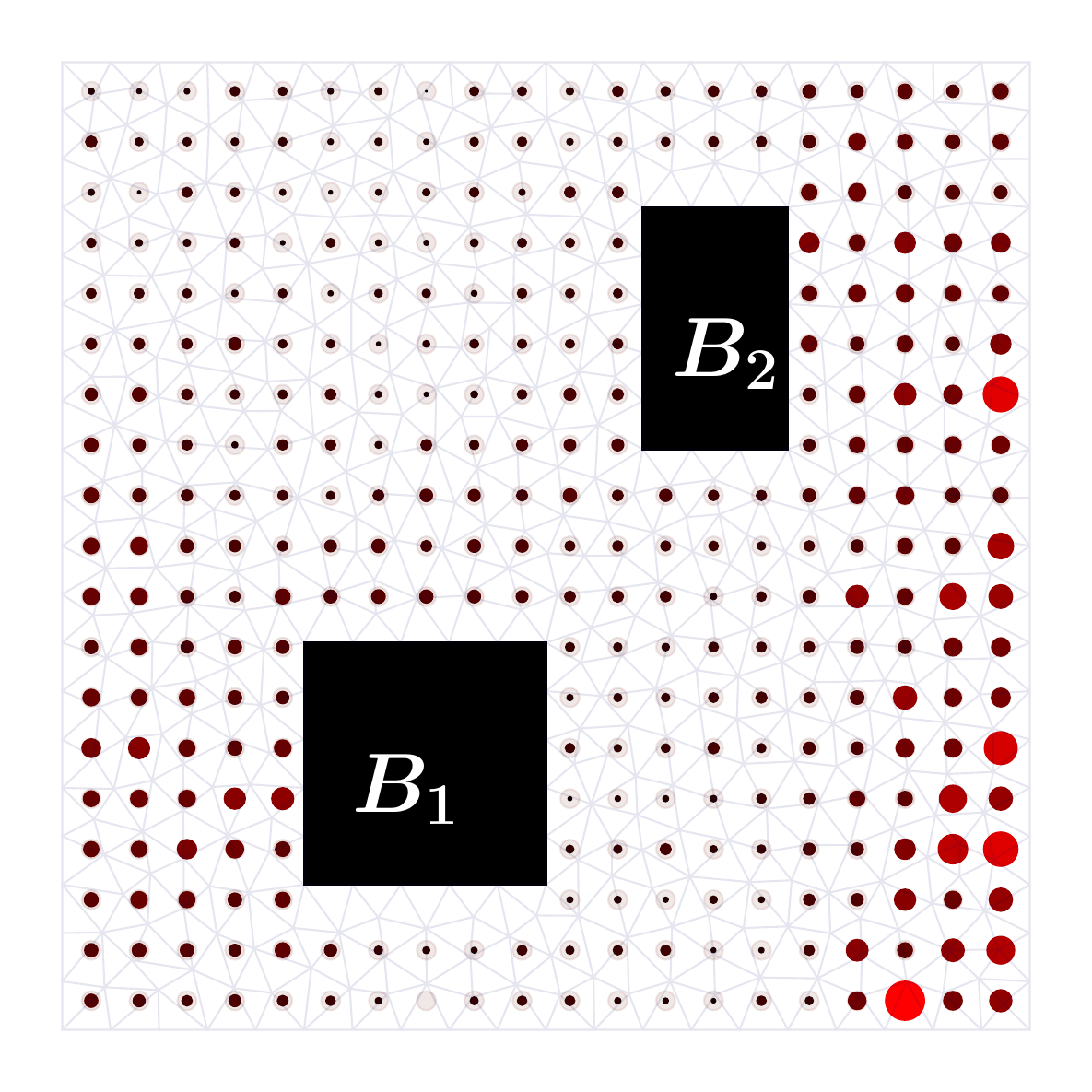}
      \end{subfigure}%
      \begin{subfigure}[t]{0.24\linewidth}
        \includegraphics[width=0.91\linewidth]{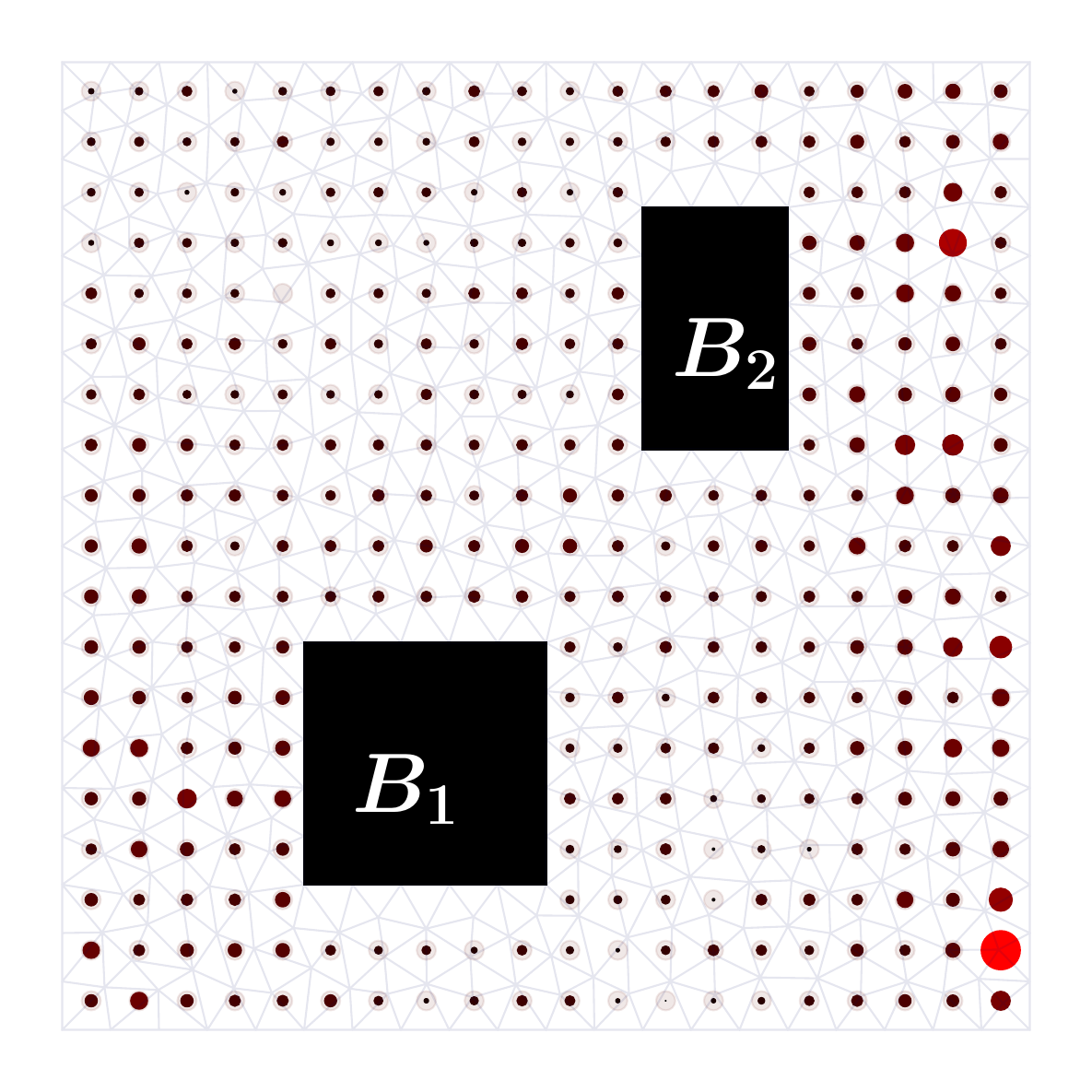}
      \end{subfigure}%
      \begin{subfigure}[t]{0.24\linewidth}
        \includegraphics[width=0.91\linewidth]{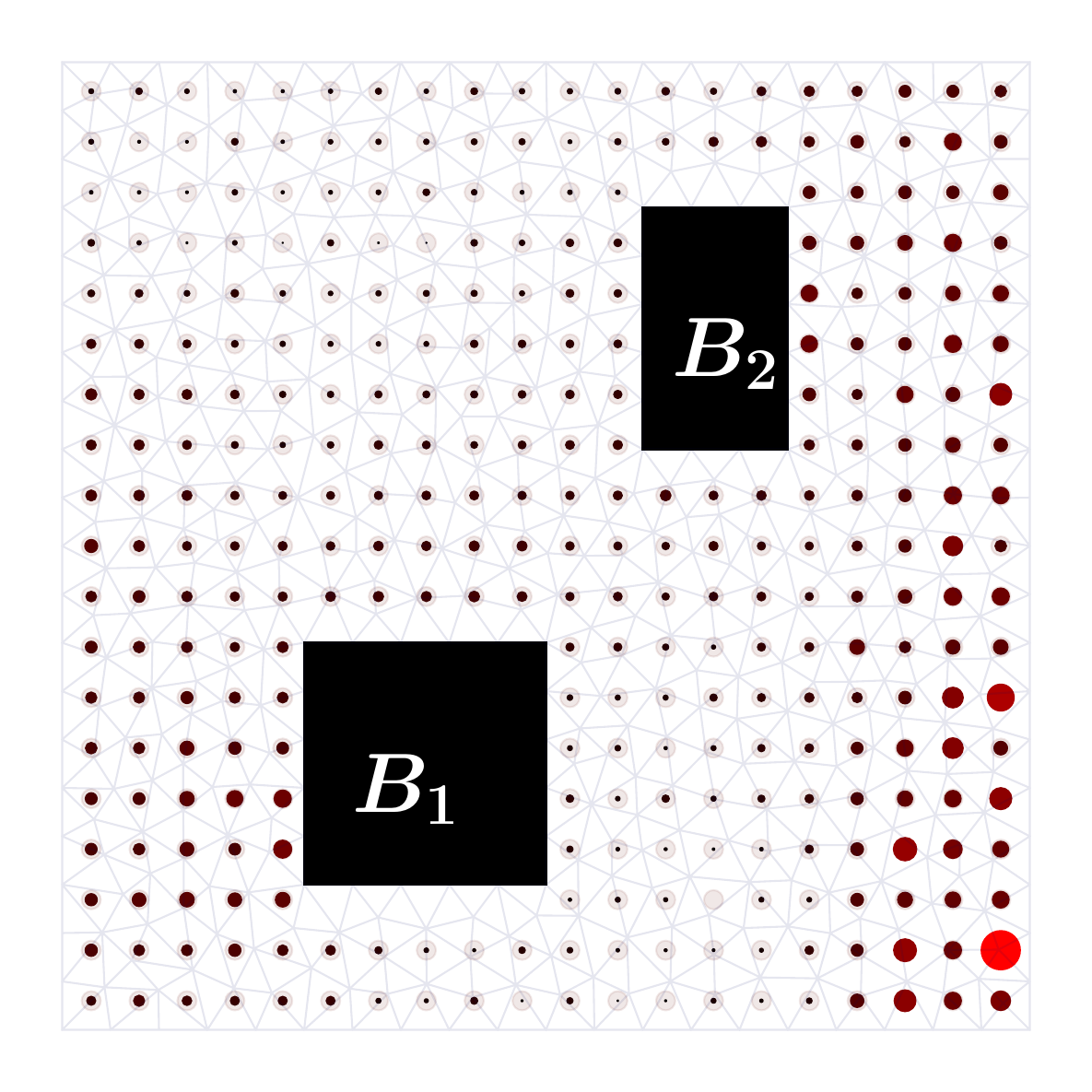}
      \end{subfigure}%
      \begin{subfigure}[t]{0.24\linewidth}
        \includegraphics[width=0.91\linewidth]{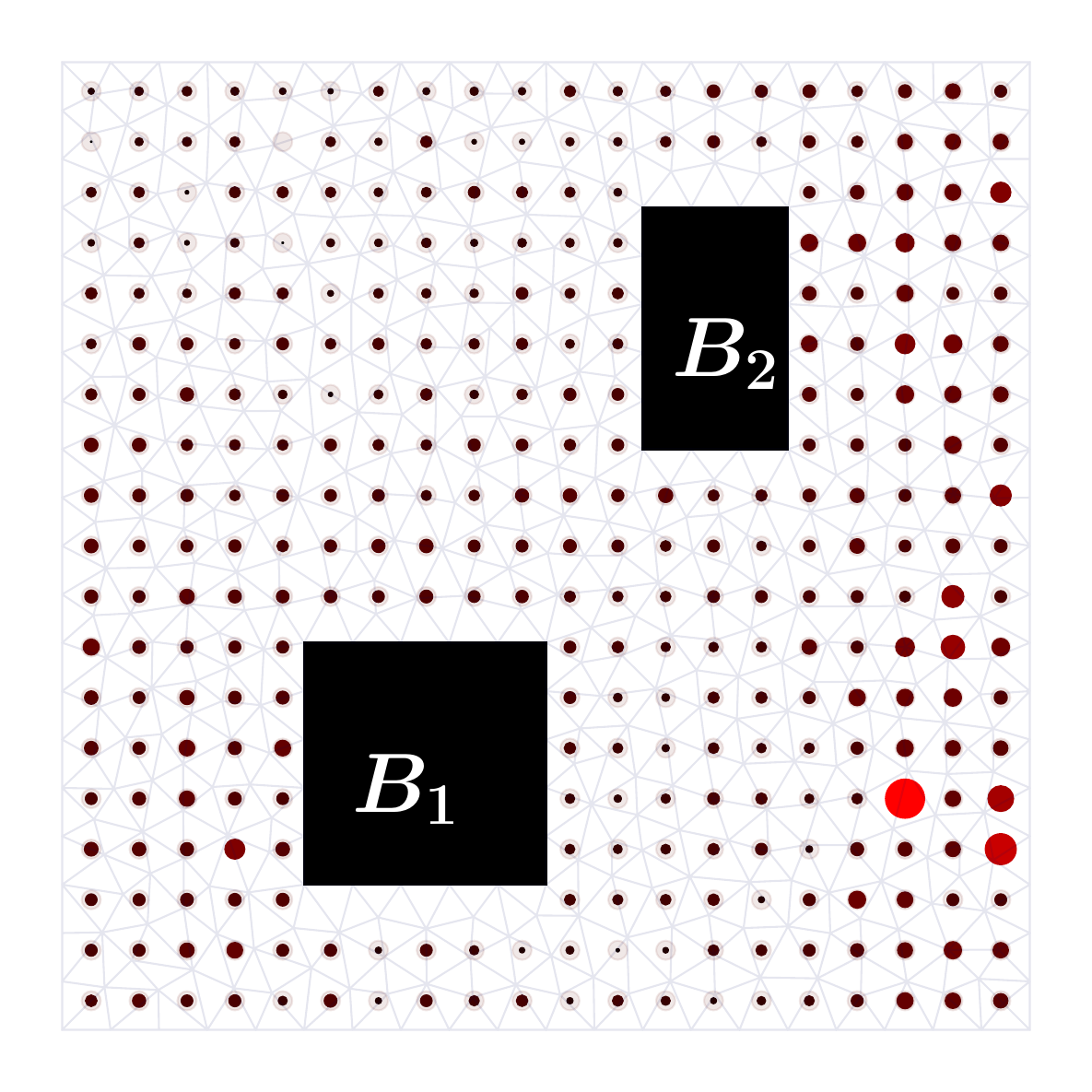}
      \end{subfigure}

      \begin{subfigure}[t]{0.245\linewidth}
        \includegraphics[width=\linewidth]{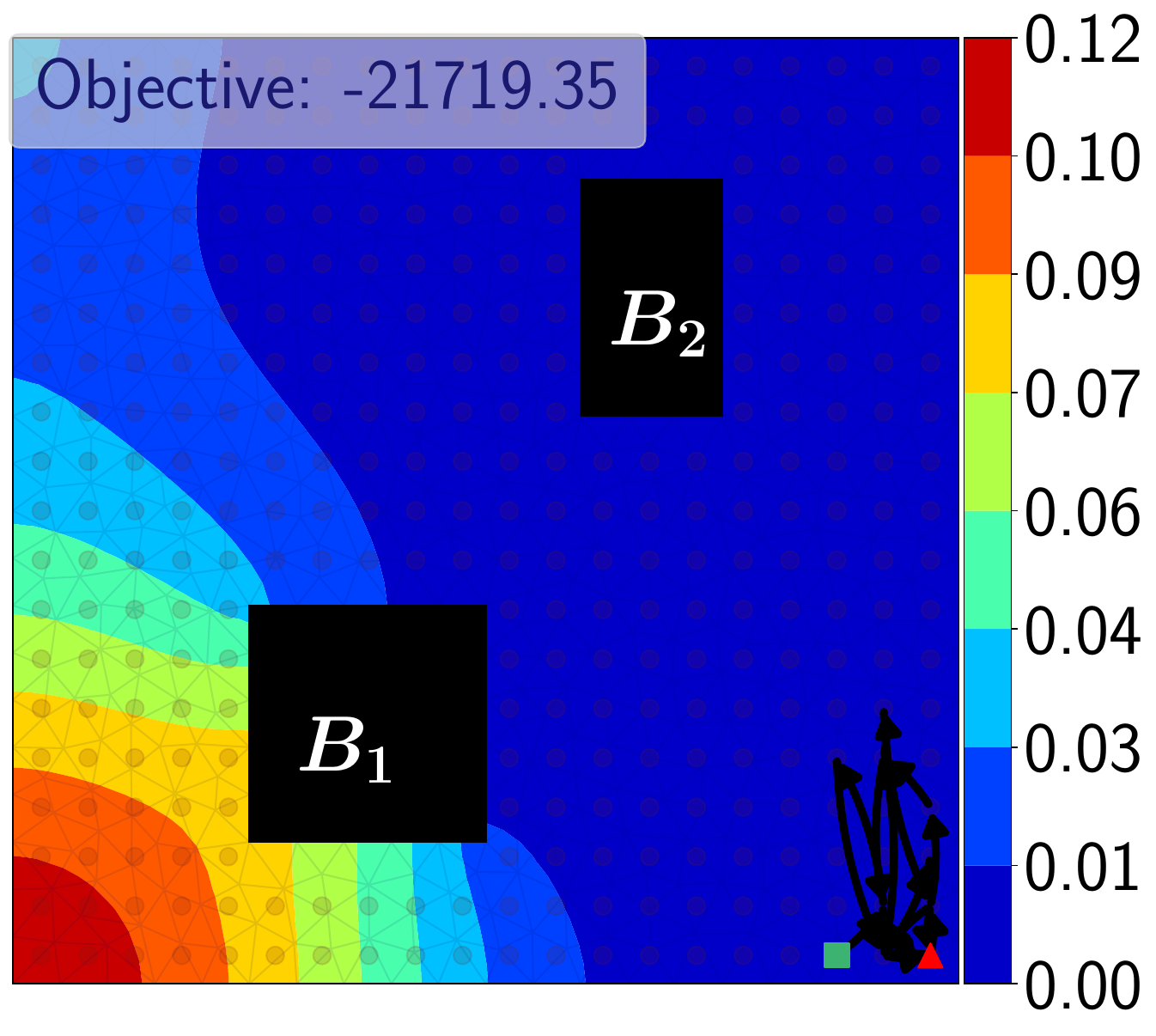}
      \end{subfigure}%
      \begin{subfigure}[t]{0.245\linewidth}
        \includegraphics[width=\linewidth]{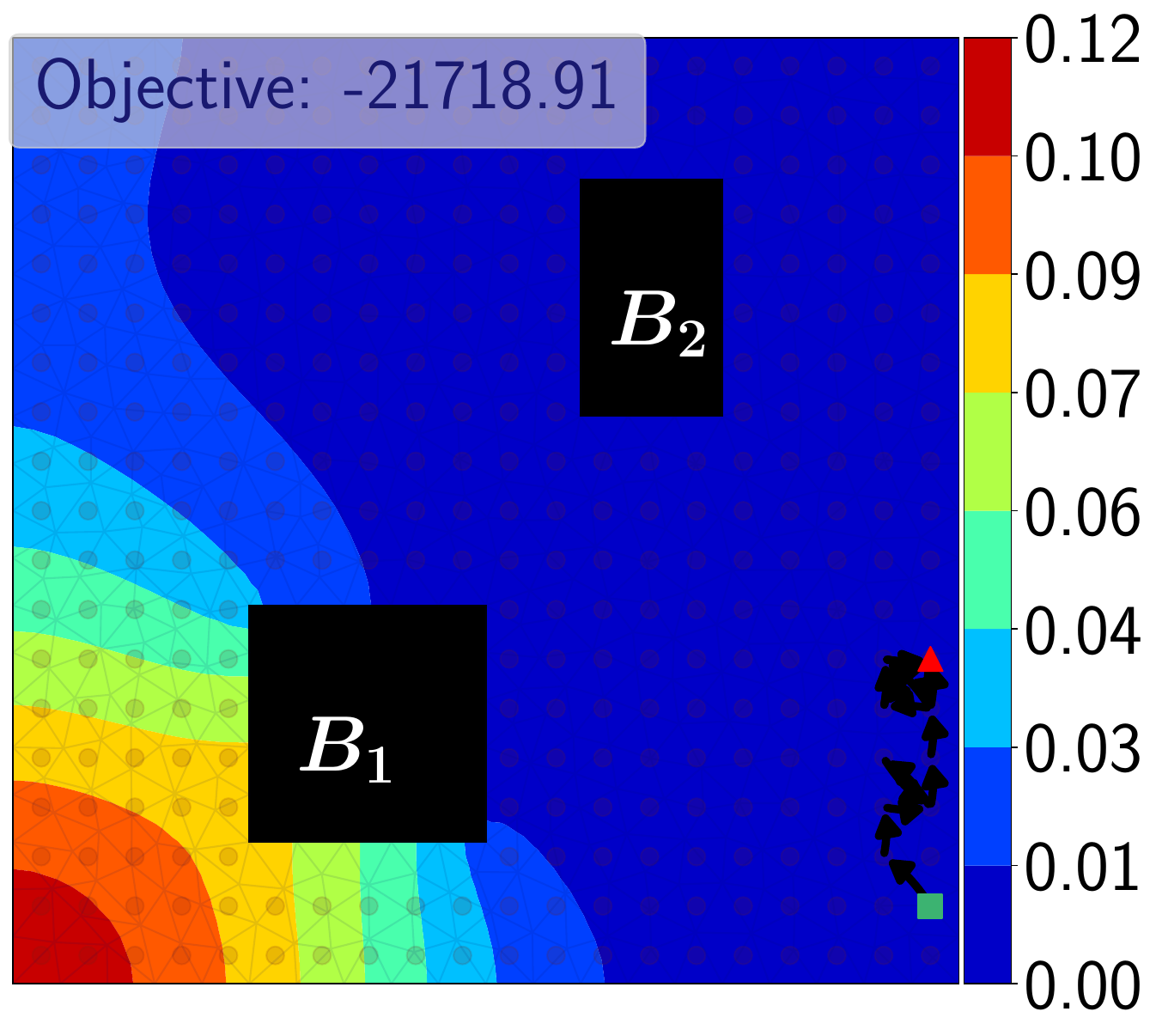}
      \end{subfigure}%
      \begin{subfigure}[t]{0.245\linewidth}
        \includegraphics[width=\linewidth]{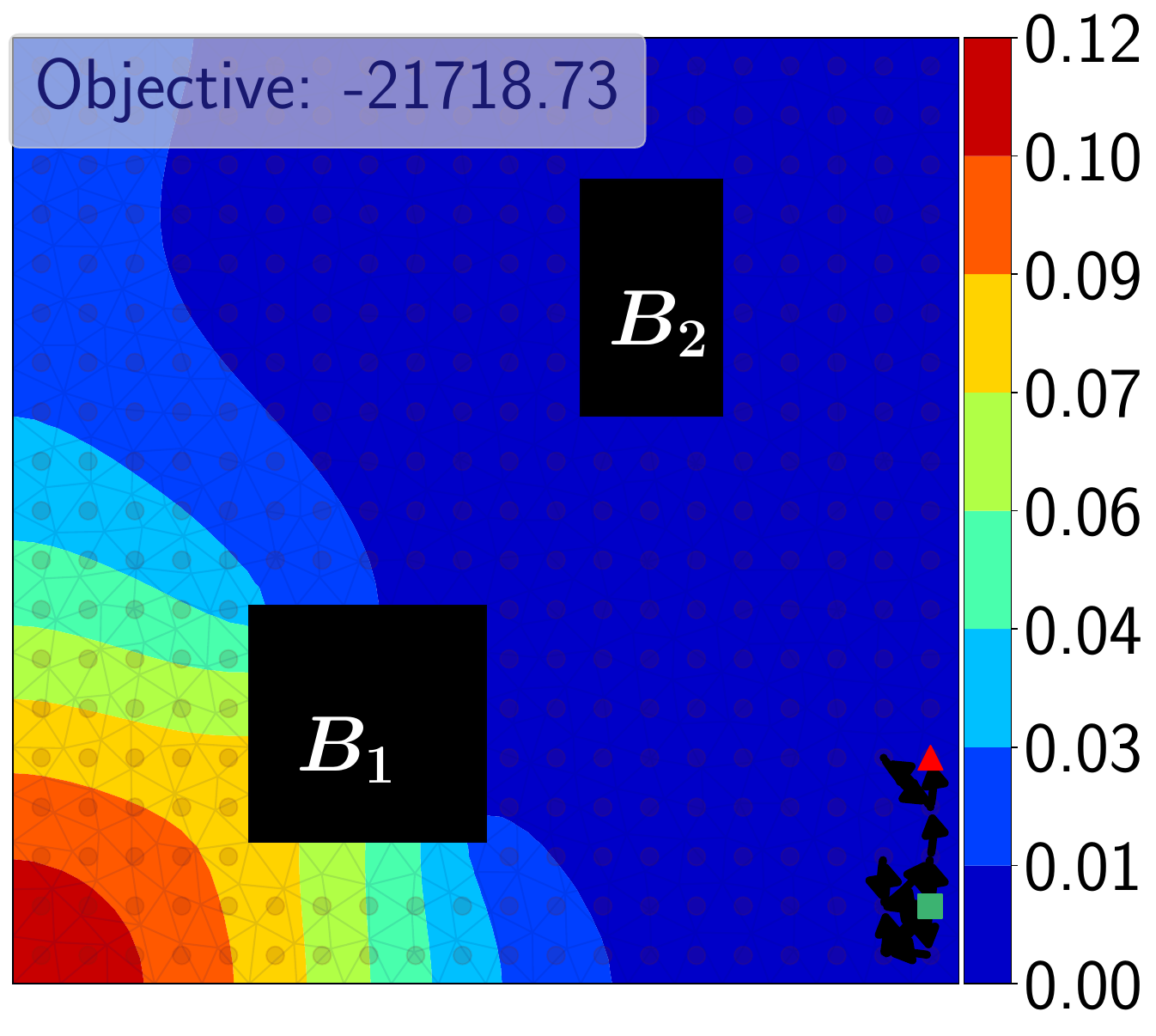}
      \end{subfigure}%
      \begin{subfigure}[t]{0.245\linewidth}
        \includegraphics[width=\linewidth]{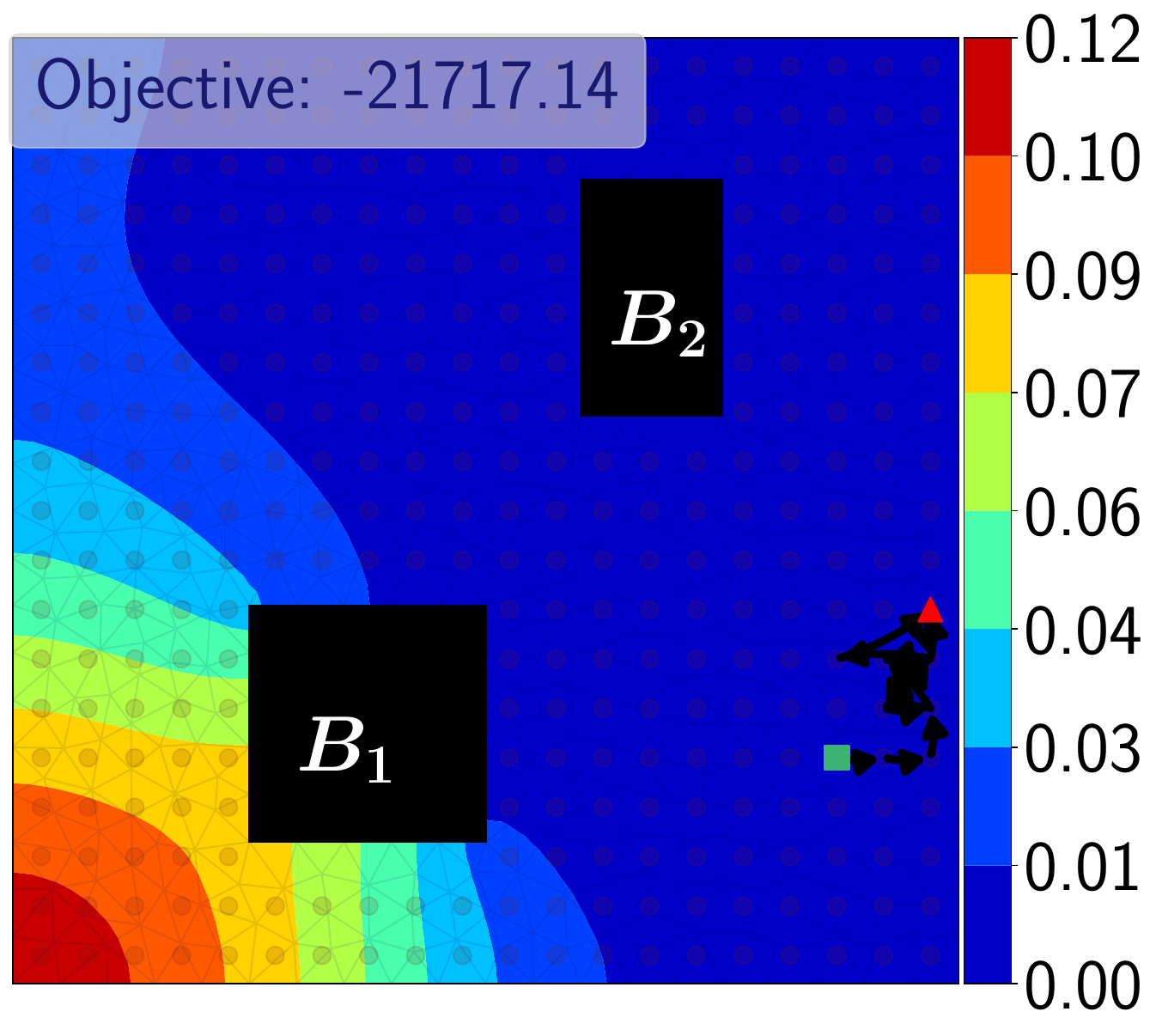}
      \end{subfigure}
      \caption{
        Optimal initial distribution parameters (top row) 
        and optimal trajectories (bottom row) 
        corresponding to \Cref{fig:fine_higher_order_unspecified_start_point}. 
        In each row, the first two panels match the first row of that figure, 
        followed by panels corresponding to its second row.
      }\label{fig:fine_higher_order_unspecified_start_point_order_3_initial_param_and_traject}
    \end{figure}
    \begin{figure}[!htbp]
      \centering
      \includegraphics[width=0.495\linewidth]{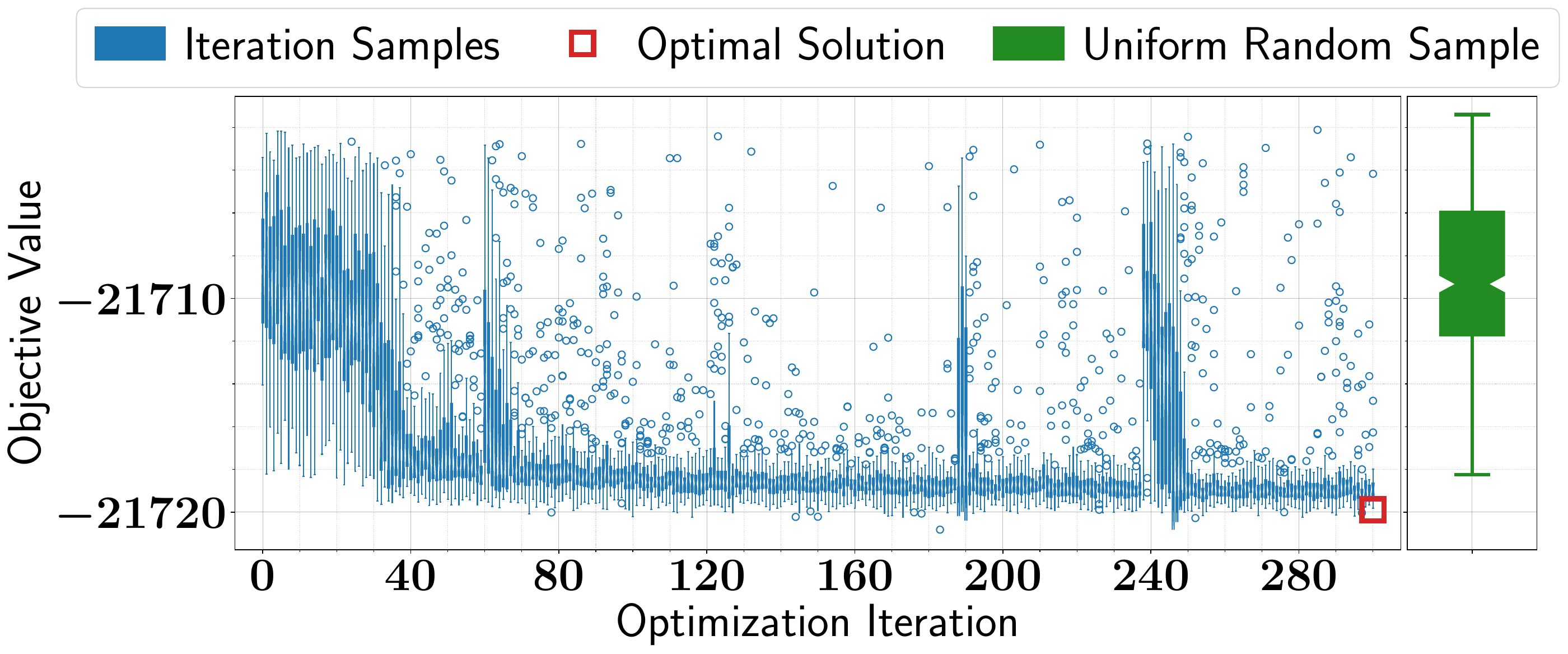}
      \includegraphics[width=0.495\linewidth]{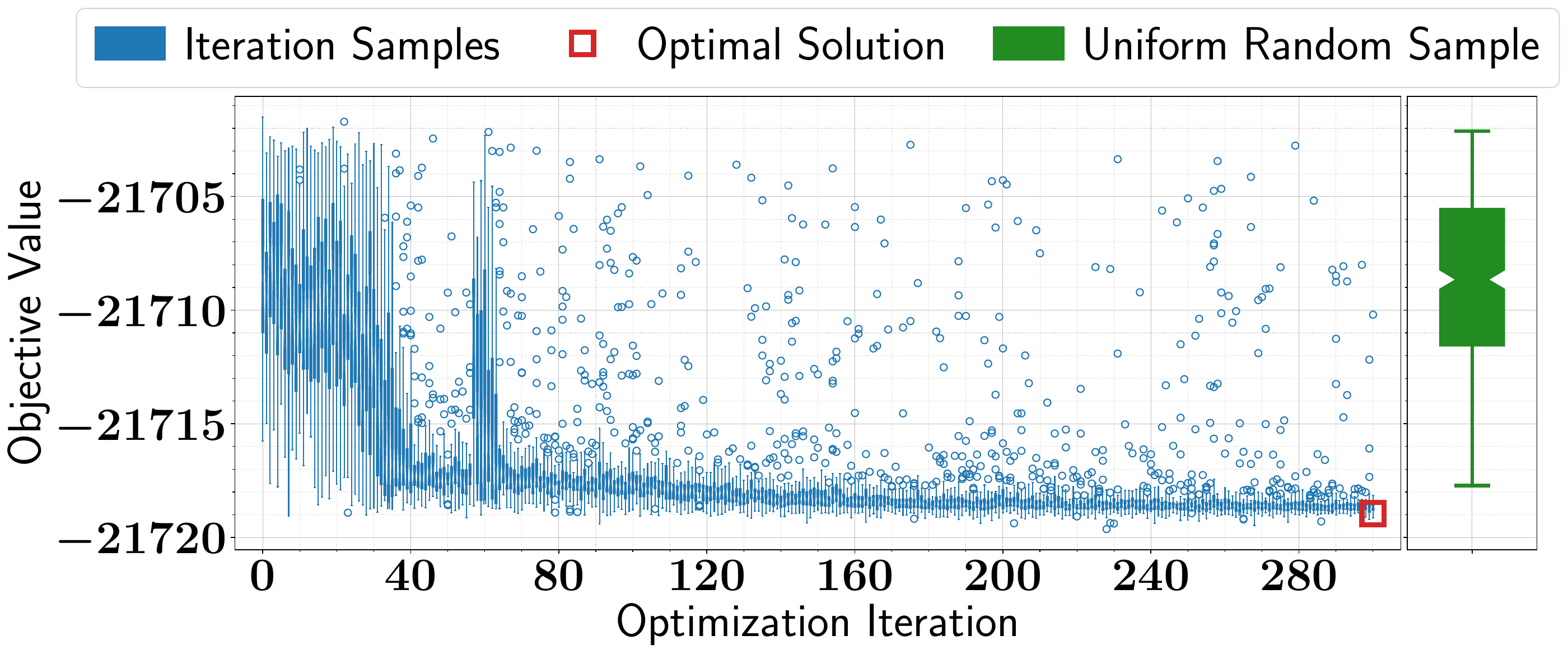}
      \includegraphics[width=0.495\linewidth]{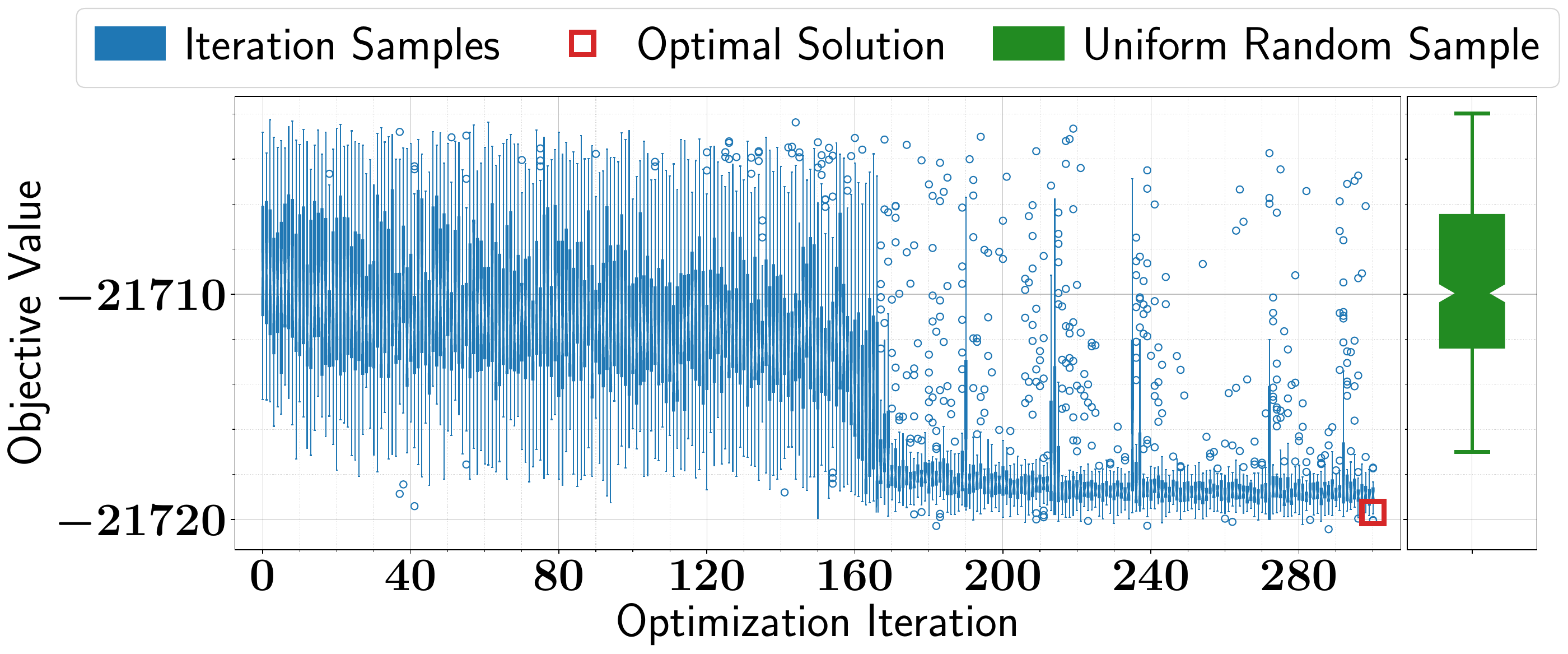}
      \includegraphics[width=0.495\linewidth]{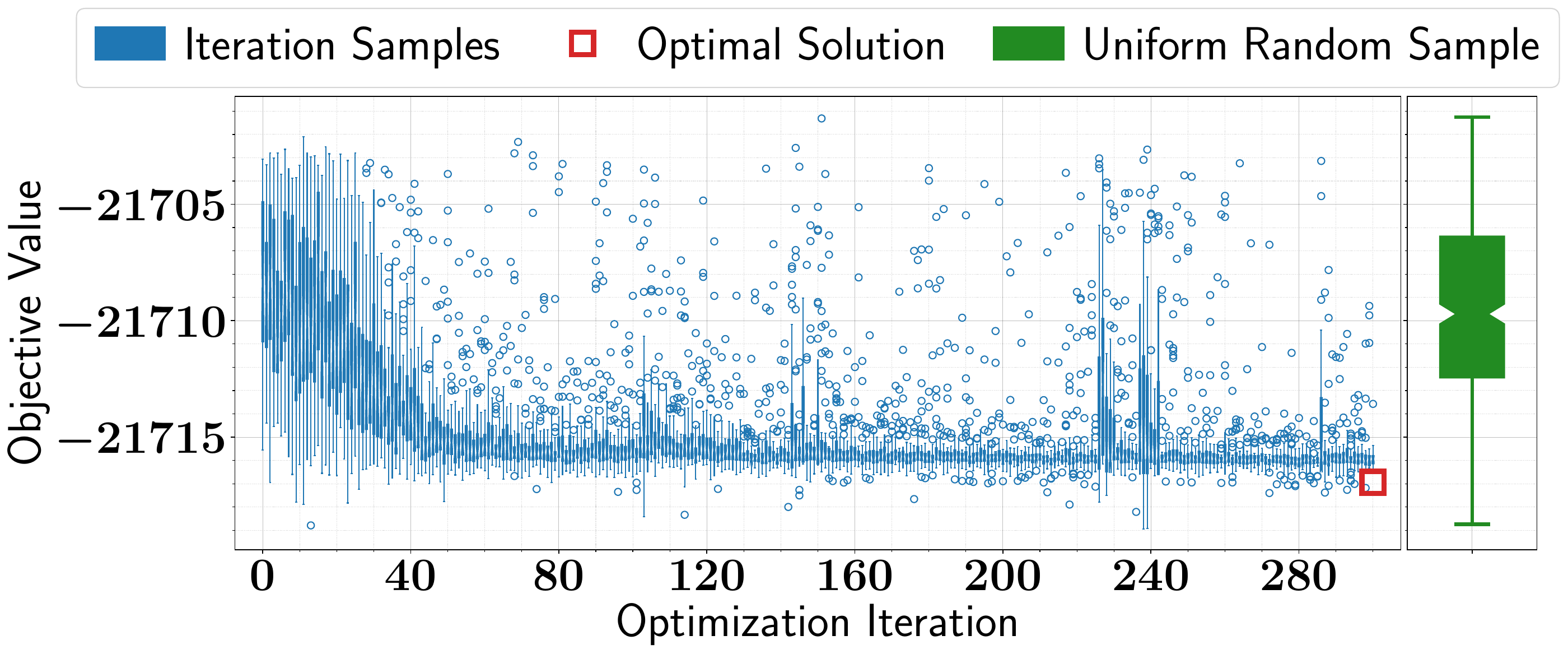}
      \caption{
        Similar to \Cref{fig:fine_higher_order_unspecified_start_point}.
        Here the policy given by \Cref{defn:generalized_higher_order_path_model} is used.
      }\label{fig:fine_generalized_higher_order_unspecified_start_point}
    \end{figure}
    \begin{figure}[!htbp]
      \begin{subfigure}[t]{0.24\linewidth}
        \includegraphics[width=0.91\linewidth]{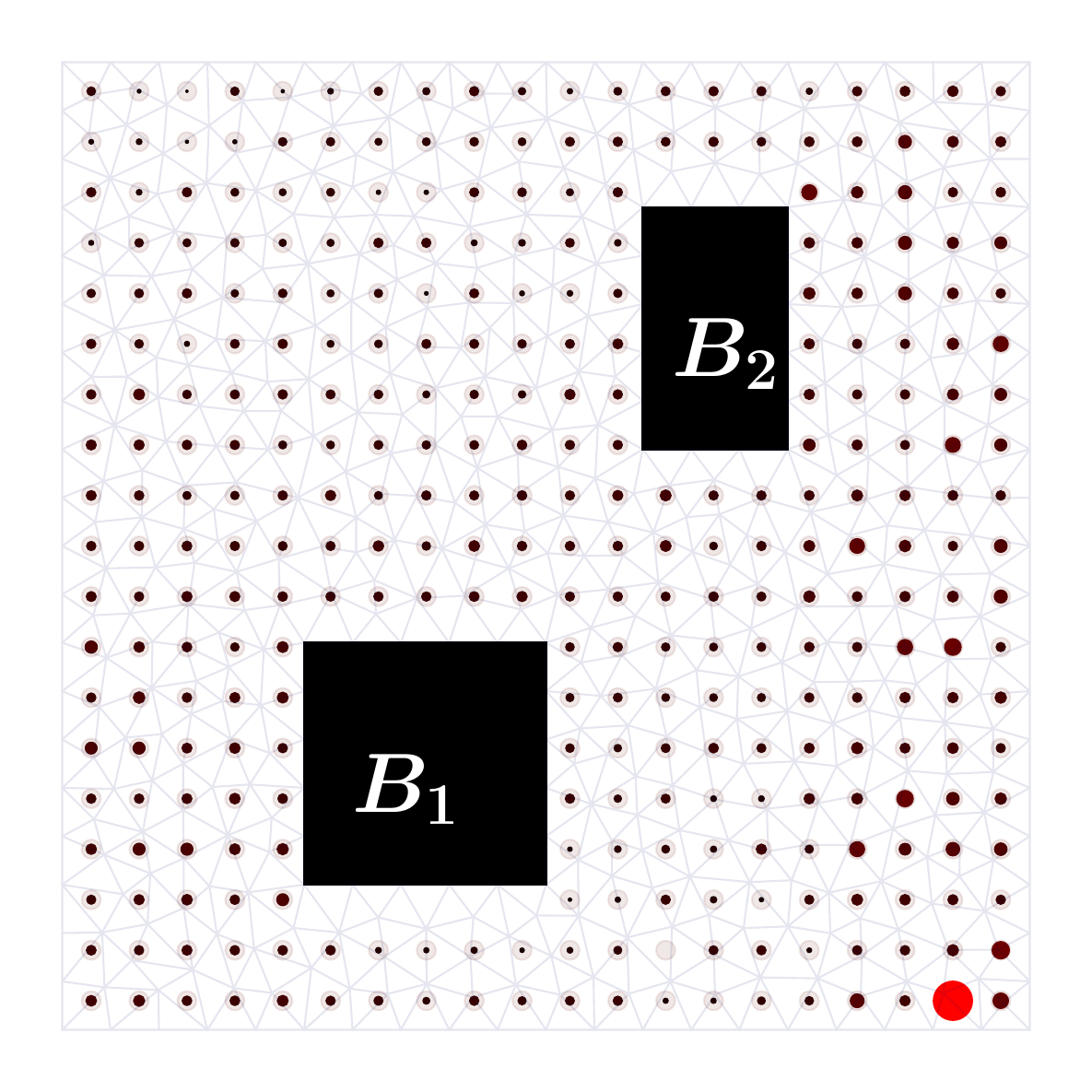}
      \end{subfigure}%
      \begin{subfigure}[t]{0.24\linewidth}
        \includegraphics[width=0.91\linewidth]{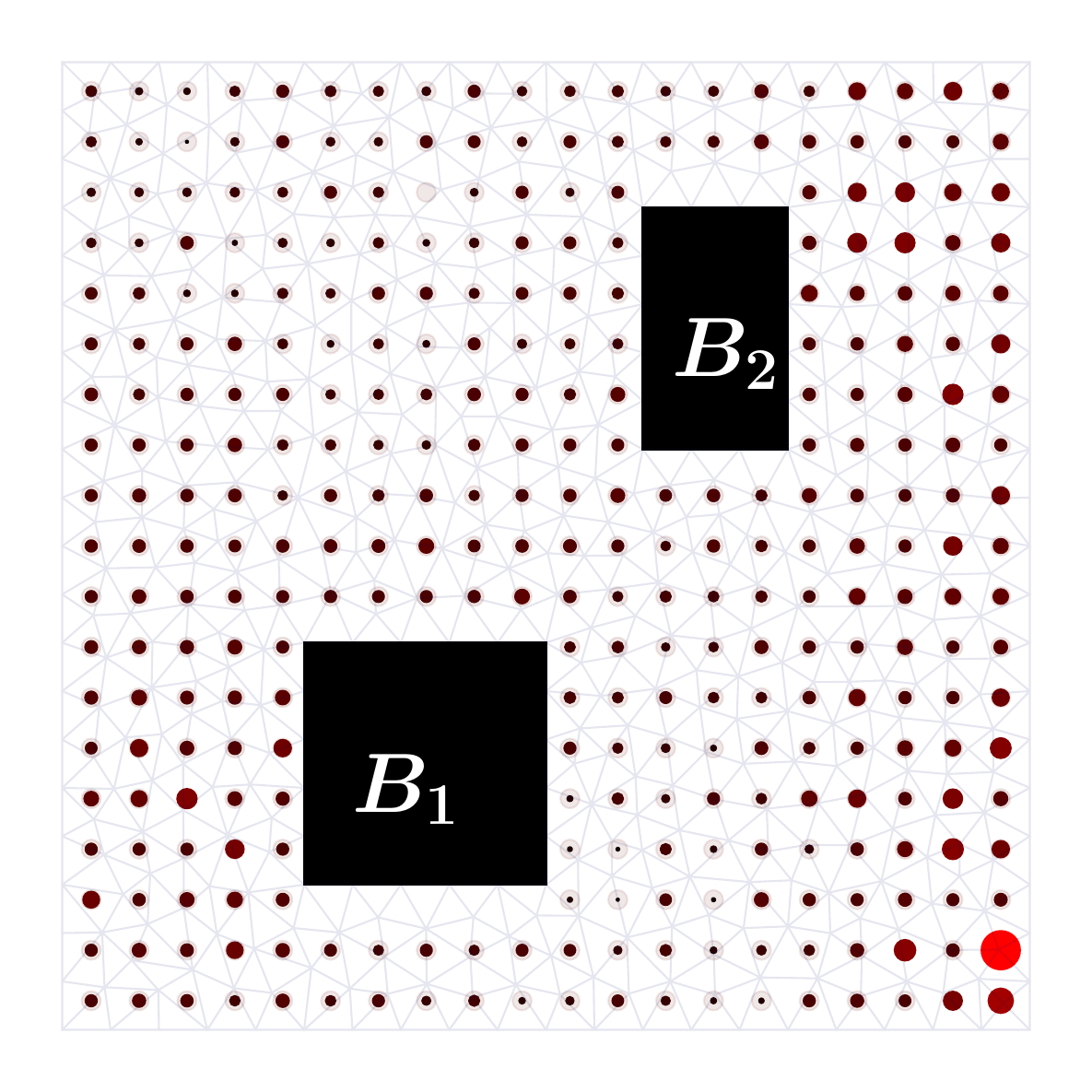}
      \end{subfigure}%
      \begin{subfigure}[t]{0.24\linewidth}
        \includegraphics[width=0.91\linewidth]{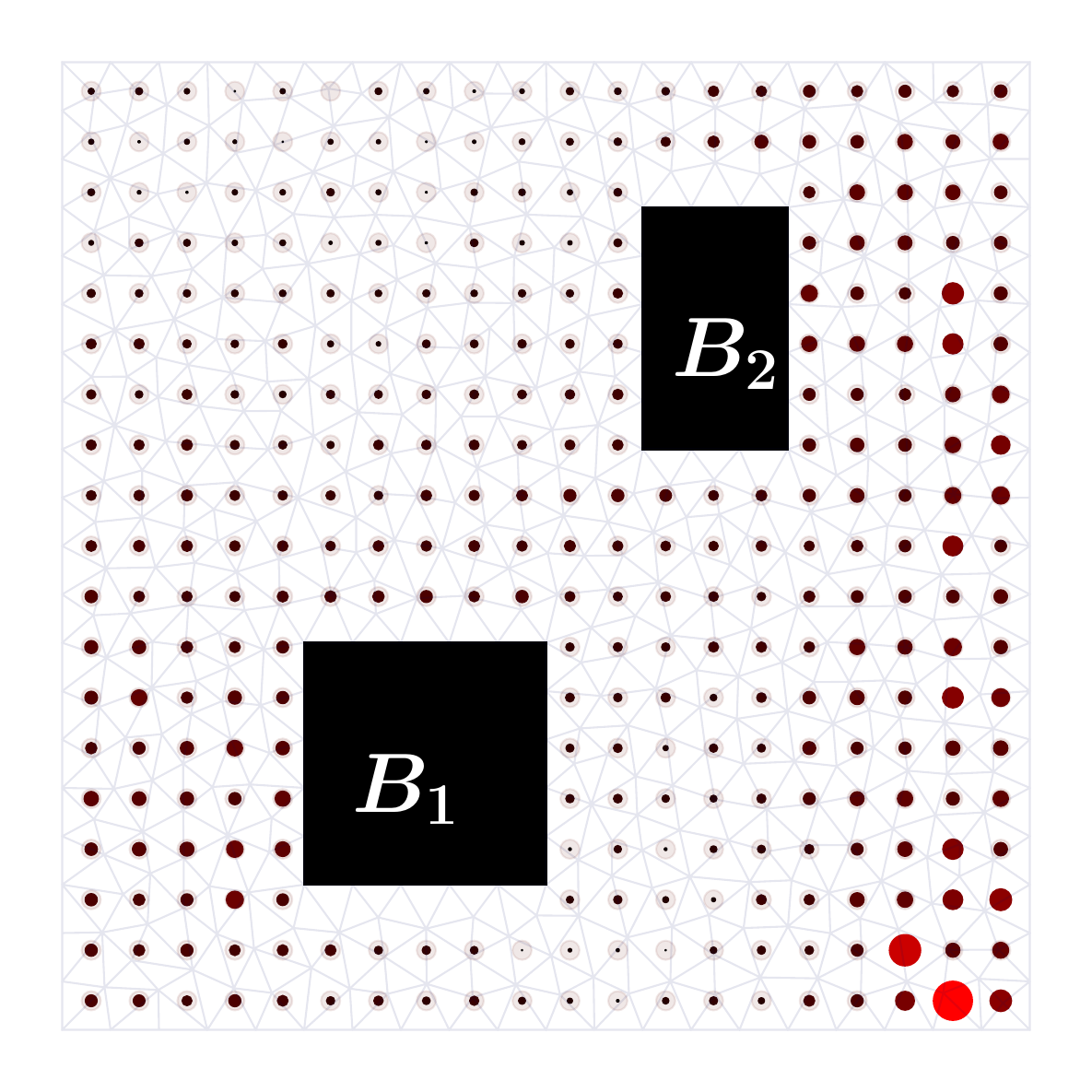}
      \end{subfigure}%
      \begin{subfigure}[t]{0.24\linewidth}
        \includegraphics[width=0.91\linewidth]{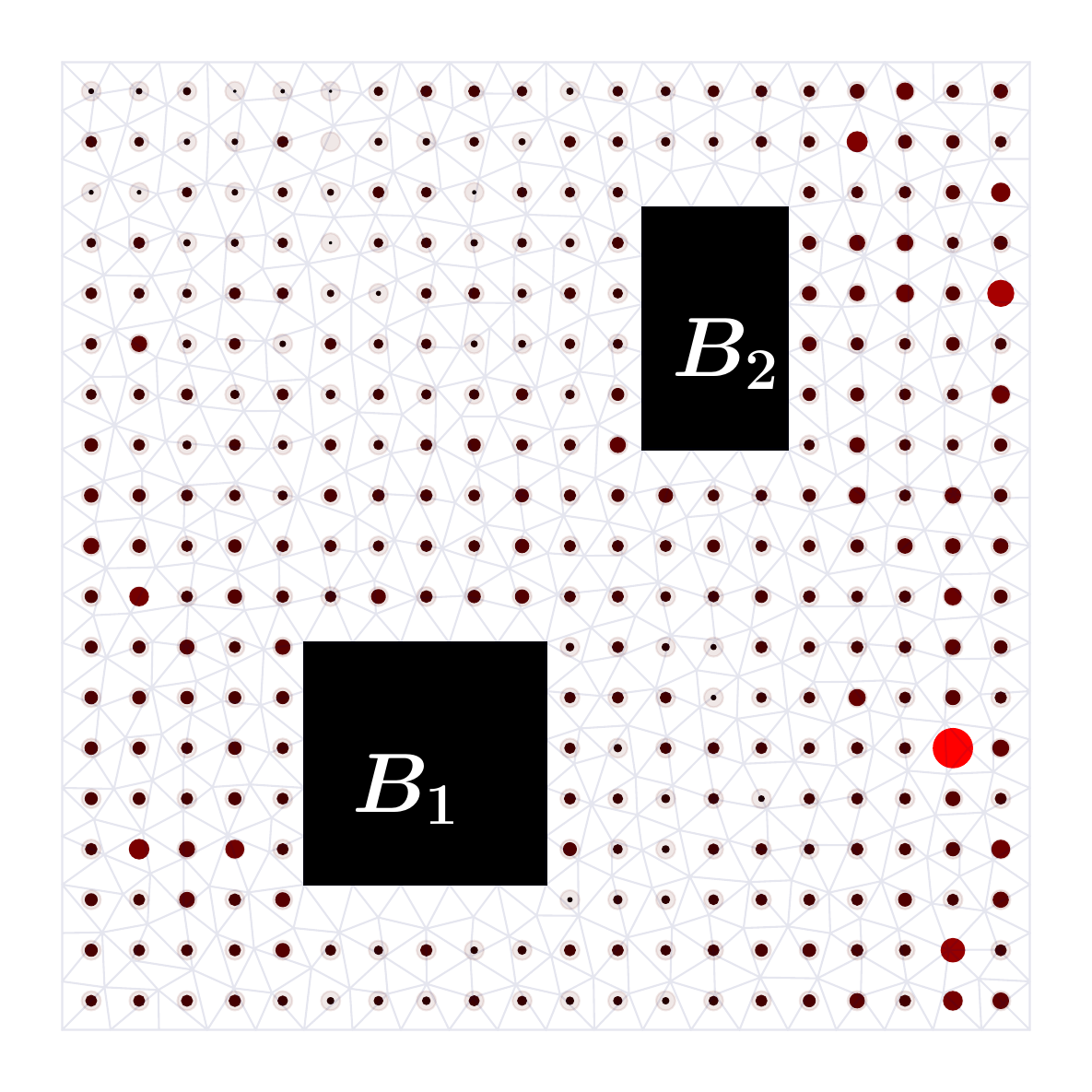}
      \end{subfigure}

      \begin{subfigure}[t]{0.245\linewidth}
        \includegraphics[width=\linewidth]{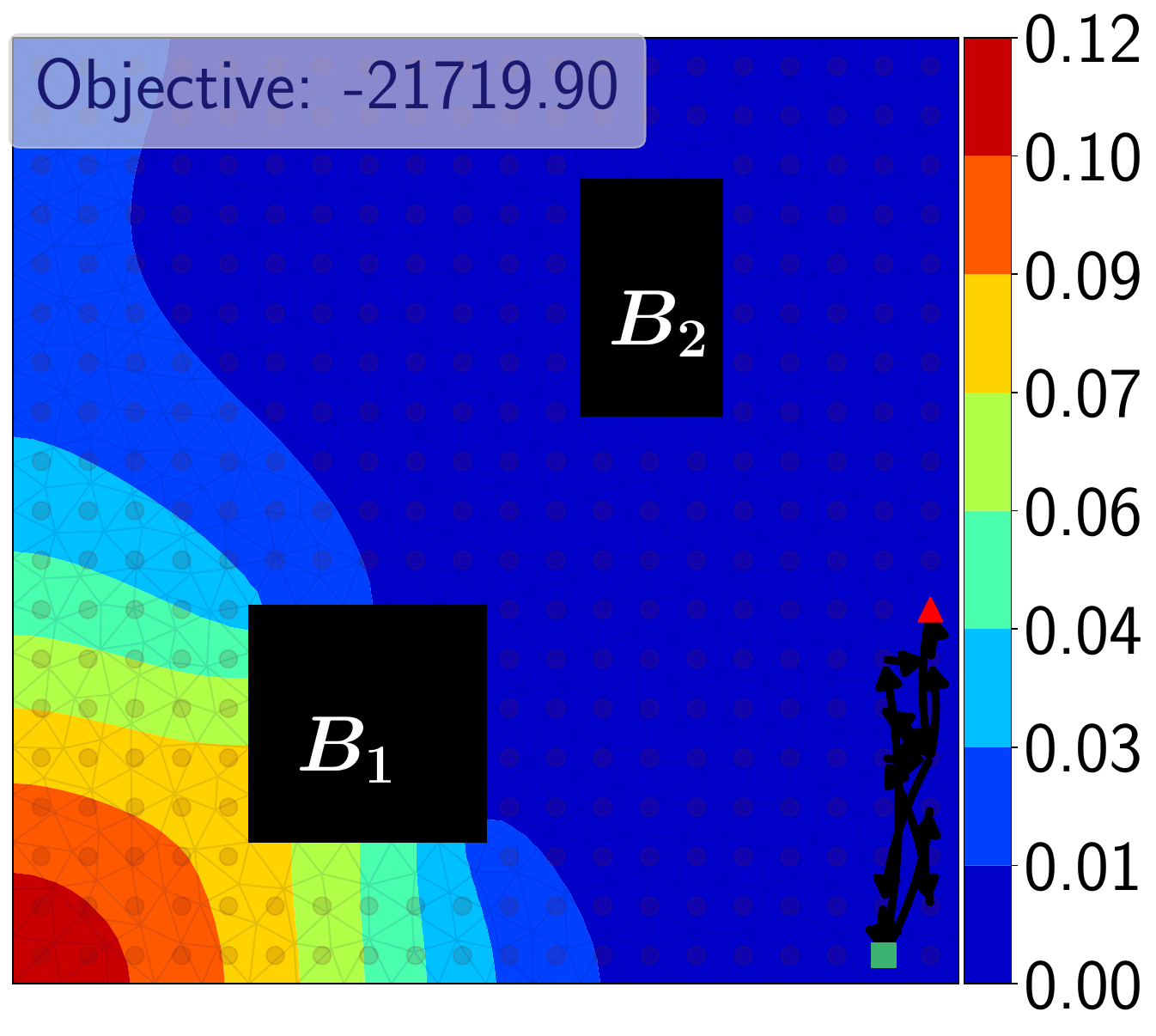}
      \end{subfigure}%
      \begin{subfigure}[t]{0.245\linewidth}
        \includegraphics[width=\linewidth]{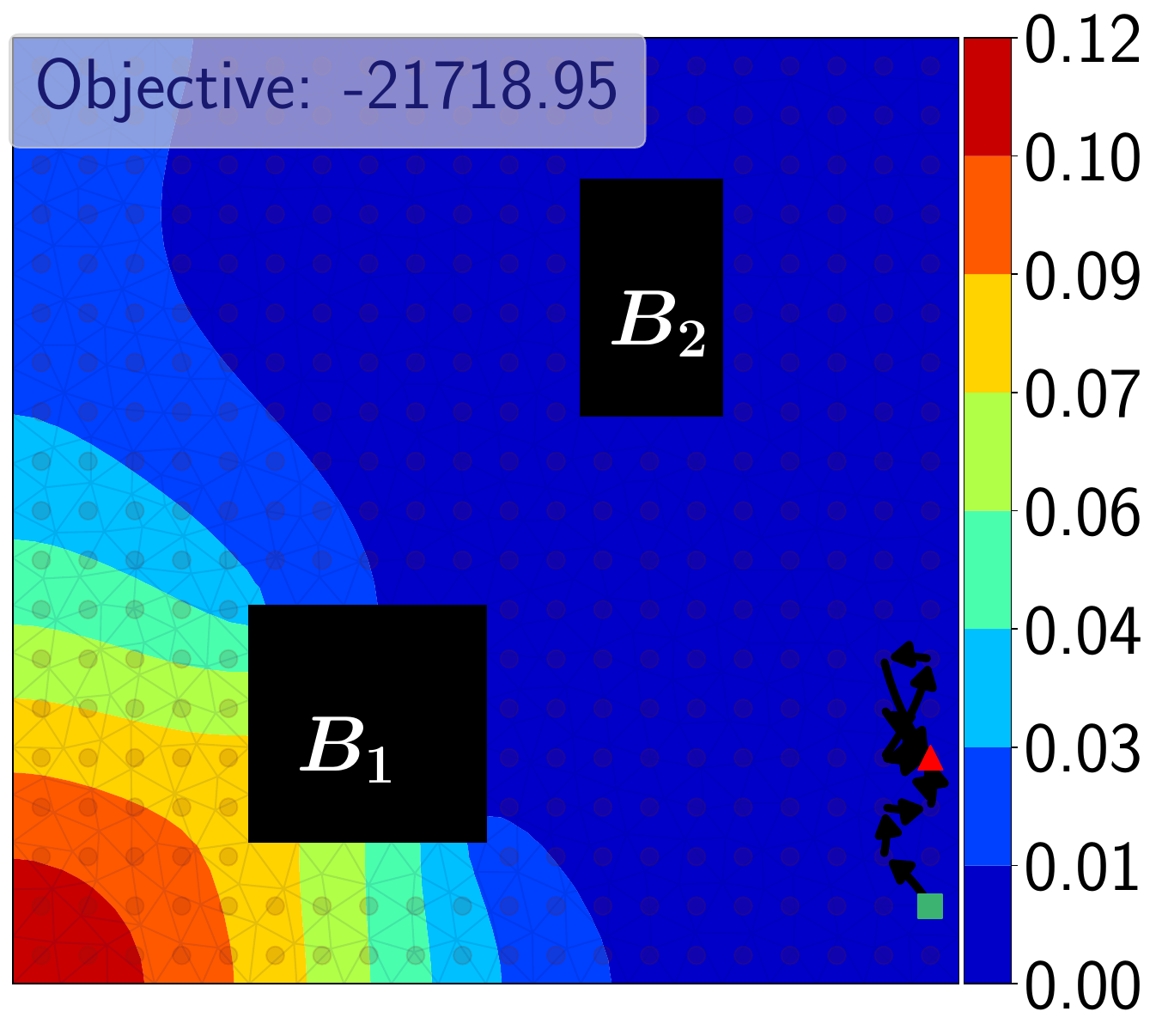}
      \end{subfigure}%
      \begin{subfigure}[t]{0.245\linewidth}
        \includegraphics[width=\linewidth]{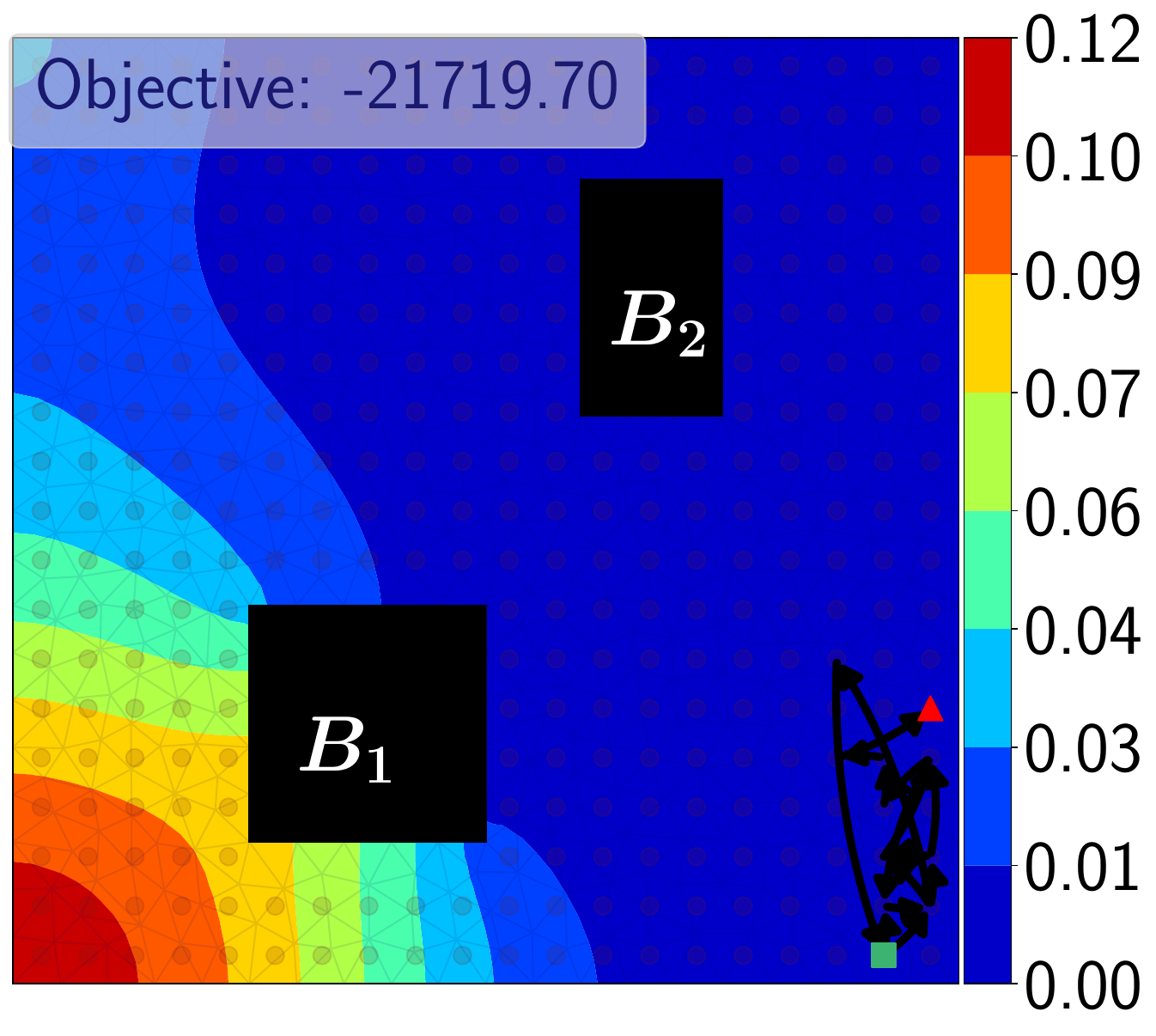}
      \end{subfigure}%
      \begin{subfigure}[t]{0.245\linewidth}
        \includegraphics[width=\linewidth]{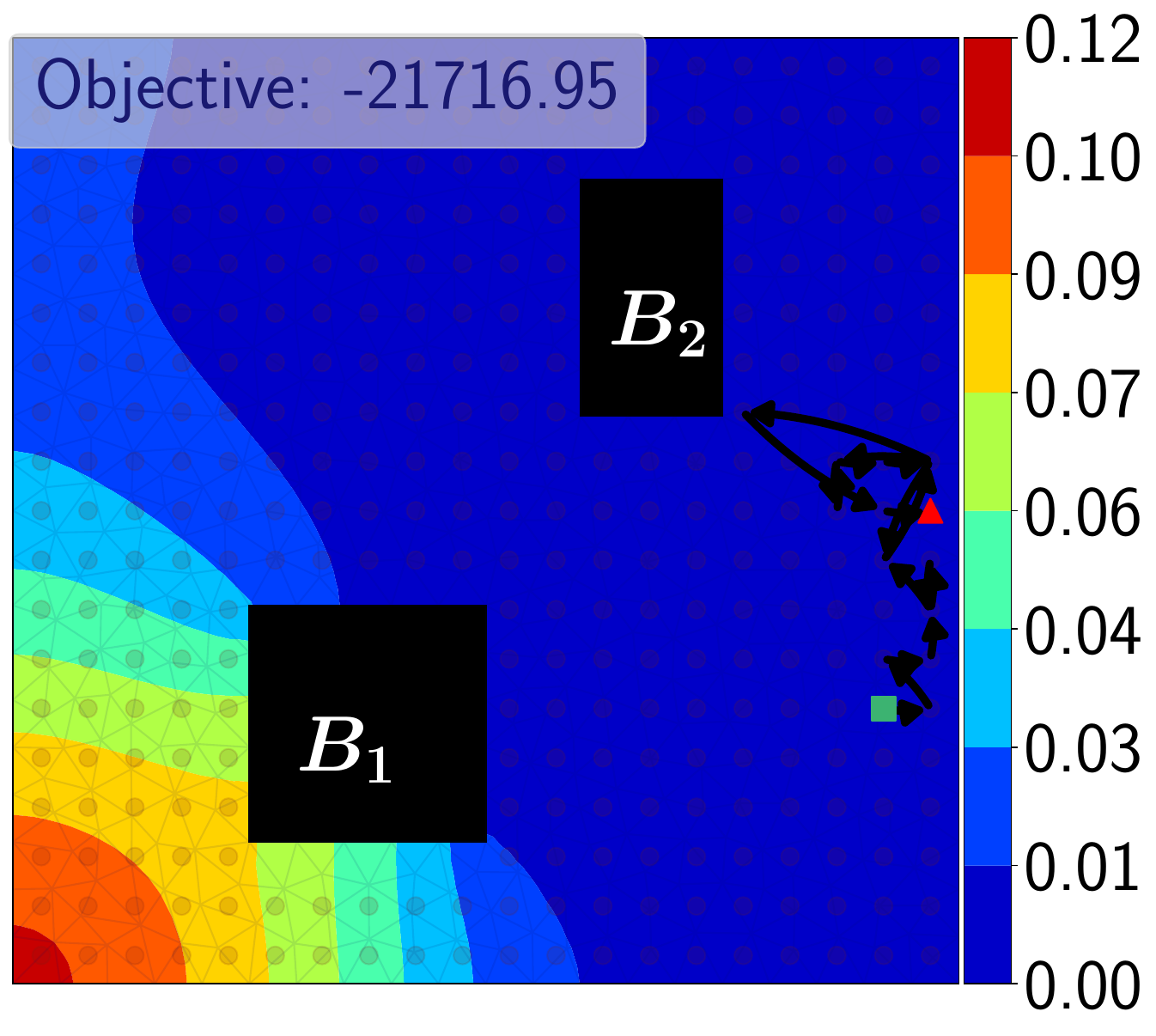}
      \end{subfigure}
      \caption{
        Optimal initial parameters and trajectories
        corresponding to 
        \Cref{fig:fine_generalized_higher_order_unspecified_start_point}. 
      }\label{fig:fine_generalized_higher_order_unspecified_start_point_order_3_initial_param_and_traject}
    \end{figure}

    The results shown here suggest that the higher-order policies both enable achieving 
    lower objective values than the first-order policy, with optimal solutions closer to 
    the global optimum of the experiment with coarse mesh; 
    see \Cref{fig:coarse_bruteforce} (right).
    Similar to the case of the coarse mesh in \Cref{subsec:numerical_results_coarse}, 
    for a higher order the optimization procedure takes more time to reduce 
    the average objective.
    Additionally, the modeled lag weights \eqref{eqn:decreasing_lag_weights}
    stabilize the optimizer more than when the 
    lag weights are calibrated. 
    All higher-order configurations here identify the bottom right corner of the domain 
    as the favorable initial region of the path 
    and return an optimal design with similar objective values.
    The generalized higher-order model with lag-dependent transitions 
    (\Cref{defn:generalized_higher_order_path_model}), however, 
    yields slightly better performance than does the case of the higher-order policy 
    (\Cref{defn:higher_order_path_model}). 
    This, however, comes at a higher computational cost, as discussed 
    in \Cref{subsubsec:computational_conciderations}.

    We conclude by noting that different OED utility functions can yield optimal trajectories with distinct shapes, as supported by the results here and in \Cref{sup:sec:numerical_results}. Nevertheless, across all choices of $\utilityfunc$, the optimal trajectories lie in regions of minimal activity, reflecting reduced uncertainty induced by the construction of the synthetic data and noise covariance here.
    Utility functions may also be designed to target specific 
    objectives, such as prediction accuracy for tracking 
    concentration modes, and their choice is independent 
    of the proposed path optimization framework.

\section{Discussion and Concluding Remarks}
  \label{sec:conclusions}
  This work presents an efficient probabilistic approach for path optimal experimental design, 
  such as optimizing moving sensors path.
  The path OED problem is formulated as discrete optimization on a navigation mesh, treating the utility function as a black box, and is reformulated as a probabilistic optimization over policy parameters. The approach is broadly applicable to path OED problems on navigation meshes and accommodates any utility function.

  A central feature of the approach is that it needs only pointwise evaluations of
  the utility. It requires no derivatives of the utility or of the observation
  operator with respect to the design. The method is therefore agnostic to the form
  of the utility. It applies unchanged to any utility function, to linear and
  nonlinear inverse problems, and to objectives beyond experimental design, while
  enforcing the navigation constraints by construction. A common alternative
  parameterizes the trajectory directly, for example through an ordinary
  differential equation, and optimizes the parameters with a gradient-based method.
  A parallel development pursues this route for infinite-dimensional Bayesian
  inverse problems governed by PDEs \cite{neuberger2026path}. Such methods can use
  fewer utility evaluations. They also require differentiable utility and
  observation operators, they rely on design- and utility-specific gradients whose
  derivation is involved, and their performance depends on the chosen parametric
  family. These parametrized-path formulations are viable alternatives, and a
  head-to-head comparison with them is outside the scope of this work. The potential
  price of the present approach is a larger number of black-box utility evaluations.
  When the utility is very expensive and design gradients are readily available, a
  gradient-based approach may be preferable.

  In this work we propose three parametric policies to model the probability distribution 
  of the design/path based on first- and higher-order Markov chains.
  The experiments discussed in \Cref{sec:numerical_experiments} 
  along with the additional results in the supplementary material in 
  \Cref{sup:sec:numerical_results} indicate that  
  the first-order policy (\Cref{defn:first_order_path_model}) yields acceptable 
  results and generally converges to a policy that explores the lower tail of 
  the OED utility function distribution.
  The generalized higher-order policy with lag-dependent transitions 
  (\Cref{defn:generalized_higher_order_path_model}) achieves 
  better results than does the higher-order policy with lag-independent transitions 
  (\Cref{defn:higher_order_path_model}) at a slightly higher computational cost. 
  The higher-order policies are generally more flexible; but without freezing the lag weights, 
  the optimizer can be unstable and may require more iterations to converge.
  Moreover, they can allow the generation of trajectories involving successive nodes
  violating connectivity in the navigation mesh.
  To mitigate such an effect, one can increase the fineness of the mesh, 
  use a small order (e.g., $k=3$), or use a fast-decaying series of lag weights to enforce 
  much lower weight on the past. 

  The empirical study shows that the optimization procedure behaves similarly for various choices 
  of the OED utility function: overall 
  the optimizer quickly reduces the average value of the utility function, converging
  to the tail of the utility function distribution.
  This enables exploring the space near global optima. Reaching the global  optimal value, however, is not guaranteed. 

  A central practical finding of this work is the role of the optimal baseline 
  in the REINFORCE-style stochastic gradient estimator. As shown empirically in 
  \Cref{subsec:importance_of_baseline}, the baseline substantially reduces the 
  variance of the stochastic gradient even with a batch size of one, at no 
  additional cost, and is critical for the convergence of the optimizer at the 
  modest sample sizes used in our experiments. Without a baseline, the 
  optimizer fails to consistently reduce the expected utility, and any 
  near-optimal trajectory recovered is largely incidental. We therefore 
  recommend that the baseline always be used when applying the proposed 
  framework, particularly in settings where utility evaluations are expensive 
  and small sample sizes are unavoidable.

  While we provide insight on the choice between the proposed policies,
  and the policy order, these remain as user choices. 
  Alternatively, one can 
  identify the best model and order, 
  for example, based on some model choice or information criterion. 
  This, however, is outside the scope of this work and will be explored in the future.
  The proposed policies do not explicitly enforce path smoothness, but this can be indirectly encouraged through penalizing the utility function or by fitting smooth curves between nodes on a coarse mesh.
  Adding explicit design constraints is another useful direction. Here it helps
  to separate the value of a path from its cost. The value of a path is how
  informative it is, and this is already measured by the OED utility. The
  optimizer therefore favors more informative paths on its own. Value is thus
  part of the objective and is not a constraint. What remains to be constrained
  is the feasibility and the cost of a path. Several of these constraints are
  already built into the model. The directed graph keeps every step on a feasible
  move. Fixed or disallowed nodes are handled by the degenerate parameters of
  \Cref{subsubsec:degenerate_case}. Each step is a budget-one conditional
  Bernoulli that picks a single move, so the trajectory length $n$ already acts as
  a budget on the number of observations.

  Other constraints are harder to impose. Asking the sensor not to revisit a node
  turns the trajectory into a self-avoiding walk \cite{madras1993self}. Such a
  walk is no longer Markovian, and even drawing samples from it is difficult and
  usually needs special Monte Carlo methods \cite{madras1988pivot}. We are not
  aware of a way to enforce it exactly within the present policy model. A budget on the
  total cost of a path is hard for a related reason. A natural cost here is the
  energy used by the moving sensor, which depends on the length of each move and
  on whether the sensor travels with or against the advective flow, so different
  paths spend different amounts of a shared energy budget. Staying within the
  budget makes the moves available at each step depend on how much budget is
  left, which in turn depends on the path taken so far. The policy then becomes
  history dependent rather than stationary, which is the setting studied for
  constrained Markov decision processes \cite{altman1999constrained}. A
  practical alternative is to discourage unwanted paths through a penalty in the
  utility. We leave the exact treatment of these constraints to future work.


  \appendix
\section{Proofs of Propositions in \Cref{subsec:the_probabilistic_models}}
\label{app:sec:gradients_proofs}
  \begin{proof}[Proof of \Cref{proposition:first_order_gradient}]  
    The log-policy gradient \eqref{eqn:first_order_trajectory_distribution_grad_log_prob_init}
    requires defining the gradients of \commentout{both} the initial distribution $\Prob{\design_1}$ (first term) 
    and the transition distribution $\CondProb{\design_{t+1}}{\design_{t}}$ (second term). 
    The parameter vector contains initial parameters $\hyperparam_{i}$ 
    and transition parameters $\hyperparam^{i}_j$. Moreover, the initial distribution 
    does not depend on the transition parameters, and 
    the transition distribution does not depend on the initial parameters.
    Thus, it suffices to develop the partial derivatives 
    $\del{\log \pi_i} {\hyperparam_j}$ and 
    $\del{\log \pi^{i}_{j}}{\hyperparam^{l}_{m}}$: 
    
    \begin{subequations}
    {\allowdisplaybreaks
      \begin{align}
        \del{\log \pi_i} {\hyperparam_j}
          &\equiv\del{\log \Prob{\design_1=v_i}} {\hyperparam_j}
           \stackrel{\eqref{eqn:initial_distribution_inclusion}}{=} 
           \del{}{\hyperparam_j}{\left(\log{w_i}-\brlog{\sum_{k=1}^{N}{w_k}} \right)} 
             \\
             &= \frac{1}{w_i} \del{w_i} {\hyperparam_j} 
               - \frac{\sum\limits_{k=1}^{N} \del{w_k}{\hyperparam_j} }{\sum\limits_{k=1}^{N}{w_k}} 
       = \frac{1}{w_i} \del{w_i} {\hyperparam_j} 
           - \frac{ \del{w_j}{\hyperparam_j} }{\sum\limits_{k=1}^{N}{w_k}} 
           \stackrel{\eqref{eqn:initial_distribution_inclusion}}{=} 
            \frac{\delta_{ij} }{w_j} \del{w_j} {\hyperparam_j} 
             - \frac{ \del{w_j}{\hyperparam_j} }{\sum\limits_{k=1}^{N}{w_k}}  \\
           &= \del{w_j} {\hyperparam_j} \left(
                \frac{\delta_{ij}}{w_j} 
               - \frac{ 1 }{\sum\limits_{k=1}^{N}{w_k}} 
             \right) 
           = \frac{1}{\left(1-\hyperparam_j\right)^2} \left(
                \frac{\delta_{ij}}{w_j} 
               - \frac{ 1 }{\sum\limits_{k=1}^{N}{w_k}} 
             \right) \,,
      \end{align}
    }
    \end{subequations}
    where 
    $\delta_{in}$ is the Kronecker delta function. Similarly we derive 
    \eqref{eqn:first_order_trajectory_distribution_grad_log_prob_transition}:
    \begin{equation}\label{eqn:transition_distribution_inclusion_log_prob_grad}
      \del{\log \pi^{i}_{j}}{\hyperparam^{l}_{m}}
         = \frac{\delta_{jm} }{w^{l}_{m}} \del{w^{l}_{m}} {\hyperparam^{l}_{m}} 
           - \frac{ \del{w^{l}_{m}}{\hyperparam^{l}_{m}} }{\sum\limits_{k=1}^{N}{w^{l}_{m}}} 
         = \frac{\delta_{il}}{\left(1-\hyperparam^{l}_{m}\right)^2} \left(
              \frac{\delta_{jm}}{w^{l}_{m}} 
             - \frac{ 1 }{\sum\limits_{k=1}^{N}{w^{l}_{k}}} 
           \right) \,.
    \end{equation}
  \end{proof}
  \begin{proof}[Proof of \Cref{proposition:higher_order_gradient}]
    The policy parameter \eqref{eqn:higher_order_trajectory_distribution_param} 
    consists of 
    the initial parameters, 
    the transition parameters, 
    and the lag parameters. 
    Thus, the gradient of the log-policy is given by the general 
    form \eqref{eqn:higher_order_path_distribution_grad_log_prob_operator}.
    From 
    \eqref{eqn:higher_order_trajectory_distribution_pmf}, it follows that
    \begin{equation}\label{app:eqn:kth-order-trajectory-log-probability}
      \log\Prob{\designvec} 
        = 
        \log\Prob{\design_1} 
          + \sum_{t=1}^{k-1}
          \log\CondProb{\designvec_{t+1}}{\designvec_{t}}
          +
          \sum_{t=k}^{n-1} \log 
          \left( 
            \sum_{i=1}^{k} {\lambda_{i}\, \CondProb{\designvec_{t+1}}{\designvec_{t+1-i}} }
            \right)
          \,.
    \end{equation}

    Only the first term in \eqref{app:eqn:kth-order-trajectory-log-probability} depends on the initial parameters, thus
    $
      \del{\log\Prob{\designvec}}{\hyperparam_j}
        = \del{\log\Prob{\design_{1}}}{\hyperparam_j} 
    $ which proves \eqref{eqn:higher_order_path_distribution_grad_log_prob_operator_initial}.
    Conversely, only the second and third terms in
    \eqref{app:eqn:kth-order-trajectory-log-probability} depend on the transition parameters, which proves \eqref{eqn:higher_order_path_distribution_grad_log_prob_operator_transitions}. 
    The derivative with respect to the lag parameters 
    \eqref{eqn:higher_order_path_distribution_grad_log_prob_operator_lag} is obtained from the third term in
    \eqref{app:eqn:kth-order-trajectory-log-probability} as follows
    \begin{equation}
      \begin{aligned}
        \del{\log\Prob{\designvec} }{\lambda_l}
        &=
        \del{}{\lambda_l}
          \sum_{t=k}^{n-1} \log 
          \left( 
            \sum_{i=1}^{k} {\lambda_{i}\, \CondProb{\designvec_{t+1}}{\designvec_{t+1-i}} }
            \right) 
        \\
        &=
        \sum_{t=k}^{n-1} 
        \frac{ 
              \sum_{i=1}^{k} {\lambda_{i}\, \del{\CondProb{\designvec_{t+1}}{\designvec_{t+1-i} }}{\lambda_l} }
            }{
              \sum_{i=1}^{k} {\lambda_{i}\, \CondProb{\designvec_{t+1}}{\designvec_{t+1-i}} }
            }
        = 
        \sum_{t=k}^{n-1} 
        \frac{
              \CondProb{\designvec_{t+1}}{\designvec_{t+1-l}} 
            }{
              \sum_{i=1}^{k} {\lambda_{i}\, \CondProb{\designvec_{t+1}}{\designvec_{t+1-i}} }
            }
        \,.
      \end{aligned}
    \end{equation}
  \end{proof}
  \begin{proof}[Proof of \Cref{proposition:generalized_higher_order_gradient}]
    The proof is similar to that of \Cref{proposition:generalized_higher_order_gradient} with  
    higher-order transitions used.
    Specifically, from \eqref{eqn:generalized_higher_order_trajectory_distribution_pmf} 
    it follows that
    \begin{equation}\label{app:eqn:generalized_kth-order-trajectory-log-probability}
      \log\Prob{\designvec} 
        = 
        \log\Prob{\design_1} 
          + \sum_{t=1}^{k-1}
          \log\CondProb{\designvec_{t+1}}{\designvec_{t}}
          +
          +
          \sum_{t=k}^{n-1} \log 
          \left( 
            \sum_{i=1}^{k} {\lambda_{i}\, \nCondProb{i}{\designvec_{t+1}}{\designvec_{t+1-i}} }
            \right)
          \,.
    \end{equation}

    Because the first terms in 
    \eqref{app:eqn:kth-order-trajectory-log-probability} and 
    \eqref{app:eqn:generalized_kth-order-trajectory-log-probability}
    are identical, 
    the derivative of the log-policy with respect to the initial parameters 
    holds here as well, which proves 
    \eqref{eqn:generalized_higher_order_path_distribution_grad_log_prob_operator_initial}.
    Conversely, the third term in 
    \eqref{app:eqn:generalized_kth-order-trajectory-log-probability} replaces 
    the first-order transitions 
    $
      \CondProb{\designvec_{t+1}}{\designvec_{t+1-i}}
    $ 
    with the higher-order transitions 
    $
      \nCondProb{i}{\designvec_{t+1}}{\designvec_{t+1-i}}
    $. 
    This modifies the derivative of the log-policy with respect to the transition and the lag parameters as follows. 
    The derivative with respect to transition 
    parameters \eqref{eqn:generalized_higher_order_path_distribution_grad_log_prob_operator_transitions} is
    \begin{equation}
      \begin{aligned}
        \del{\log\Prob{\designvec} }{\hyperparam^{l}_{m}}
          &=  \sum_{t=1}^{k-1} 
            \del{\log \CondProb{\designvec_{t+1}}{\designvec_{t}}}{\hyperparam^{l}_{m}}
          +
          \del{}{\hyperparam^{l}_{m}} 
            \sum_{t=k}^{n-1} \log 
            \left( 
              \sum_{i=1}^{k} {\lambda_{i}\, \nCondProb{i}{\designvec_{t+1}}{\designvec_{t+1-i}} }
              \right)
        \\
        &=
          \sum_{t=1}^{k-1} 
            \del{\log \CondProb{\designvec_{t+1}}{\designvec_{t}}}{\hyperparam^{l}_{m}}
            +
            \sum_{t=k}^{n-1} 
            \frac{ 
              \sum_{i=1}^{k} {\lambda_{i}\, \del{}{\hyperparam^{l}_{m}} \nCondProb{i}{\designvec_{t+1}}{\designvec_{t+1-i}} }
            }{
              \sum_{i=1}^{k} {\lambda_{i}\, \nCondProb{i}{\designvec_{t+1}}{\designvec_{t+1-i}} }
            } \,.
      \end{aligned}
    \end{equation}

    The derivative with respect to the lag parameters 
    \eqref{eqn:generalized_higher_order_path_distribution_grad_log_prob_operator_lag} 
    is obtained directly by differentiating the third term in 
    \eqref{app:eqn:generalized_kth-order-trajectory-log-probability} 
    since the first two terms are independent of these parameters.
    Finally \eqref{eqn:generalized_higher_order_path_distribution_grad_log_prob_operator_higher_order_gradient}
    follows by differentiating the higher-order transition probabilities
    \eqref{eqn:generalized_higher_order_transition}
    with respect to the first-order transition parameters. 
  \end{proof}

%
\clearpage

\def\siamprelabel{SM}

\renewcommand{\thesection}{\siamprelabel\arabic{section}}

\setcounter{section}{0}
\setcounter{subsection}{0}
\setcounter{equation}{0}
\setcounter{figure}{0}
\setcounter{table}{0}
\setcounter{page}{1}

\renewcommand{\theHsection}{SM.\arabic{section}}
\renewcommand{\theHsubsection}{SM.\arabic{section}.\arabic{subsection}}
\renewcommand{\theHequation}{SM.\arabic{equation}}
\renewcommand{\theHfigure}{SM.\arabic{figure}}
\renewcommand{\theHtable}{SM.\arabic{table}}

\begingroup
  \centering
  \rule{\linewidth}{0.8pt}\par
  \medskip
  {\bfseries\Large Supplementary Materials}\par
  \medskip
  {\itshape \fulltitle}\par
  \smallskip
  {\itshape Ahmed Attia}\par
  \medskip
  \rule{\linewidth}{0.8pt}\par
  \bigskip
\endgroup

  This supplementary material provides
a detailed explanation of the computations 
under the probabilistic policies proposed in 
\Cref{subsec:the_probabilistic_models}, 
and provides additional numerical experiments 
to complement the 
empirical analysis discussed in \Cref{sec:numerical_experiments}.

\section{Illustrative Example}
\label{app:sec:Example}
  Here we 
  provide an example with detailed computations under the parametric probabilistic policies
  proposed by 
  \Cref{defn:first_order_path_model}, 
  \Cref{defn:higher_order_path_model}, and
  \Cref{defn:generalized_higher_order_path_model}, respectively.

  Consider the simplified navigation mesh (transition graph) shown in \Cref{fig:illustrative_example_DMC}, 
  with the first-order transition parameters $\hyperparam^{i}_{j}$ indicated on the 
  corresponding arcs.

  \begin{figure}[H]
    \centering
    \includegraphics[width=0.26\textwidth]{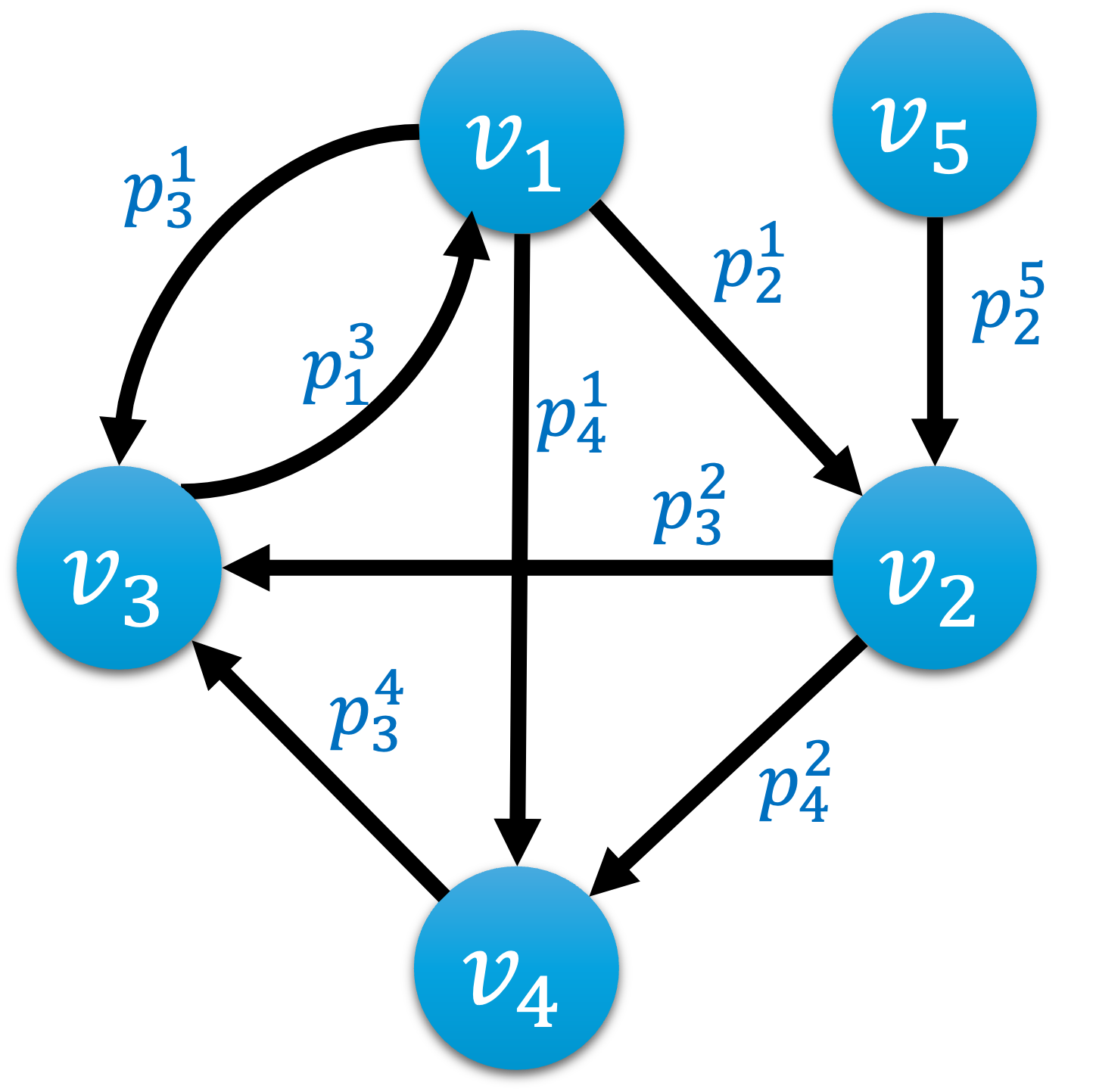}
    \caption{
      Directed graph describing a simple navigation mesh. 
      The graph consists of $\Nsens=5$ nodes 
      $v_1, \ldots, v_5$ with transition parameters $p^{i}_{j}$ describing 
      the probability of moving from node $v_i$ to node $v_j$ displayed 
      on the respective arcs $(v_i, v_j)$.
    }\label{fig:illustrative_example_DMC}
  \end{figure}

  The proposed probabilistic policies require the initial parameters $\hyperparam_i$ 
  that define the probability of a path starting from node $v_i$ and 
  the transition parameters $\hyperparam^{i}_{j}$ that encode the probability of 
  moving from node $v_i$ to node $v_j$. 
  In this example these parameters  are given by
  %
  \begin{subequations}\label{supp:eqn:parameters}
    \begin{align}
      \left[\hyperparam_i\right]_{i=1, \ldots, 5 } 
        &:= \left(
              \hyperparam_1, \, 
              \hyperparam_2, \, 
              \hyperparam_3, \, 
              \hyperparam_4, \, 
              \hyperparam_5
            \right) \,, 
        \label{supp:eqn:initial_parameters}
        \\ 
      \left[\hyperparam^{i}_{j}\right]_{i, j=1, \ldots, 5 } 
        &:= 
          \begin{blockarray}{cccccc} 
          & v_1 & v_2 & v_3 & v_4 & v_5 \\ 
          \begin{block}{c[ccccc]}
            v_1 & 0 & \hyperparam^{1}_{2} & \hyperparam^{1}_{3} & \hyperparam^{1}_{4} & 0 \\
            v_2 & 0  & 0  & \hyperparam^{2}_{3} & \hyperparam^{2}_{4} & 0 \\
            v_3 & \hyperparam^{3}_{1}   & 0  & 0 & 0  & 0 \\
            v_4 & 0   & 0  & \hyperparam^{4}_{1} & 0 & 0  \\
            v_5 & 0   & \hyperparam^{5}_{2}  & 0 & 0 & 0  \\
          \end{block}
        \end{blockarray} 
        \label{supp:eqn:transition_parameters}
        \,,  
    \end{align}
  \end{subequations}
  where the transition parameters (as well as connectivity)
  are induced from the navigation mesh in 
  \Cref{fig:illustrative_example_DMC}. 
  According to \eqref{eqn:first_order_trajectory_distribution_param}, 
  for example, the parameter vector $\hyperparamvec$ of the first-order policy takes the form
  \begin{equation}\label{supp:eqn:example_parameter}
    \hyperparamvec
      = 
          \Bigl(
          \underbrace{
            \hyperparam_{1},\,
            \hyperparam_{2},\,
            \hyperparam_{3},\,
            \hyperparam_{4},\,
            \hyperparam_{5}
          }_{\text{initial parameters}}; \, \,\, 
          \underbrace{
            \hyperparam^{1}_2,\, \hyperparam^{1}_3,\, \hyperparam^{1}_4;\,\,  \, 
            \hyperparam^{2}_3, \hyperparam^{2}_4                   ;\, \, \, 
            \hyperparam^{3}_1                                      ;\, \, \, 
            \hyperparam^{4}_1                                      ;\, \, \, 
            \hyperparam^{5}_2                     
          }_{\text{transition parameters}}
        \,\, \Bigr)  \,,
  \end{equation}
  where all zero-valued transition parameters are removed from the parameter vector because 
  they never change.
  
  When implemented, 
  however, this efficient representation \eqref{supp:eqn:example_parameter} requires keeping a connectivity matrix
  and an indexing scheme to enable efficient development of the gradient of the log-policy 
  with respect to the parameter vector.
  This approach is followed in the implementations available through the PyOED 
  framework \cite{chowdhary2025pyoed}. 
  Specifically, the code---publicly available through the PyOED package repository 
  \cite{pyoedrepo}---with implementations of the proposed policies is available 
  under the subpackage
  \texttt{pyoed.stats.distributions}.
   For clarity, however, here we keep the parameters in their original forms 
  \eqref{supp:eqn:parameters}.
 
  The policy parameters \eqref{supp:eqn:parameters} 
  are 
  used to define initial $w_{i}$ and transition $w^{i}_{j}$ weights, respectively:  
  \begin{subequations}  
    \label{supp:eqn:initial_and_transition_weights}
    \begin{align}
      w_i 
        &= \frac{\hyperparam_i}{1-\hyperparam_i}  \,, \qquad i=1, \ldots, 5 \,, \\
      w^{i}_{j} 
        &= \frac{\hyperparam^{i}_{j}}{1-\hyperparam^{i}_{j}} \,, \quad 
        \begin{matrix}
          & i = 1,\ldots,5 \,, \\ 
          & j = 1,\ldots,5 \,. 
        \end{matrix}
    \end{align}
  \end{subequations}

  From \eqref{supp:eqn:parameters} and \eqref{supp:eqn:initial_and_transition_weights}, the 
  initial and transition weights are given by
  \begin{subequations}
    \label{supp:eqn:initial_and_transition_weights_evaluated}
    \begin{align}
      \left[w_i\right]_{i=1, \ldots, 5} 
        &= \left(
            \frac{\hyperparam_1}{1-\hyperparam_1}, 
            \frac{\hyperparam_2}{1-\hyperparam_2}, 
            \frac{\hyperparam_3}{1-\hyperparam_3}, 
            \frac{\hyperparam_4}{1-\hyperparam_4}, 
            \frac{\hyperparam_5}{1-\hyperparam_5}
          \right) \,,
          \label{supp:eqn:initial_and_transition_weights_evaluated_initial}
          \\
      \left[w^{i}_{j}\right]_{i,j=1, \ldots, 5}
        &=
        \begin{blockarray}{cccccc} 
          & v_1 & v_2 & v_3 & v_4 & v_5 \\ 
          \begin{block}{c[ccccc]}
            v_1 & 0 & \frac{\hyperparam^{1}_{2}}{1-\hyperparam^{1}_{2}} 
                & \frac{\hyperparam^{1}_{3}}{1-\hyperparam^{1}_{3}} 
                & \frac{\hyperparam^{1}_{4}}{1-\hyperparam^{1}_{4}} & 0 \\
            v_2 & 0  & 0  & \frac{\hyperparam^{2}_{3}}{1-\hyperparam^{2}_{3}} 
                & \frac{\hyperparam^{2}_{4}}{1-\hyperparam^{2}_{4}} & 0 \\
            v_3 & \frac{\hyperparam^{3}_{1}}{1-\hyperparam^{3}_{1}}   & 0  & 0 & 0  & 0\\
            v_4 & 0   & 0  & \frac{\hyperparam^{4}_{3}}{1-\hyperparam^{4}_{3}} & 0 \\
            v_5 & 0   & \frac{\hyperparam^{5}_{2}}{1-\hyperparam^{5}_{2}}  & 0 & 0 & 0 \\ 
          \end{block}
        \end{blockarray} \,,
          \label{supp:eqn:initial_and_transition_weights_evaluated_transition}
    \end{align}
  \end{subequations}

  Common to the three policies proposed in this work are the initial 
  (inclusion) probabilities 
  $\pi^{i}\equiv \Prob{\design_{1}=v_i}$ \eqref{eqn:initial_distribution_inclusion}  
  and the first-order transition probabilities 
  $\pi^{i}_{j} \equiv \CondProb{\design_{t+1}=v_j}{\design_{t}=v_i}$ 
  \eqref{eqn:transition_probabilities} for any positive integer (time) $t>0$.
  From \eqref{supp:eqn:initial_and_transition_weights_evaluated} it follows that
  \begin{subequations}
    \label{supp:eqn:initial_and_transition_probabilities_evaluated}
    \begin{align}
      [\pi_i]_{i=1, \ldots, 5} 
        &= \frac{1}{\sum_{i=1}^{5}{\frac{\hyperparam_i}{1-\hyperparam_i}} }
          \left(
            \frac{\hyperparam_1}{1-\hyperparam_1}, 
            \ldots, 
            \frac{\hyperparam_5}{1-\hyperparam_5}
          \right)
          \label{supp:eqn:initial_probabilities_evaluated}
          \\
      %
      [\pi^{i}_{j}]_{i, j=1, \ldots, 5}  
        &=  
          \begin{blockarray}{cccccc} 
            & v_1 & v_2 & v_3 & v_4 & v_5 \\ 
            \begin{block}{c[ccccc]}
              v_1 & 0 & \frac{w^{1}_{2}}{w^{1}_{2}+w^{1}_{3}+w^{1}_{4}} 
                  & \frac{w^{1}_{3}}{w^{1}_{2}+w^{1}_{3}+w^{1}_{4}} 
                  & \frac{w^{1}_{4}}{w^{1}_{2}+w^{1}_{3}+w^{1}_{4}} & 0 \\
              v_2 & 0 & 0 & \frac{w^{2}_{3}}{w^{2}_{3}+w^{2}_{4}} 
                  & \frac{w^{2}_{4}}{w^{2}_{3}+w^{2}_{4}} & 0 \\
              v_3 & 1  & 0  & 0 & 0 & 0 \\
              v_4 & 0   & 0 & 1 & 0 & 0 \\
              v_5 & 0   & 1 & 0 & 0 & 0 \\ 
            \end{block}
          \end{blockarray} \,.
          \label{supp:eqn:transition_probabilities_evaluated}
    \end{align}
  \end{subequations}

  \noindent{}\textbf{Trajectory and Distribution Parameter.}
    Without loss of generality and for simplicity we consider the 
    case of a trajectory of length $n=3$ nodes.
    For the higher-order policy models, 
    we set the policy order to $k=2$.
    Since the path consists of only $n=3$ nodes, the memory here involves the full path. 
    We also assume the lag parameters are modeled by 
    $\lambda_1=\frac{2}{3}, \lambda_2=\frac{1}{3}$
    as defined by \eqref{eqn:decreasing_lag_weights}.

    As a  numerical example, we assign the following values to the initial and 
    the first-order transition parameters: 
    %
    \begin{subequations}\label{supp:eqn:numerical_parameter}
    \begin{align}
      \left[p_i\right]_{i=1, \ldots, 5} 
        &= \left(1/2,\, 1/2,\, 1/2,\, 1/2,\, 1/2\right) \,, 
        \label{supp:eqn:numerical_parameter_initial}
        \\
      \left[\hyperparam^{i}_{j}\right]_{i, j=1, \ldots, 5} 
        &= 
        \begin{blockarray}{cccccc} 
          & v_1 & v_2 & v_3 & v_4 & v_5 \\ 
          \begin{block}{c[ccccc]}
            v_1 & 0    & 1/2  & 1/2  &  4/5  & 0 \\
            v_2 & 0    & 0    & 1/2  &  1/2  & 0 \\
            v_3 & 1/2  & 0    & 0    &  0    & 0 \\
            v_4 & 0    & 0    & 1/2  &  0    & 0  \\
            v_5 & 0    & 1/2  & 0    &  0    & 0  \\
          \end{block}
        \end{blockarray} \,,
        \label{supp:eqn:numerical_parameter_transitions}
    \end{align}
    \end{subequations}
    resulting in the following initial and transition weights:
    \begin{subequations}
    \begin{align}
      \left[w_i\right]_{i=1, \ldots, 5} 
        &\stackrel{\eqref{supp:eqn:initial_and_transition_weights_evaluated_initial}}{=} 
          \left(1,\, 1,\, 1,\, 1,\, 1 \right) \,,  \\
      \left[w^{i}_{j}\right]_{i, j=1, \ldots, 5} 
      &\stackrel{\eqref{supp:eqn:initial_and_transition_weights_evaluated_transition}}{=} 
      \begin{blockarray}{cccccc} 
        & v_1 & v_2 & v_3 & v_4 & v_5 \\ 
        \begin{block}{c[ccccc]}
          v_1 & 0  & 1  & 1  &  4  & 0 \\
          v_2 & 0  & 0  & 1  &  1  & 0 \\
          v_3 & 1  & 0  & 0  &  0  & 0 \\
          v_4 & 0  & 0  & 1  &  0  & 0  \\
          v_5 & 0  & 1  & 0  &  0  & 0  \\
        \end{block}
      \end{blockarray} \,.
    \end{align}
    \end{subequations}

    The initial and transition probabilities are thus given by
    \begin{subequations}
      \begin{align}
      \left[\pi_i\right]_{i=1, \ldots, 5} 
      &\stackrel{\eqref{supp:eqn:initial_probabilities_evaluated}}{=} 
      \left(\frac{1}{5},\,  \frac{1}{5},\, \frac{1}{5},\, \frac{1}{5},\, \frac{1}{5}\right) \,, 
      \\
      \left[\pi^{i}_{j}\right]_{i, j=1, \ldots, 5} 
        &\stackrel{\eqref{supp:eqn:transition_probabilities_evaluated}}{=} 
        \begin{blockarray}{cccccc} 
          & v_1 & v_2 & v_3 & v_4 & v_5 \\ 
          \begin{block}{c[ccccc]}
            v_1 & 0  & 1/6  & 1/6  &  4/6  & 0 \\
            v_2 & 0  & 0  & 1/2  &  1/2  & 0 \\
            v_3 & 1  & 0  & 0  &  0  & 0 \\
            v_4 & 0  & 0  & 1  &  0  & 0  \\
            v_5 & 0  & 1  & 0  &  0  & 0  \\
          \end{block}
        \end{blockarray} \,.
    \end{align}
    \end{subequations}

    As expected, the initial probabilities add up to $1$, and each row of the transition probability matrix
    also sums up to $1$.
    Therefore, for any set of Bernoulli parameters 
    satisfying the box constraints $\pi_i\in[0, 1],\, \pi^{i}_j\in[0, 1],\, \forall i,j=1, \ldots, \Nsens$, 
    we can evaluate the initial or the transition 
    distribution/probabilities with automatic satisfaction of the probability distribution 
    constraints.

    In what follows we evaluate the probability distribution for each 
    of the policy models proposed in this work.

  \subsection{First-Order Policy}
    \label{app:subsec:example_first_order_model}
    We first discuss the generation of the support and then evaluate the probabilities for 
    each path in the support.
    
    \paragraph{Distribution support}
      Since the initial parameter \eqref{supp:eqn:numerical_parameter_initial} 
      does not include any zeros, the trajectory can start at any of the 
      $5$ nodes on the navigation mesh (\Cref{fig:illustrative_example_DMC}).
      We begin by listing all trajectories starting with $\design_1=v_1$.
      Since this is a first-order model, we use the first-order transitions 
      to define the next candidate nodes on the graph.
      Specifically, starting at $v_1$, the next 
      node---by inspecting the transition matrix or the navigation mesh---can 
      be $v_2$, $v_3$, or $v_4$. 
      All these are allowed
      because the conditional transition parameters 
      $\hyperparam^{1}_{2}, \hyperparam^{1}_{3}, \hyperparam^{1}_{4}$ are all nonzero.
      Nonzero parameters guarantee nonzero probabilities, and thus
      $\pi^{1}_{2}, \pi^{1}_{3}, \pi^{1}_{4}$ are all positive.

      Thus, all trajectories of length $n=2$ starting with $\design_1=v_1$ are 
      \begin{equation}
        (\design_1=v_1,\, \design_2) \in \{(v_1, v_2),\, (v_1, v_3),\, (v_1, v_4)\} \,, 
      \end{equation}
      which are then expanded recursively to generate all paths of length $3$ starting with $\design_1=v_1$.

      From $v_2$ a sensor is allowed to move to $v_3, v_4$. 
      Thus the trajectory $(v_1, v_2)$ can be expanded to two feasible trajectories 
      $\{(v_1, v_2, v_3),\, (v_1, v_2, v_4)\}$. 
      Similarly, moving from $v_3$ is allowed to $v_1$, and 
      $v_4$ is allowed to transition only to $v_3$.
      Adding these possible choices expands the trajectories of length $2$ starting with $\design_1=v_1$ 
      to the following trajectories of length $3$ starting with the same node $\design_1=v_1$:
      \begin{equation}
        (\design_1=v_1, \design_2, \design_3) \in 
          \{
            (v_1, v_2, v_3),\, 
            (v_1, v_2, v_4),\, 
            (v_1, v_3, v_1),\, 
            (v_1, v_4, v_3) 
          \} \,.
      \end{equation}

      By following the same strategy for the other starting points 
      $\design_1 \in \{v_2, v_3, v_4, v_5\}$, respectively,
      the distribution support consisting of $12$ feasible paths 
      $(\design_1, \design_2, \design_3)$ of length $n=3$ can be generated.

    \paragraph{Probability mass function}
      From \Cref{defn:first_order_path_model}, the 
      probability distribution (mass function) evaluated at all feasible trajectories of 
      length $3$ is given by 
      \begin{subequations}
      {\allowdisplaybreaks
        \begin{align}
          \Prob{\designvec\!=\!\left(v_1, v_2, v_3\right)} 
            &= \pi_1\, \pi^{1}_2\, \pi^{2}_3 
             = \frac{1}{60} \,, \\ 
          \Prob{\designvec\!=\!\left(v_1, v_2, v_4\right)} 
            &= \pi_1\, \pi^{1}_2\, \pi^{2}_4 
              = \frac{1}{60} \,,  \\
          \Prob{\designvec\!=\!\left(v_1, v_3, v_1\right)} 
           &= \pi_1\, \pi^{1}_3\, \pi^{3}_1 
             = \frac{2}{60} \,,  \\
          \Prob{\designvec\!=\!\left(v_1, v_4, v_3\right)} 
            &= \pi_1\, \pi^{1}_4\, \pi^{4}_3 
              = \frac{8}{60} \,,  \\ 
          \Prob{\designvec\!=\!\left(v_2, v_3, v_1\right)} 
            &= \pi_2\, \pi^{2}_3\, \pi^{3}_1 
              = \frac{6}{60} \,,  \\
          \Prob{\designvec\!=\!\left(v_2, v_4, v_3\right)} 
            &= \pi_2\, \pi^{2}_4\, \pi^{4}_3 
              = \frac{6}{60} \,,  \\
          \Prob{\designvec\!=\!\left(v_3, v_1, v_2\right)} 
            &= \pi_3\, \pi^{3}_1\, \pi^{1}_2 
              = \frac{2}{60} \,,  \\
          \Prob{\designvec\!=\!\left(v_3, v_1, v_3\right)} 
            &= \pi_3\, \pi^{3}_1\, \pi^{1}_3 
              = \frac{2}{60} \,,  \\
          \Prob{\designvec\!=\!\left(v_3, v_1, v_4\right)} 
            &= \pi_3\, \pi^{3}_1\, \pi^{1}_4 
              = \frac{8}{60} \,,  \\
          \Prob{\designvec\!=\!\left(v_4, v_3, v_1\right)} 
            &= \pi_4\, \pi^{4}_3\, \pi^{3}_1 
              = \frac{12}{60} \,,  \\
          \Prob{\designvec\!=\!\left(v_5, v_2, v_3\right)} 
            &= \pi_5\, \pi^{5}_2\, \pi^{2}_3 
              = \frac{6}{60} \,,  \\
          \Prob{\designvec\!=\!\left(v_5, v_2, v_4\right)} 
            &= \pi_5\, \pi^{5}_2\, \pi^{2}_4 
              = \frac{6}{60} \,,  
        \end{align}
      }
      \end{subequations}
      where path probabilities satisfy $\Prob{\designvec[k]}\in[0, 1]$ 
      and $\sum_{k=1}^{12}{\Prob{\designvec[k]}}=1$ for an index $k=1, \ldots, 12$ uniquely associated 
      with the different paths in the support.

      Note that the support (all feasible trajectories with nonzero probabilities) involves only 
      consecutive nodes with nonzero transition probabilities.
      These feasible trajectories are generated by following the connectivity defined by the 
      navigation mesh in \Cref{fig:illustrative_example_DMC}.
      Alternatively, one could enumerate all possible permutations of $3$ nodes out of the $5$ mesh 
      nodes, resulting in $\Perm{5}{3}=60$ paths, of which only $12$ belong to the policy support.
      For example, the probability of $\design=\left(v_3, v_2, v_4\right)$ 
      is given by $\pi_{3} \pi^{3}_{2} \pi^{2}_{4} = \frac{1}{2}\times 0 \times \frac{1}{2}=0$.
      This of course leads to unneeded waste of computational resources if followed in practical 
      implementations of the policy, for example for sampling. 

  \subsection{Higher-Order Policy}
    \label{app:subsec:example_higher_order_model}
    The higher-order policy given by \Cref{defn:higher_order_path_model} employs 
    the conditional transition probability \eqref{eqn:Raftery_model} 
    to determine which nodes a moving sensor is allowed to transition to.
    Thus, given a partial path $(\design_{n-k}, \ldots, \design_{n-1})$, 
    the conditional probability of $\design_{n}$ is nonzero if the first-order
    transition probability $\CondProb{\design_{n}}{\design_{n-i}}$ 
    is nonzero for any $i=1, \ldots, k$, where $k$ is the predefined policy order.
    Note that we assume all lag weights $\lambda_i,\, i=1, \ldots, k$ 
    are nonzero.
   
    \paragraph{Distribution support}
      For clarity, we repeat the same exercise in 
      \Cref{app:subsec:example_higher_order_model} for support generation. 
      Specifically,
      we generate all paths in the support with length $n=3$ starting at $\design_1=v_1$.
      With a partial path containing only one node $(\design_1=v_1)$ with policy order $k=2$, 
      only the first-order transition probabilities are needed to calculate the 
      conditional transitions probabilities.
      Thus, similar to the first-order model, the trajectories of length $n=2$ 
      starting with $\design_1=v_1$ here 
      are
      \begin{equation}
        (\design_1=v_1, \design_2) \in \{ (v_1, v_2),\, (v_1, v_3),\, (v_1, v_4) \} \,.
      \end{equation}

      Now, the third node on the path depends on the last $k=2$ nodes on the path.
      If the first-order transition probabilities $\pi^{i}_{j}$ from any of those nodes to 
      a given node $\design_{n}$ are nonzero, the node $\design_{n}$ can appear next on the path.
      Let us consider the partial path $(v_1, v_2)$. 
      Since $v_2$ can transition to $v_3$ and $v_4$ can transition 
      to any of $v_2, v_3, v_4$, the next node on the path can be 
      any of $v_2, v_3, v_4$. 
      This results in the following possible paths:
      \begin{equation}\label{supp:eqn:higher_order_path_expansion_1}
        (\design_1=v_1, \design_2=v_2, \design_3)
        \in \{
          (v_1, v_2, v_2),\, 
          (v_1, v_2, v_3),\, 
          (v_1, v_2, v_4)
        \}
        \,.
      \end{equation}

      Note that these paths allow transitioning from $v_2$ to $v_1$ even though 
      the navigation mesh does not contain an arc 
      $(v_2, v_1)$. 
      This shows that the proposed higher-order models allow jumps 
      between nodes with joining paths of length up to the policy order $k$. 
      Similarly, $(v_1, v_3)$ expands to 
      \begin{equation}\label{supp:eqn:higher_order_path_expansion_2}
        (\design_1=v_1, \design_2=v_3, \design_3)
        \in
        \{
          (v_1, v_3, v_1),\, 
          (v_1, v_3, v_2),\, 
          (v_1, v_3, v_3),\, 
          (v_1, v_3, v_4) 
        \} \,
      \end{equation}
      and $(v_1, v_4)$ expands to
      \begin{equation}\label{supp:eqn:higher_order_path_expansion_3}
        (\design_1=v_1, \design_2=v_4, \design_3)
        \{
          (v_1, v_4, v_2),\, 
          (v_1, v_4, v_3),\, 
          (v_1, v_4, v_4)
        \} \,.
      \end{equation}

      By combining 
      \eqref{supp:eqn:higher_order_path_expansion_1},
      \eqref{supp:eqn:higher_order_path_expansion_2}, and 
      \eqref{supp:eqn:higher_order_path_expansion_3} we obtain all paths of length $n=3$ 
      in the support
      starting from $\design_1=v_1$. 
      By following the same strategy with other starting points, 
      the support consisting of $24$ trajectories can be constructed.
    
    \paragraph{Probability computations}
      We show an example of the computations of the path probabilities and leave the rest as 
      an exercise to the reader, since they follow similarly:
      \begin{equation}
        \begin{aligned}
          \Prob{\designvec=\left(v_1, v_2, v_2\right)} 
            &= \Prob{v_1} \,\, \CondProb{v_2}{v_1}  \,\, \CondProb{v_2}{v_1, v_2} \\
            &\stackrel{(\ref{eqn:higher_order_trajectory_initial_state}, 
              \ref{eqn:higher_order_trajectory_distribution_transition})}{=} 
              \pi_{1}\, \, \pi^{1}_{2} \,\,
              \CondProb{\design_{n}=v_2}{\design_{n-1}=v_2, \design_{n-2}=v_1} \\
            &\stackrel{\eqref{eqn:Raftery_model}}{=} 
              \pi_{1}\,\, \pi^{1}_{2} \,\, 
              \left(
                \lambda_1 \CondProb{v_2}{v_2} + 
                \lambda_2 \CondProb{v_2}{v_1} 
              \right) \\
            &\stackrel{\eqref{eqn:Raftery_model}}{=} 
              \pi_{1}\,\, \pi^{1}_{2} \,\, 
              \left(
                \lambda_1 \, \pi^{2}_{2} + 
                \lambda_2 \,  \pi^{1}_{2} 
              \right) \\ 
            &= \frac{1}{5} \times \frac{1}{6} 
              \left(\frac{2}{3} \times 0 + \frac{1}{3} \times \frac{1}{6}\right)
            = \frac{1}{540} \,. 
        \end{aligned}
      \end{equation}
      %

  \subsection{Generalized Higher-Order Policy}
    \label{app:subsec:example_generalized_higher_order_model}
    %
    \paragraph{Distribution support}
      Here the support consists of $19$ trajectories with nonzero probabilities.
      We follow the same strategy as in \Cref{app:subsec:example_generalized_higher_order_model}.
      Expanding a trajectory starting with $\design_1=v_1$ to a path of the same length here 
      is the same as in \Cref{app:subsec:example_higher_order_model}.
      Thus we start by discussing the next node allowed for a partial path $(v_1, v_2)$ under 
      the generalized higher-order policy given by \Cref{defn:generalized_higher_order_path_model}.

      The conditional transition probability \eqref{eqn:generalized_Raftery_model} 
      is used to determine the next nodes allowed on the path starting with 
      $(\design_1=v_1, \design_2=v_2)$.
      Starting from $v_2$, one-step transition is allowed to either $v_3$ or $v_4$.
      Starting from $v_1$, two-step transition (path consisting of two edges) is allowed to 
      $v_1$ (via $v_3$). 
      Thus, given the partial path $(\design_1=v_1, \design_2=v_2)$, 
      the conditional transition probability is  nonzero only for $v_1, v_3, v_4$.
      Thus the following paths belong to the distribution support:
      \begin{equation}
        (v_1, v_2, v_1),\, 
        (v_1, v_2, v_3),\, 
        (v_1, v_2, v_4)
        \,.
      \end{equation}
      
      We note that the $m$-step transition probabilities can be obtained, for example, by
      multiplying the first-step transition probability matrix by itself $m$ times. 
      Thus the two-step transition probability matrix is given by
      \begin{equation}
        \left[ \nCondProb{2}{v_j}{v_i} \right]_{i=1, \ldots, 5}
        %
          =
          \begin{blockarray}{cccccc} 
            & v_1 & v_2 & v_3 & v_4 & v_5 \\ 
            \begin{block}{c[ccccc]}
              v_1 & 2/12  & 0  & 9/12  &  1/12  & 0 \\
              v_2 & 1/2   & 0  & 1/2  &  0  & 0 \\
              v_3 & 0     & 2/12  & 2/12  &  8/12  & 0 \\
              v_4 & 1  & 0  & 0  &  0  & 0  \\
              v_5 & 0  & 0  & 1/2  &  1/2  & 0  \\
            \end{block}
          \end{blockarray} \,,
      \end{equation}
      which can be used to enumerate possible 2-step transitions.
      For example, from the third row we know that there are paths with two edges joining 
      $v_3$ to and only to each of $v_2, v_3, v_4$.

    \paragraph{Probability computations}
      An example of probability computations is given for the path $(v_1, v_2, v_1)$ as follows:
      \begin{equation}
        \begin{aligned}
          \Prob{\designvec=\left(v_1, v_2, v_1\right)} 
            &= \Prob{v_1} \,\, \CondProb{v_2}{v_1} \,\,  \CondProb{v_1}{v_1, v_2} \\
            &\stackrel{(\ref{eqn:generalized_higher_order_trajectory_initial_state}, 
              \ref{eqn:generalized_higher_order_trajectory_distribution_transition})}{=} 
              \pi_{1}\,\,  \pi^{1}_{2} \,\,  
              \CondProb{\design_{n}=v_1}{\design_{n-1}=v_2, \design_{n-2}=v_1} \\
            &\stackrel{\eqref{eqn:Raftery_model}}{=} 
              \pi_{1}\,\,  \pi^{1}_{2} \,\,  
              \left(
                \lambda_1 \CondProb{v_1}{v_2} + 
                \lambda_2 \nCondProb{2}{v_1}{v_1} 
              \right) \\
            &\stackrel{(\ref{eqn:generalized_Raftery_model}, \ref{eqn:generalized_higher_order_transition})}{=} 
            \frac{1}{5} \times \frac{1}{6} 
              \left(\frac{2}{3} \times 0 + \frac{1}{3} \times \frac{2}{12}\right)
            = \frac{1}{540} \,. 
        \end{aligned}
      \end{equation}

      The rest of the support---consisting of $19$ possible paths---and the full 
      probability distribution under this model 
      can be constructed by following the same steps above.

      Note that unlike the case of \Cref{defn:higher_order_path_model}, here
      the trajectory $(v_1, v_2, v_2)$ does not belong to the support of the generalized 
      higher-order policy \Cref{defn:generalized_higher_order_path_model}.
      Specifically, $\Prob{\designvec=(v_1, v_2, v_2)}=0$ because both $\CondProb{v_2}{v_2}=0$ 
      and $\nCondProb{2}{v_2}{v_1}=0$, yielding the conditional transition probability
      $\CondProb{v_2}{v_1, v_2}=0$.

\section{Additional Numerical Results}
  \label{sup:sec:numerical_results}
  Here we provide numerical experiments additional to those presented in 
  \Cref{sec:numerical_experiments}.
  Specifically, in \Cref{sup:subsubsec:fine_mesh_results_unspecified_start_point_7_sensors} 
  we complement the results presented in 
  \Cref{subsubsec:fine_mesh_results_unspecified_start_point} 
  with results obtained by employing the fine navigation mesh 
  \Cref{fig:navigation_meshes} (right) 
  with $s=7$ moving sensors, where the starting point of the trajectory is unknown.
  \Cref{sup:subsec:fine_mesh_results_fixed_start_point} presents results 
  for an experiment in which the starting point of a trajectory is fixed.
  %
  Finally, in \Cref{sup:subsec:Other_Utility_Functions} we show results 
  for experiments carried out with A- and E-optimality 
  OED criteria, respectively, used to define the utility/objective function.

  \subsection{Results with Fine Navigation Mesh and $7$ Moving Sensors}
  \label{sup:subsubsec:fine_mesh_results_unspecified_start_point_7_sensors}
    This section utilizes the same setup used in  
    \Cref{subsubsec:fine_mesh_results_unspecified_start_point} and replaces 
    the one moving sensor with a group of $s=7$ moving sensors.

    \Cref{sup:fig:fine_first_order_unspecified_start_point_7_sensors} 
    shows the results of \Cref{alg:probabilistic_path_optimization} with the first-order 
    policy defined by \Cref{defn:first_order_path_model}.
    \Cref{sup:fig:fine_higher_order_unspecified_start_point_7_sensors} 
    shows the performance of the optimization procedure with the higher-order 
    policy given by \Cref{defn:higher_order_path_model} with order $k=3$ and $k=5$, respectively.
    The resulting optimal initial parameter (first row) and optimal trajectories (second row)
     under this policy are shown in 
    \Cref{sup:fig:fine_higher_order_unspecified_start_point_order_3_initial_param_and_traject_7_sensors}.
    Similarly, results obtained by employing the generalized higher-order policy given by  
    \Cref{defn:generalized_higher_order_path_model} 
    are shown in \Cref{sup:fig:fine_generalized_higher_order_unspecified_start_point_7_sensors} 
    and 
    \Cref{sup:fig:fine_generalized_higher_order_unspecified_start_point_order_3_initial_param_and_traject_7_sensors}.

    These results support the results in \Cref{subsubsec:fine_mesh_results_unspecified_start_point} and 
    together show that the proposed approach behaves similarly for different numbers of moving sensors 
    $s\geq 1$. 
    With more sensors, however, the optimization landscape becomes more challenging, 
    with many more designs close to the global optima because of the overlap of the regions containing the moving sensors.
    This situation is demonstrated by the behavior of the optimization procedure over the early iterations.

    \begin{figure}[H]
      \centering
      \includegraphics[width=0.53\linewidth]{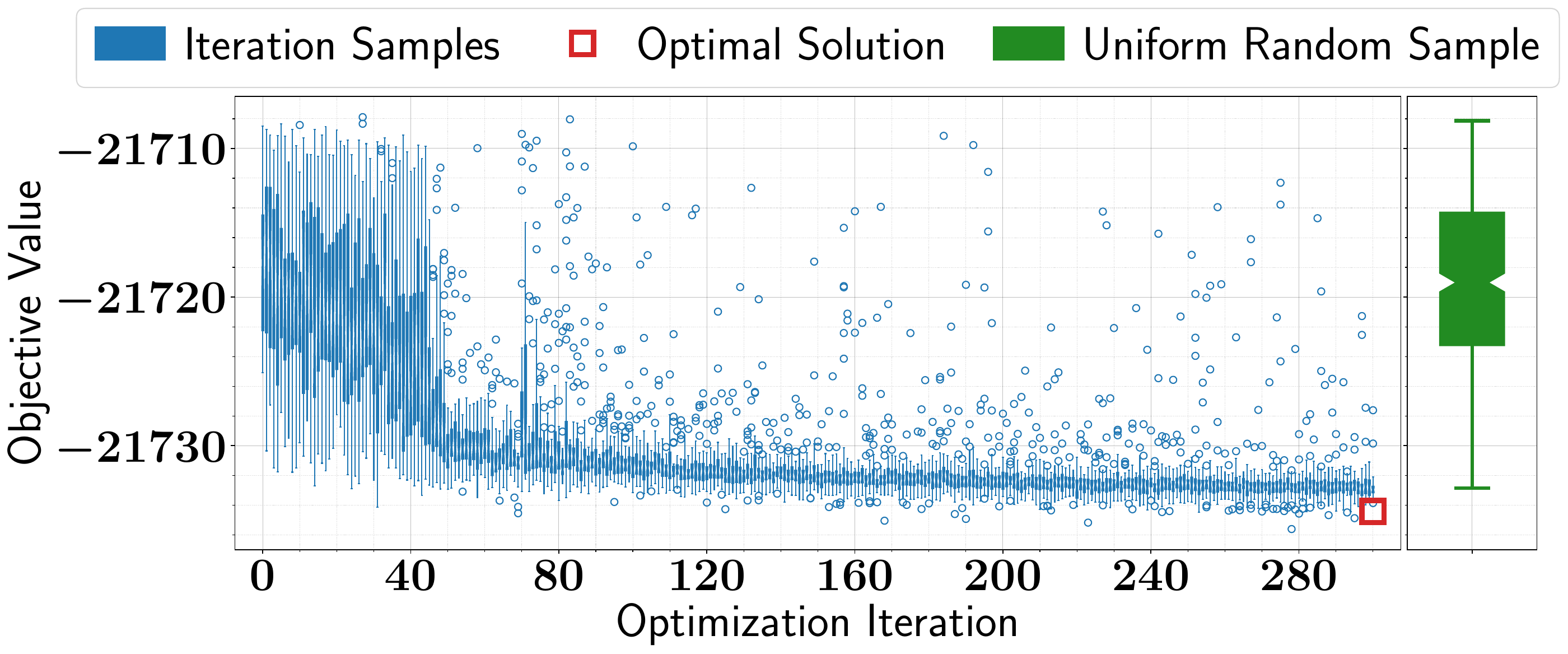}
      \includegraphics[width=0.215\linewidth]{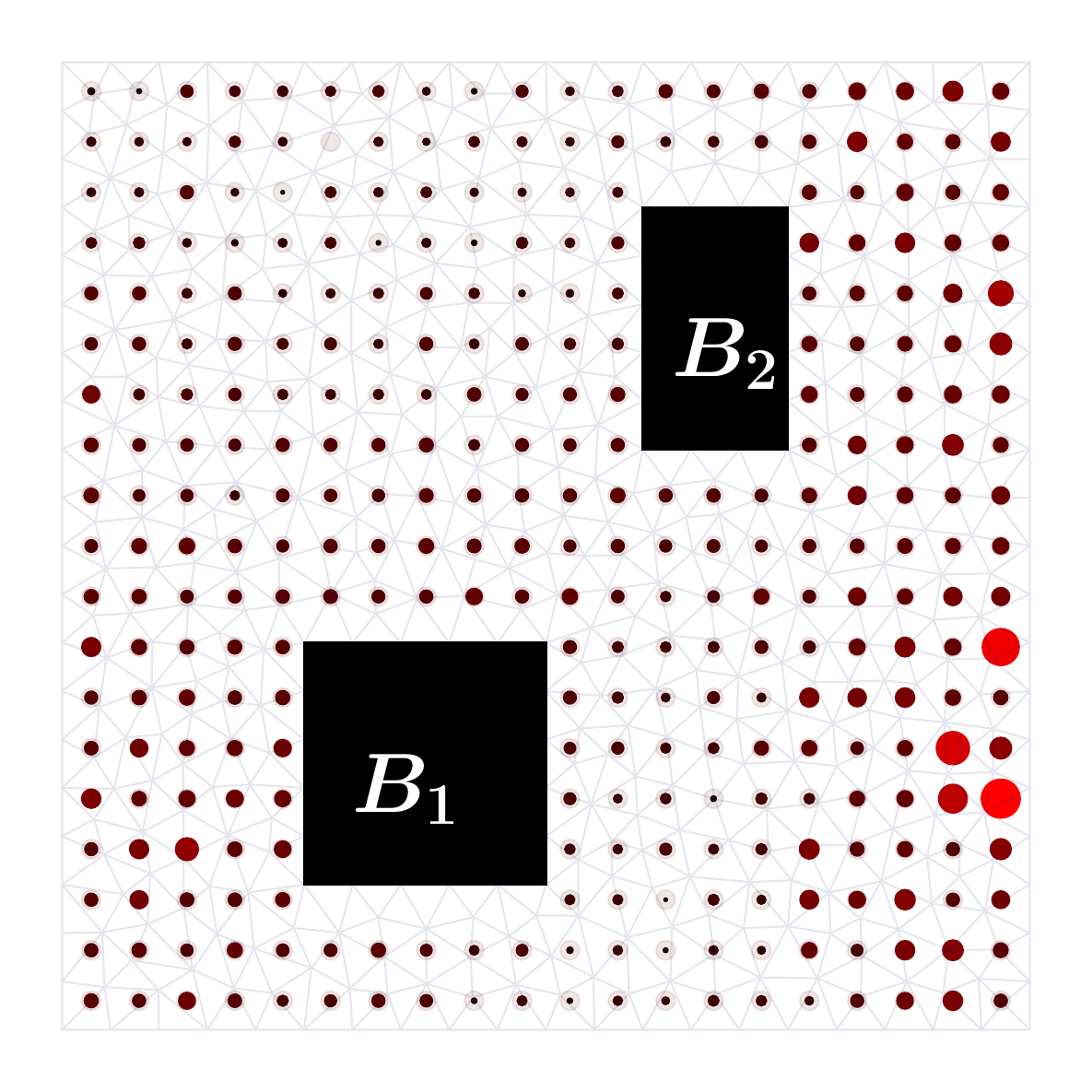}
      \includegraphics[width=0.235\linewidth]{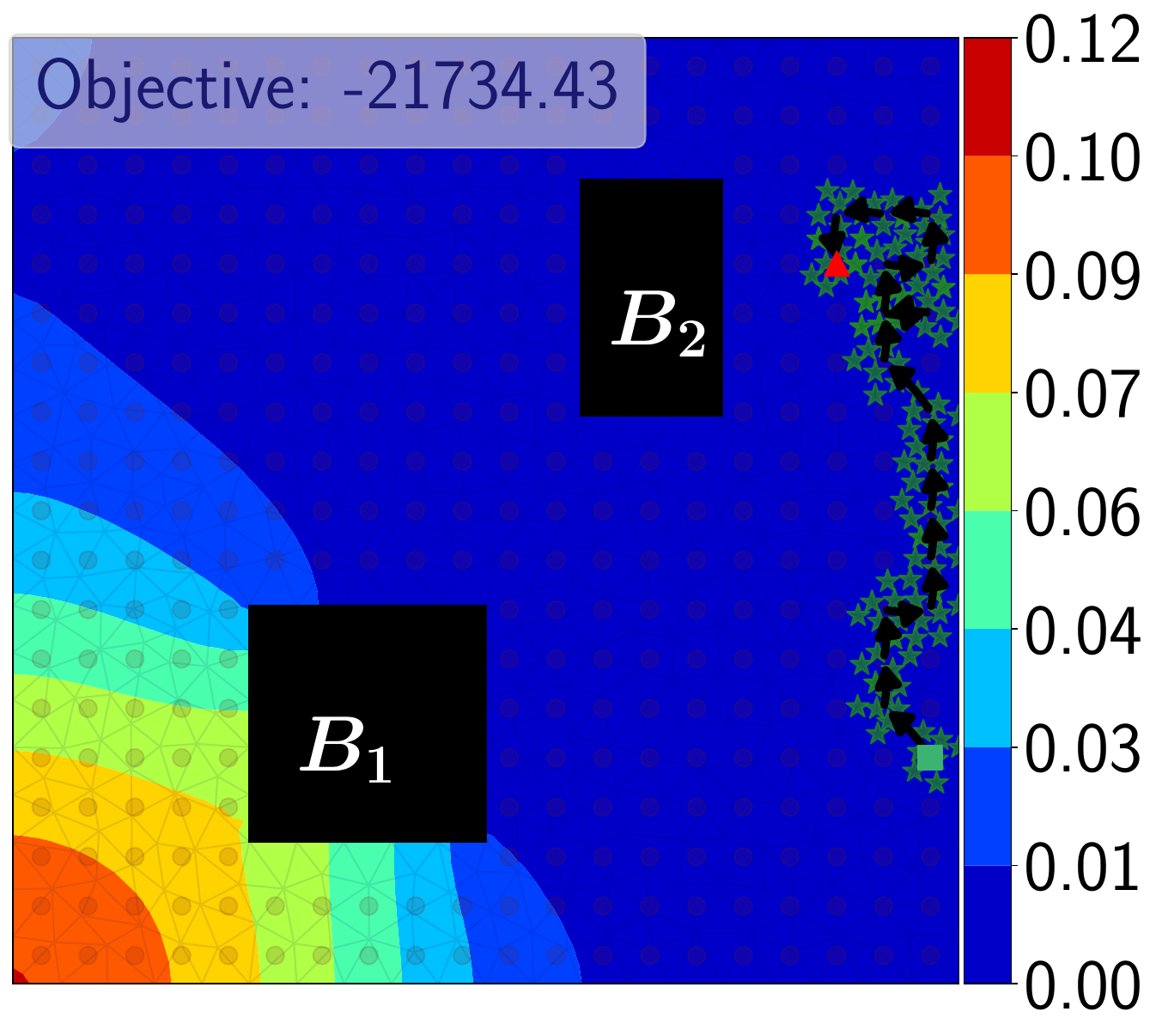}
      \caption{
        Results of \Cref{alg:probabilistic_path_optimization} with the first-order 
        policy (\Cref{defn:first_order_path_model}) applied to 
        the fine navigation mesh (\Cref{fig:navigation_meshes}, right) 
        with trajectory length of $n=19$ and a group of $s=7$ moving sensors.
      }\label{sup:fig:fine_first_order_unspecified_start_point_7_sensors}
    \end{figure}
    \begin{figure}[H]
      \centering
      \includegraphics[width=0.495\linewidth]{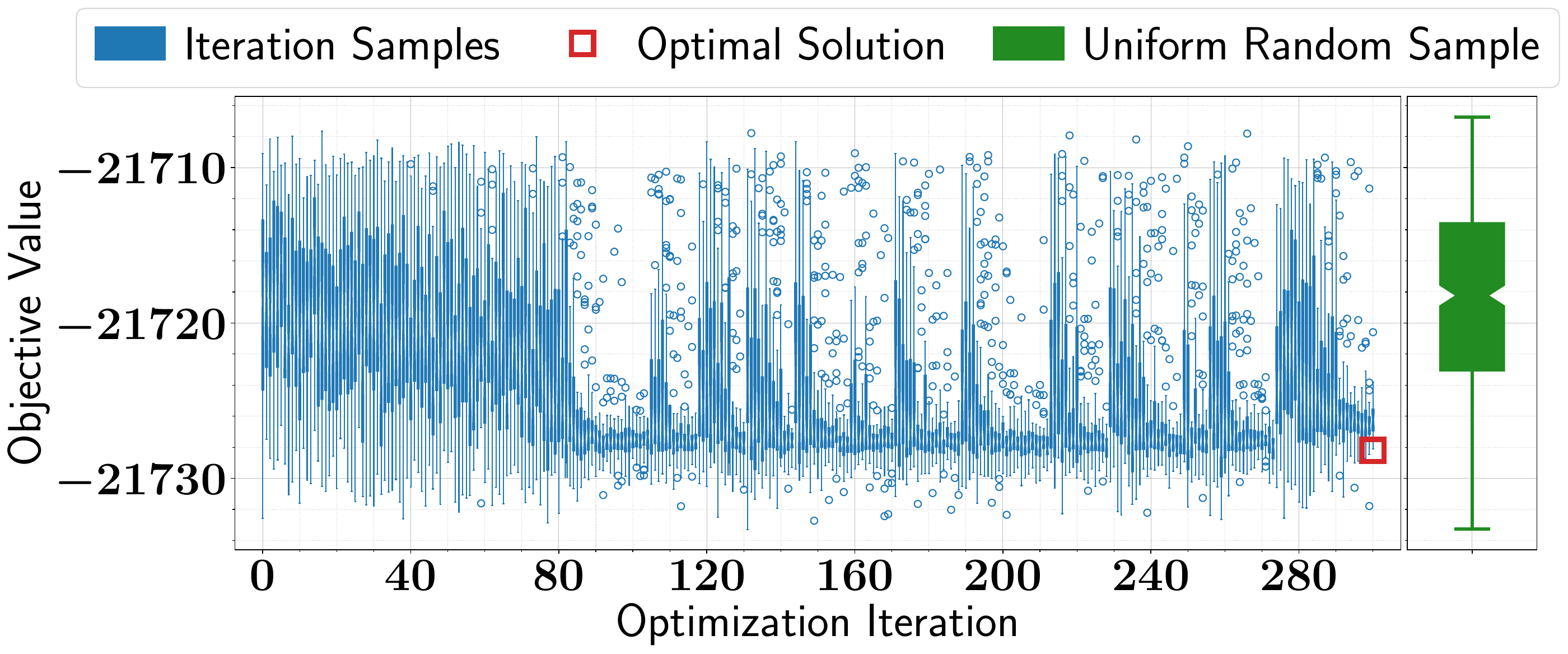}
      \includegraphics[width=0.495\linewidth]{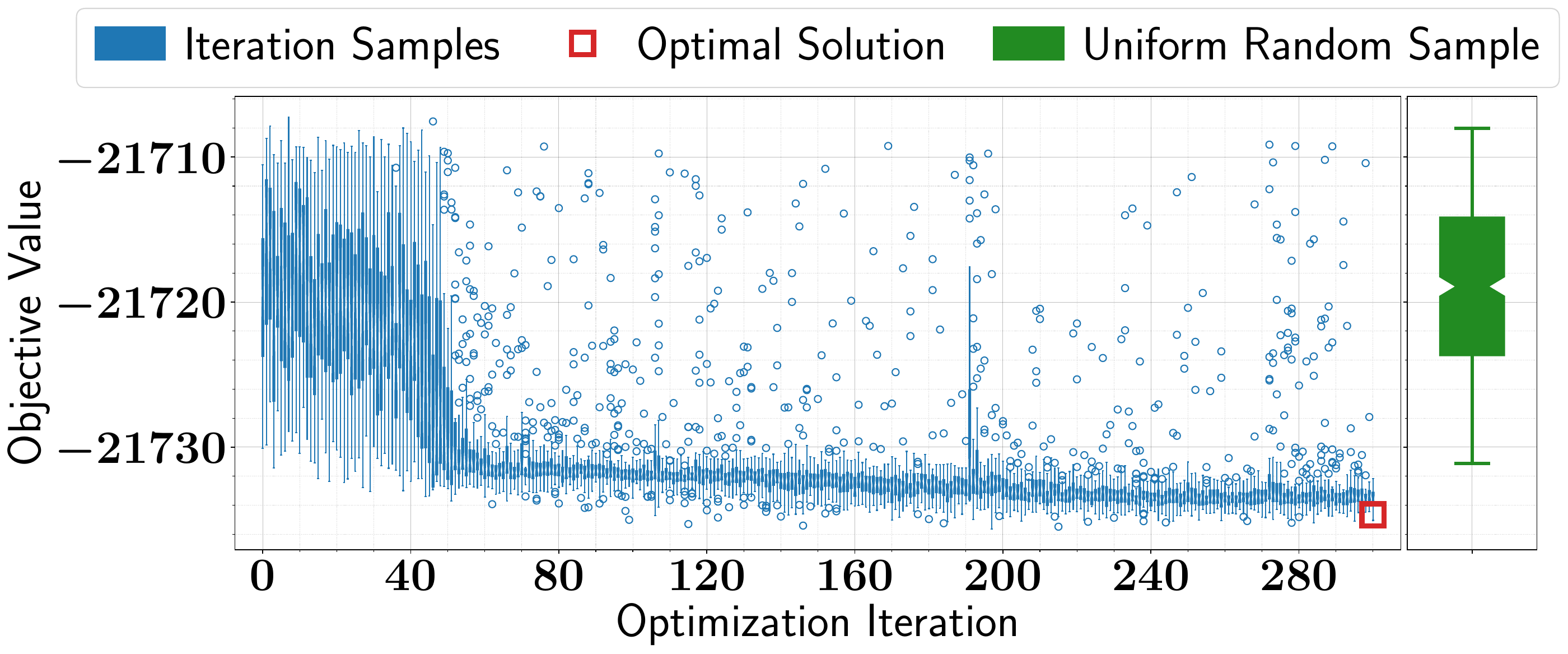}
      \includegraphics[width=0.495\linewidth]{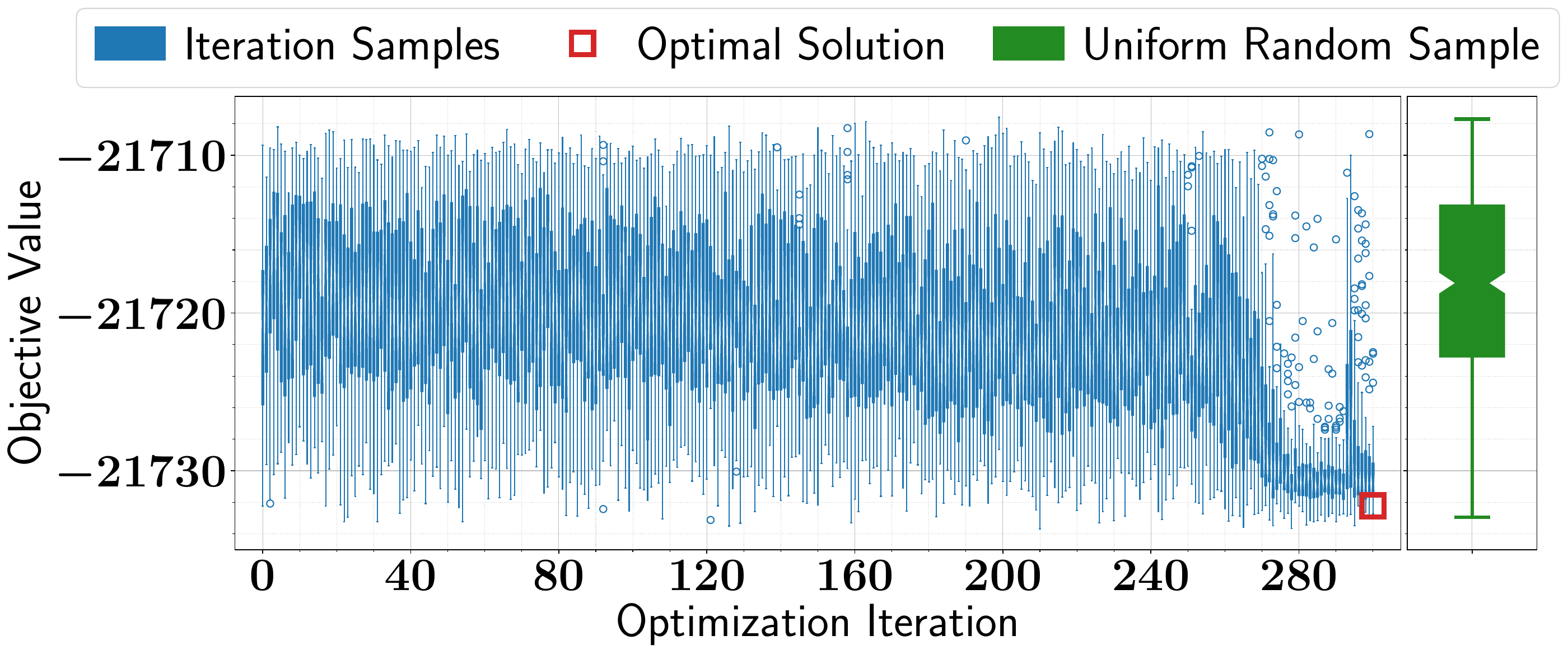}
      \includegraphics[width=0.495\linewidth]{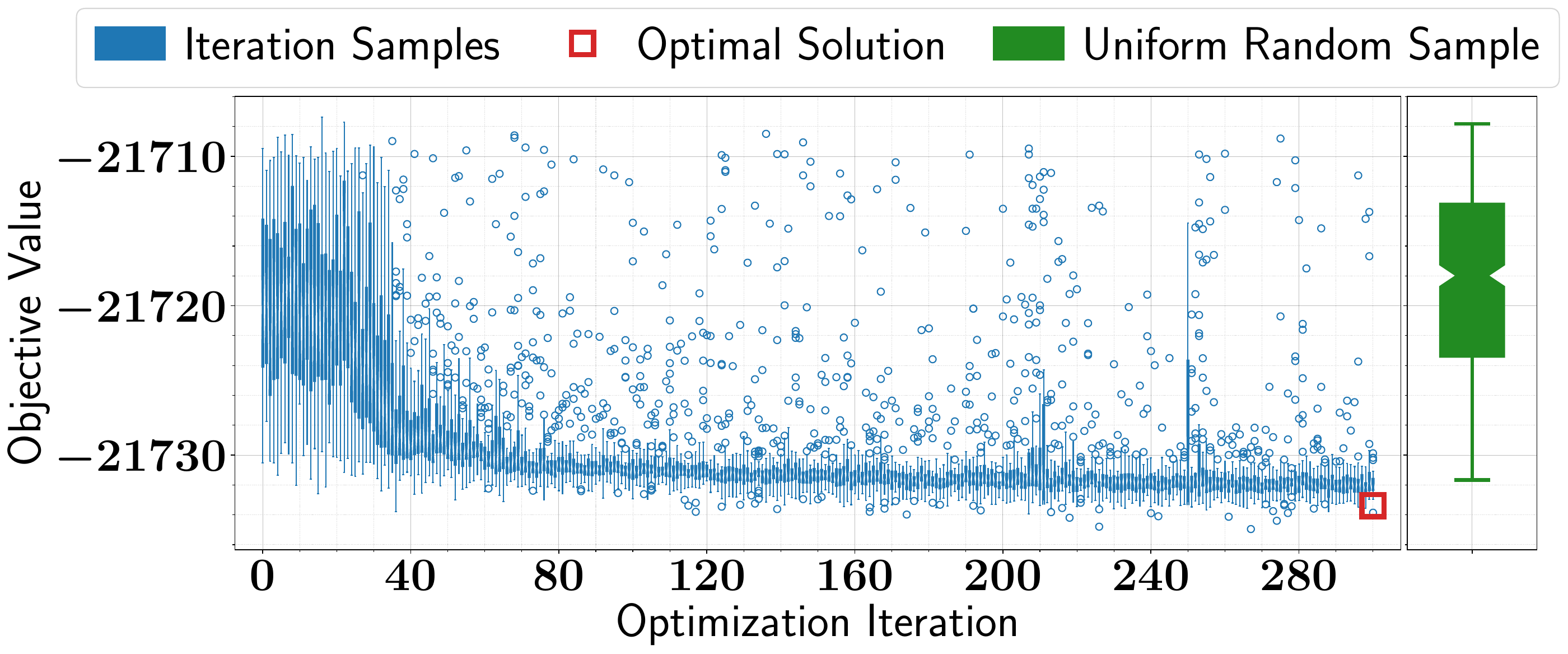}
      \caption{
        Results of \Cref{alg:probabilistic_path_optimization} with the 
        higher-order 
        policy (\Cref{defn:higher_order_path_model}) applied to 
        the fine navigation mesh (\Cref{fig:navigation_meshes}, right) 
        with trajectory length of $n=19$ and $s=7$ moving sensors.
        Results are shown for policy order $k=3$ (first row) and $k=5$ (second row). 
        The first column shows results with lag weights being calibrated by the 
        optimization procedure, and 
        the second column shows results with lag weights modeled 
        by \eqref{eqn:decreasing_lag_weights}.
      }\label{sup:fig:fine_higher_order_unspecified_start_point_7_sensors}
    \end{figure}
    \begin{figure}[H]
      \begin{subfigure}[t]{0.24\linewidth}
        \includegraphics[width=0.91\linewidth]{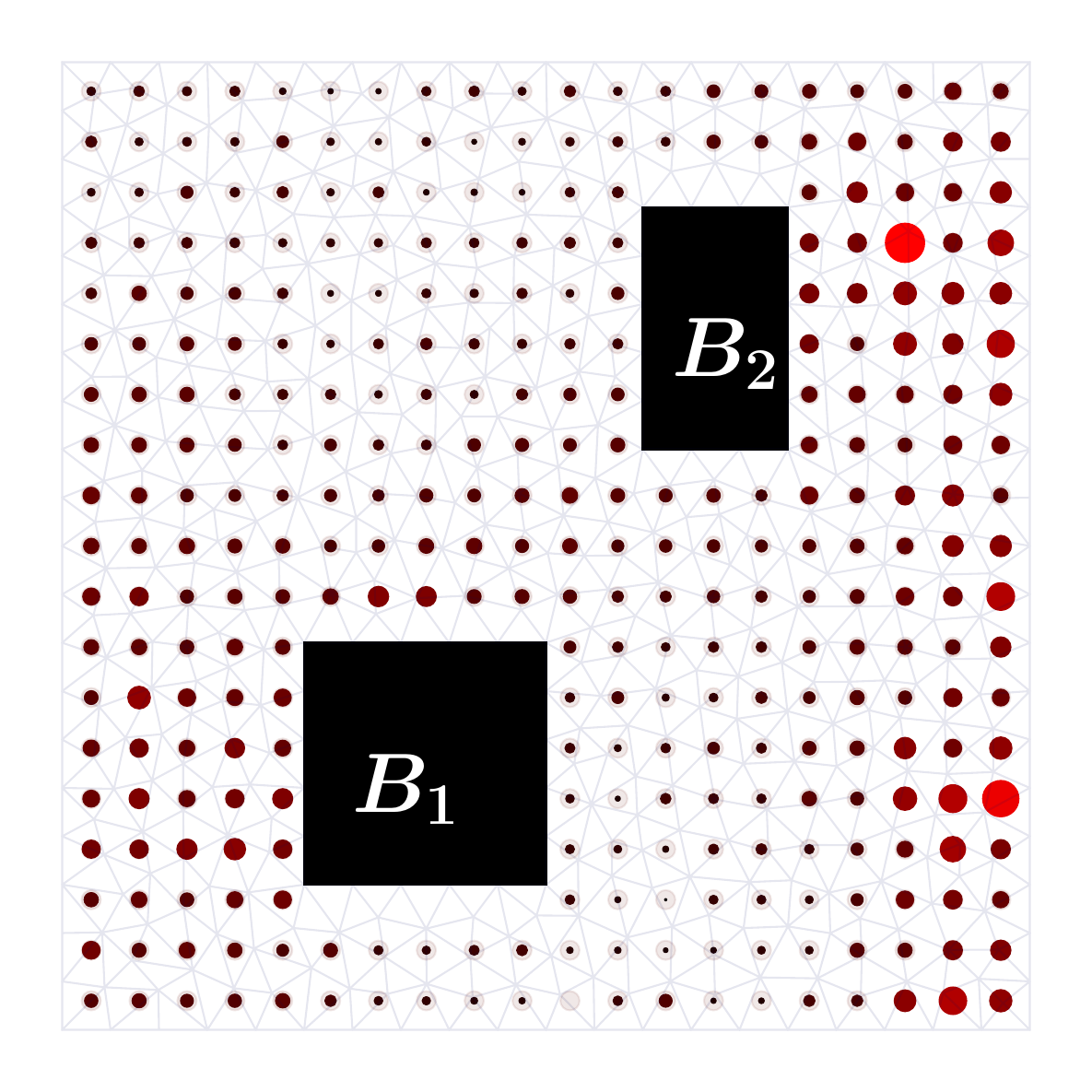}
      \end{subfigure}%
      \begin{subfigure}[t]{0.24\linewidth}
        \includegraphics[width=0.91\linewidth]{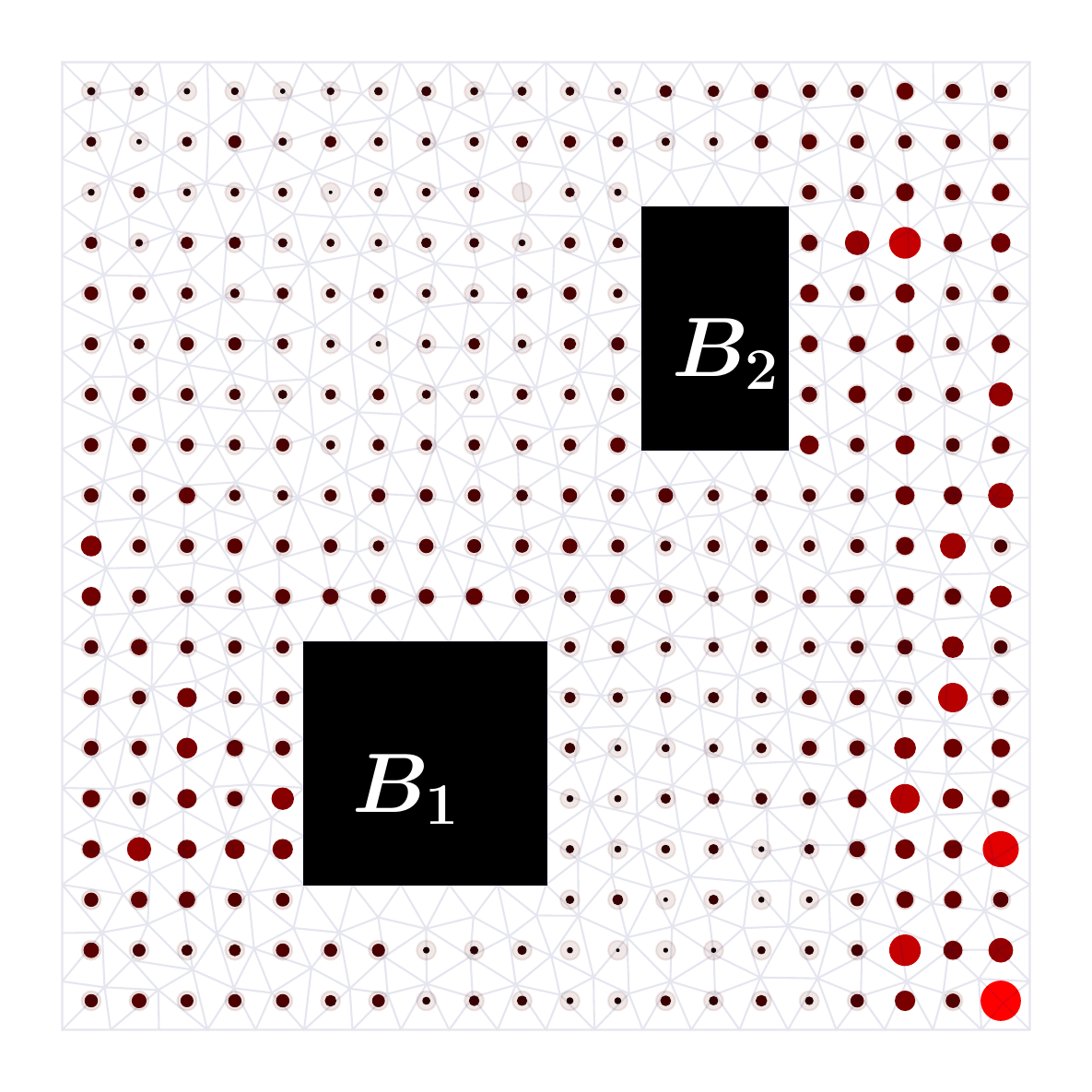}
      \end{subfigure}%
      \begin{subfigure}[t]{0.24\linewidth}
        \includegraphics[width=0.91\linewidth]{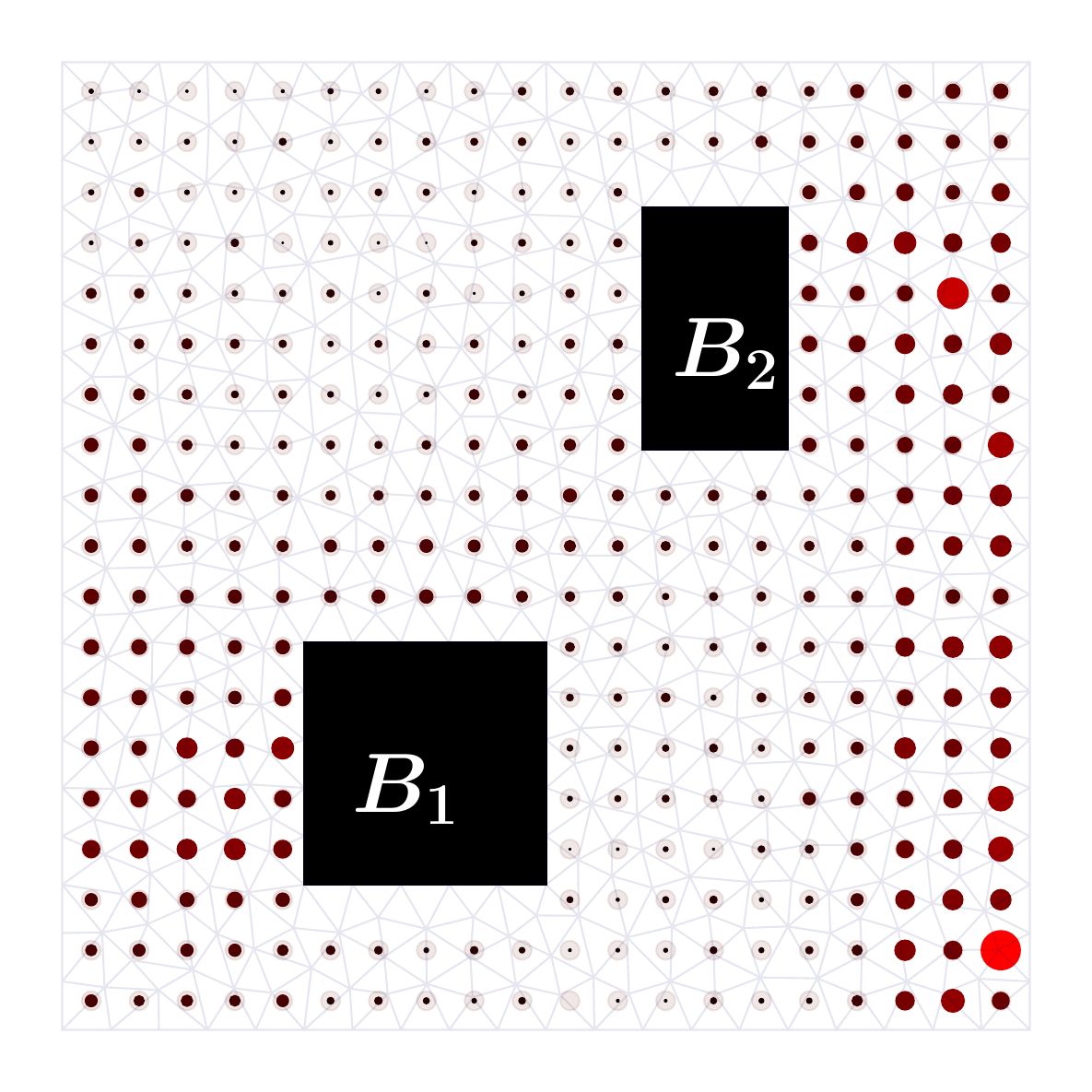}
      \end{subfigure}%
      \begin{subfigure}[t]{0.24\linewidth}
        \includegraphics[width=0.91\linewidth]{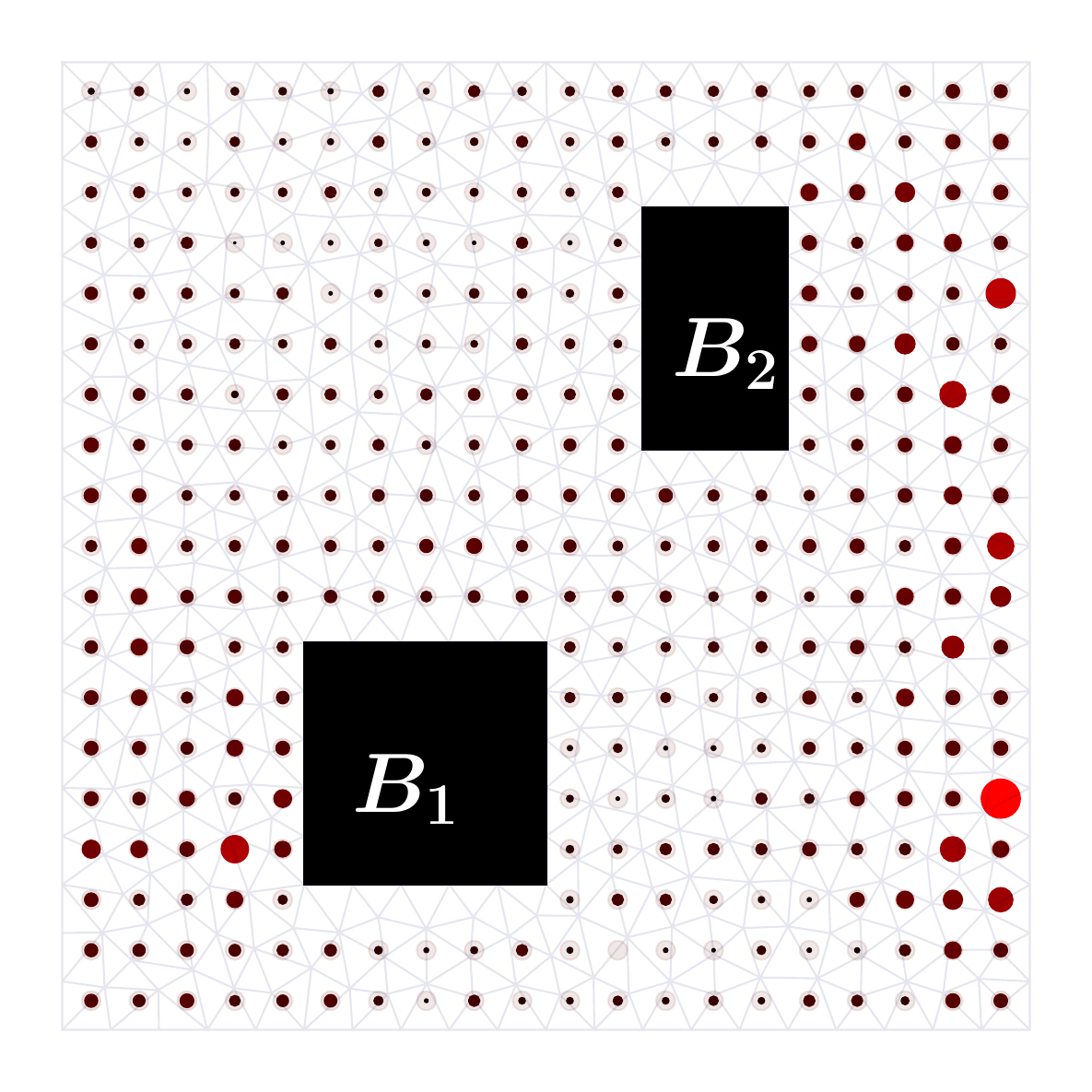}
      \end{subfigure}

      \begin{subfigure}[t]{0.245\linewidth}
        \includegraphics[width=\linewidth]{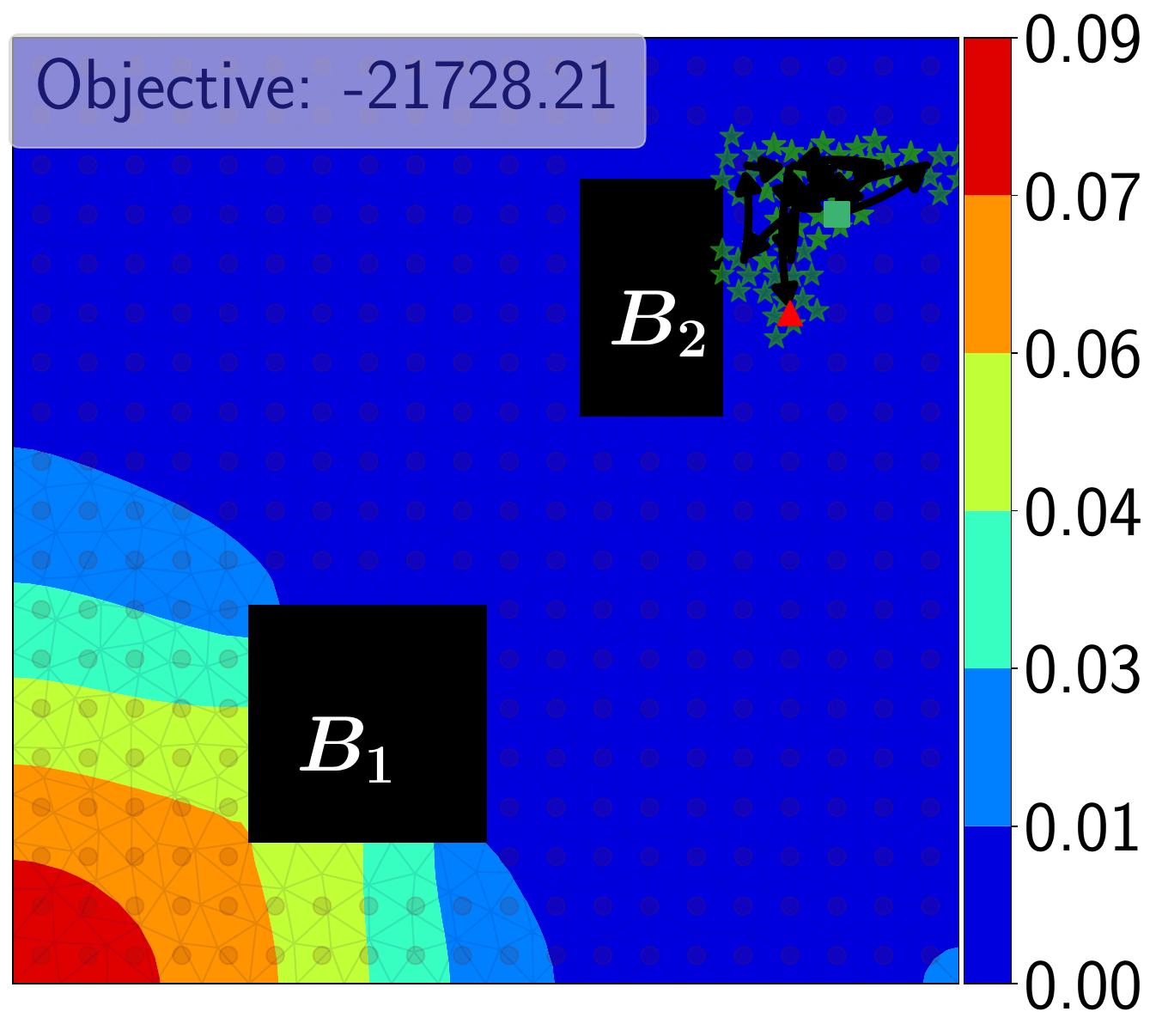}
      \end{subfigure}%
      \begin{subfigure}[t]{0.245\linewidth}
        \includegraphics[width=\linewidth]{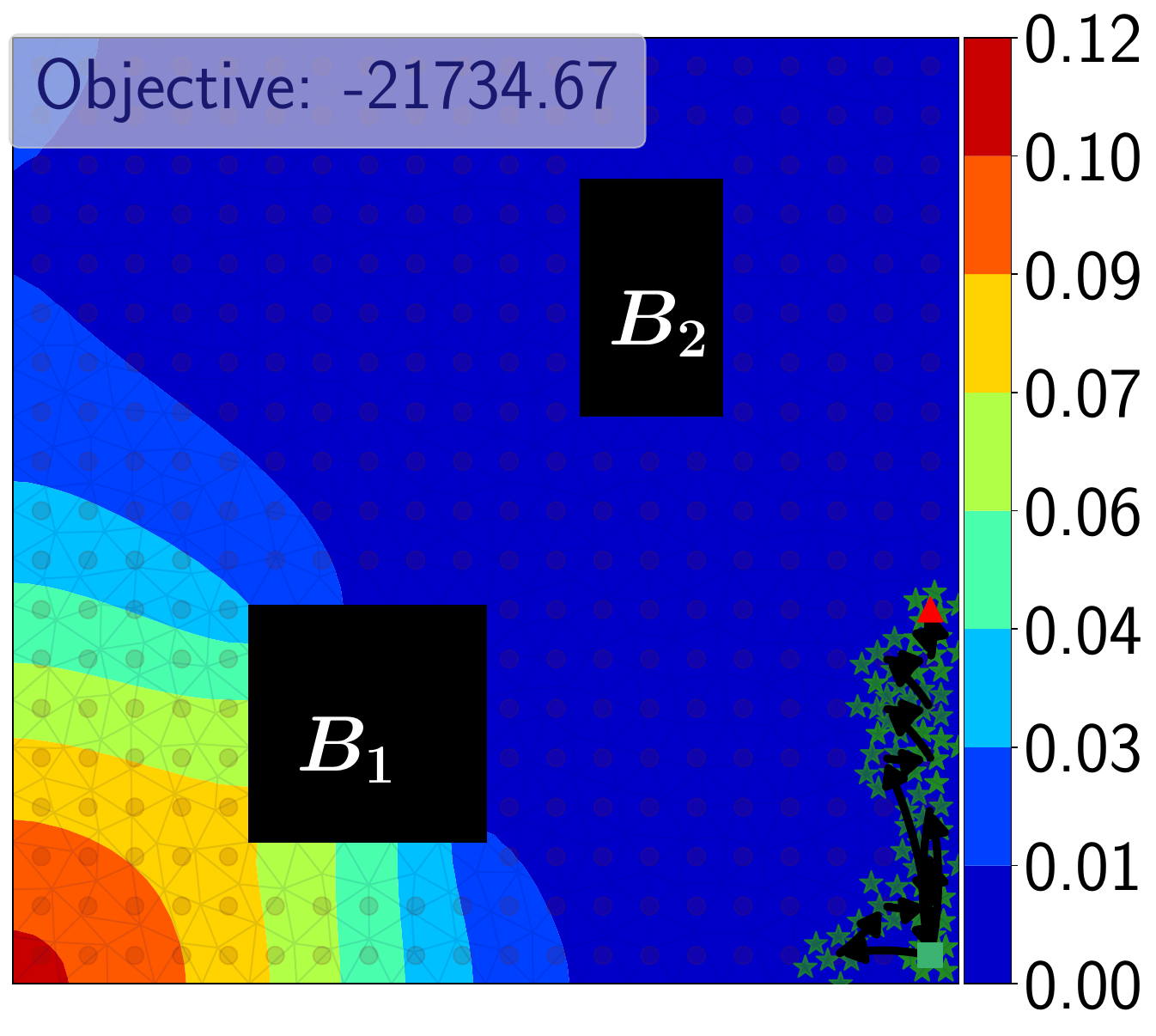}
      \end{subfigure}%
      \begin{subfigure}[t]{0.245\linewidth}
        \includegraphics[width=\linewidth]{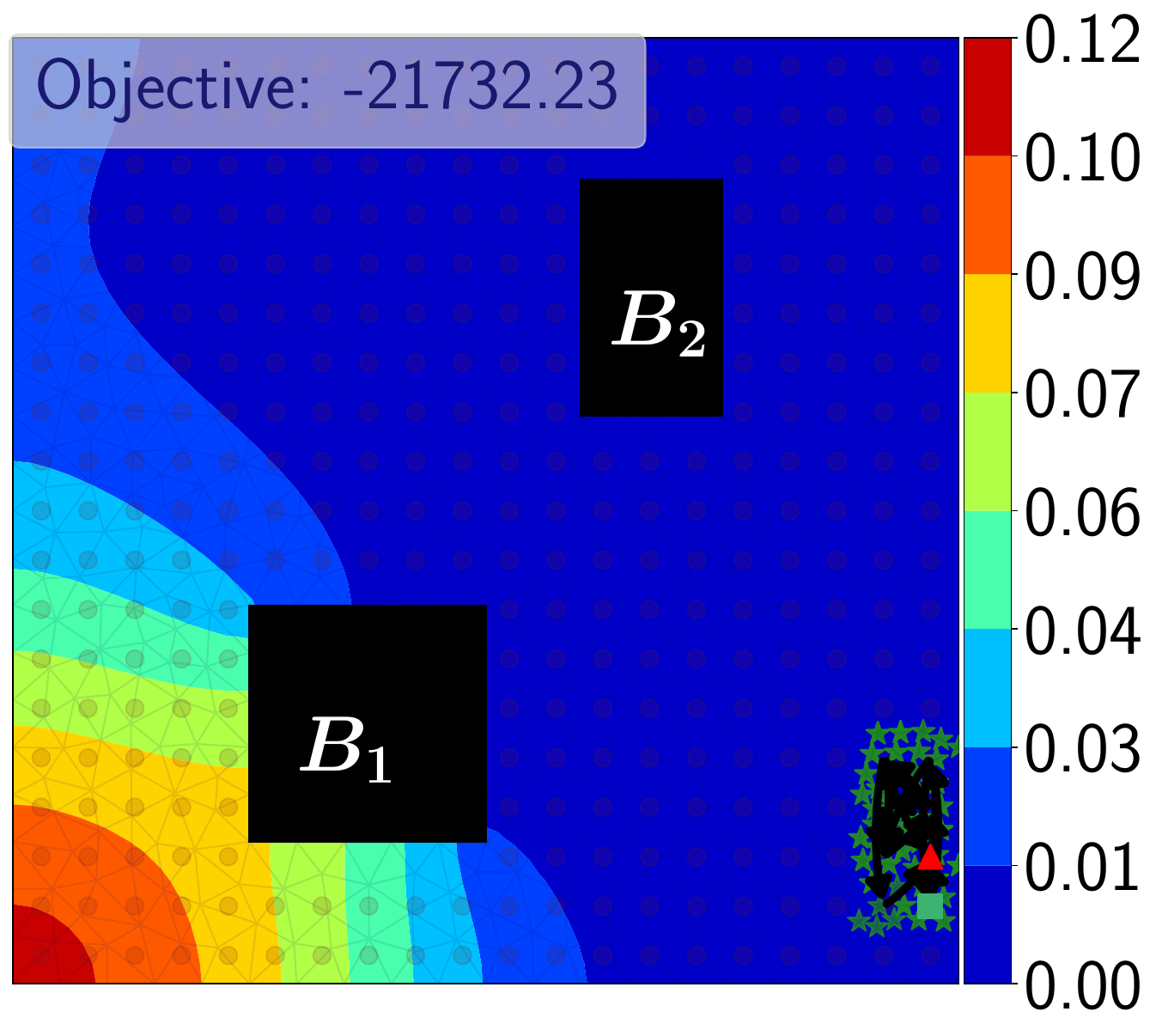}
      \end{subfigure}%
      \begin{subfigure}[t]{0.245\linewidth}
        \includegraphics[width=\linewidth]{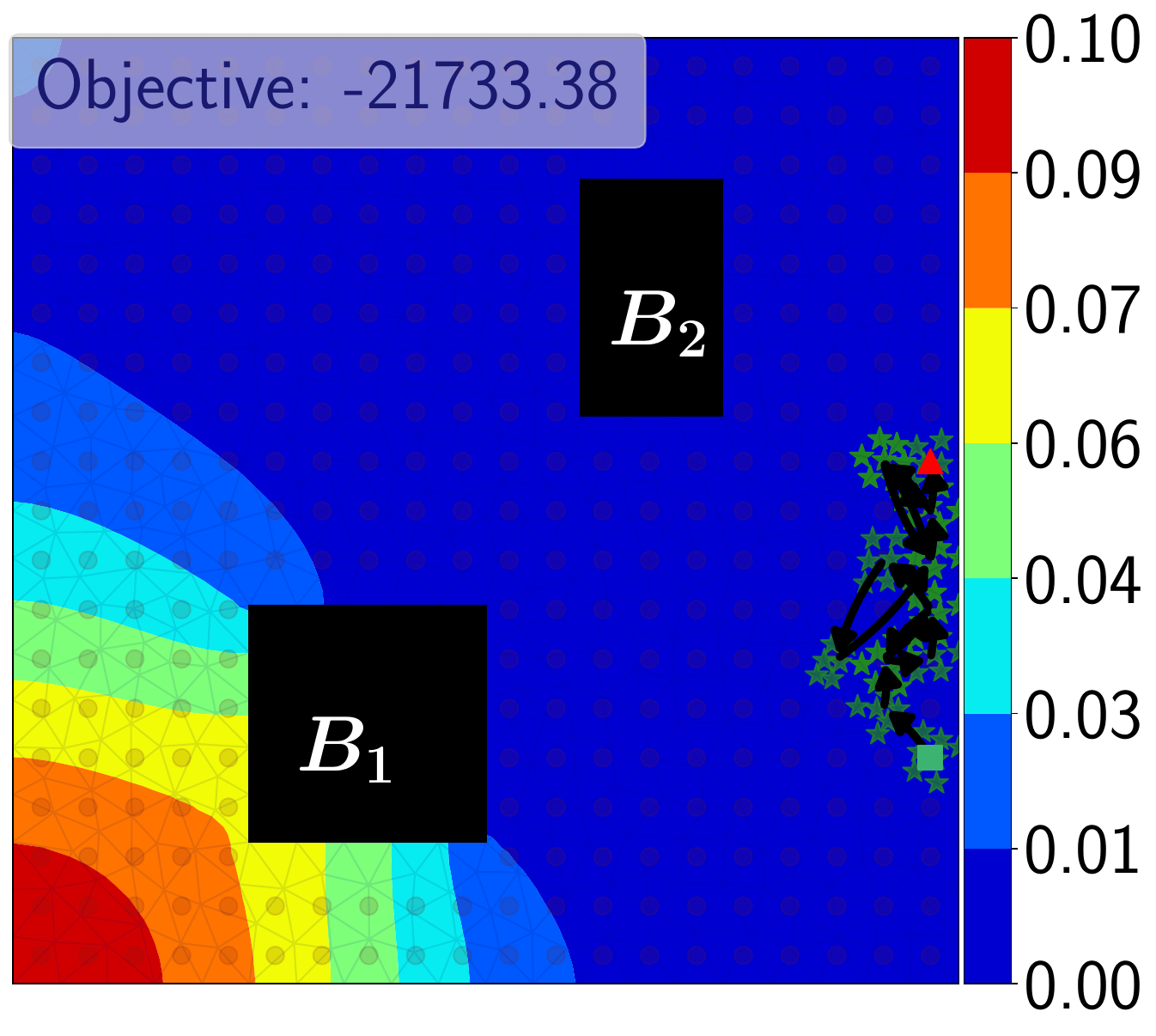}
      \end{subfigure}
      \caption{
        Optimal initial distribution parameters (top row) 
        and optimal trajectories (bottom row) 
        corresponding to \Cref{sup:fig:fine_higher_order_unspecified_start_point_7_sensors}. 
        In each row the first two panels match the first row of that figure, 
        followed by panels corresponding to its second row.
      }\label{sup:fig:fine_higher_order_unspecified_start_point_order_3_initial_param_and_traject_7_sensors}
    \end{figure}
    \begin{figure}[H]
      \centering
      \includegraphics[width=0.495\linewidth]{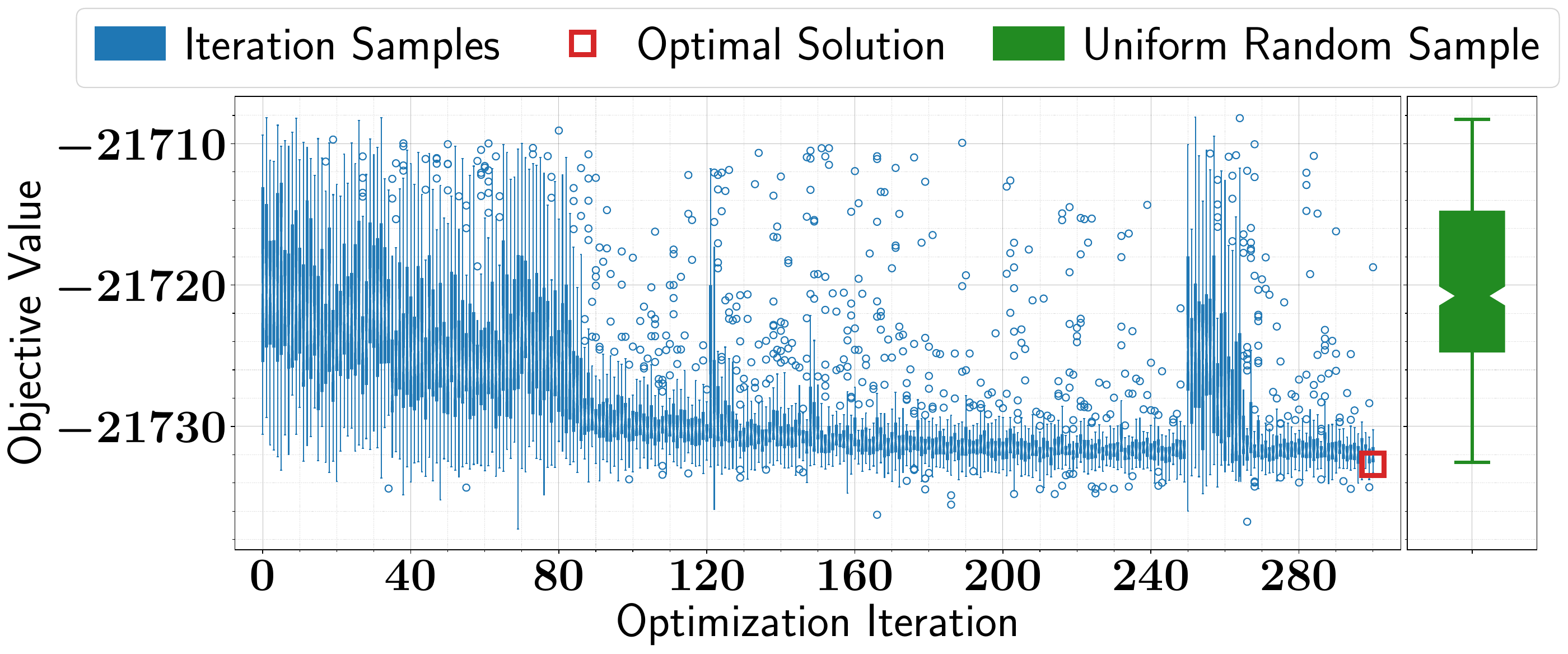}
      \includegraphics[width=0.495\linewidth]{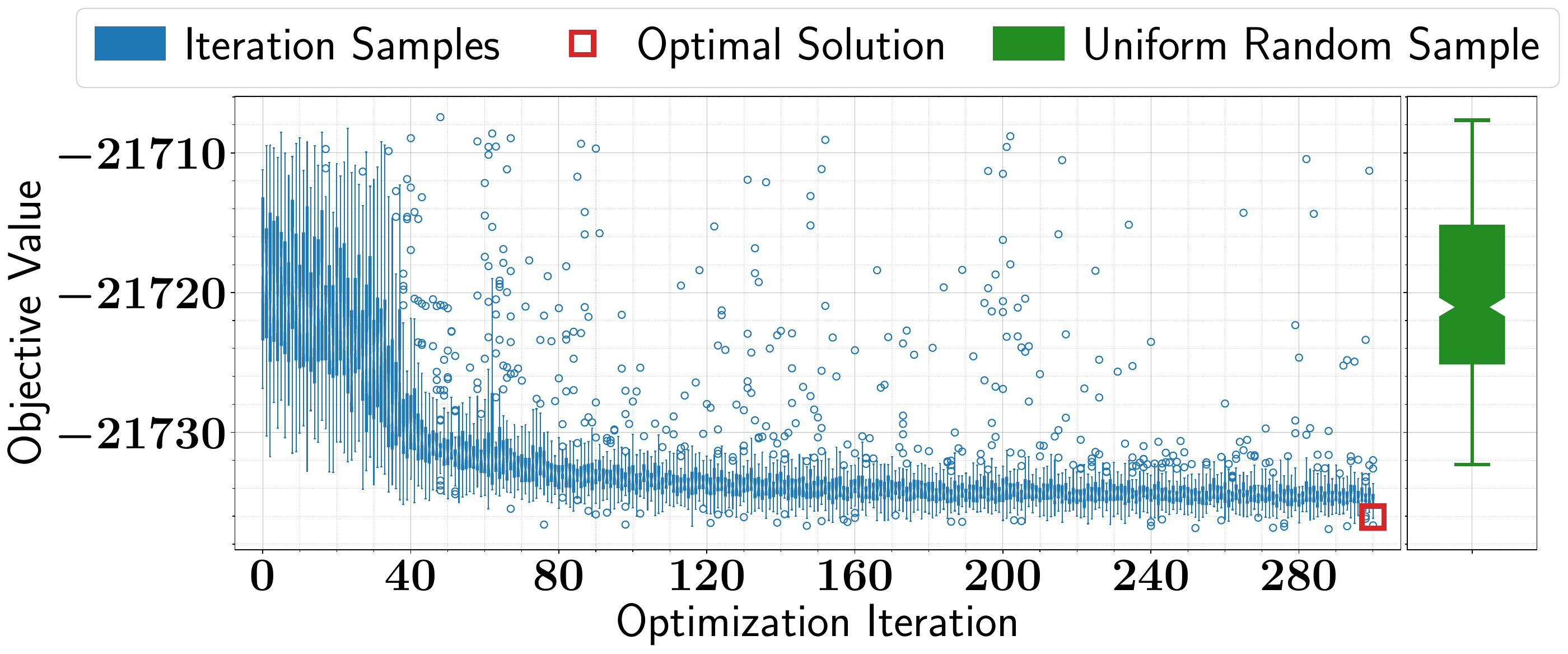}
      \includegraphics[width=0.495\linewidth]{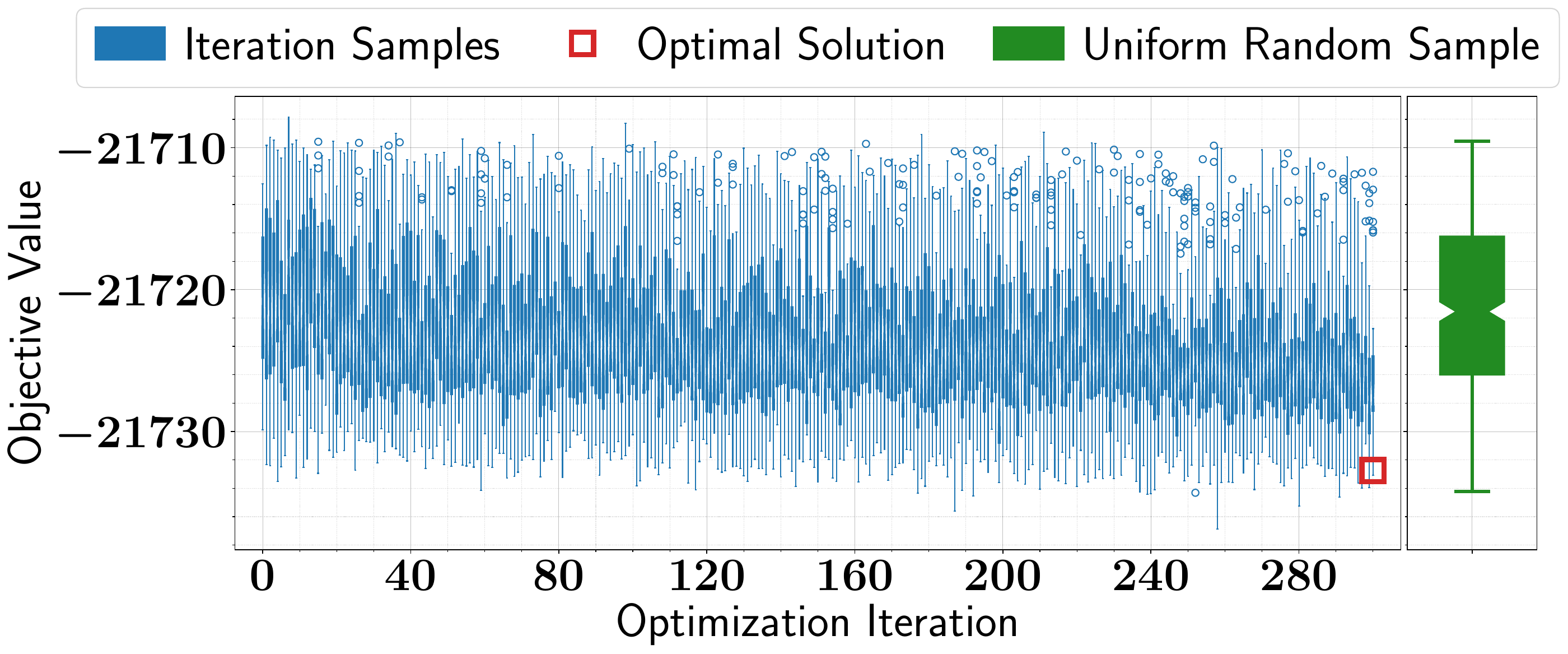}
      \includegraphics[width=0.495\linewidth]{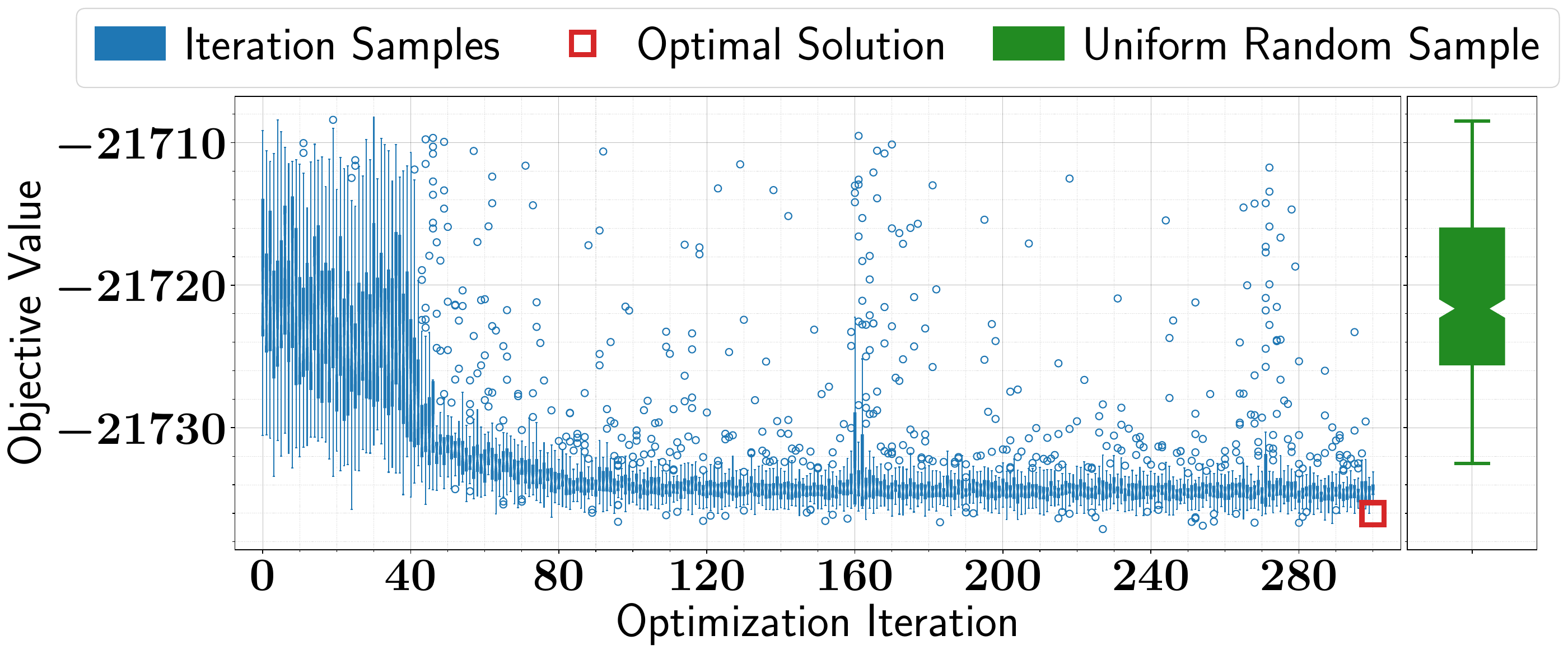}
      \caption{
        Results of \Cref{alg:probabilistic_path_optimization} with the 
        generalized higher-order 
        policy (\Cref{defn:generalized_higher_order_path_model}) applied to 
        the fine navigation mesh (\Cref{fig:navigation_meshes}, right) 
        with trajectory length of $n=19$ and $s=7$ moving sensors.
        Results are shown for policy order $k=3$ (first row) and $k=5$ (second row). 
        The first column shows results with lag weights being calibrated by the 
        optimization procedure, and 
        the second column shows results with lag weights modeled 
        by \eqref{eqn:decreasing_lag_weights}.
      }\label{sup:fig:fine_generalized_higher_order_unspecified_start_point_7_sensors}
    \end{figure}
    \begin{figure}[H]
      \begin{subfigure}[t]{0.24\linewidth}
        \includegraphics[width=0.91\linewidth]{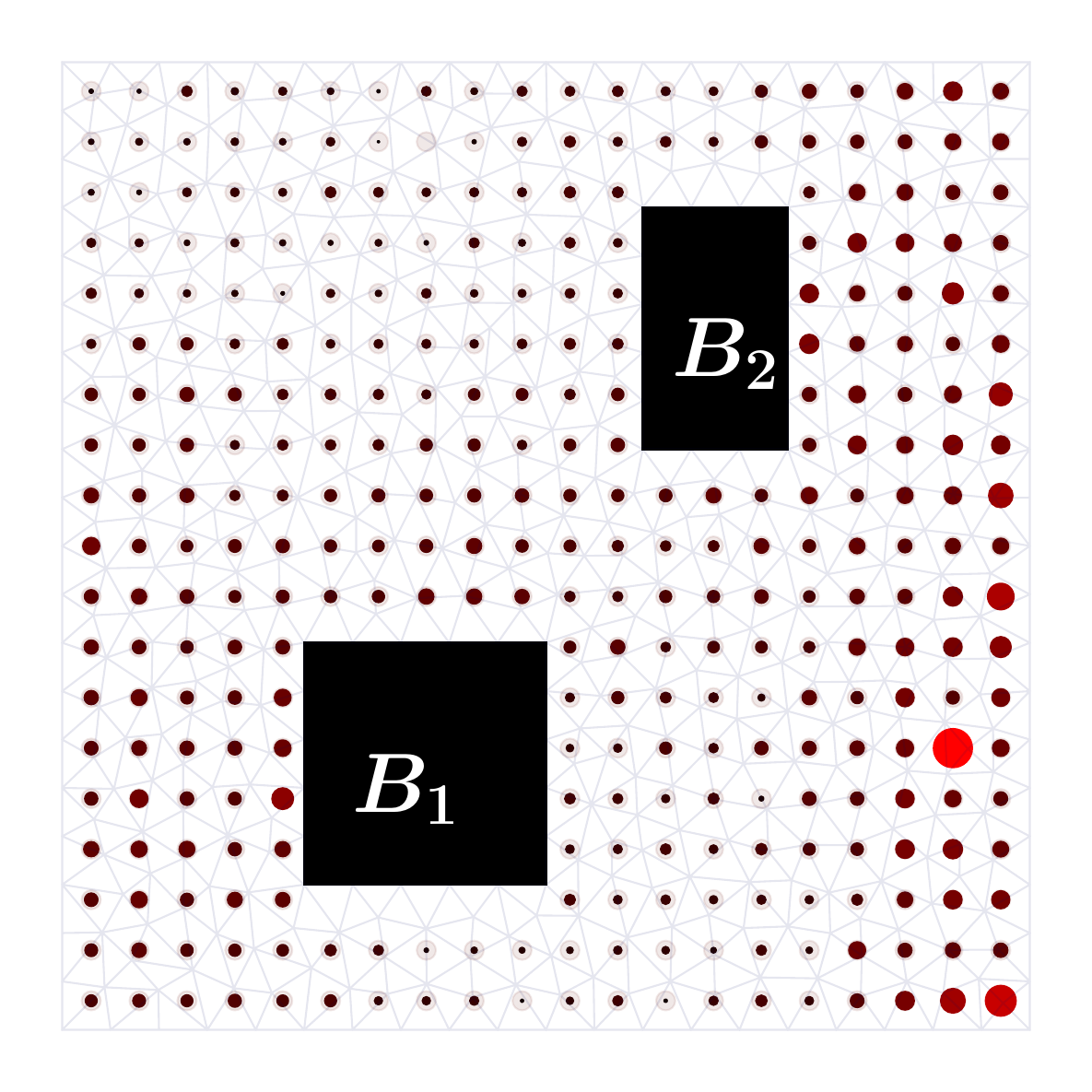}
      \end{subfigure}%
      \begin{subfigure}[t]{0.24\linewidth}
        \includegraphics[width=0.91\linewidth]{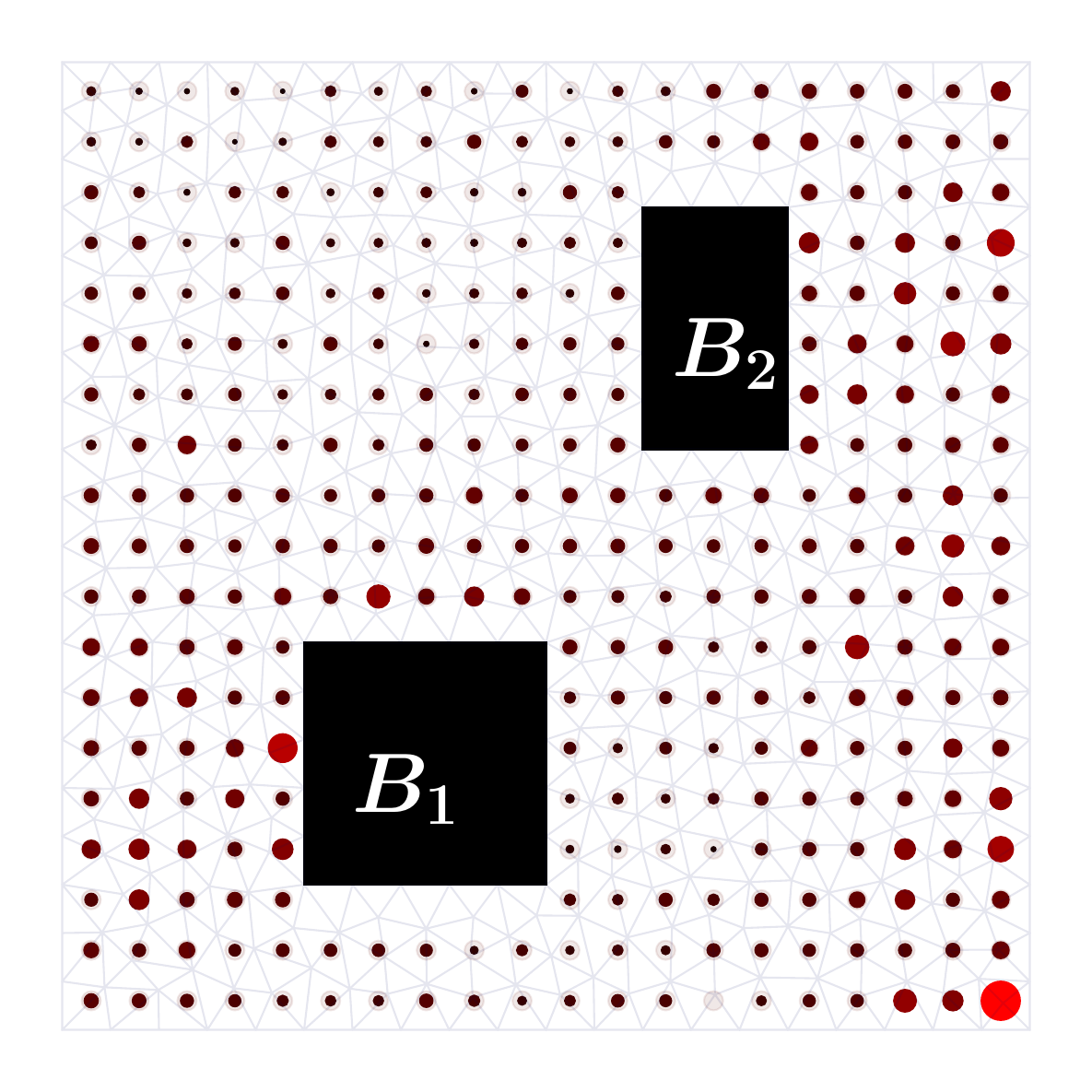}
      \end{subfigure}%
      \begin{subfigure}[t]{0.24\linewidth}
        \includegraphics[width=0.91\linewidth]{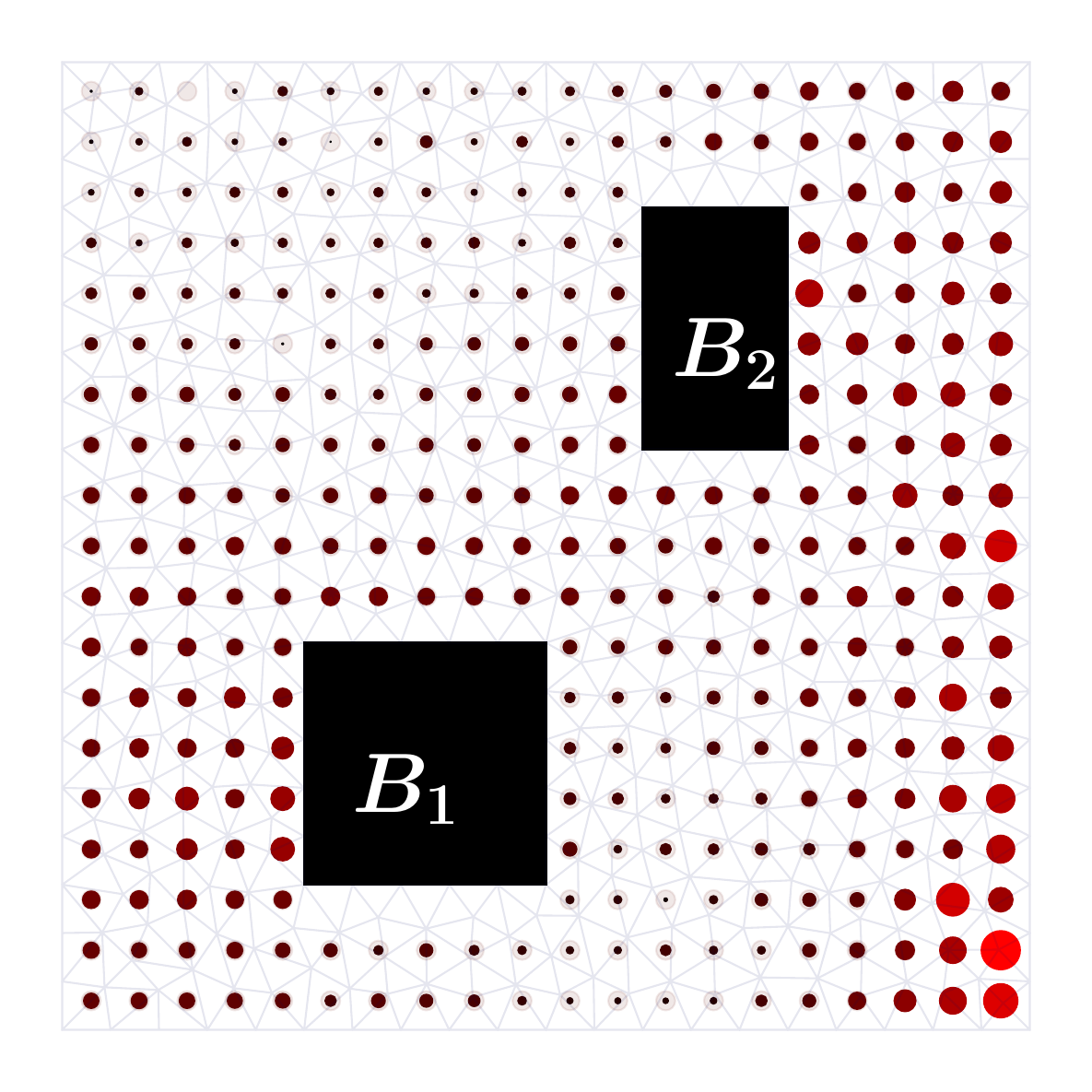}
      \end{subfigure}%
      \begin{subfigure}[t]{0.24\linewidth}
        \includegraphics[width=0.91\linewidth]{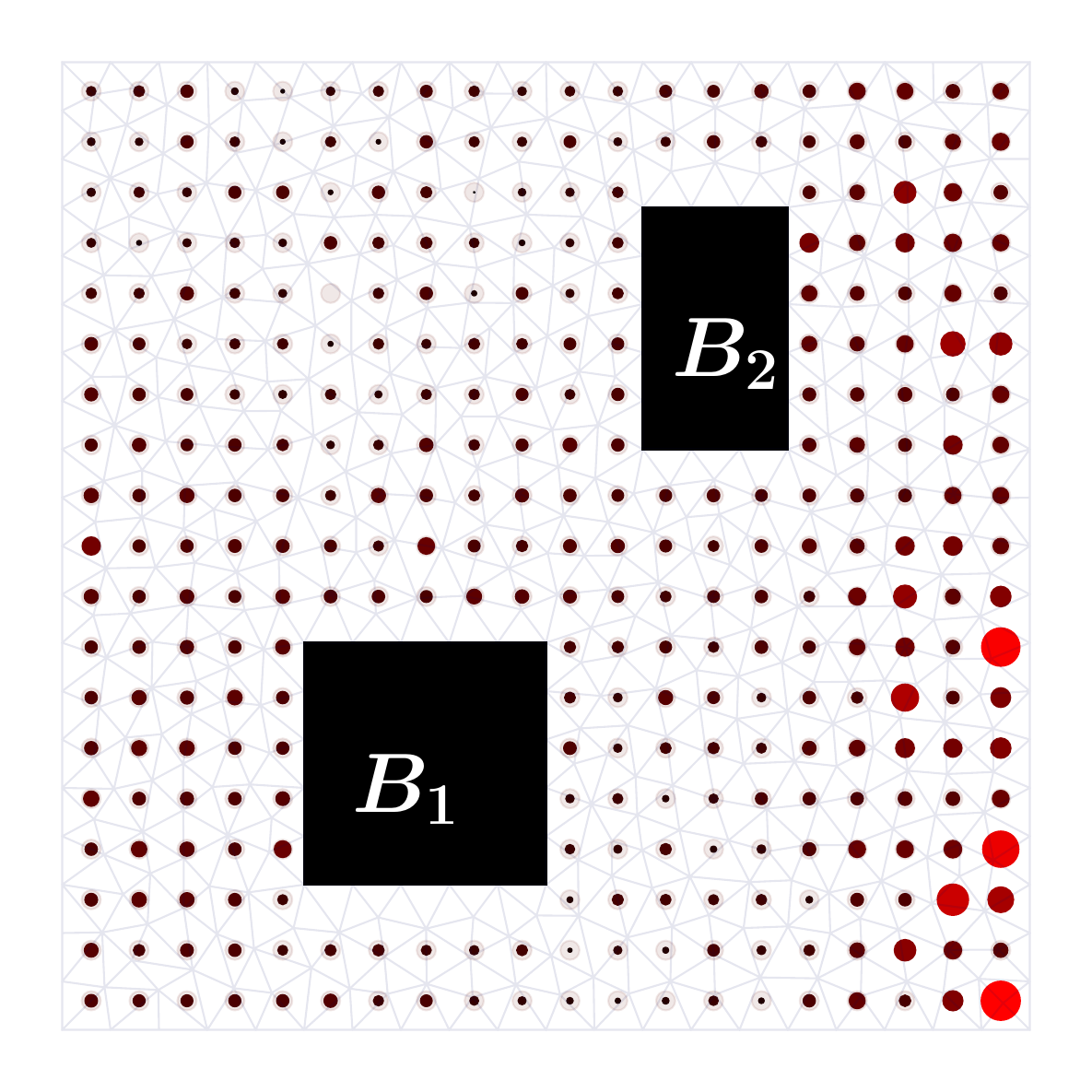}
      \end{subfigure}

      \begin{subfigure}[t]{0.245\linewidth}
        \includegraphics[width=\linewidth]{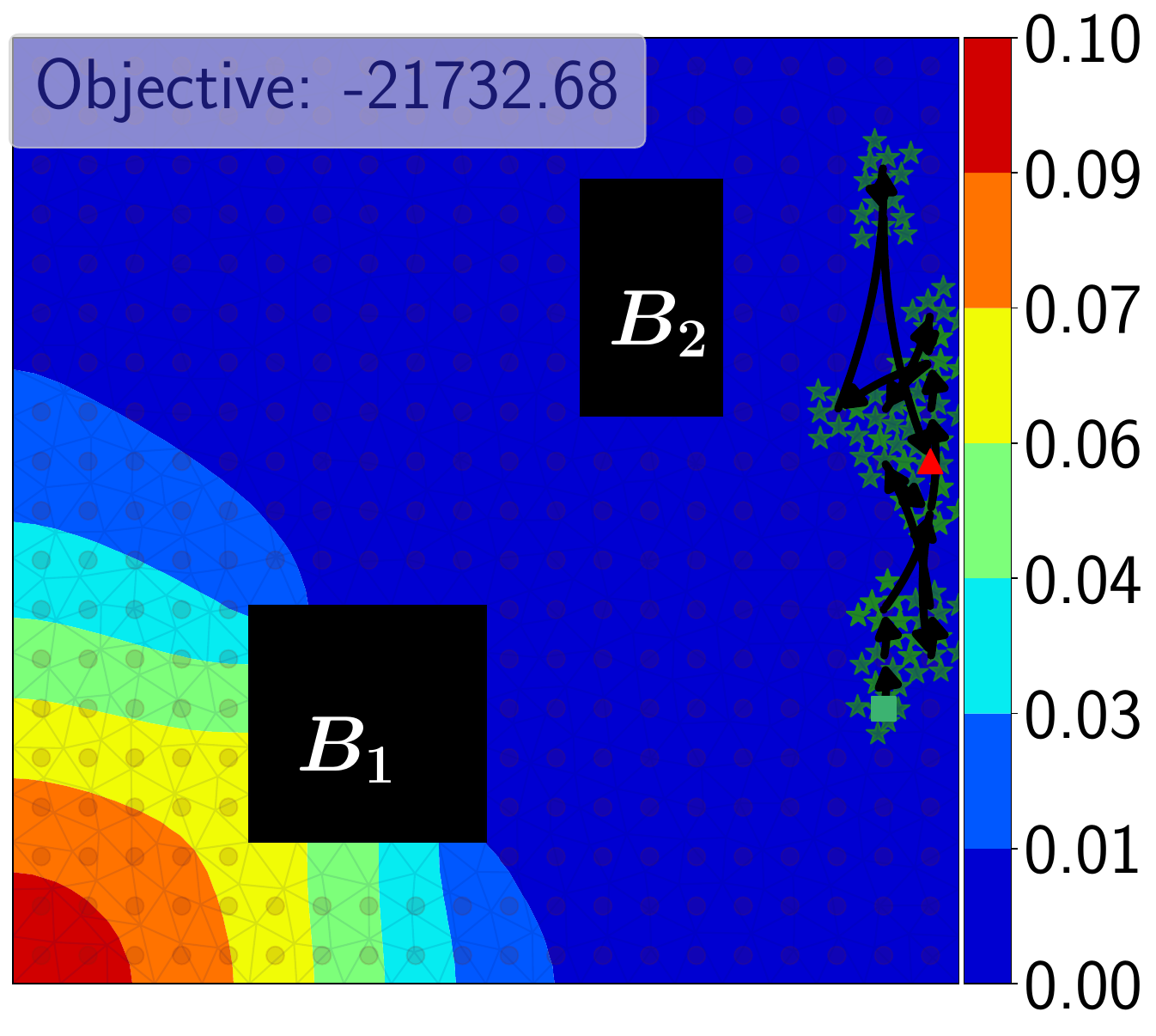}
      \end{subfigure}%
      \begin{subfigure}[t]{0.245\linewidth}
        \includegraphics[width=\linewidth]{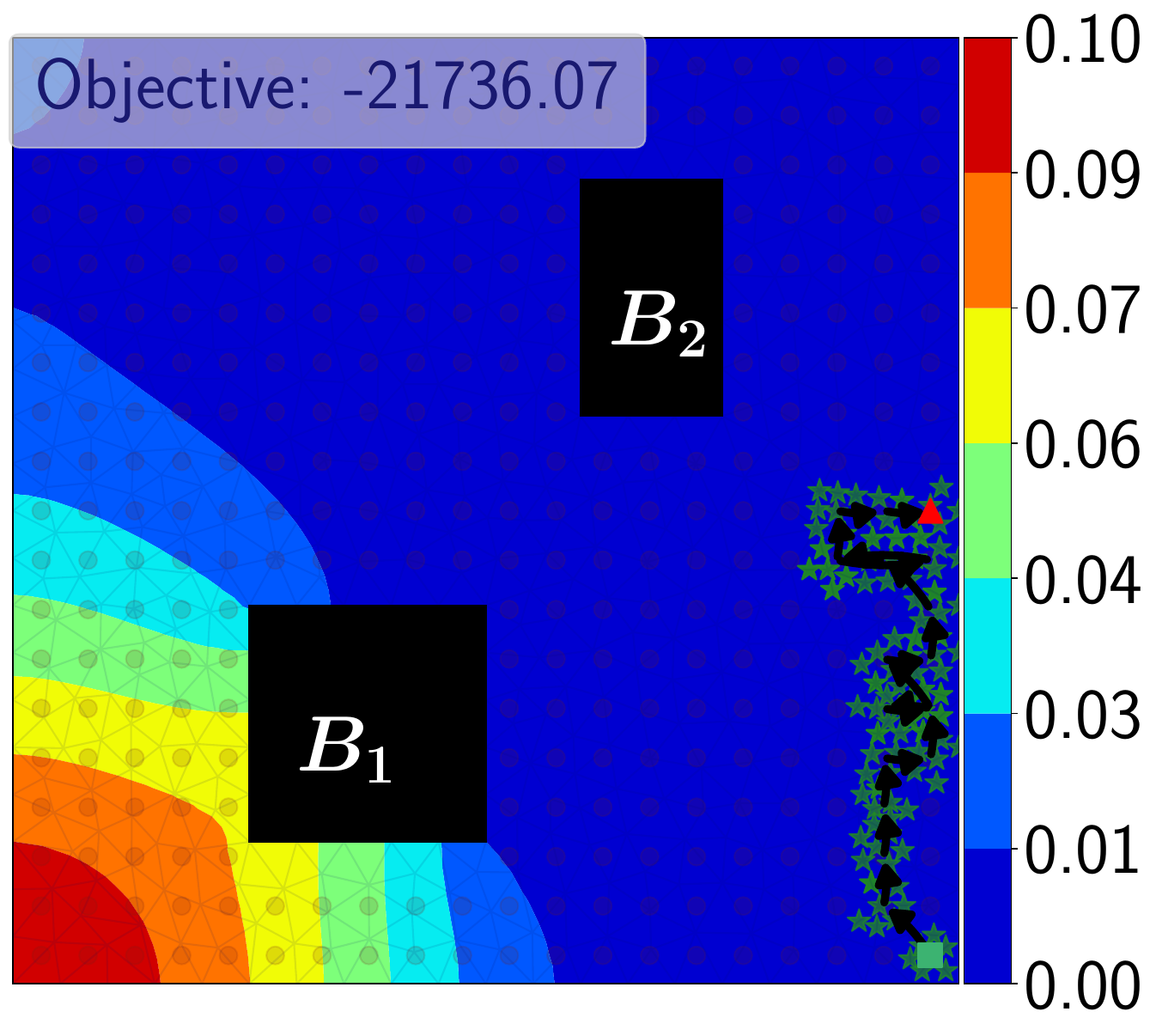}
      \end{subfigure}%
      \begin{subfigure}[t]{0.245\linewidth}
        \includegraphics[width=\linewidth]{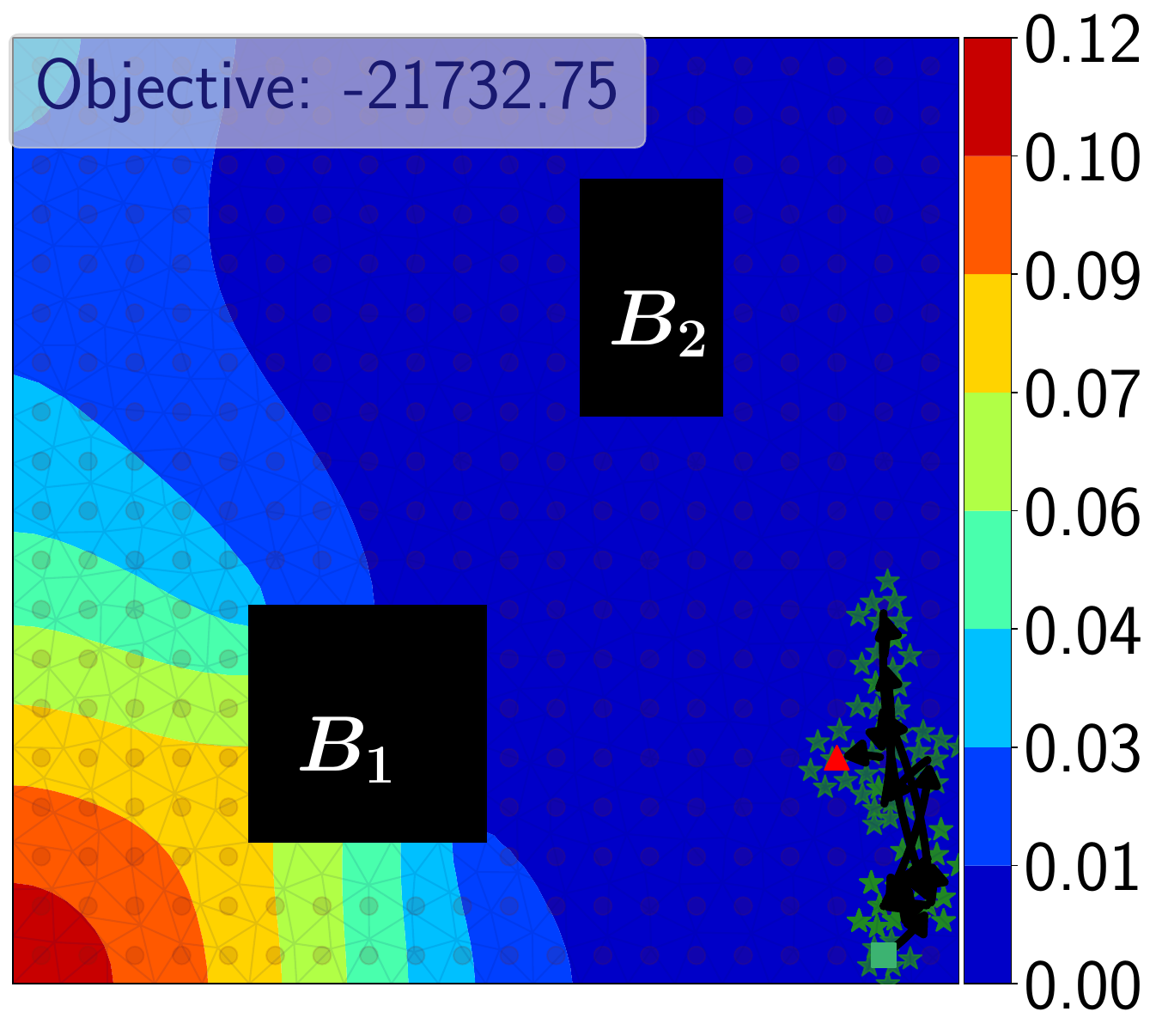}
      \end{subfigure}%
      \begin{subfigure}[t]{0.245\linewidth}
        \includegraphics[width=\linewidth]{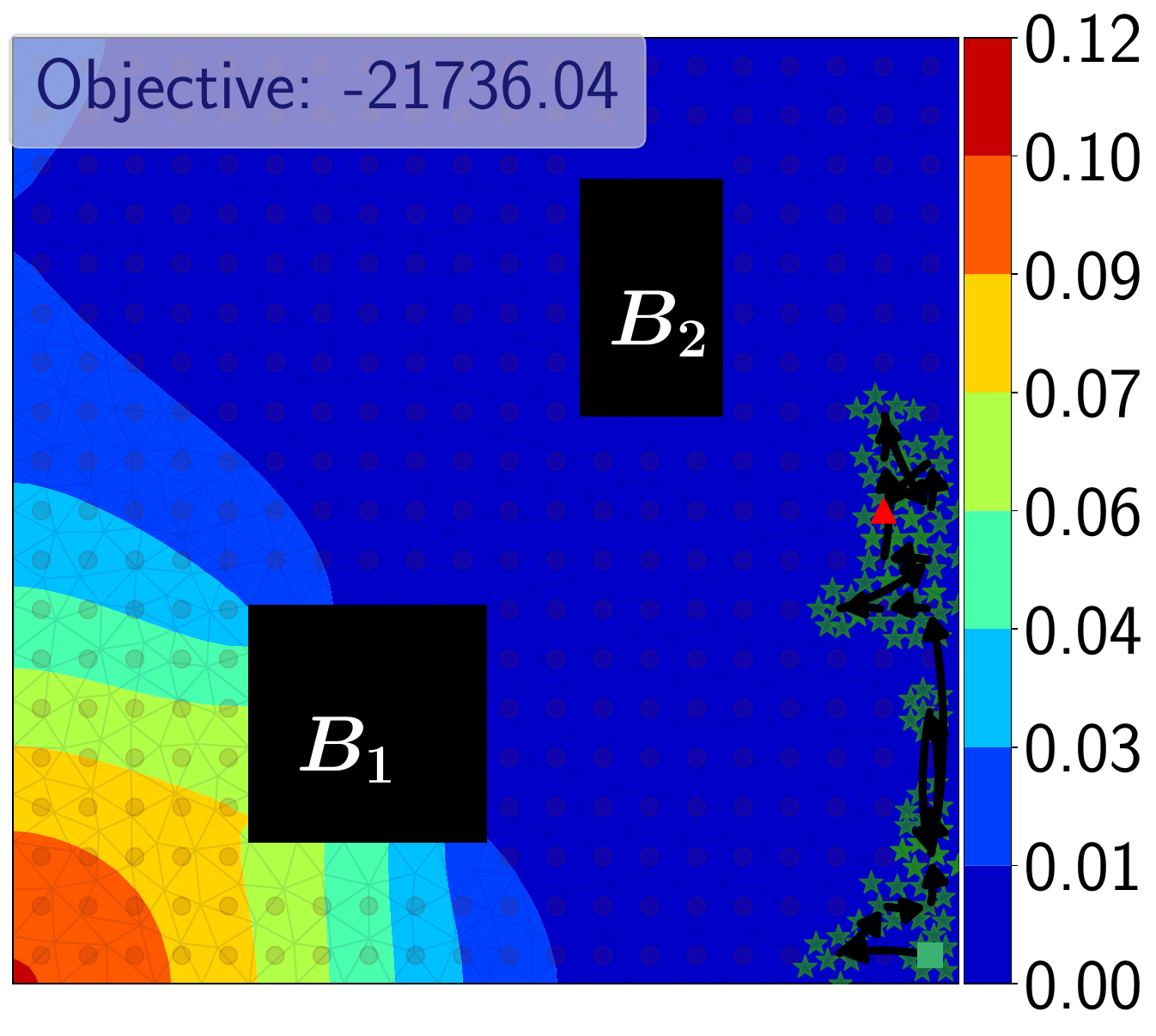}
      \end{subfigure}
      \caption{
        Optimal initial parameters and trajectories
        corresponding to 
        \Cref{sup:fig:fine_generalized_higher_order_unspecified_start_point_7_sensors}. 
        In each row, the first two panels match the first row of that figure, 
        followed by panels corresponding to its second row.
      }\label{sup:fig:fine_generalized_higher_order_unspecified_start_point_order_3_initial_param_and_traject_7_sensors}
    \end{figure}

  \subsection{Results with Fine Navigation Mesh and Fixed Starting point}
  \label{sup:subsec:fine_mesh_results_fixed_start_point}
    Here we discuss a different set of experiments in which 
    the path must start at a fixed starting point.
    Specifically, we enforce the starting point to be at the coordinates 
    $(x, y)=(0.2, 0.7)$ by setting the initial parameter $p_i$ corresponding to that coordinate on the navigation mesh to $1$; 
    all other initial parameters are set to $0$. 
    Other than the fixed starting point, all settings here are the same as
    in \Cref{subsubsec:fine_mesh_results_unspecified_start_point}. 
    Results obtained with $s=1$ moving sensors are given 
    in \Cref{sup:subsubsec:fine_mesh_results_fixed_start_point_1_sensor}, 
    and results with $s=7$ sensors are given in
    \Cref{sup:subsubsec:fine_mesh_results_fixed_start_point_7_sensors}

    \subsubsection{Results with $1$ Moving Sensor}
      \label{sup:subsubsec:fine_mesh_results_fixed_start_point_1_sensor}
      \Cref{sup:fig:fine_first_order_fixed_start_point_1_sensor} 
      shows the results of \Cref{alg:probabilistic_path_optimization} with the first-order 
      policy defined by \Cref{defn:first_order_path_model}.

      \begin{figure}[H]
        \centering
        \includegraphics[width=0.53\linewidth]{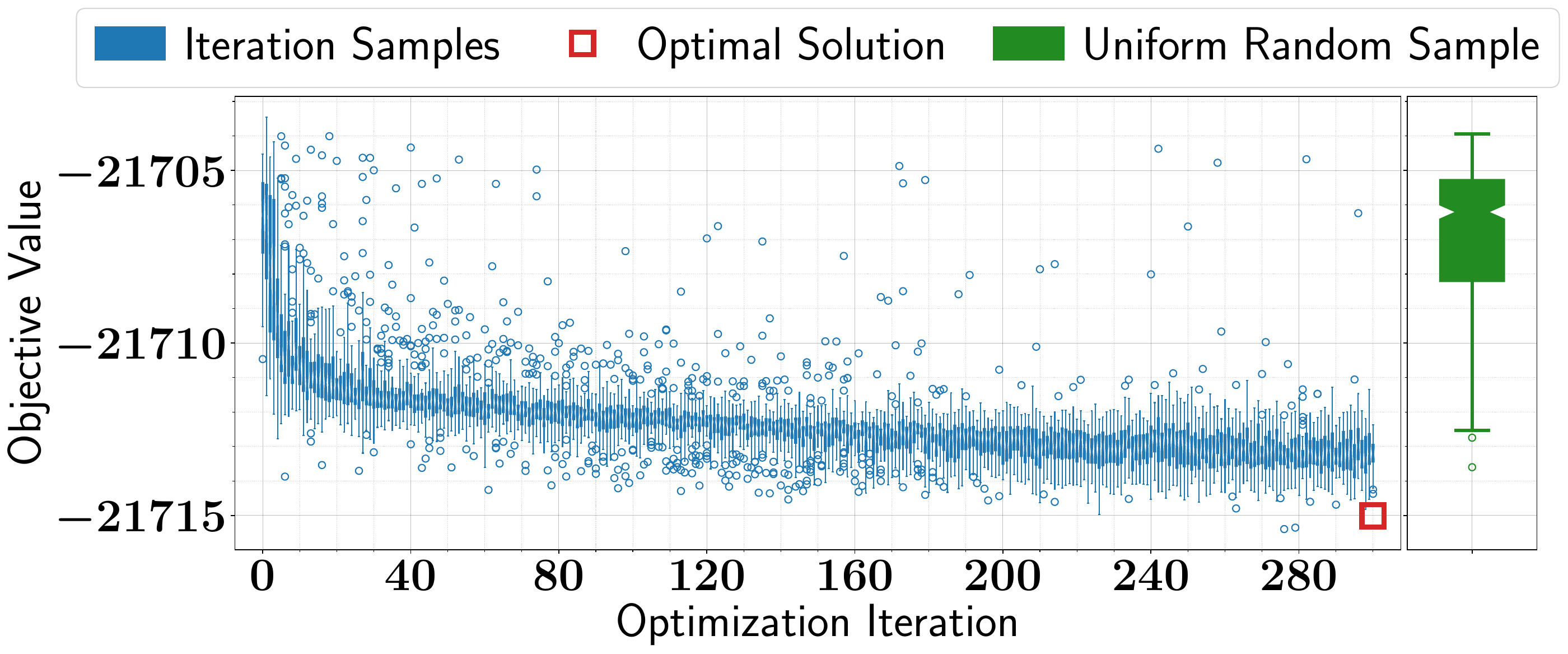}
        \includegraphics[width=0.215\linewidth]{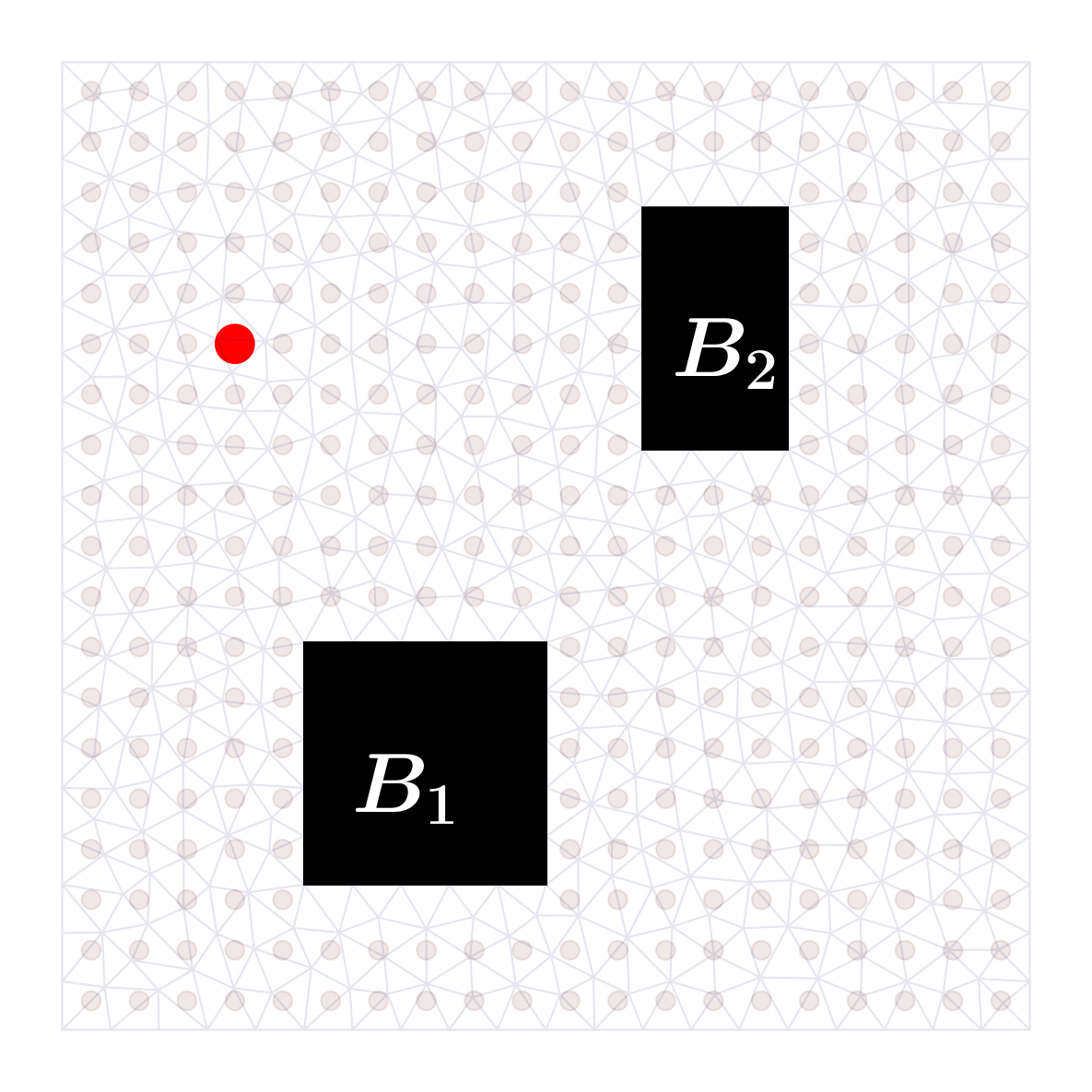}
        \includegraphics[width=0.235\linewidth]{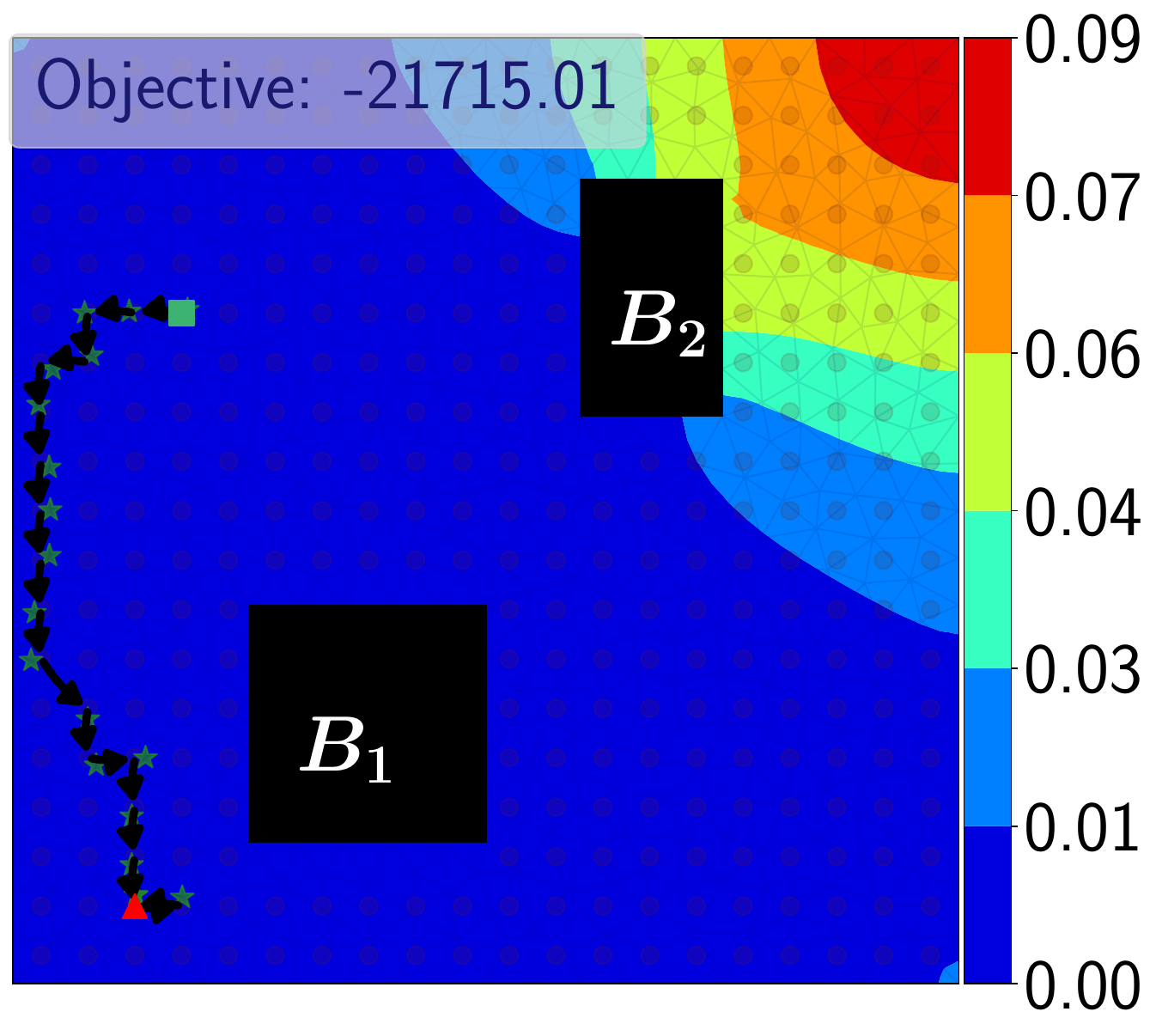}
        \caption{
          Results of \Cref{alg:probabilistic_path_optimization} with the first-order 
          policy model (\Cref{defn:first_order_path_model}) applied to the fine
          navigation mesh (\Cref{fig:navigation_meshes}, right) with 
          the starting point of the trajectory fixed to $(x, y)=(0.2, 0.7)$.
          Trajectory length is $n=19$, and the sensor size is $s=1$.
        }\label{sup:fig:fine_first_order_fixed_start_point_1_sensor}
      \end{figure}

      \Cref{sup:fig:fine_higher_order_fixed_start_point_1_sensor} 
      shows the performance of the optimization procedure with the higher-order 
      policy given by \Cref{defn:higher_order_path_model} with order $k=3$ and $k=5$, respectively.
      The resulting optimal initial parameter (first row) and optimal trajectories (second row)
      are shown in 
      \Cref{sup:fig:fine_higher_order_fixed_start_point_1_sensor_optimal_trajectories}.

      \begin{figure}[H]
        \centering
        \includegraphics[width=0.495\linewidth]{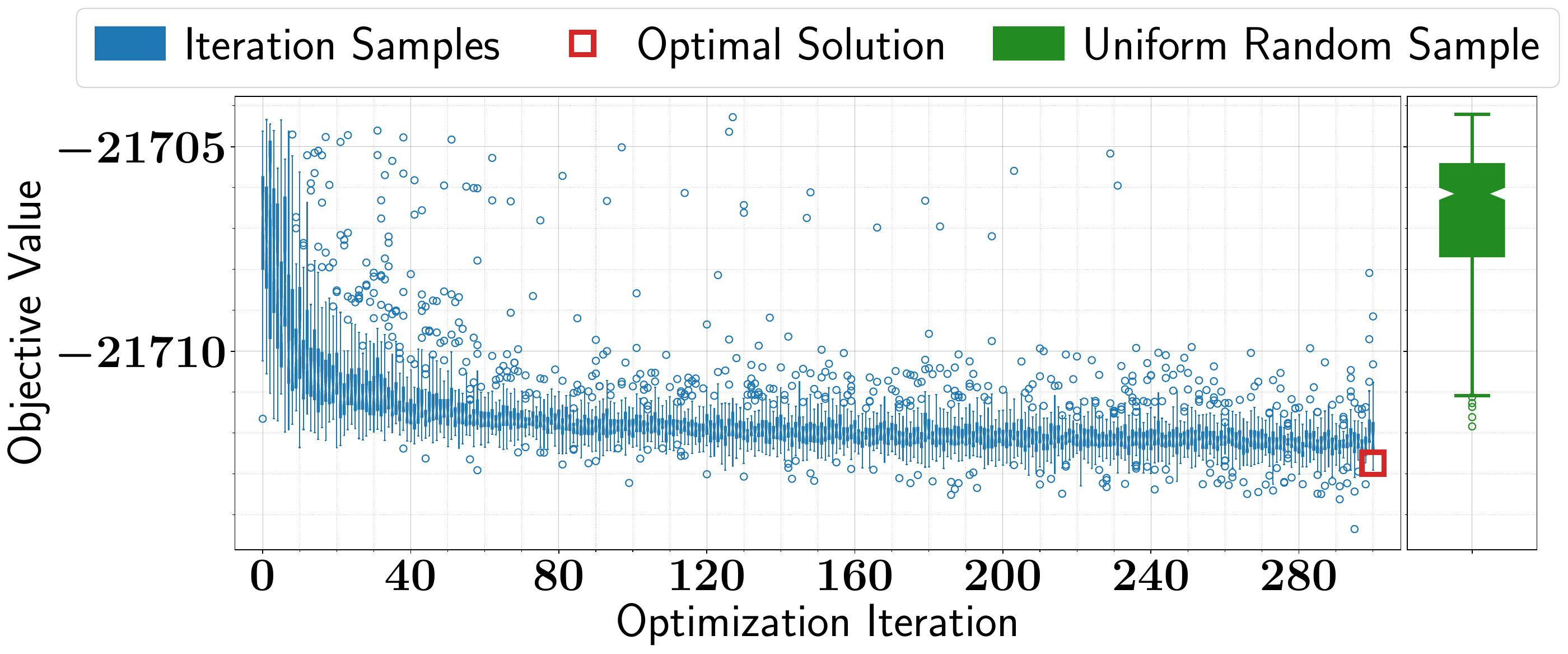}
        \includegraphics[width=0.495\linewidth]{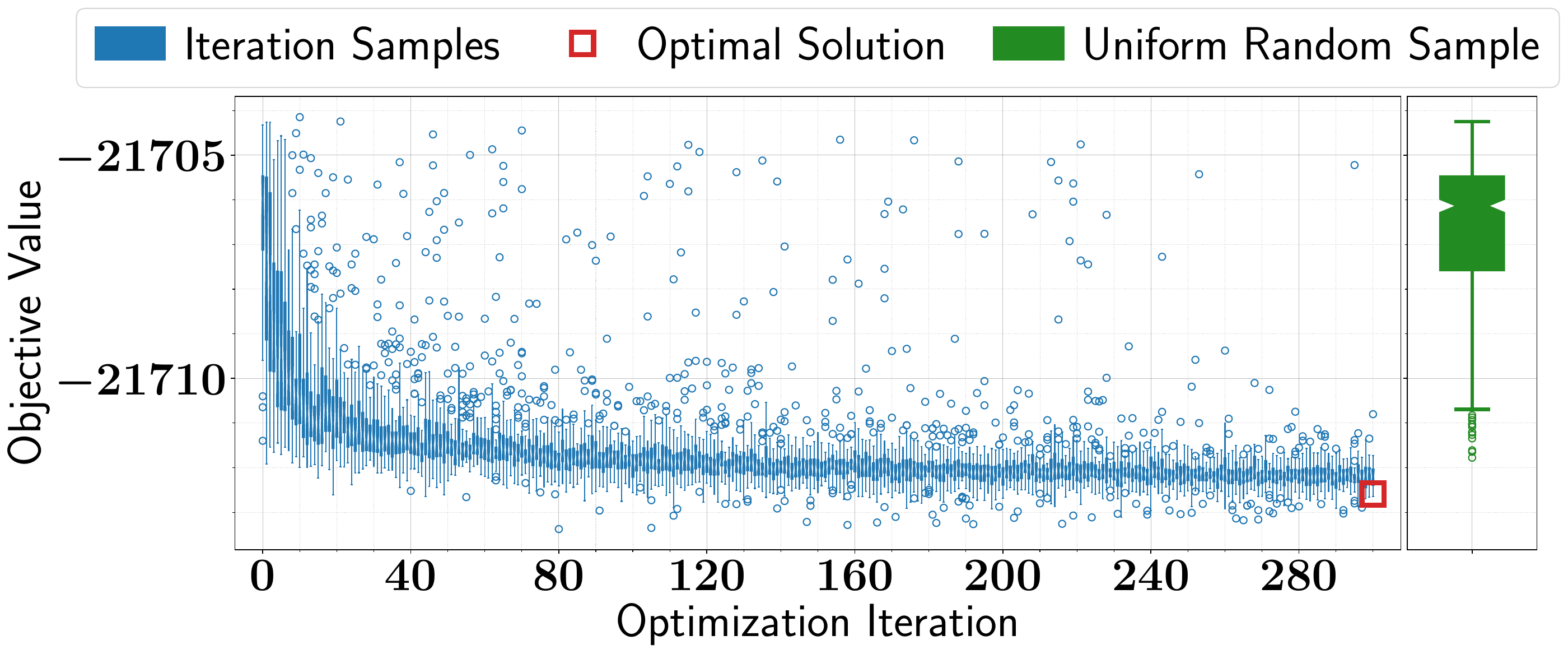}
        \includegraphics[width=0.495\linewidth]{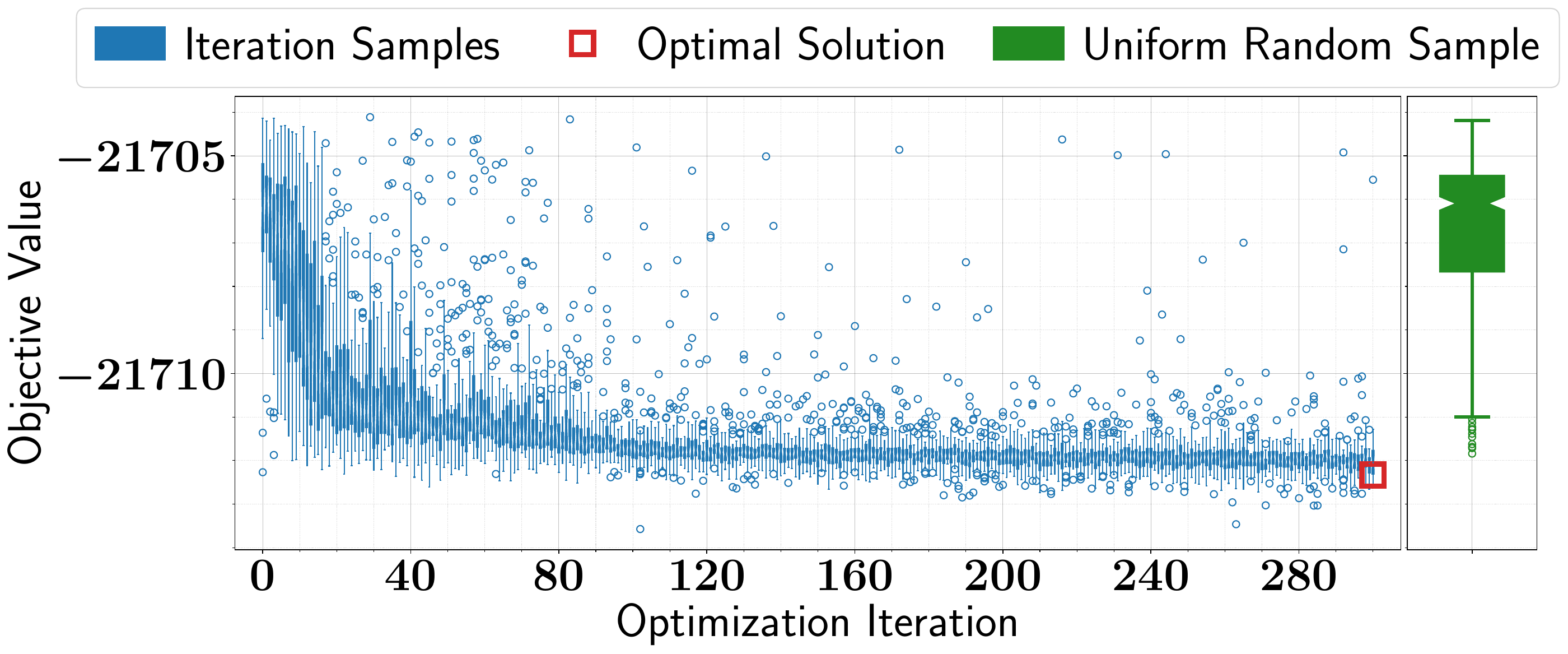}
        \includegraphics[width=0.495\linewidth]{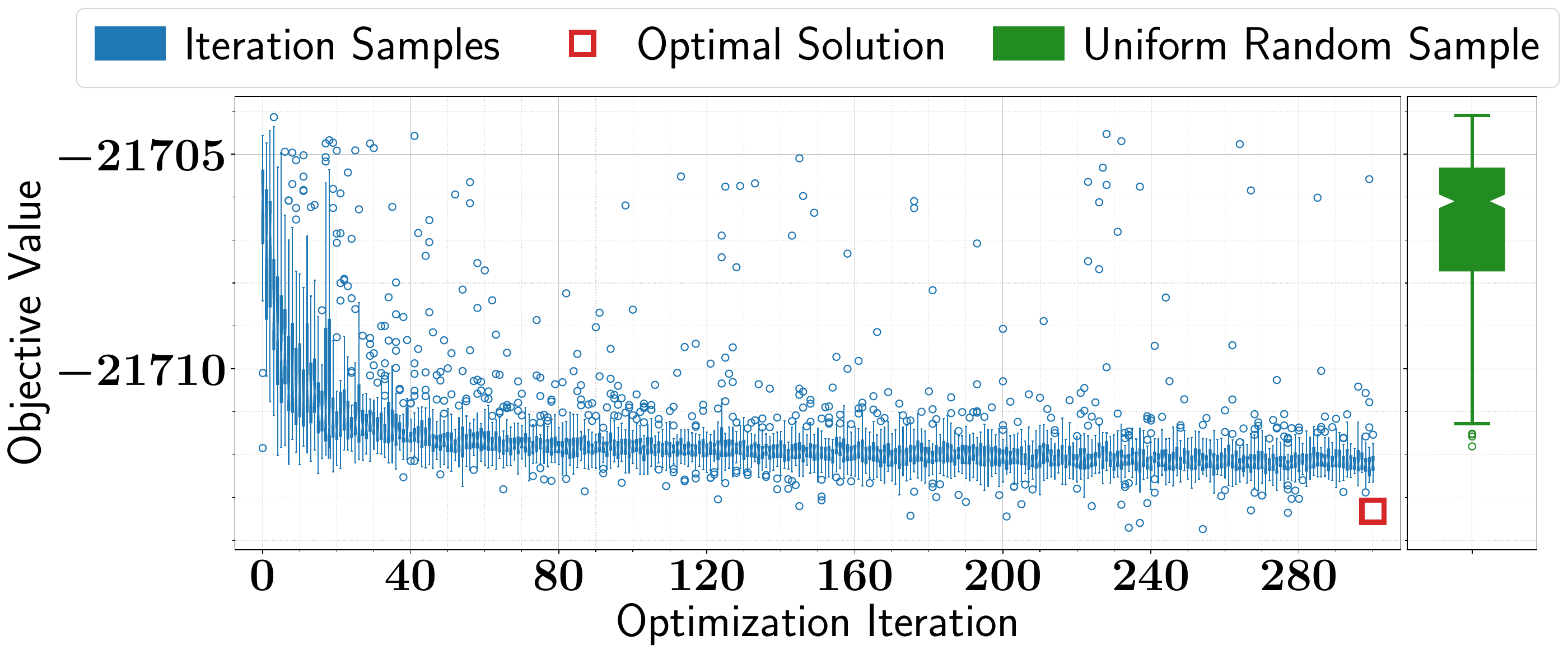}
        \caption{
          Results of \Cref{alg:probabilistic_path_optimization} with the higher-order 
          policy model (\Cref{defn:higher_order_path_model}) applied to the fine
          navigation mesh (\Cref{fig:navigation_meshes}, right) with 
          the starting point of the trajectory fixed to $(x, y)=(0.2, 0.7)$.
          Trajectory length is $n=19$, and the sensor size is $s=1$.
          The policy order is set to  $k=3$ (first row) and to $k=5$ (second row).
          The first column shows results with lag weights being optimized, and 
          the second column shows results with lag weights modeled 
          by \eqref{eqn:decreasing_lag_weights}.
        }\label{sup:fig:fine_higher_order_fixed_start_point_1_sensor}
      \end{figure}
      Similarly, results obtained by the policy given by  \Cref{defn:higher_order_path_model} 
      are shown in 
      \Cref{sup:fig:fine_generalized_higher_order_fixed_start_point_1_sensor} 
      and 
      \Cref{sup:fig:fine_generalized_higher_order_fixed_start_point_1_sensor_optimal_trajectories}.

      \begin{figure}[H]
        \centering
        \includegraphics[width=0.235\linewidth]{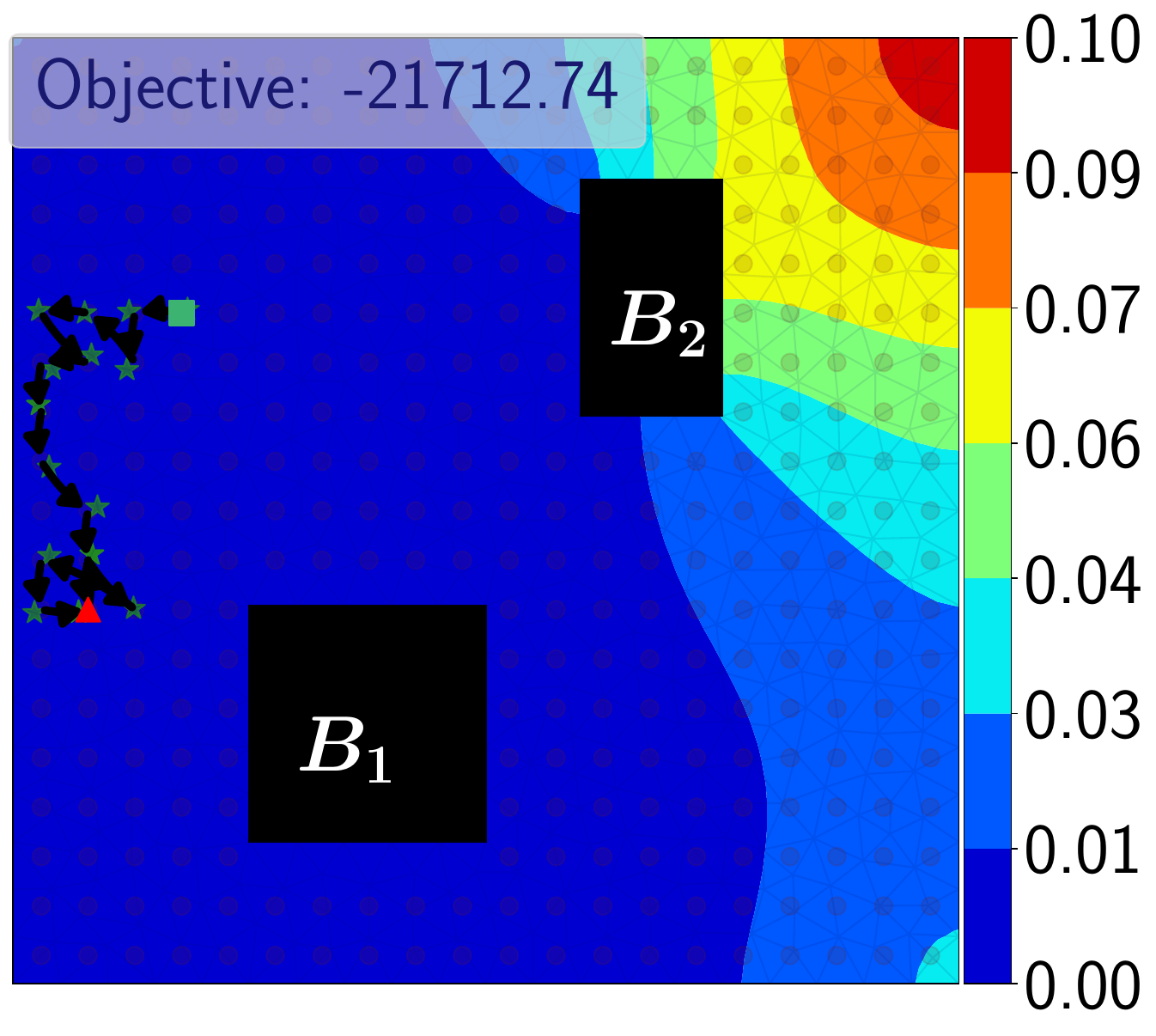}
        \includegraphics[width=0.235\linewidth]{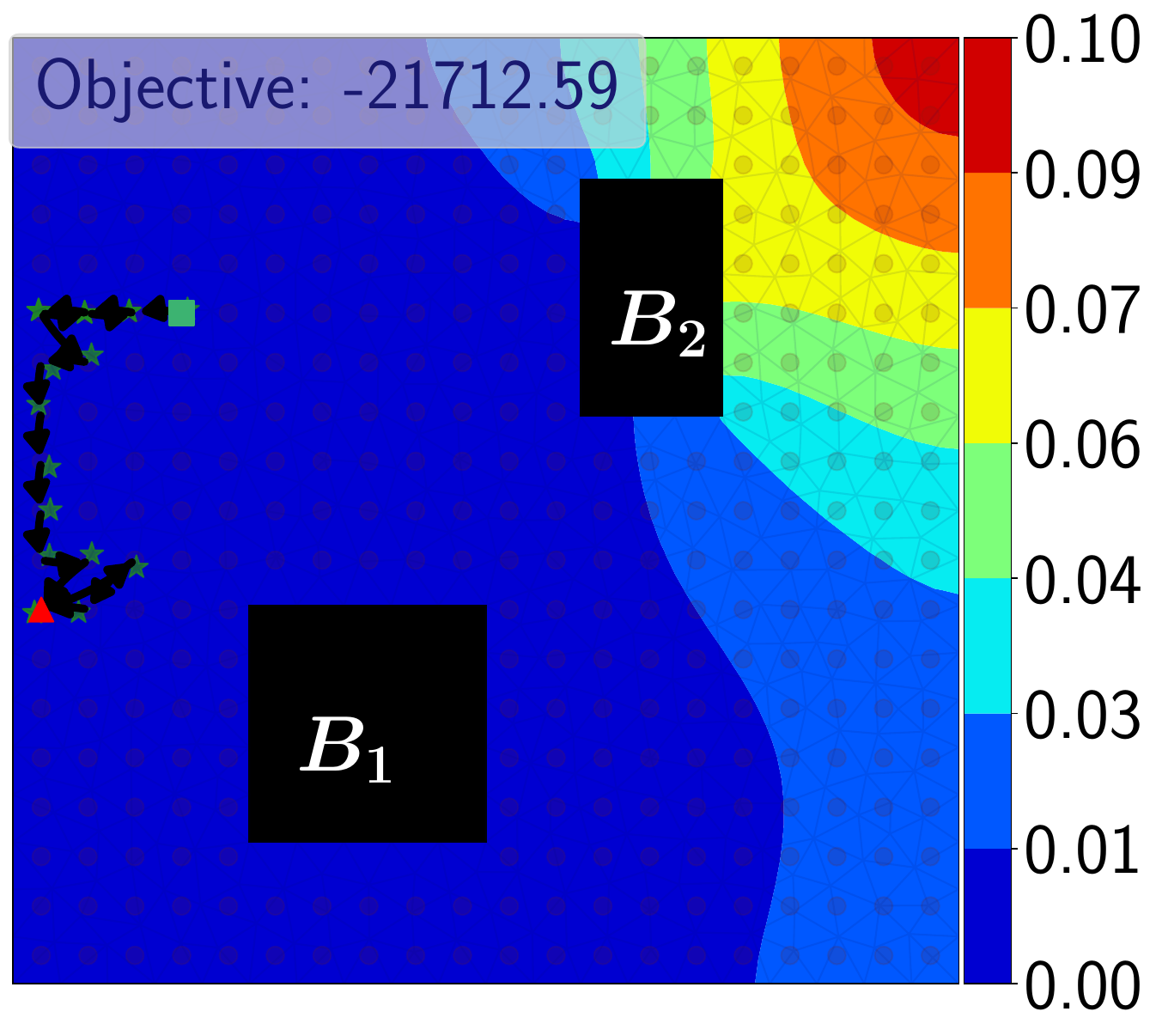}
        \\
        \includegraphics[width=0.235\linewidth]{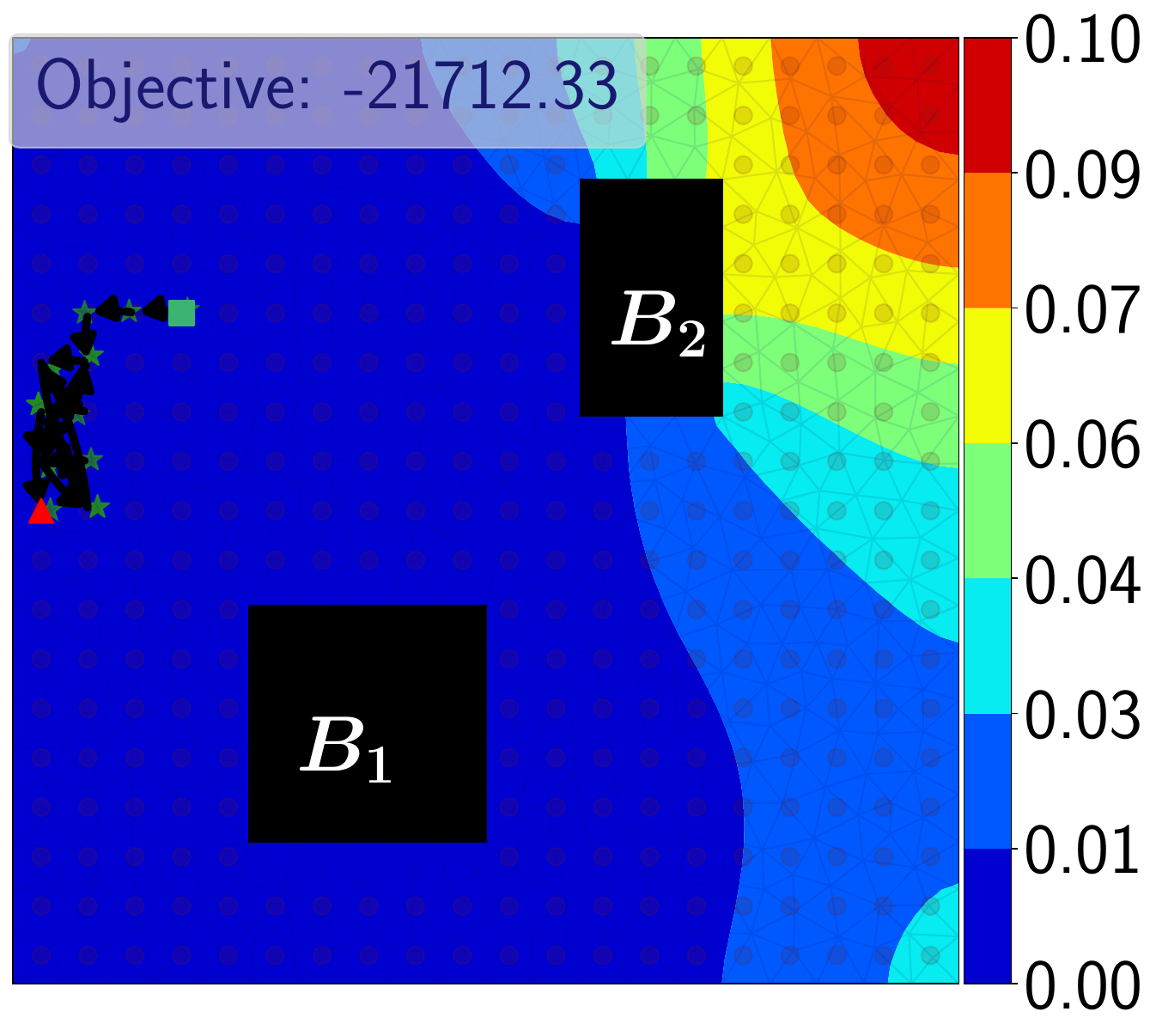}
        \includegraphics[width=0.235\linewidth]{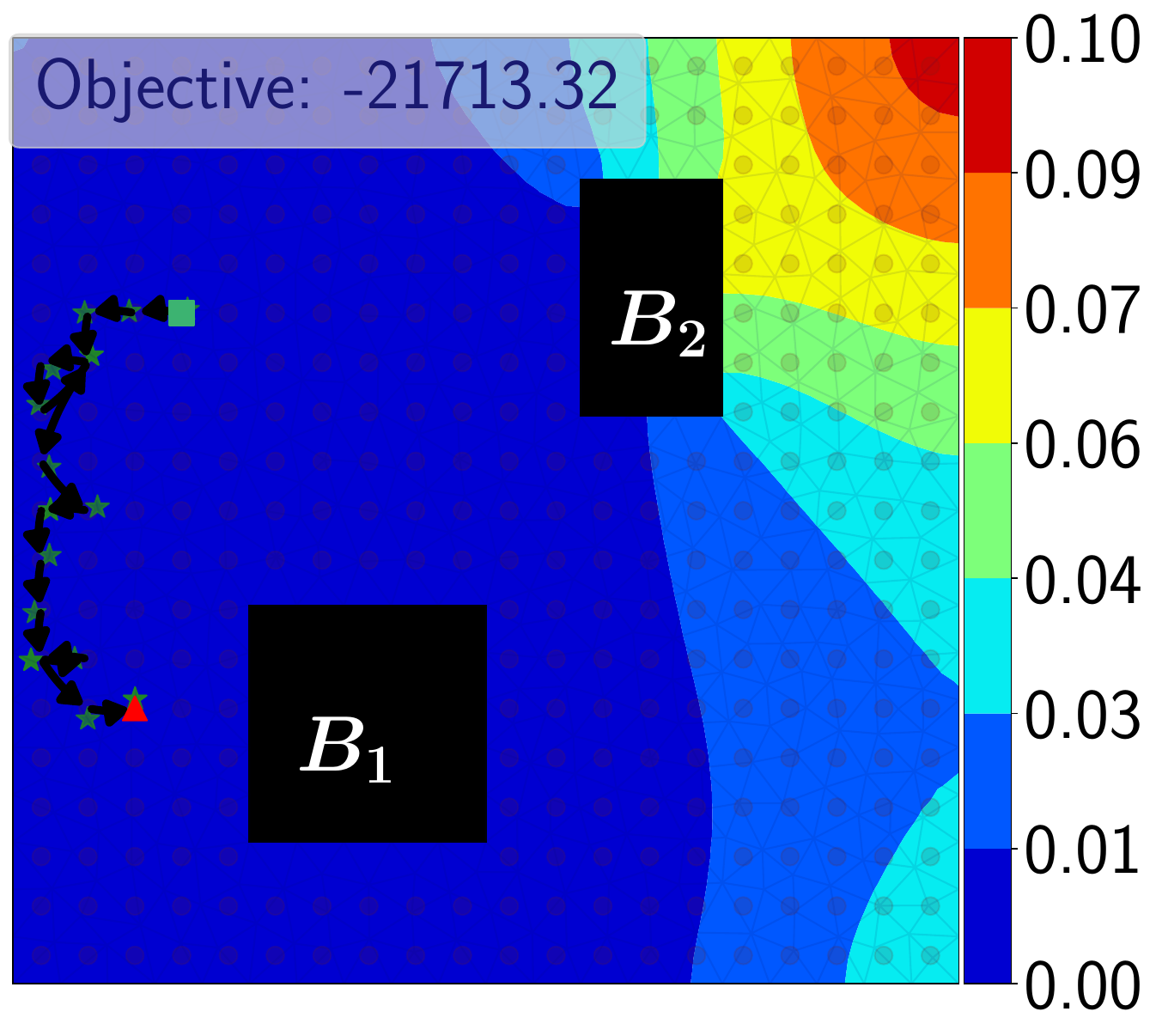}
        \caption{
          Optimal trajectories corresponding to results shown in 
          \Cref{sup:fig:fine_higher_order_fixed_start_point_1_sensor}.
        }\label{sup:fig:fine_higher_order_fixed_start_point_1_sensor_optimal_trajectories}
      \end{figure}
      \begin{figure}[H]
        \centering
        \includegraphics[width=0.495\linewidth]{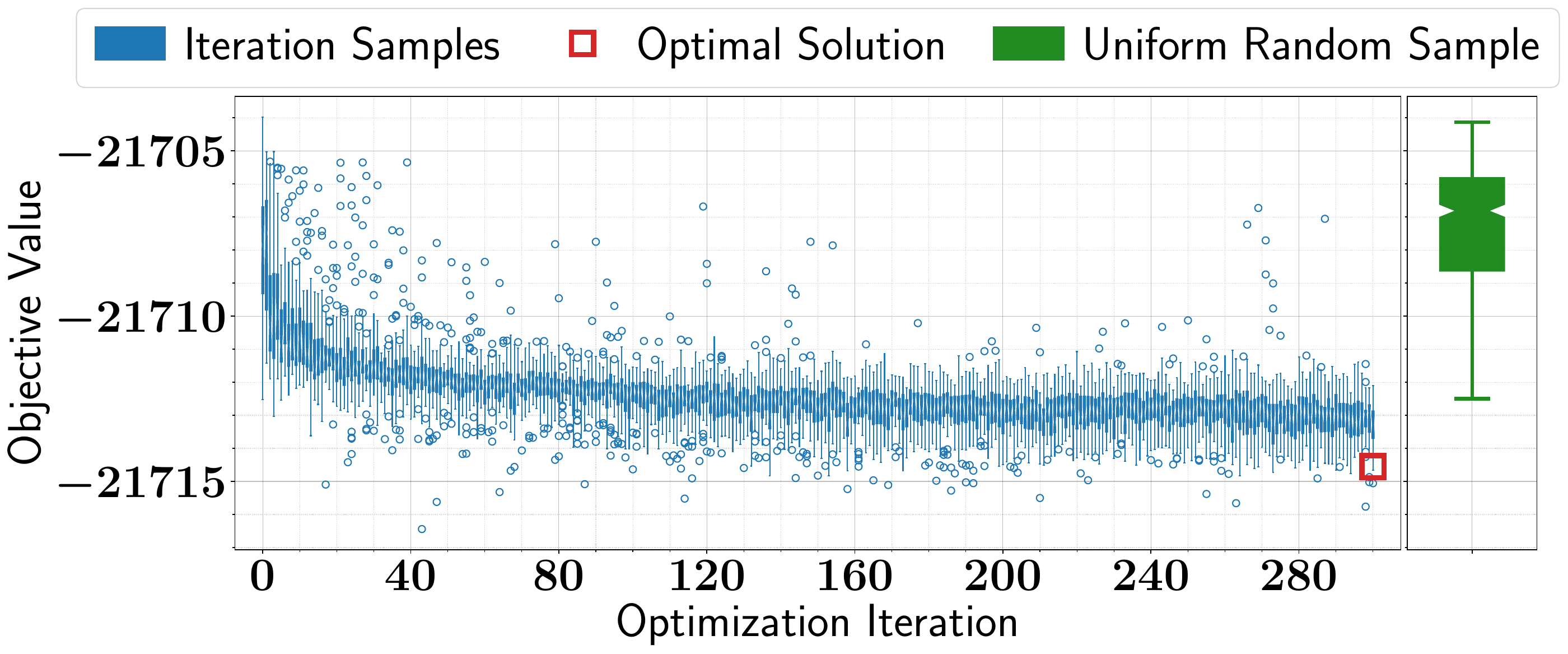}
        \includegraphics[width=0.495\linewidth]{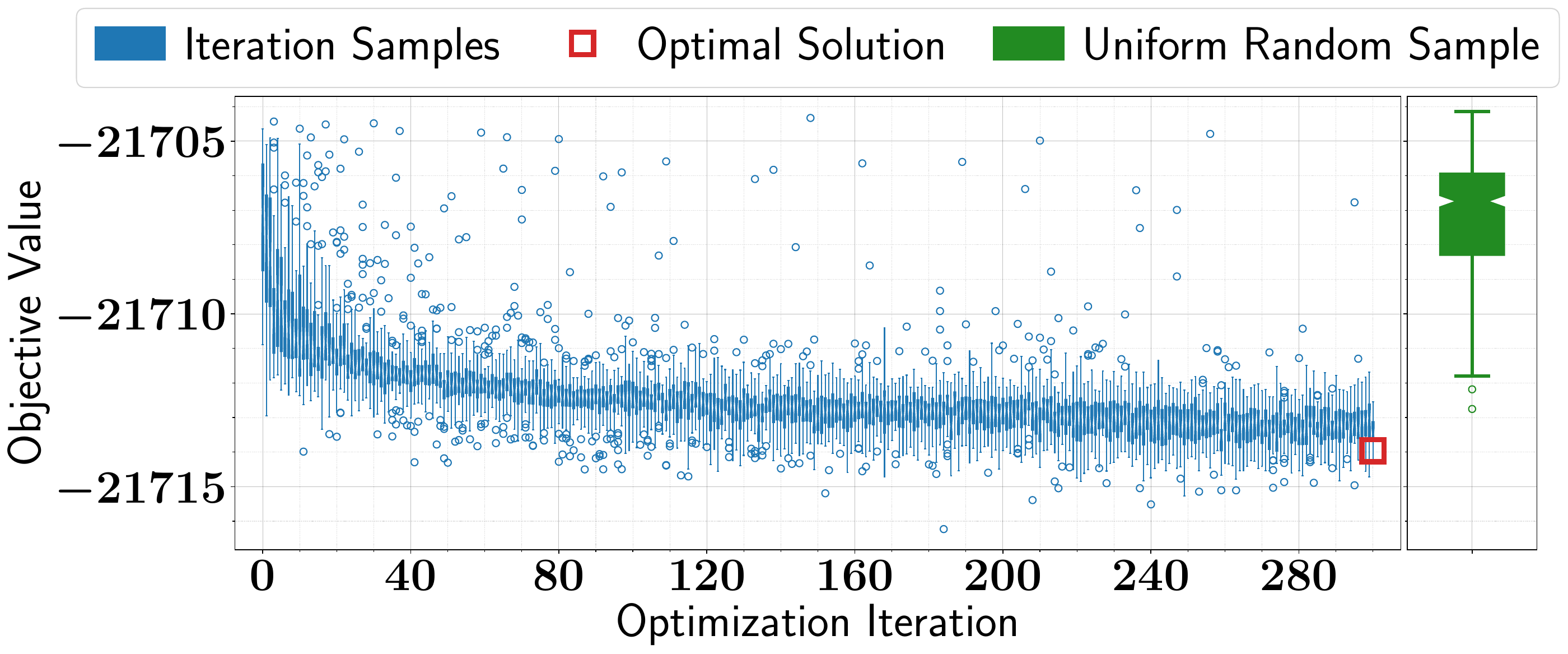}
        \includegraphics[width=0.495\linewidth]{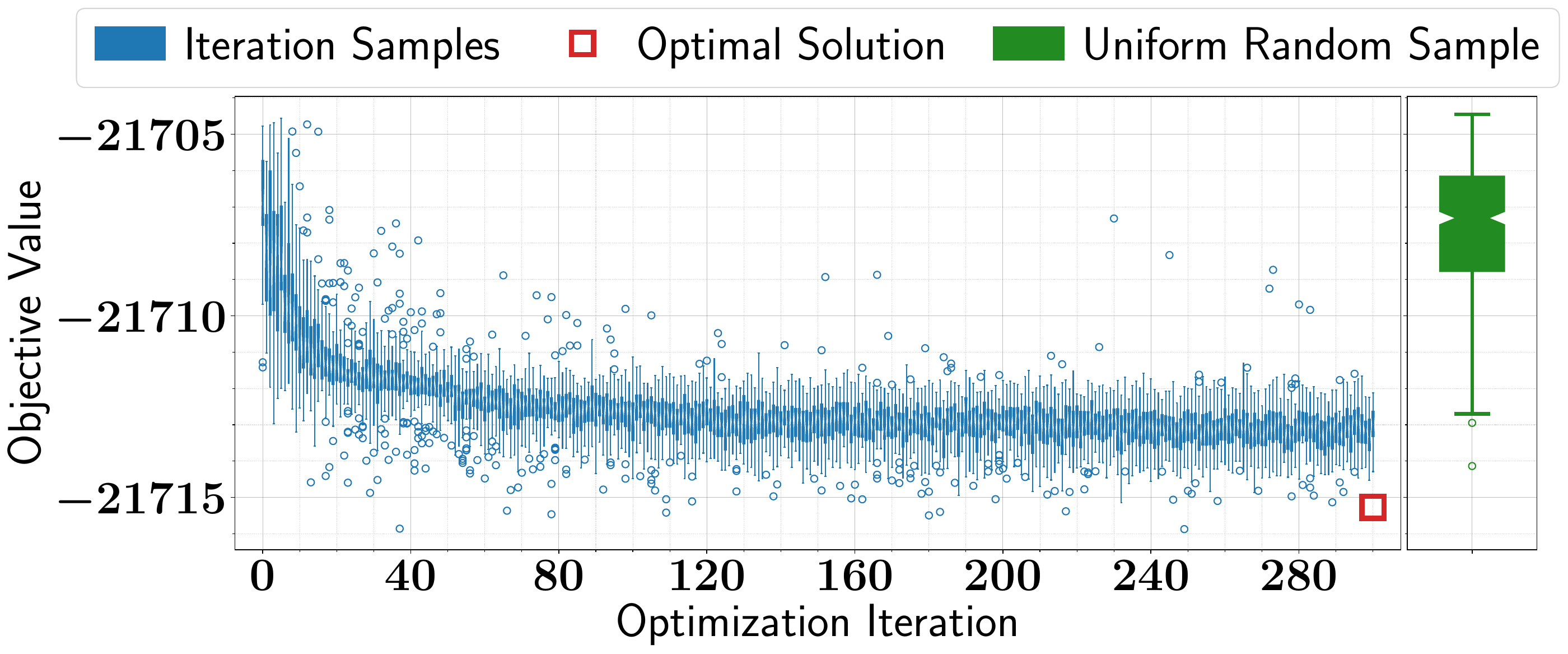}
        \includegraphics[width=0.495\linewidth]{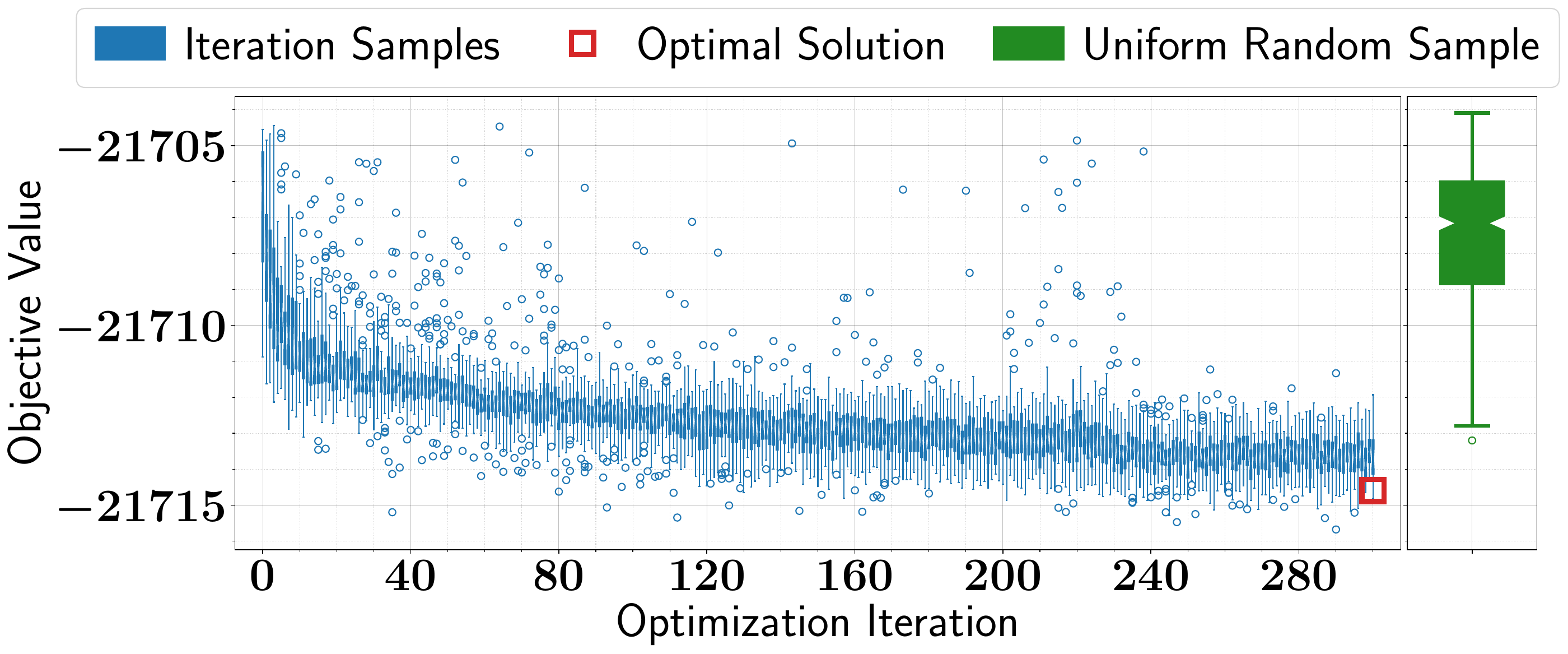}
        \caption{
          Similar to \Cref{sup:fig:fine_higher_order_fixed_start_point_1_sensor}.
          Here the generalized higher-order policy (\Cref{defn:generalized_higher_order_path_model}) is used. 
        }\label{sup:fig:fine_generalized_higher_order_fixed_start_point_1_sensor}
      \end{figure}
      \begin{figure}[H]
        \centering
        \includegraphics[width=0.235\linewidth]{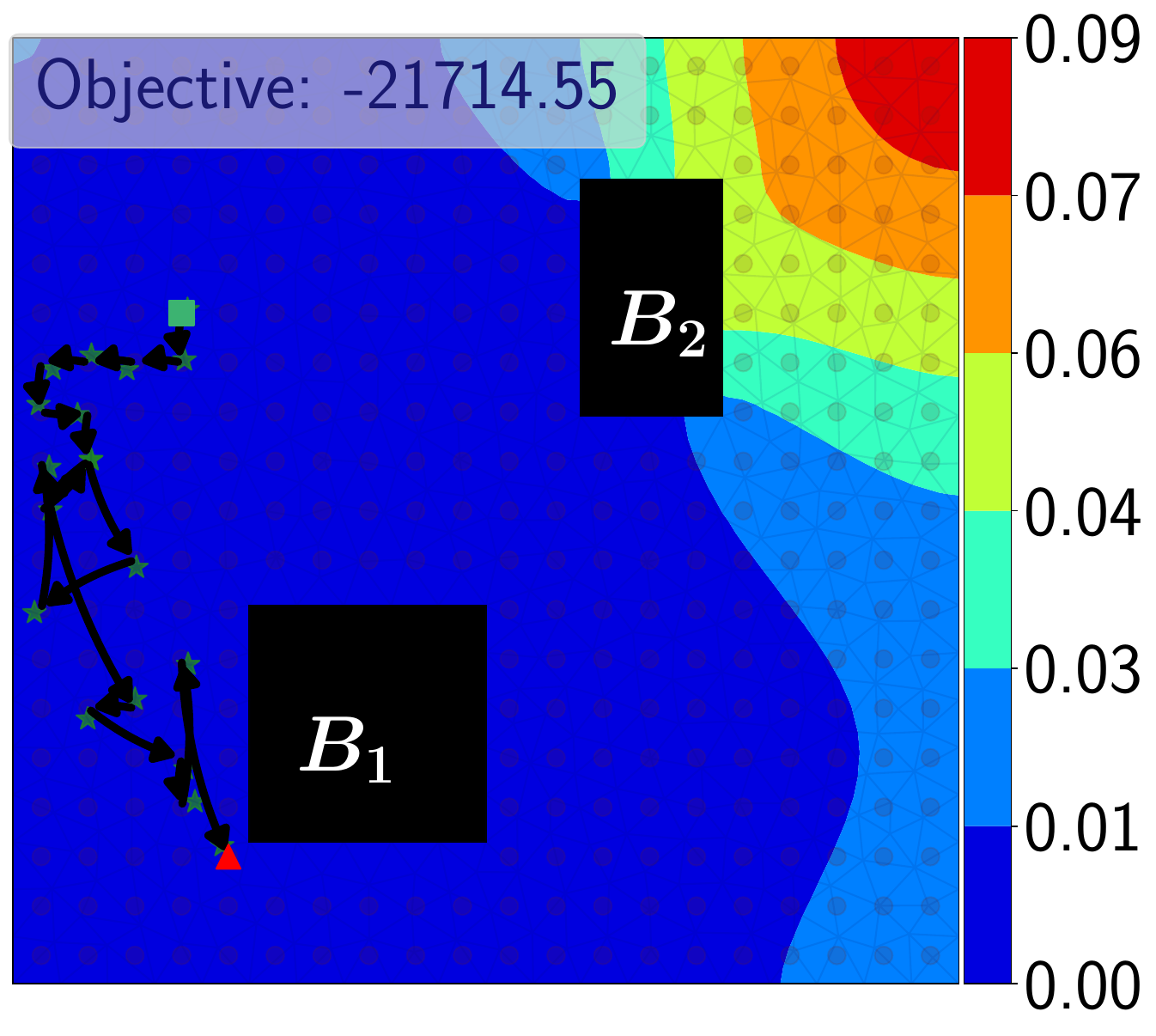}
        \includegraphics[width=0.235\linewidth]{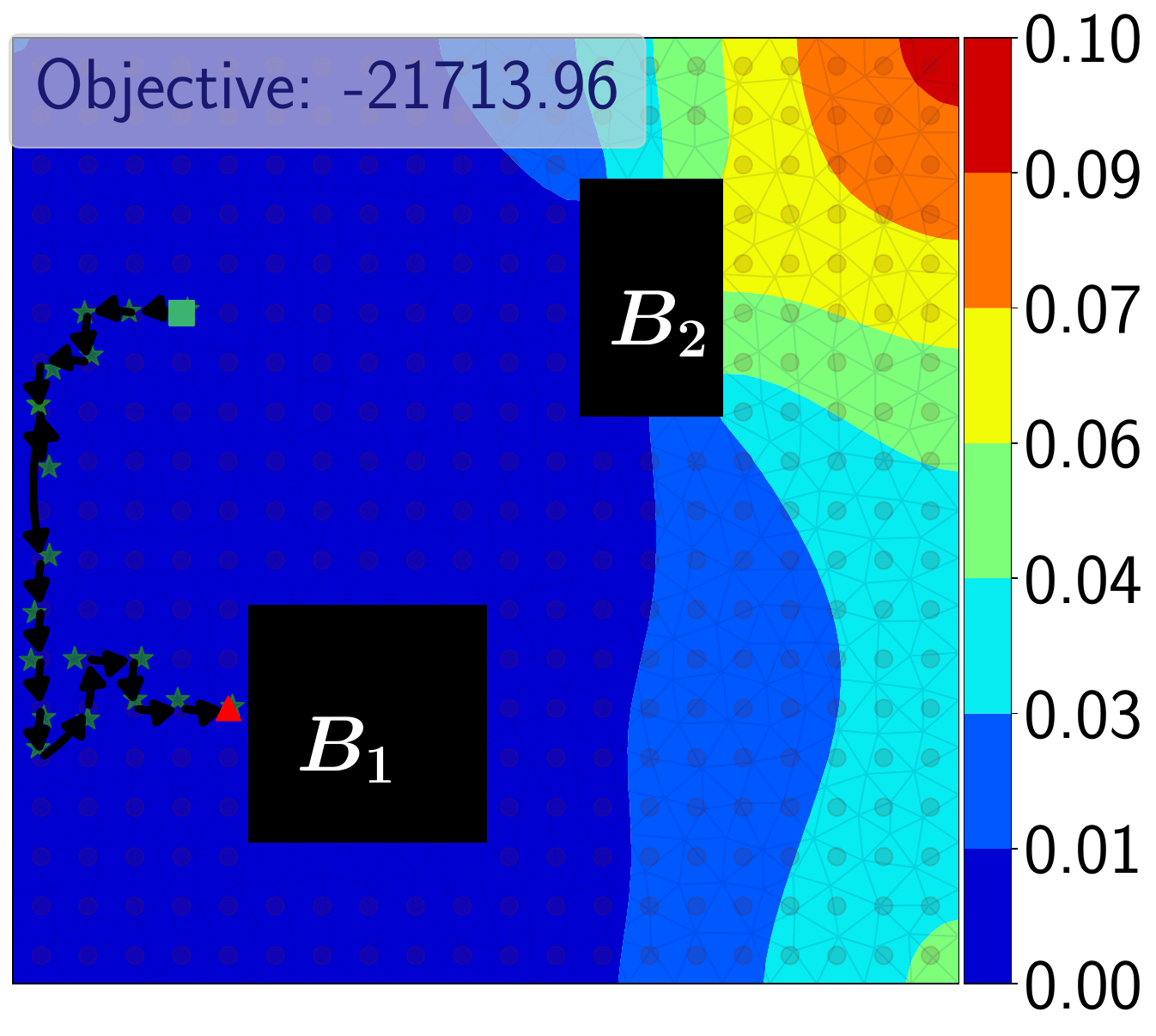}
        \\
        \includegraphics[width=0.235\linewidth]{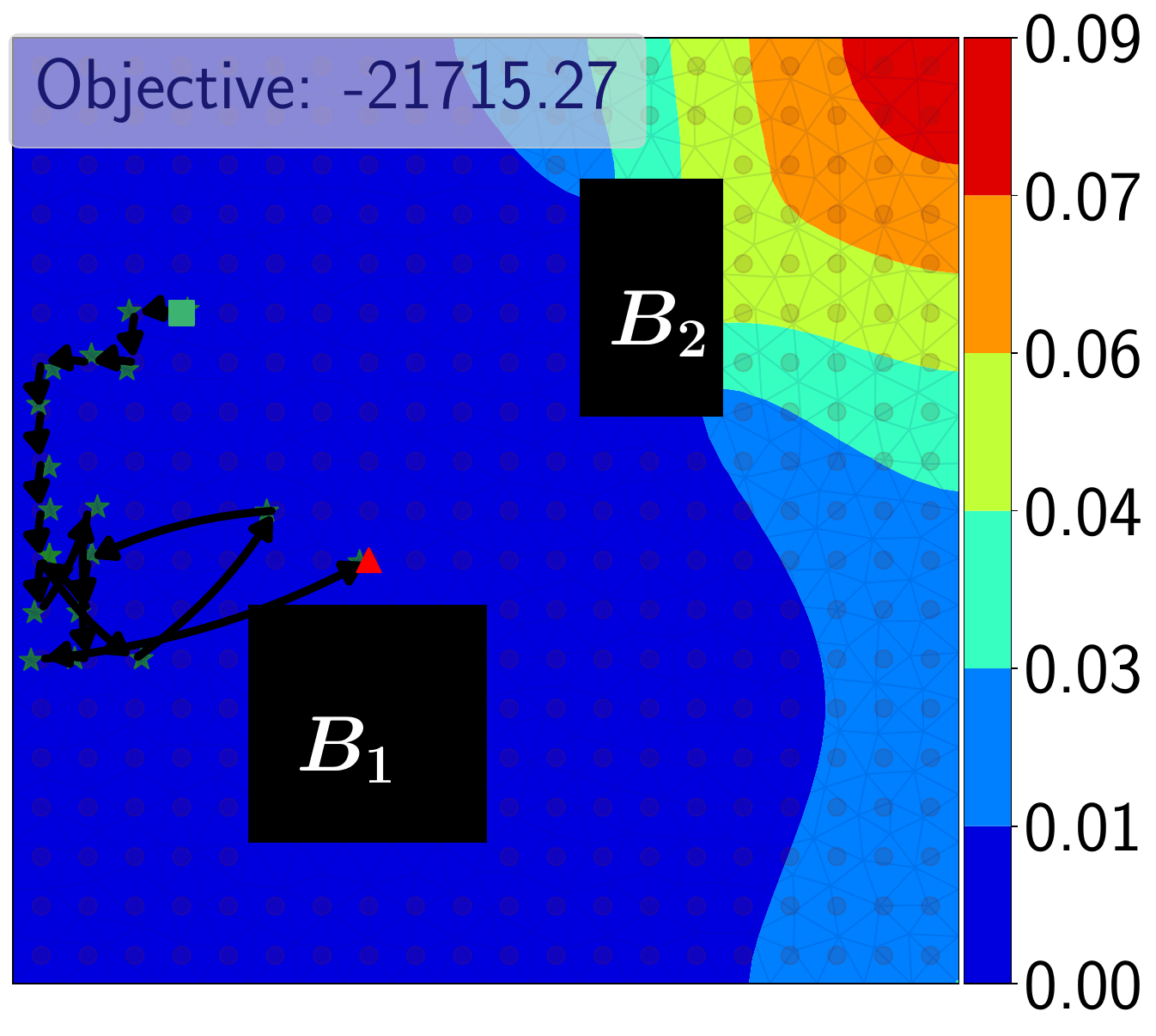}
        \includegraphics[width=0.235\linewidth]{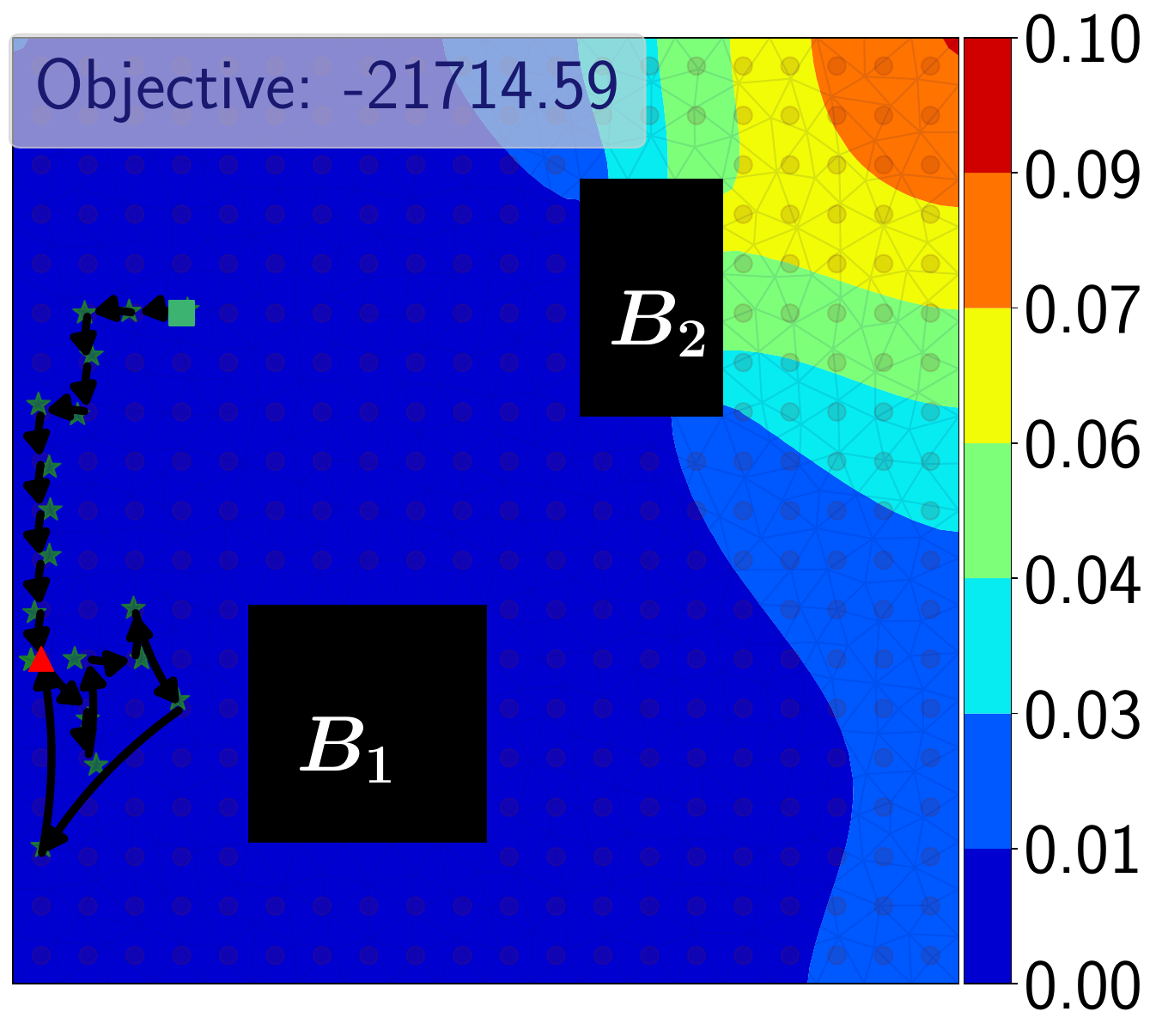}
        \caption{
          Optimal trajectories corresponding to results shown in 
          \Cref{sup:fig:fine_generalized_higher_order_fixed_start_point_1_sensor}.
        }\label{sup:fig:fine_generalized_higher_order_fixed_start_point_1_sensor_optimal_trajectories}
      \end{figure}

    \subsubsection{Results with $7$ Moving Sensors}
      \label{sup:subsubsec:fine_mesh_results_fixed_start_point_7_sensors}
      This case simulates $s=7$ sensors (e.g., drones) moving together. 

      \Cref{sup:fig:fine_first_order_fixed_start_point_7_sensors} 
      shows the results of \Cref{alg:probabilistic_path_optimization} with the first-order 
      policy defined by \Cref{defn:first_order_path_model}.
      \begin{figure}[H]
        \centering
        \includegraphics[width=0.53\linewidth]{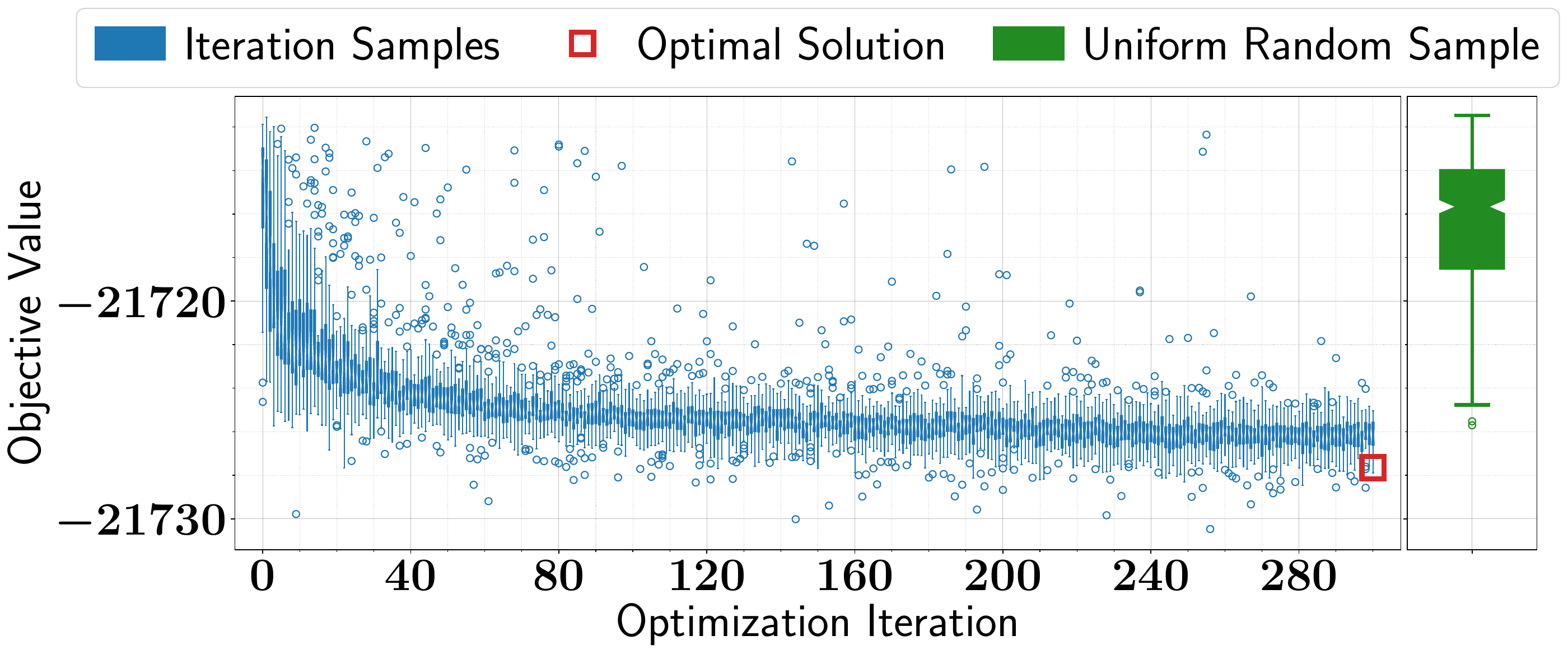}
        \includegraphics[width=0.215\linewidth]{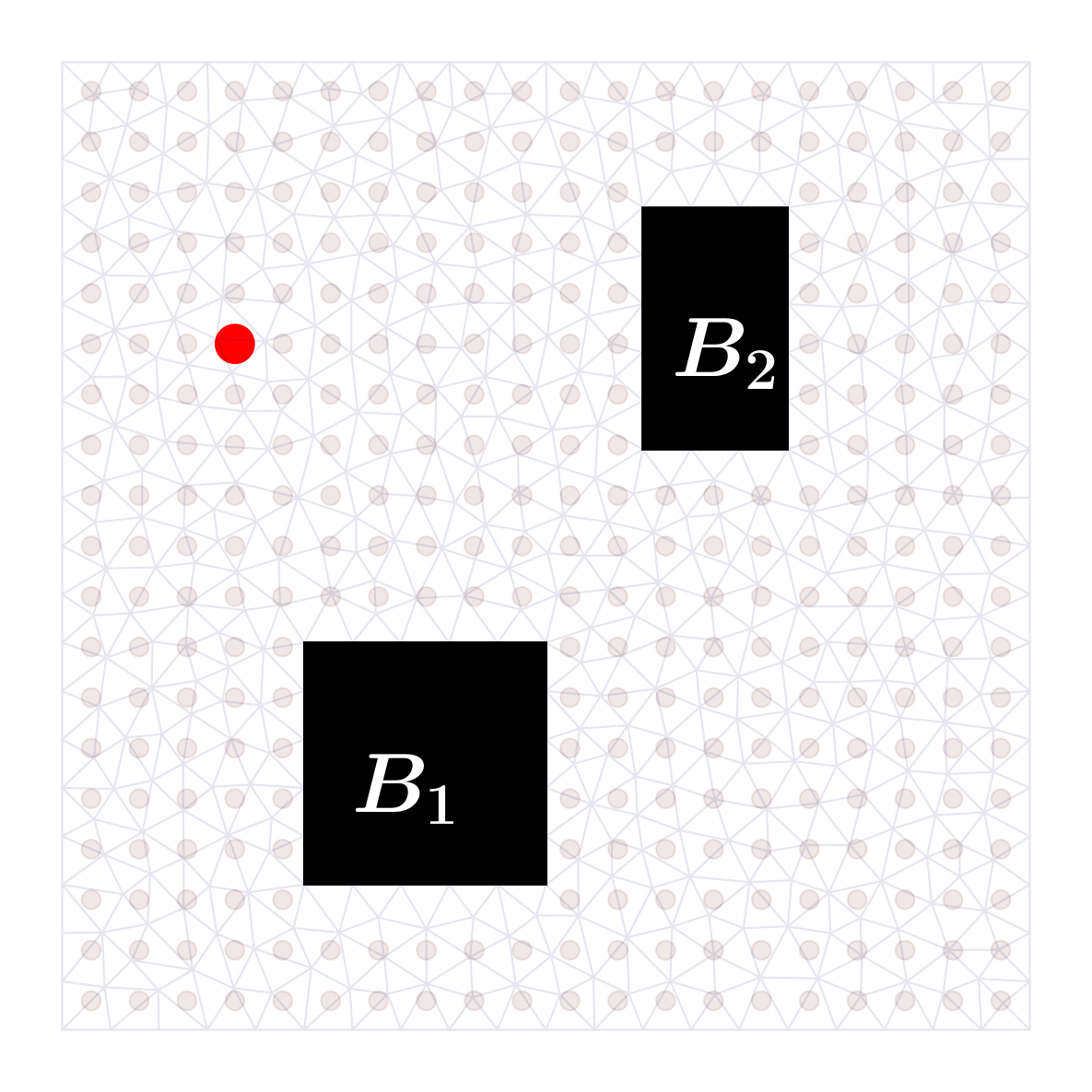}
        \includegraphics[width=0.235\linewidth]{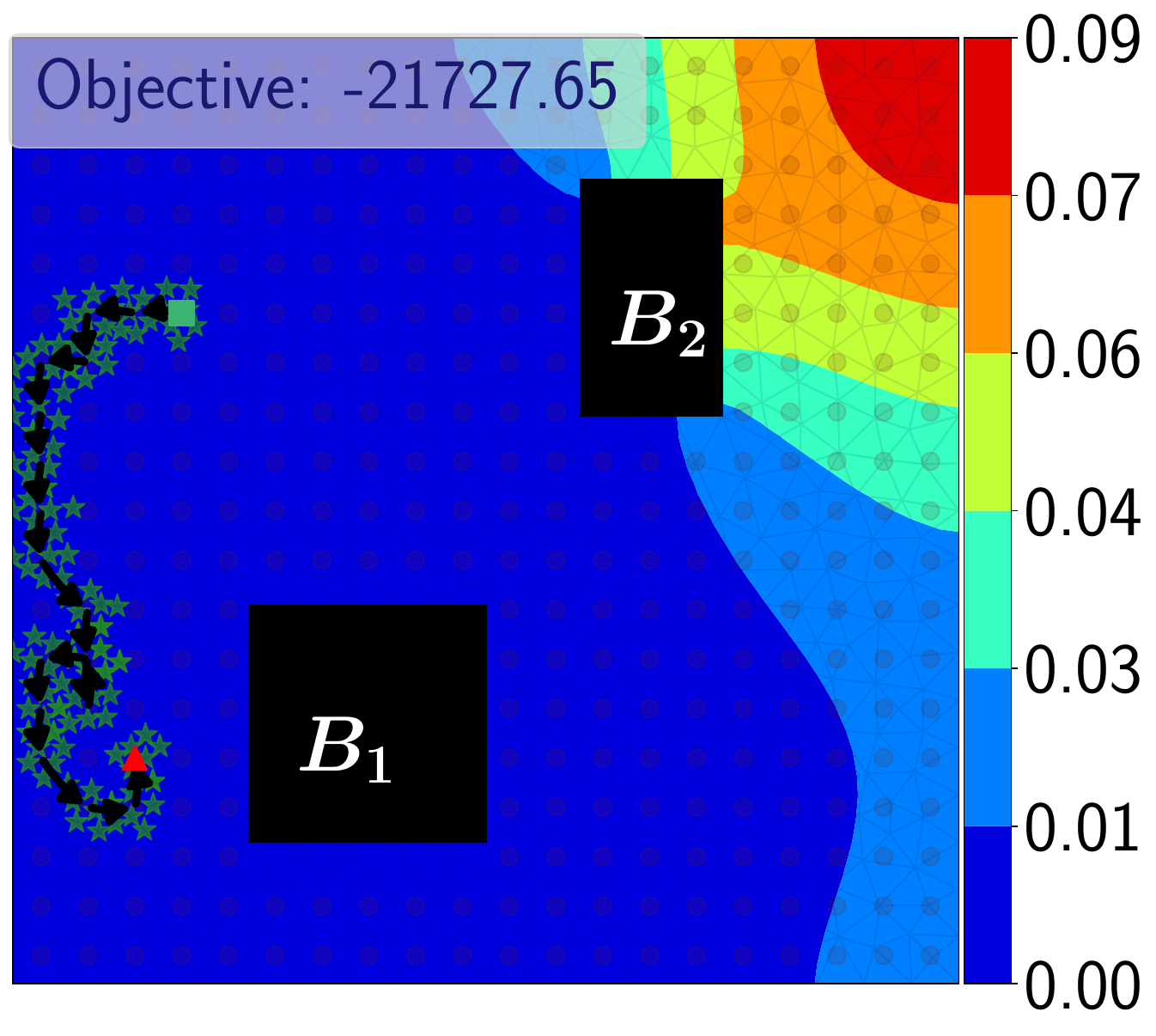}
        \caption{
          Results of \Cref{alg:probabilistic_path_optimization} with the first-order 
          policy model (\Cref{defn:first_order_path_model}) applied to the fine
          navigation mesh (\Cref{fig:navigation_meshes}, right) with 
          the starting point of the trajectory fixed to $(x, y)=(0.2, 0.7)$.
          Trajectory length is $n=19$, and the sensor size is $s=7$.
        }\label{sup:fig:fine_first_order_fixed_start_point_7_sensors}
      \end{figure}

      \Cref{sup:fig:fine_higher_order_fixed_start_point_7_sensors} 
      shows the performance of the optimization procedure with the higher-order 
      policy given by \Cref{defn:higher_order_path_model} with order $k=3$ and $k=5$, respectively.
      The resulting optimal initial parameter (first row) and optimal trajectories (second row)
      are shown in 
      \Cref{sup:fig:fine_higher_order_fixed_start_point_7_sensors_optimal_trajectories}.

      \begin{figure}[H]
        \centering
        \includegraphics[width=0.495\linewidth]{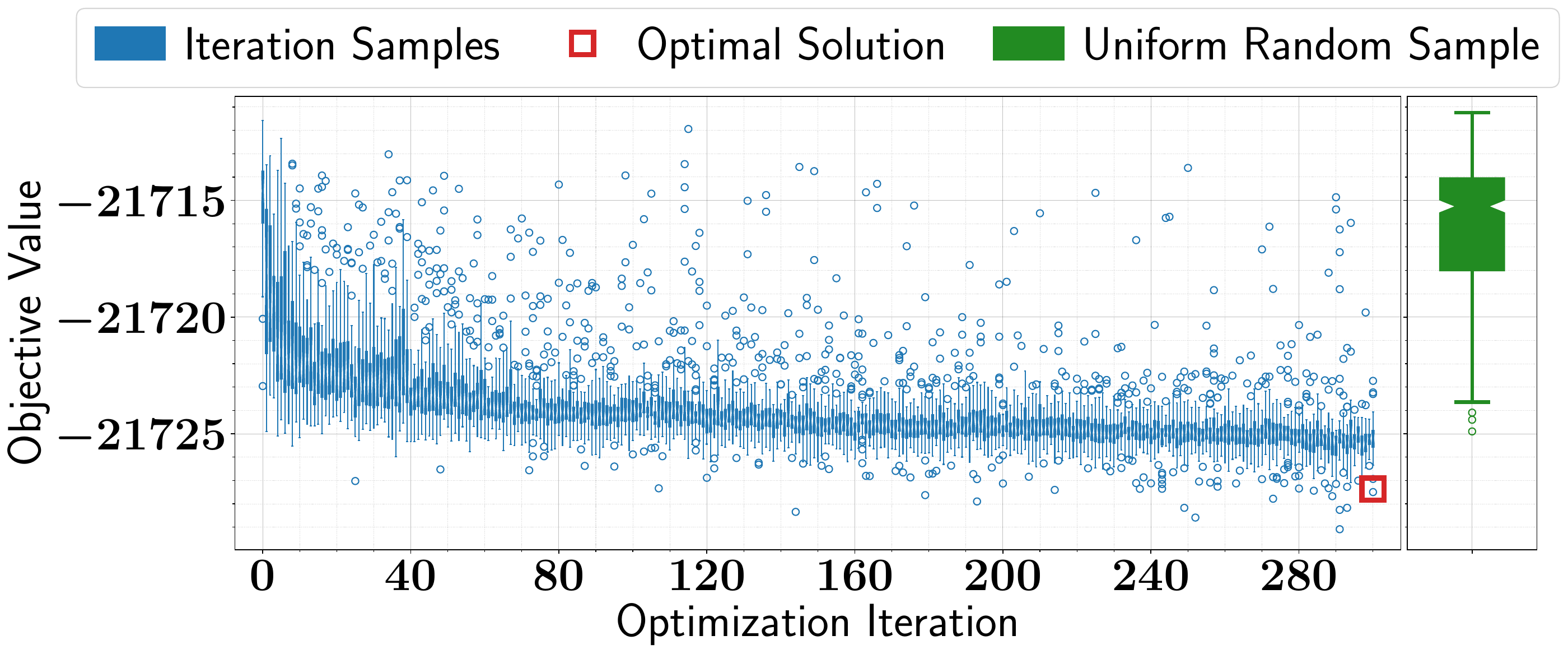}
        \includegraphics[width=0.495\linewidth]{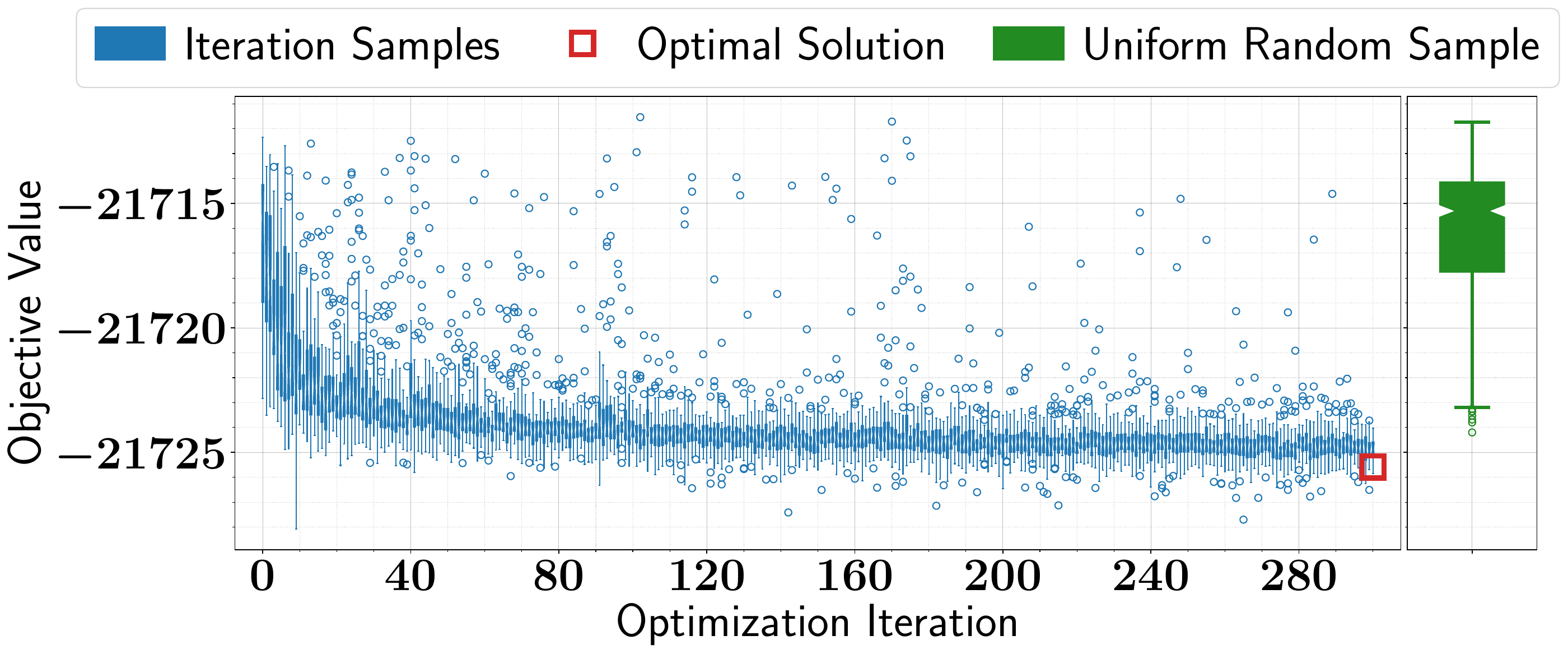}
        \includegraphics[width=0.495\linewidth]{plot_124_fine_higher_order_fixed_start_point_7_sensors_Reinforce_Optimization_Objective_Values_Box_with_Uniform_Random_Sample}
        \includegraphics[width=0.495\linewidth]{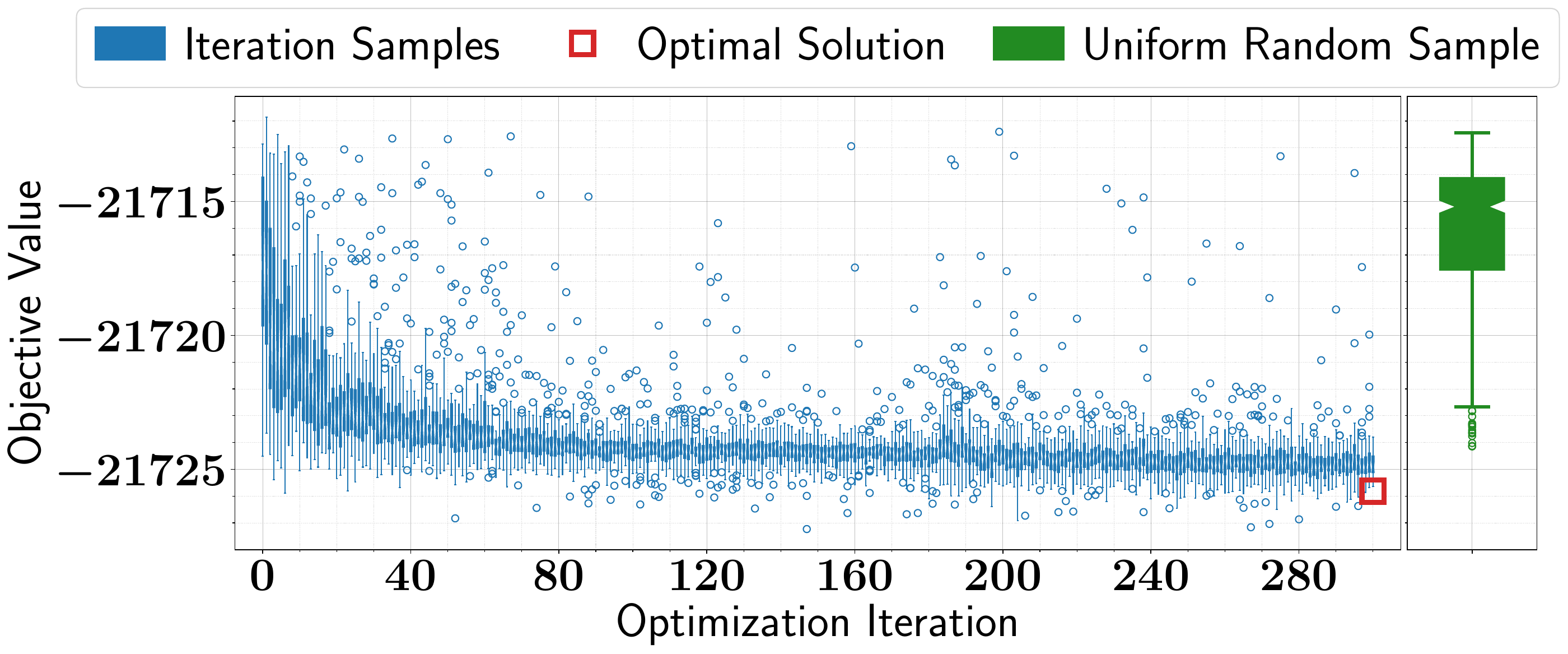}
        \caption{
          Results of \Cref{alg:probabilistic_path_optimization}  applied to the fine
          navigation mesh (\Cref{fig:navigation_meshes}, right) with 
          the starting point of the trajectory fixed to $(x, y)=(0.2, 0.7)$.
          Trajectory length is $n=19$, and the sensor size is $s=7$.
          Here the higher-order policy 
          (\Cref{defn:higher_order_path_model}) is used 
          with orders $k=3$ (first row) and $k=5$ (second row).
          The first column shows results with lag weights being optimized, and 
          the second column shows results with lag weights modeled 
          by \eqref{eqn:decreasing_lag_weights}.
        }\label{sup:fig:fine_higher_order_fixed_start_point_7_sensors}
      \end{figure}
      \begin{figure}[H]
        \centering
        \includegraphics[width=0.235\linewidth]{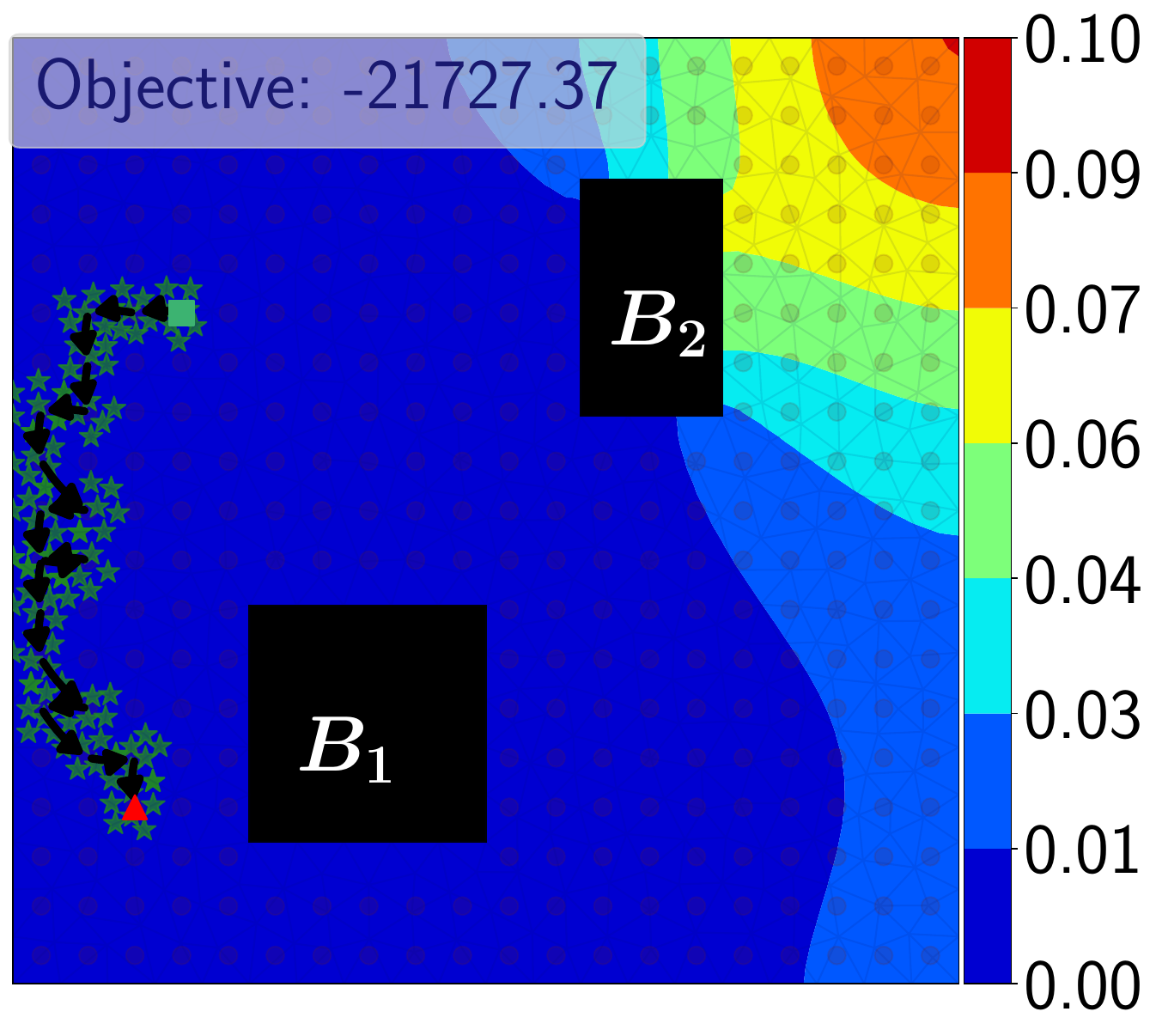}
        \includegraphics[width=0.235\linewidth]{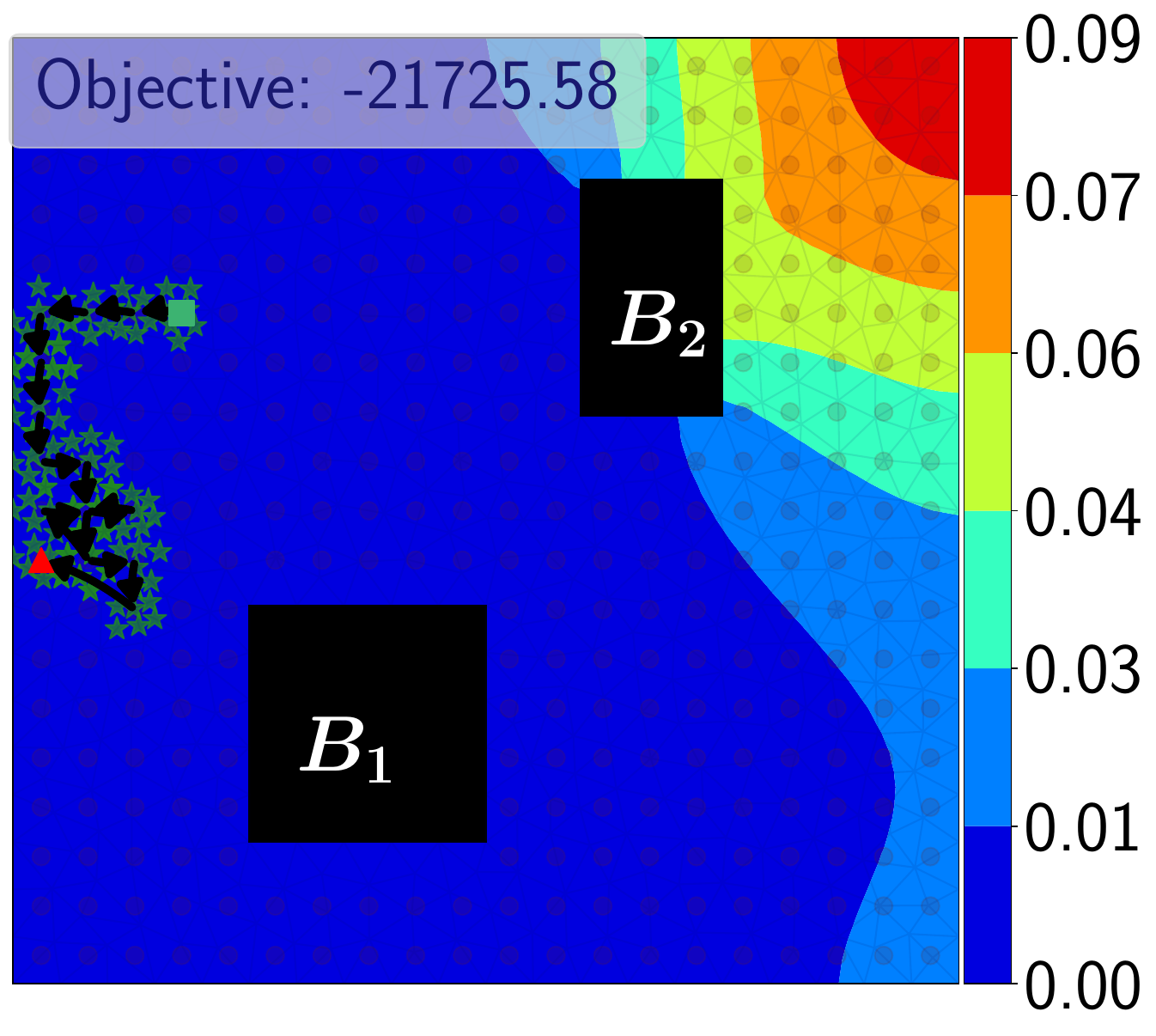}
        \\
        \includegraphics[width=0.235\linewidth]{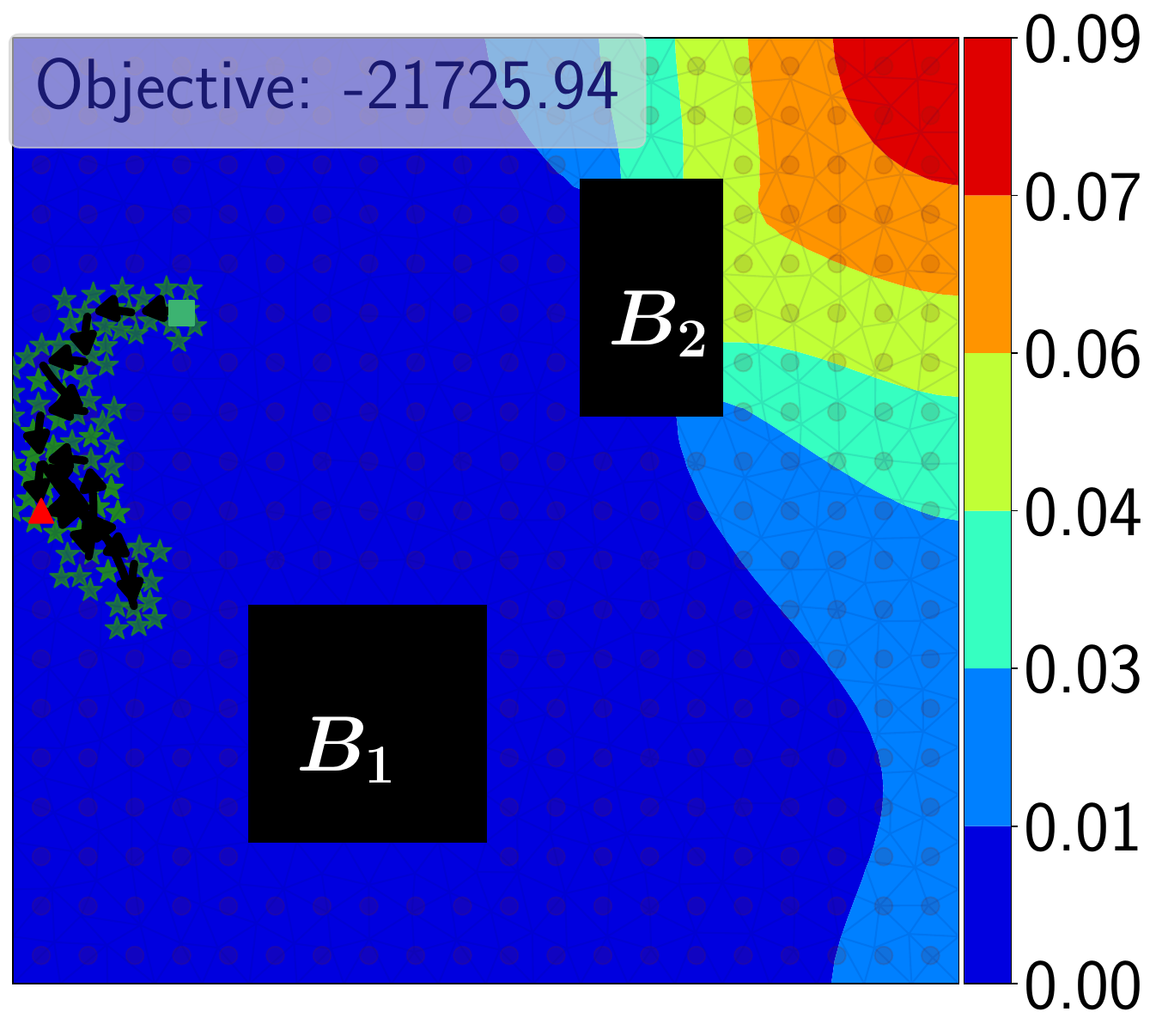}
        \includegraphics[width=0.235\linewidth]{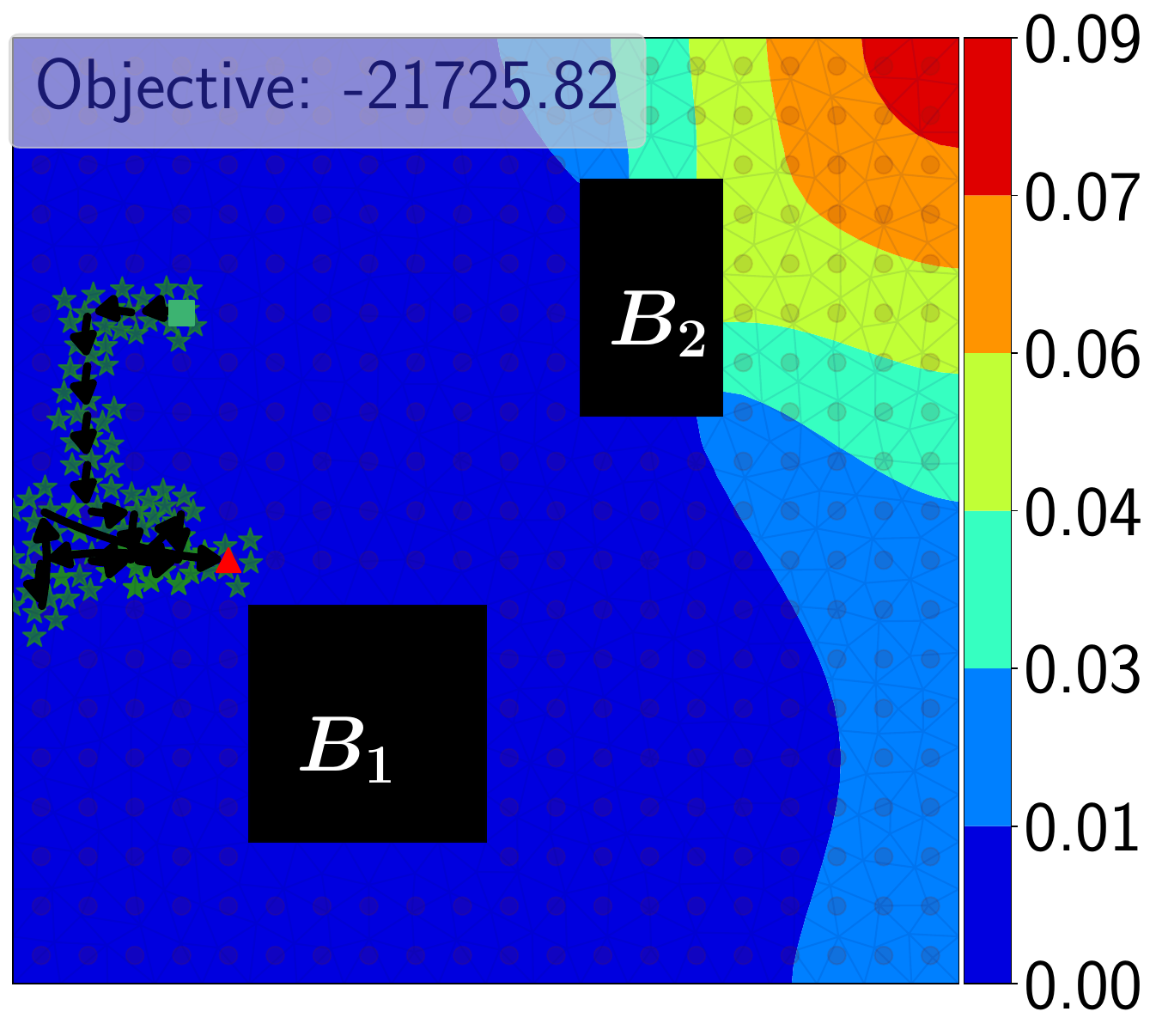}
        \caption{
          Optimal trajectories corresponding to results shown in 
          \Cref{sup:fig:fine_higher_order_fixed_start_point_7_sensors}.
        }\label{sup:fig:fine_higher_order_fixed_start_point_7_sensors_optimal_trajectories}
      \end{figure}
      Similarly, results obtained by the policy given by  \Cref{defn:generalized_higher_order_path_model} 
      are shown in 
      \Cref{sup:fig:fine_generalized_higher_order_fixed_start_point_7_sensors} 
      and 
      \Cref{sup:fig:fine_generalized_higher_order_fixed_start_point_7_sensors_optimal_trajectories}.

      \begin{figure}[H]
        \centering
        \includegraphics[width=0.495\linewidth]{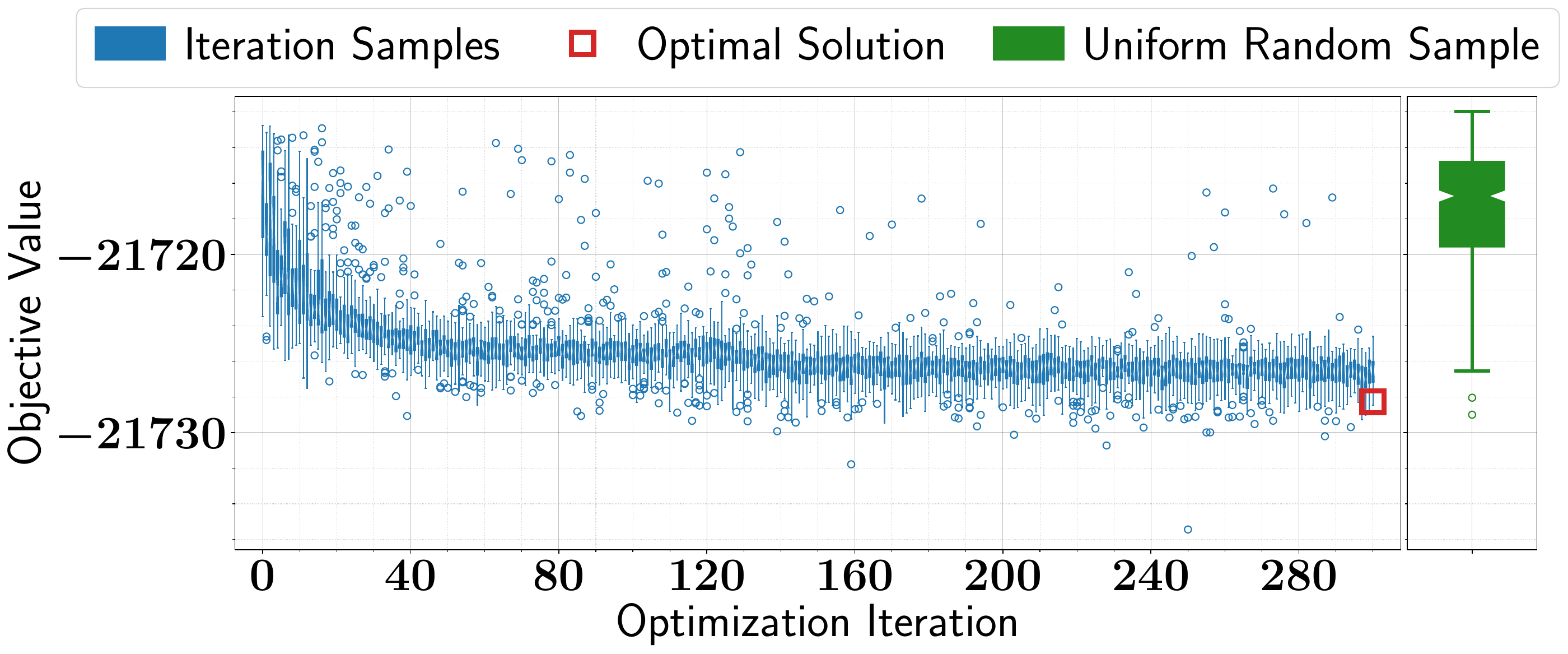}
        \includegraphics[width=0.495\linewidth]{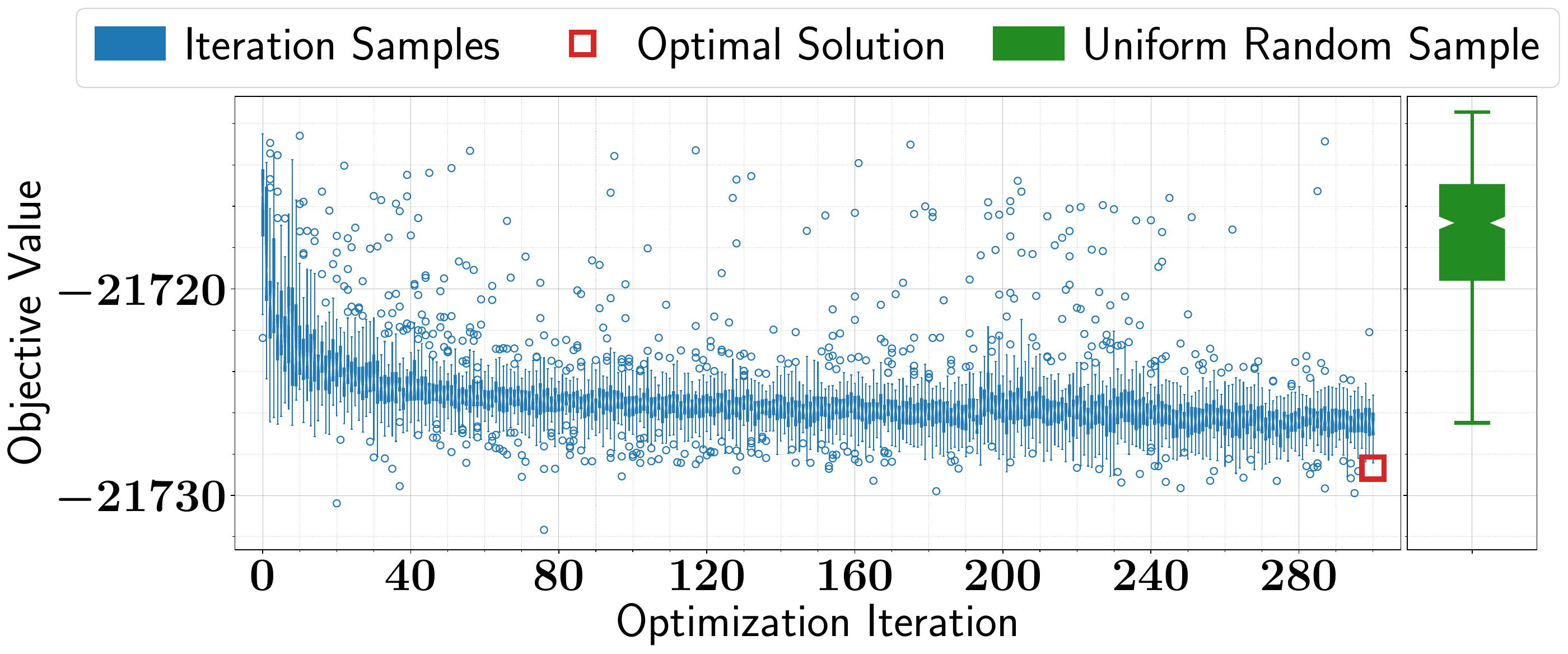}
        \includegraphics[width=0.495\linewidth]{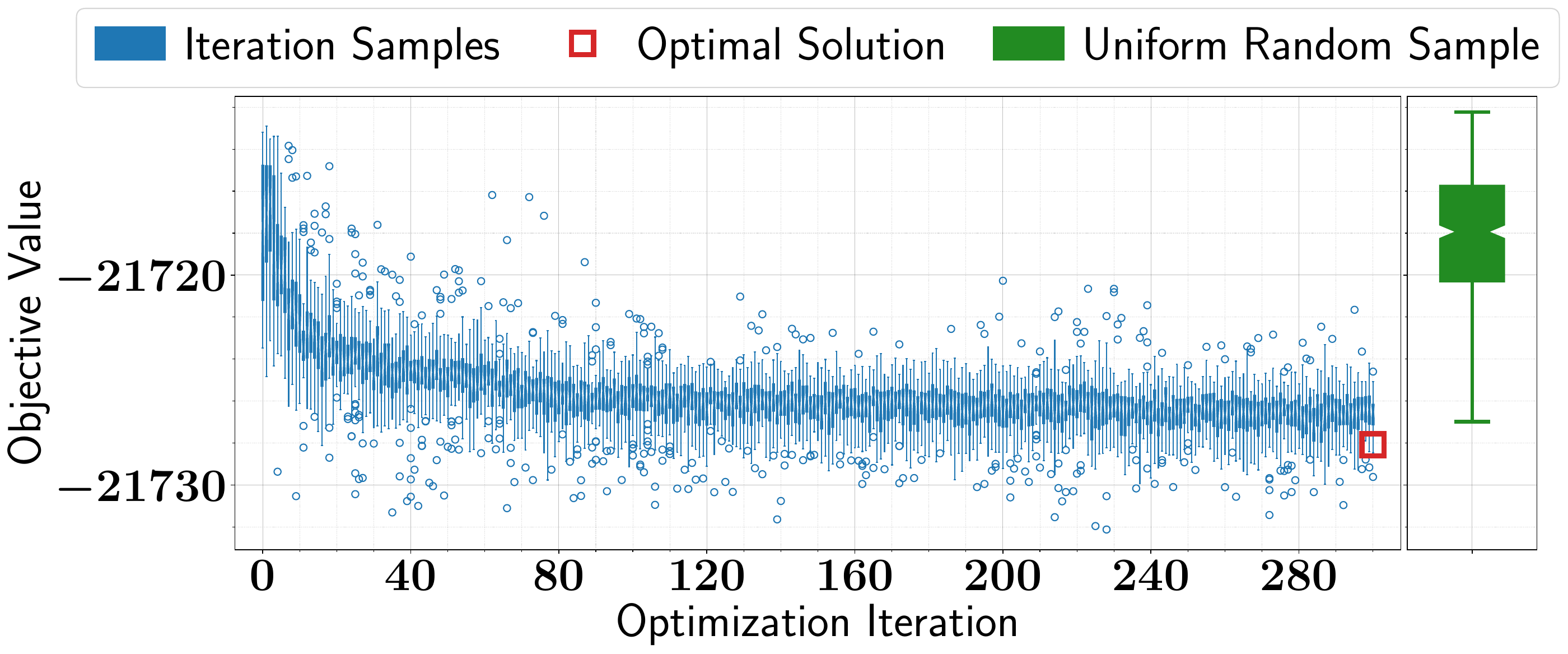}
        \includegraphics[width=0.495\linewidth]{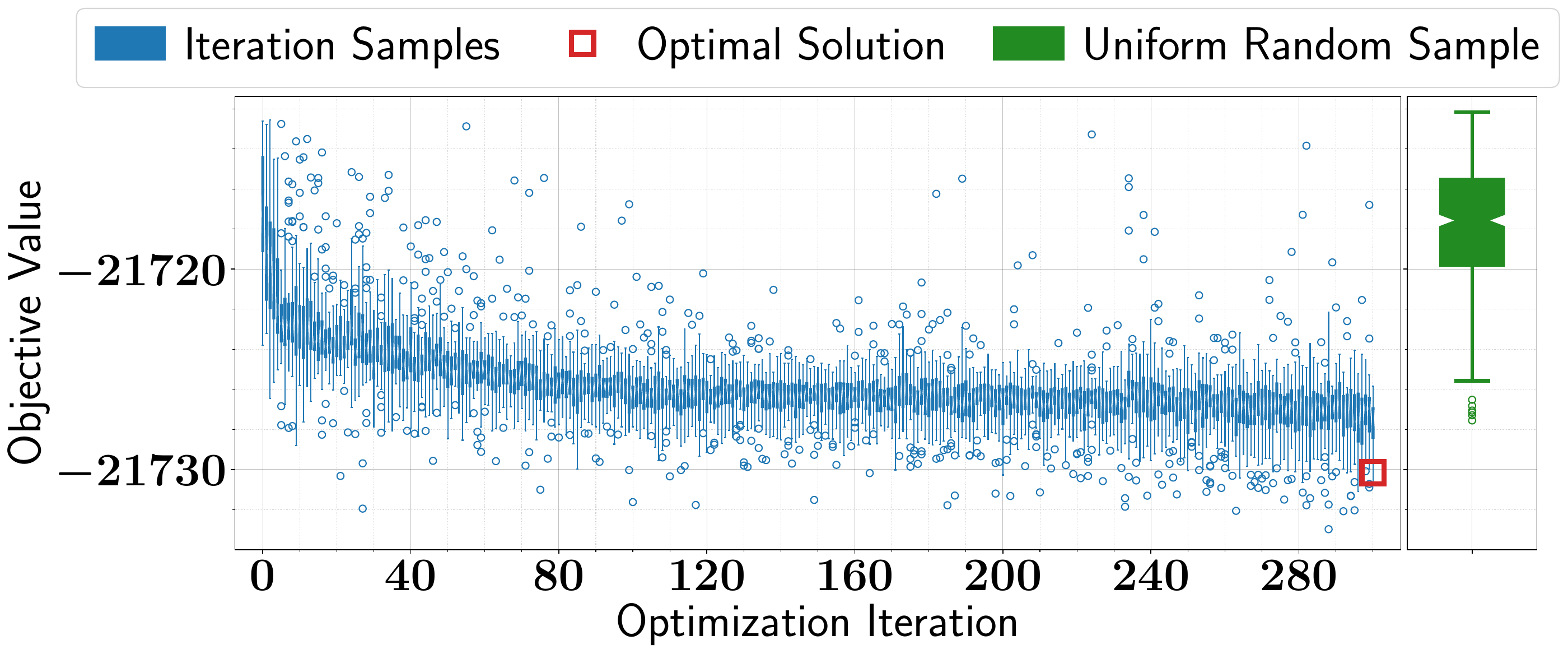}
        \caption{
          Results of \Cref{alg:probabilistic_path_optimization} with the generalized higher-order 
          policy model (\Cref{defn:generalized_higher_order_path_model}) applied to the fine
          navigation mesh (\Cref{fig:navigation_meshes}, right) with 
          the starting point of the trajectory fixed to $(x, y)=(0.2, 0.7)$.
          Trajectory length is $n=19$, and the sensor size is $s=7$.
          Here the generalized higher-order policy 
          (\Cref{defn:generalized_higher_order_path_model}) is used 
          with orders $k=3$ (first row) and $k=5$ (second row).
          The first column shows results with lag weights being optimized, and 
          the second column shows results with lag weights modeled 
          by \eqref{eqn:decreasing_lag_weights}.
        }\label{sup:fig:fine_generalized_higher_order_fixed_start_point_7_sensors}
      \end{figure}
      \begin{figure}[H]
        \centering
        \includegraphics[width=0.235\linewidth]{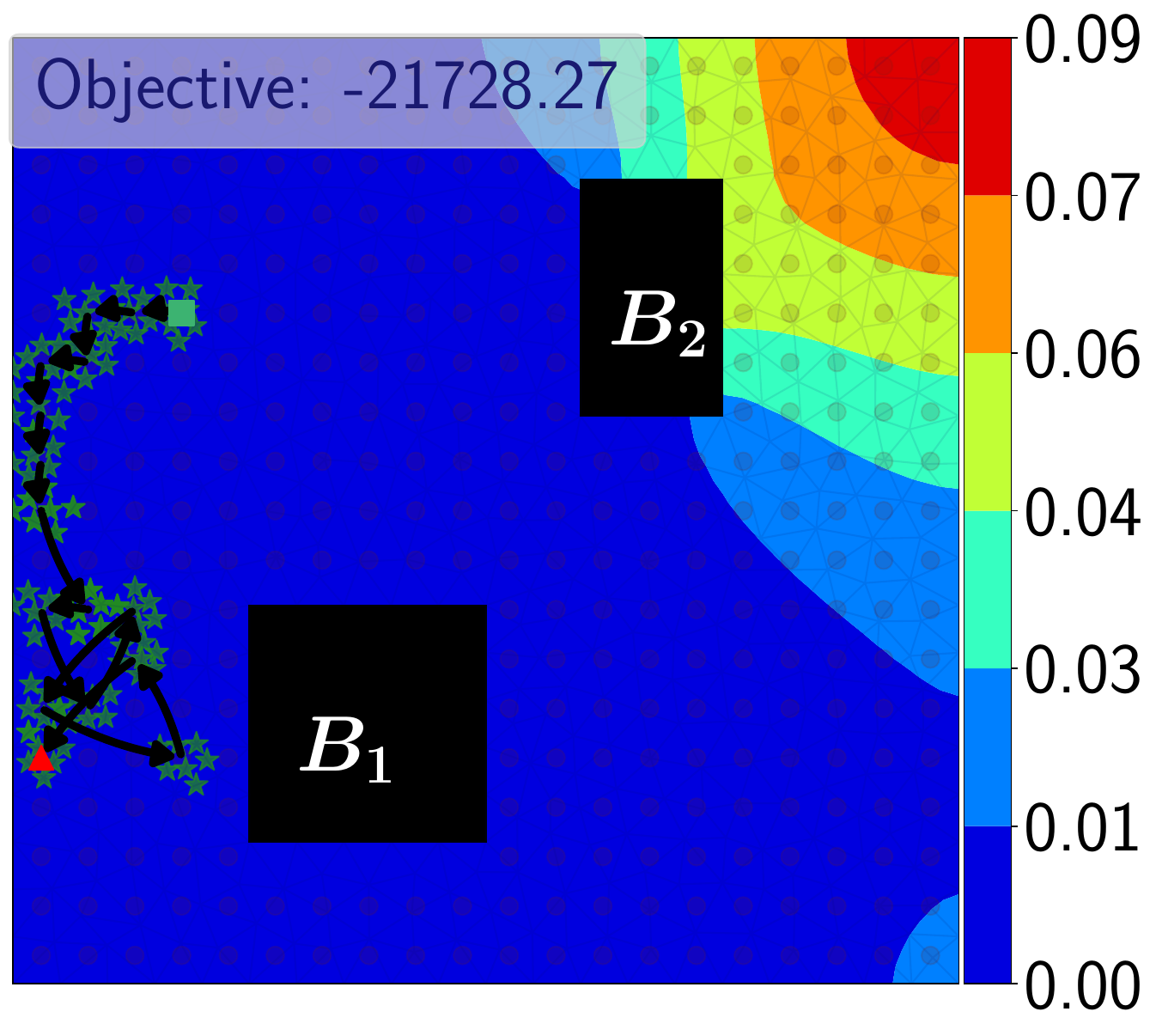}
        \includegraphics[width=0.235\linewidth]{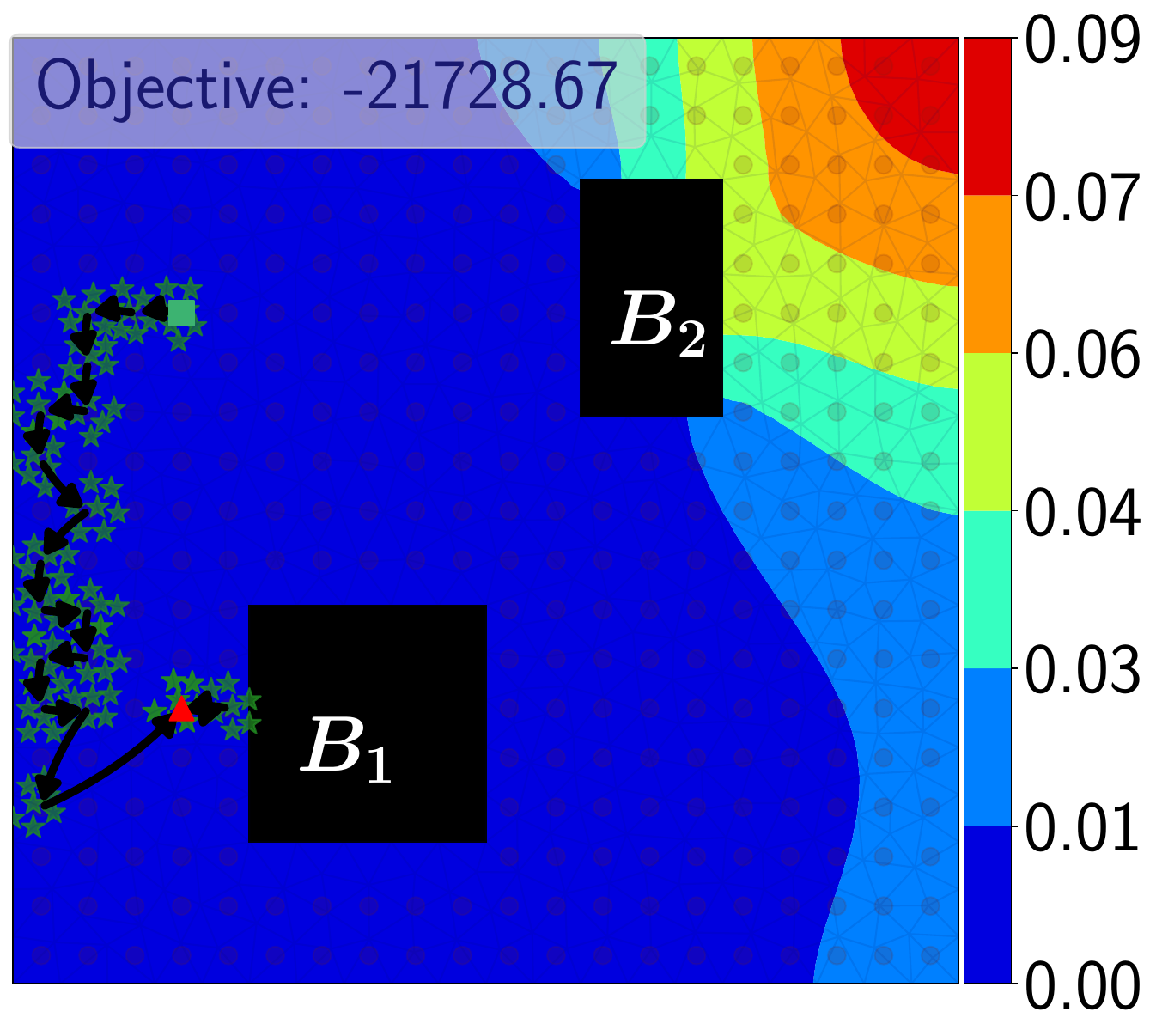}
        \\
        \includegraphics[width=0.235\linewidth]{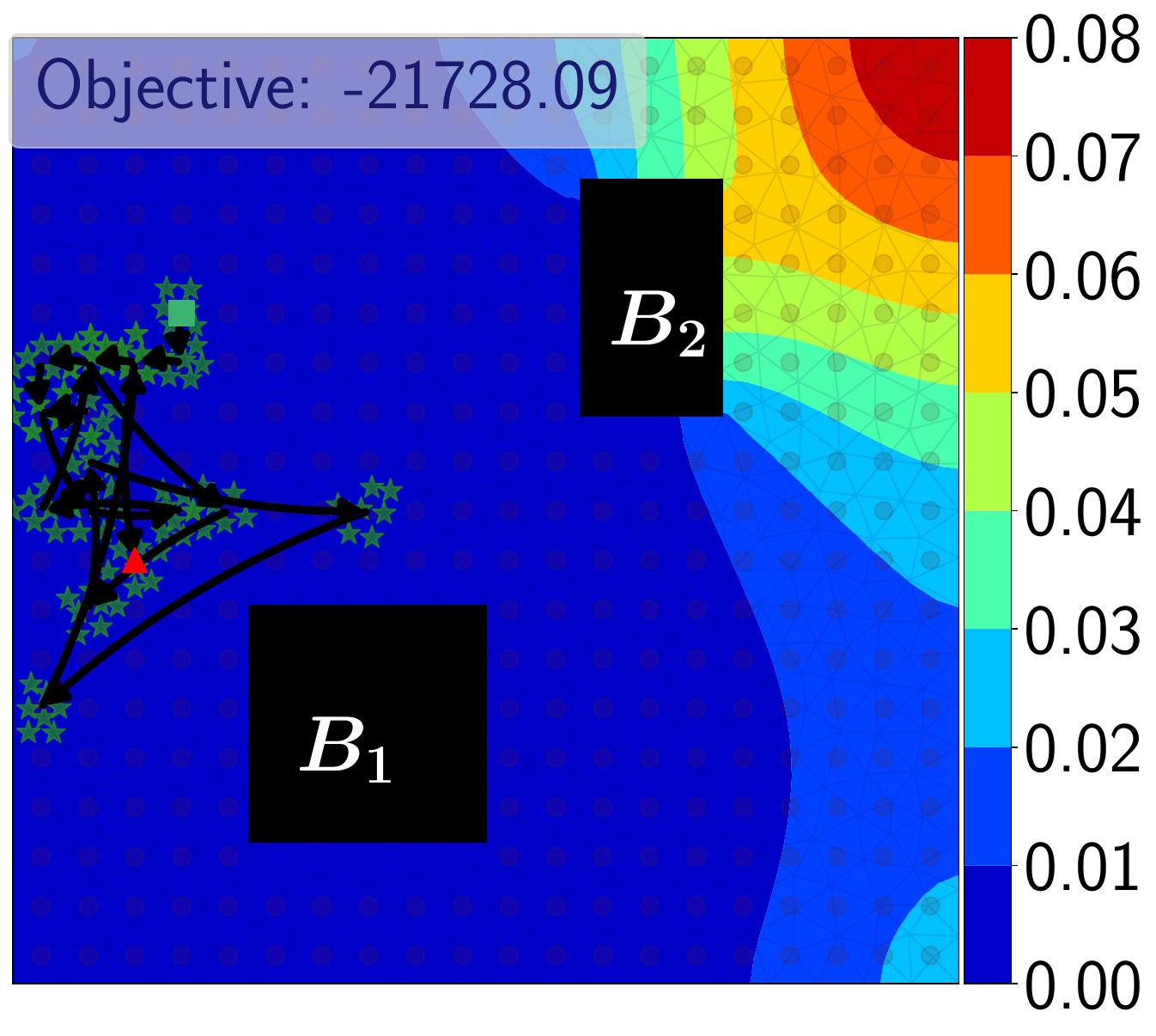}
        \includegraphics[width=0.235\linewidth]{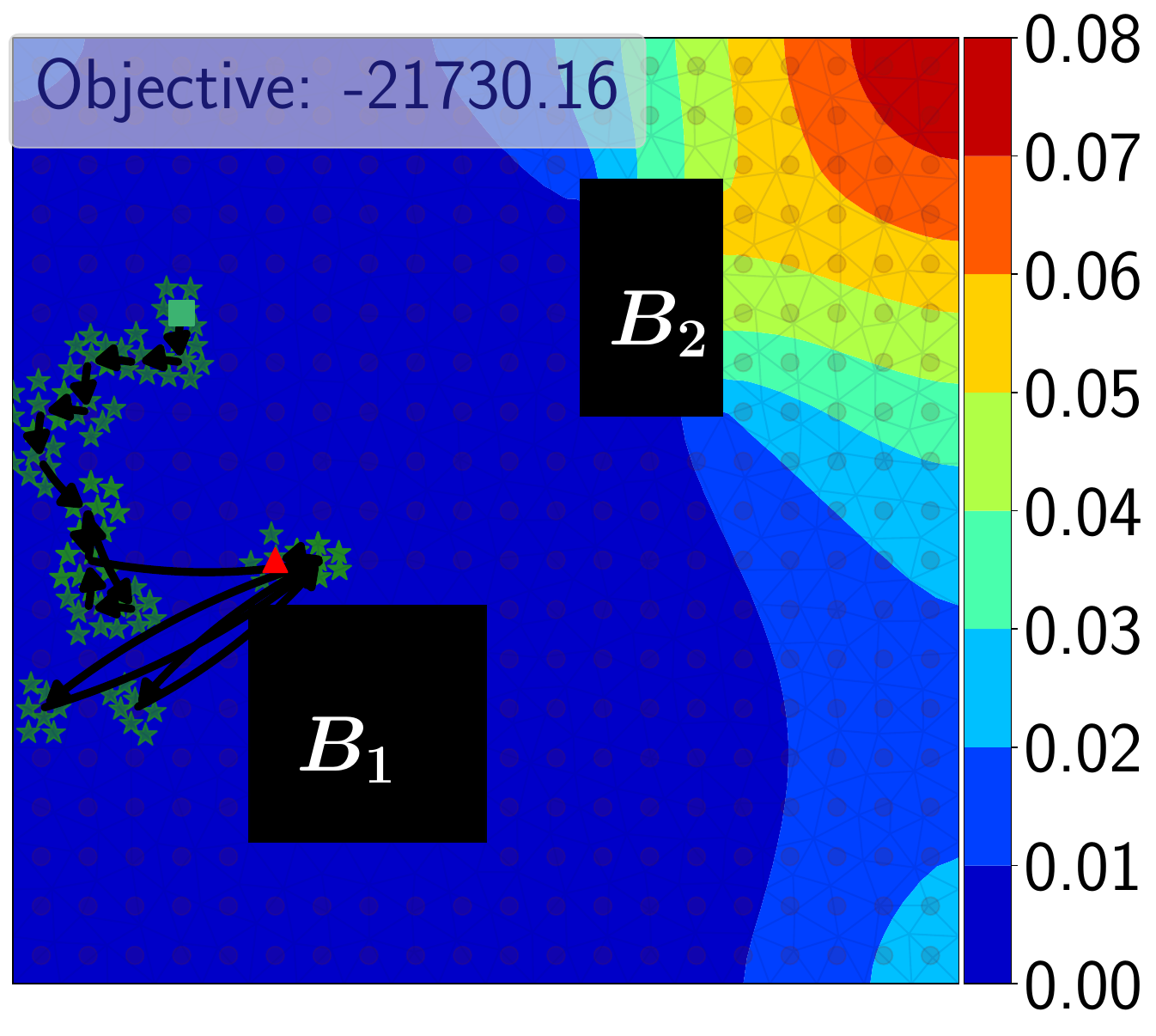}
        \caption{
          Optimal trajectories corresponding to results shown in 
          \Cref{sup:fig:fine_generalized_higher_order_fixed_start_point_7_sensors}.
        }\label{sup:fig:fine_generalized_higher_order_fixed_start_point_7_sensors_optimal_trajectories}
      \end{figure}

    The results displayed  in 
    \Cref{sup:subsubsec:fine_mesh_results_fixed_start_point_7_sensors} for $s=7$ moving sensors 
    are consistent with the results obtained with $s=1$ moving sensor in   
    \Cref{sup:subsubsec:fine_mesh_results_fixed_start_point_1_sensor}, 
    and with the coarse mesh experiment discussed in \Cref{subsec:numerical_results_coarse}.

  \subsection{Results with A- and E-Optimality Criteria}
    \label{sup:subsec:Other_Utility_Functions}
    In this section we show a partial set of results obtained by 
    using optimality criteria other than the D-optimality used primarily in 
    the numerical experiments.
    Specifically, we present in \Cref{sup:subsubsec:A-optimality_results} results obtained
    by using the A-optimality (trace/average of the posterior variances) criterion. 
    \Cref{sup:subsubsec:E-optimality_results} presents results obtained by employing 
    the E-optimality (eigenvalue) criterion.

    Similar to \Cref{subsubsec:fine_mesh_results_unspecified_start_point}, 
    this experiment employs the fine navigation mesh \Cref{fig:navigation_meshes} (right).
    The trajectory length is set to $n=19$ nodes (18 edges), and 
    the observation frequency is set to $f=1$. 
    The number of sensors is set to $s=1$ and $s=7$, respectively, and is stated clearly in each figure.
    First we show results  with the starting point of the path restricted  
    to the coordinates $(x, y)=(0.2, 0.7)$, and then 
    we allow the trajectory to start anywhere in the domain,  allowing the 
    starting point to be optimized as well.

    For clarity, we show results obtained only by using the first-order policy 
    (\Cref{defn:first_order_path_model}). We note that the same behavior as 
    in the case of D-optimality is observed here for higher-order policies in 
    comparison with the first-order policy.

    \subsubsection{Results with A-Optimality Criterion}
    \label{sup:subsubsec:A-optimality_results}
      For the A-optimality, the objective is to minimize the sum of posterior variances.
      The probabilistic OED optimization
      (\Cref{prbl:probabilistic_path_oed_problem}) in this case takes the form
      \begin{equation}\label{eqn:A-opt_probabilistic_optimization}
        \hyperparamvec\opt \in \argmin_{\designvec\sim\CondProb{\designvec}{\hyperparamvec}}
          \Expect{\designvec\sim\CondProb{\designvec}{\hyperparamvec}}{\utilityfunc(\designvec)}
          \,; \qquad \utilityfunc(\designvec) := \Trace{\Cparampost(\designvec)}
            \,,
      \end{equation}
      where $\Cparampost(\designvec)$ is the posterior covariance matrix \eqref{eqn:Posterior_Params}.

      \paragraph{Results with fixed starting point}
        \Cref{sup:fig:a_opt_fixed_start_1_sensor} 
        shows the results of \Cref{alg:probabilistic_path_optimization} with the first-order 
        policy defined by \Cref{defn:first_order_path_model} and with the number of 
        sensors set to $s=1$. Results obtained with $s=7$ moving sensors are shown in 
        \Cref{sup:fig:a_opt_fixed_start_7_sensors}. 
        \begin{figure}[H]
          \centering
          \includegraphics[width=0.53\linewidth]{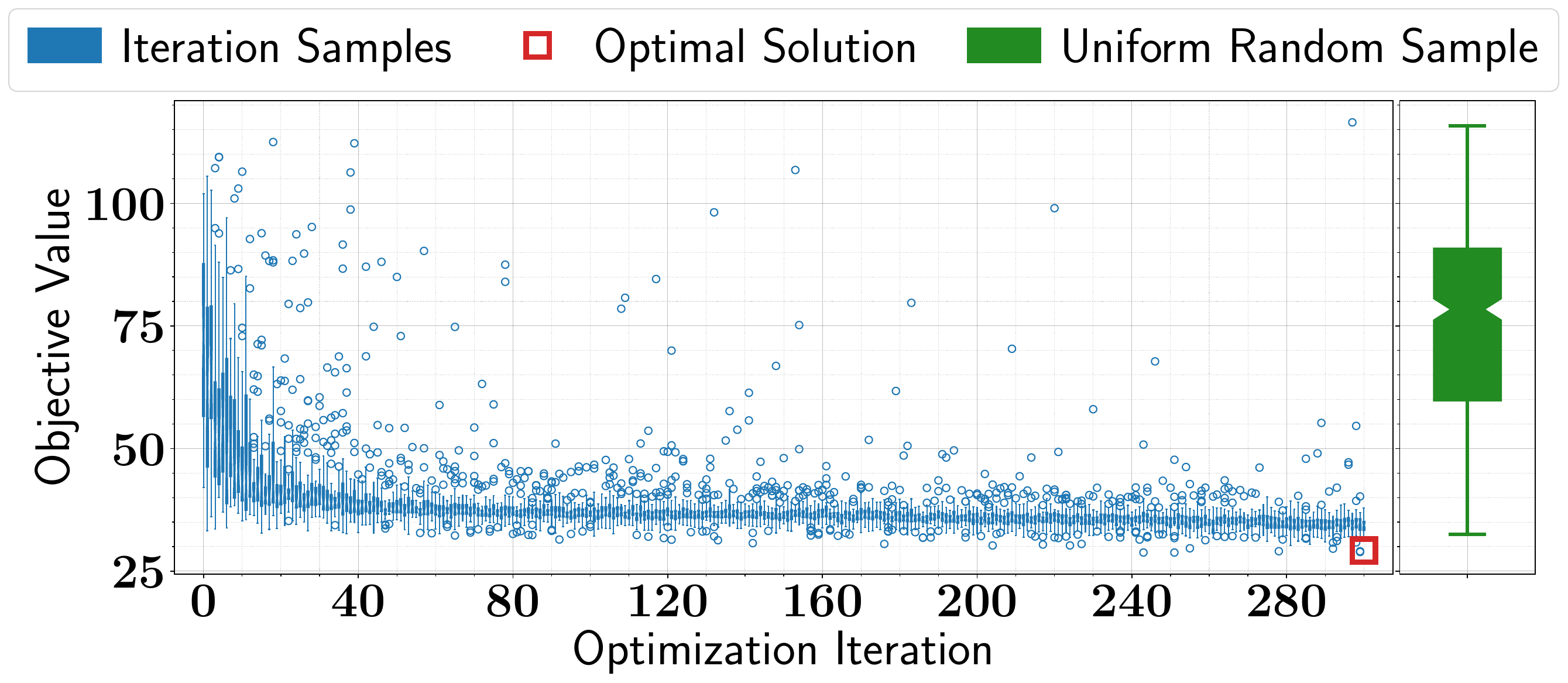}
          \includegraphics[width=0.215\linewidth]{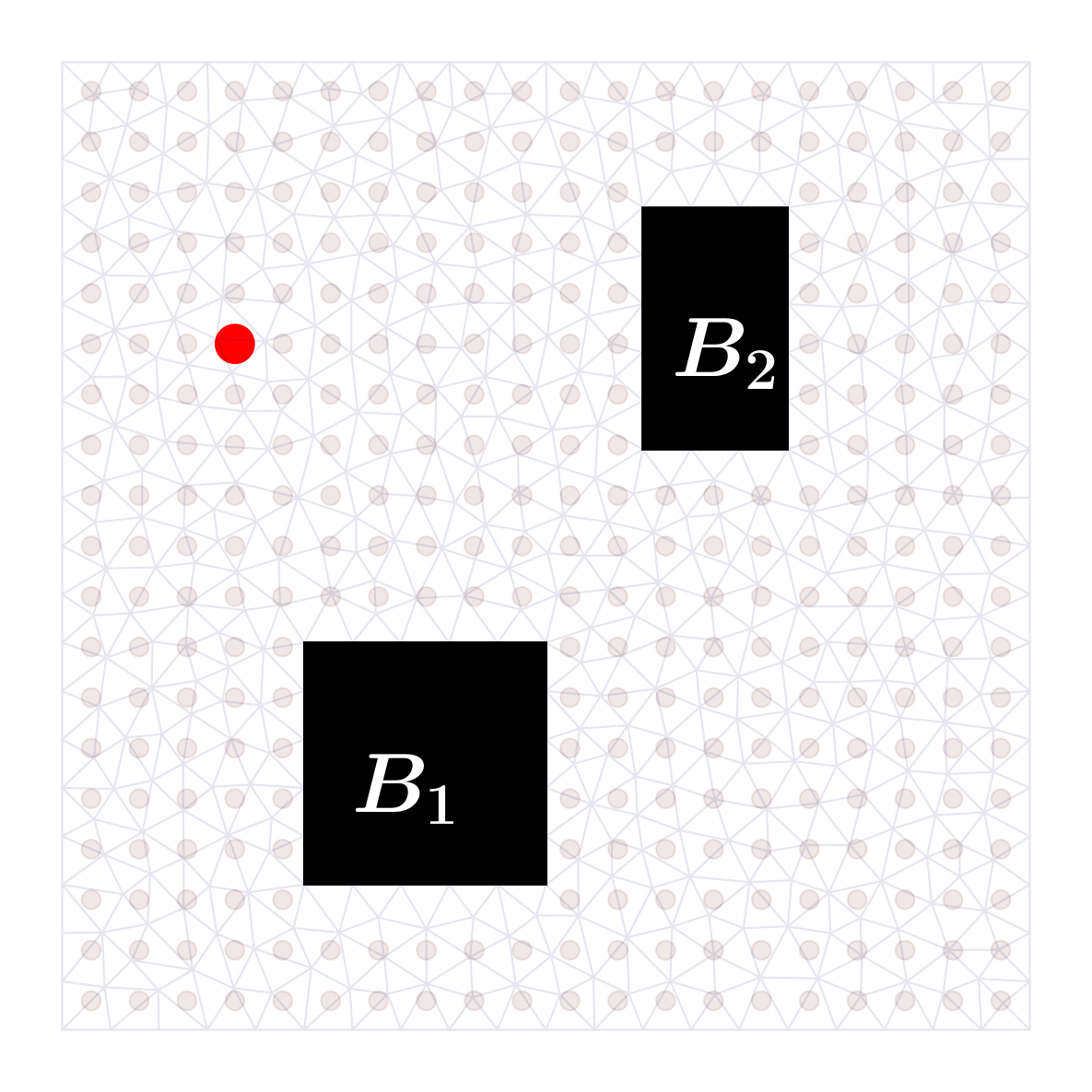}
          \includegraphics[width=0.235\linewidth]{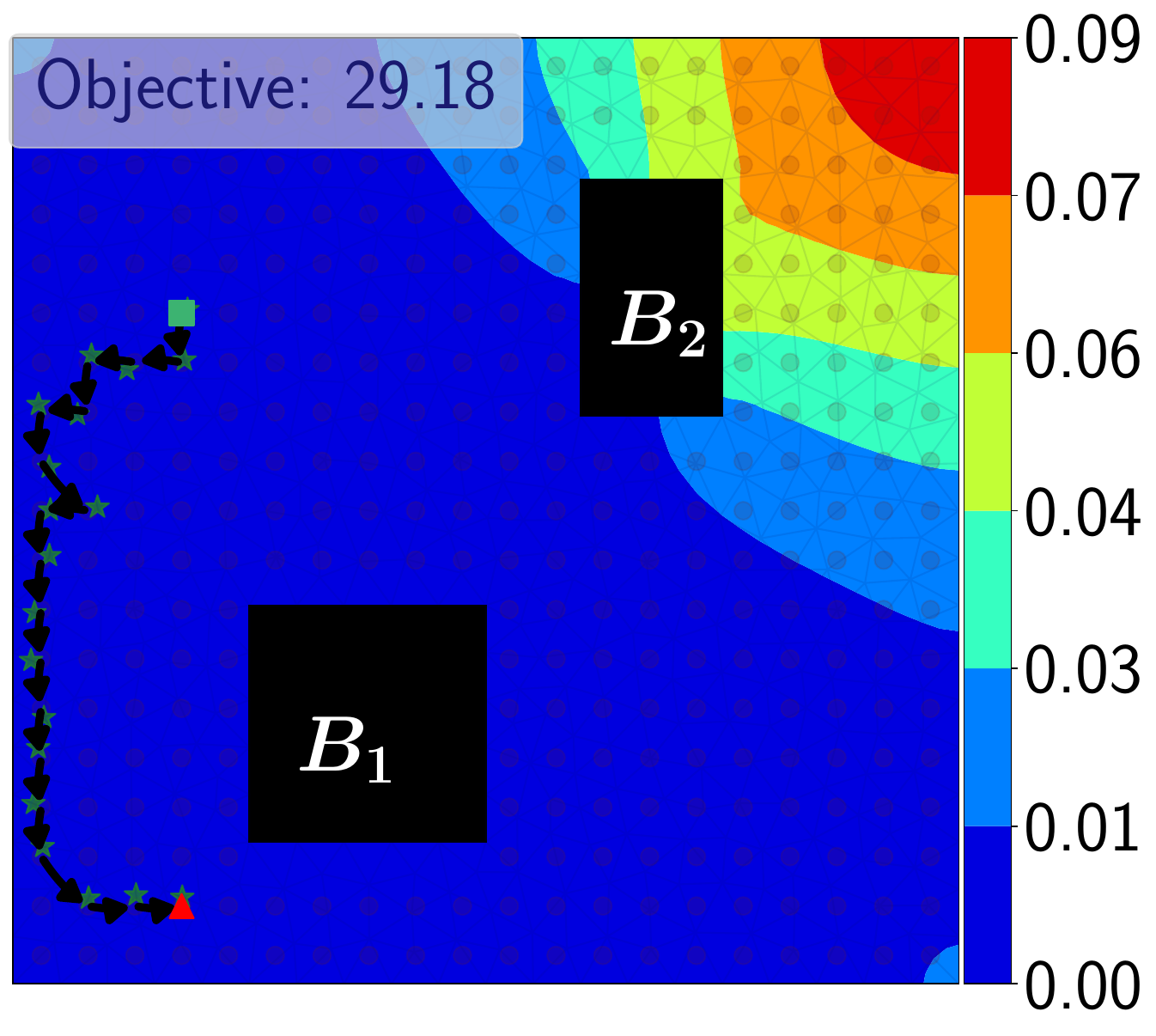}
          \caption{
            Results of \Cref{alg:probabilistic_path_optimization} with the first-order 
            policy model (\Cref{defn:first_order_path_model}) applied to the fine
            navigation mesh (\Cref{fig:navigation_meshes}, right) with 
            the starting point of the trajectory fixed to $(x, y)=(0.2, 0.7)$.
            The optimality criterion used is the A-optimality, 
            that is, the trace of the posterior covariance matrix.
            The number of sensors is set to $s=1$, the observation frequency is $f=1$, 
            and the trajectory length is $n=19$.
          }\label{sup:fig:a_opt_fixed_start_1_sensor}
        \end{figure}
        \begin{figure}[H]
          \centering
          \includegraphics[width=0.53\linewidth]{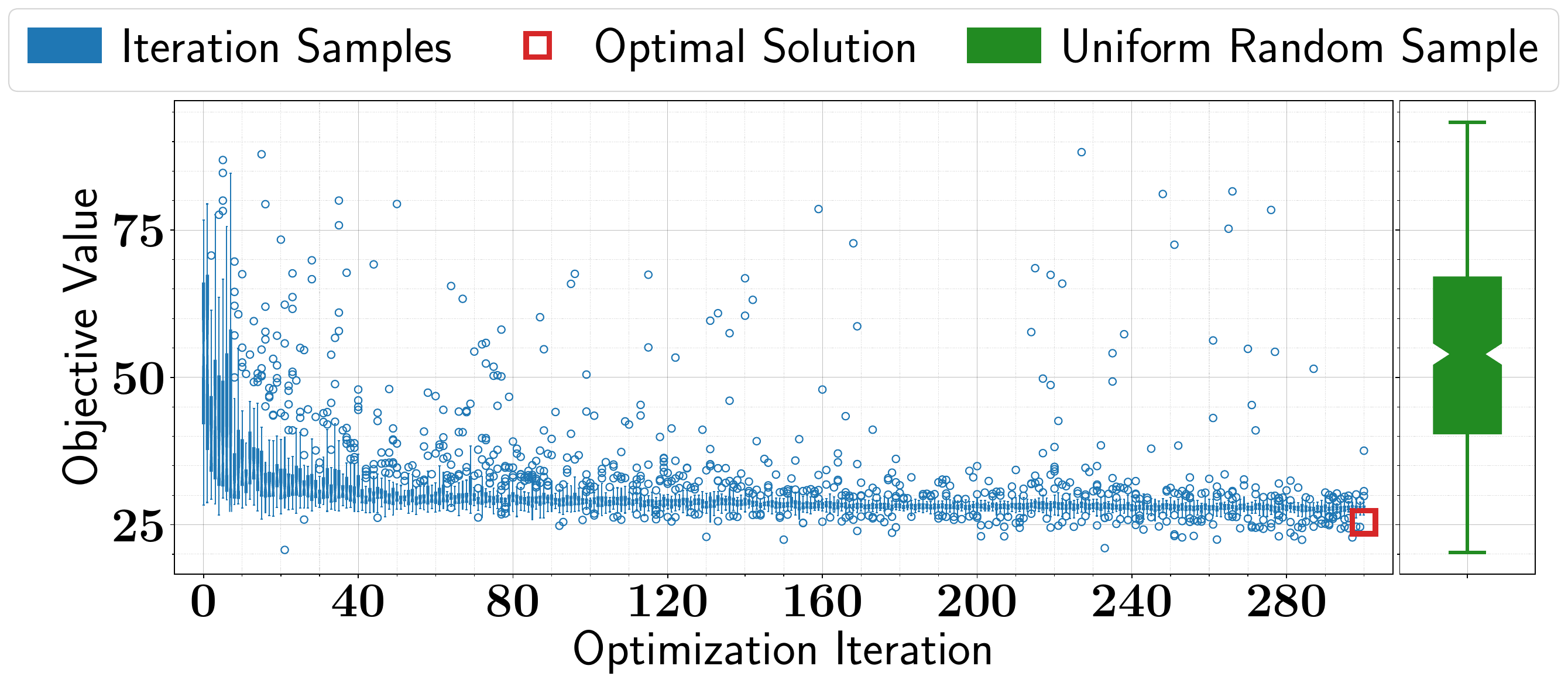}
          \includegraphics[width=0.215\linewidth]{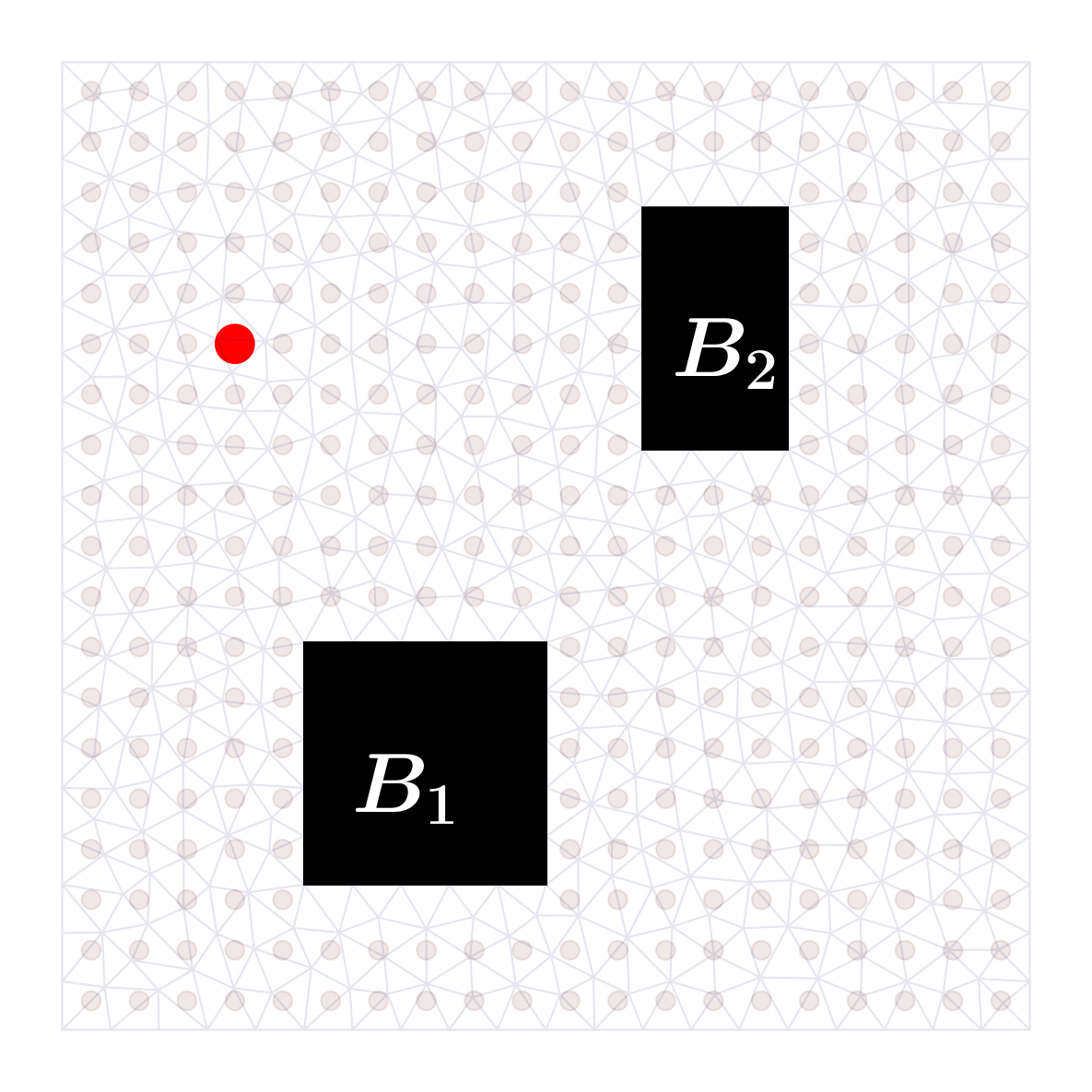}
          \includegraphics[width=0.235\linewidth]{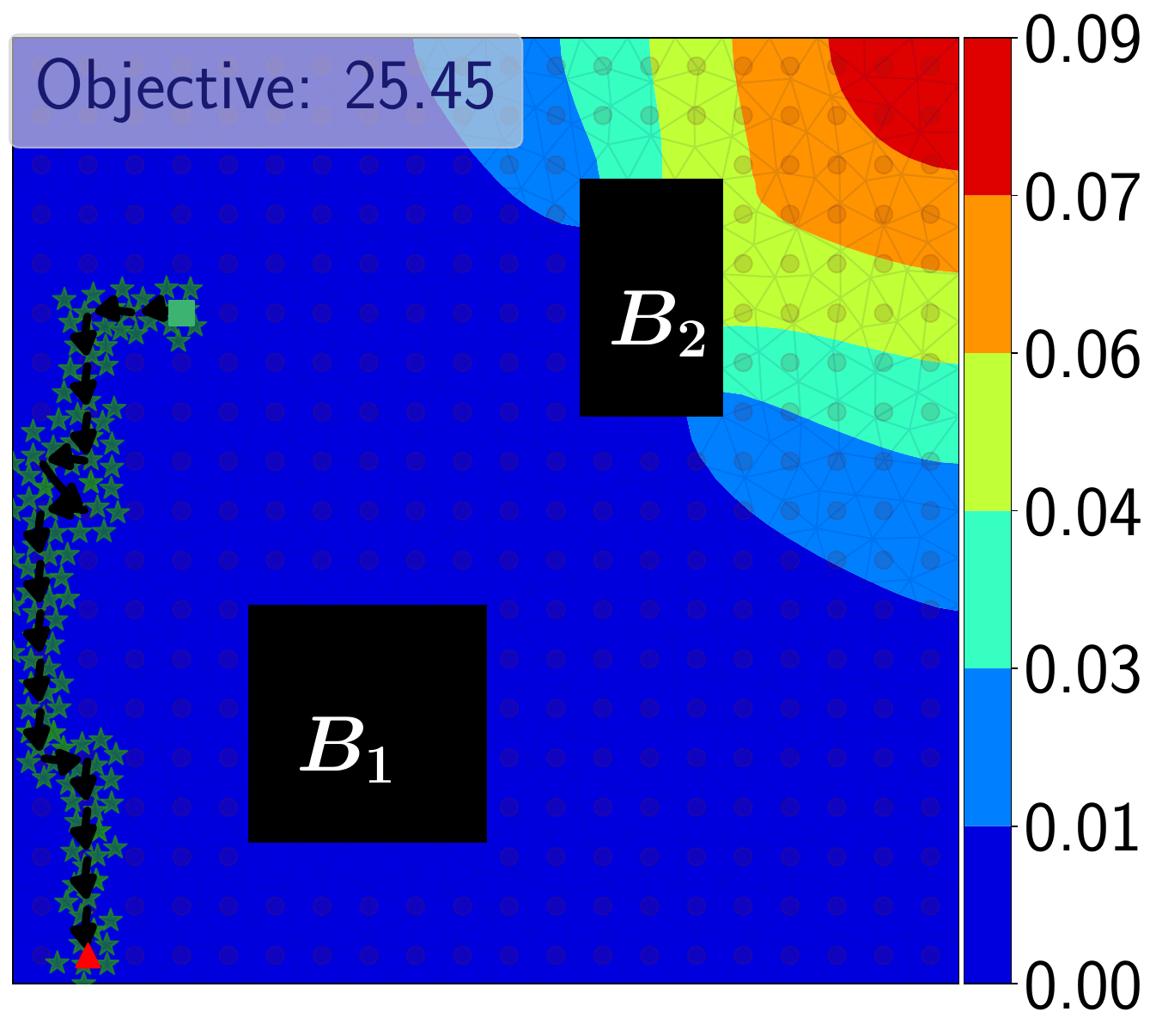}
          \caption{
            Similar to \Cref{sup:fig:a_opt_fixed_start_1_sensor}. 
            Here the number of moving sensors is set to $s=7$.
          }\label{sup:fig:a_opt_fixed_start_7_sensors}
        \end{figure}

      \paragraph{Results with unspecified starting point}
        \Cref{sup:fig:a_opt_unspecified_start_1_sensor} 
        shows the results of \Cref{alg:probabilistic_path_optimization} with the first-order 
        policy defined by \Cref{defn:first_order_path_model} and with number of 
        sensors set to $s=1$.
        \Cref{sup:fig:a_opt_unspecified_start_7_sensors} 
        shows the results obtained by setting the number of moving sensors to $s=7$.
        \begin{figure}[H]
          \centering
          \includegraphics[width=0.53\linewidth]{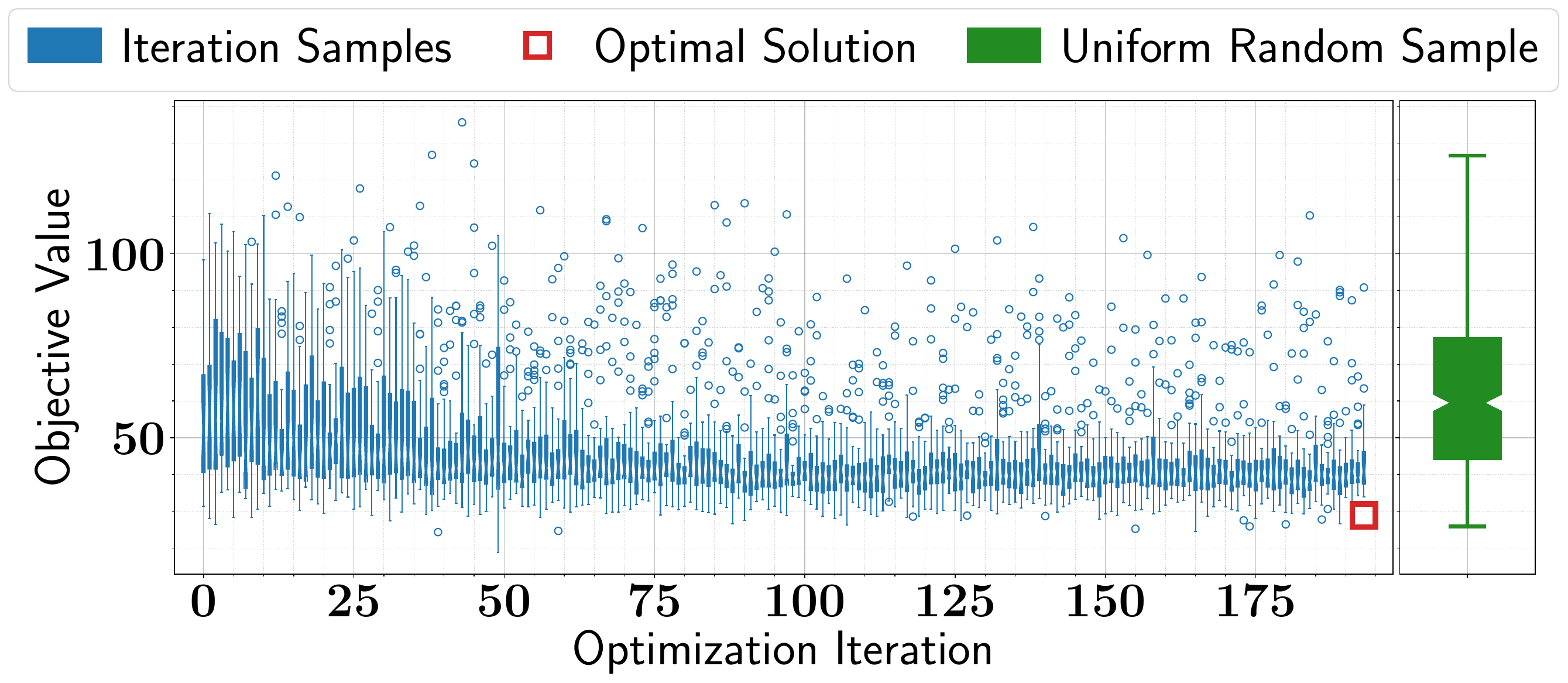}
          \includegraphics[width=0.215\linewidth]{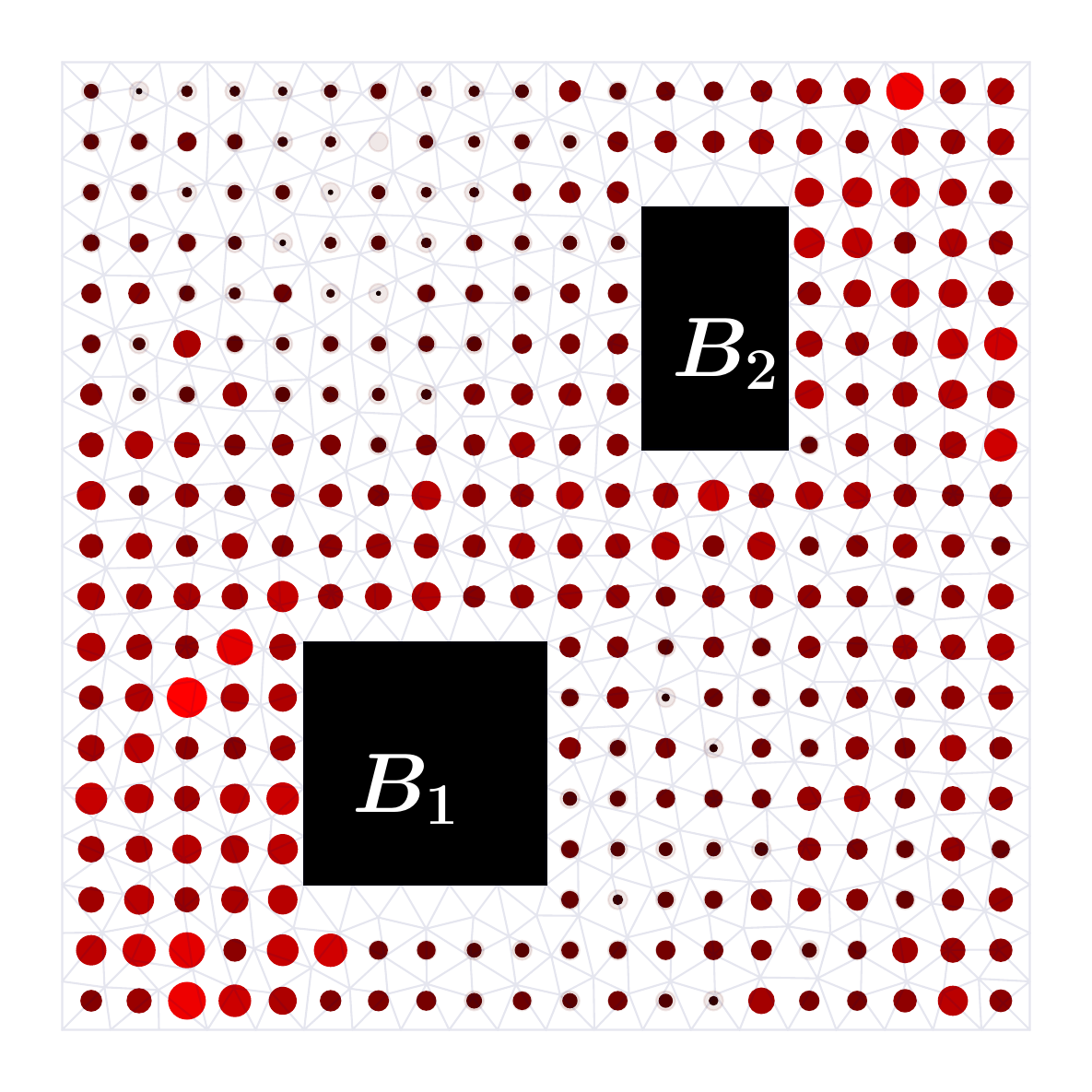}
          \includegraphics[width=0.235\linewidth]{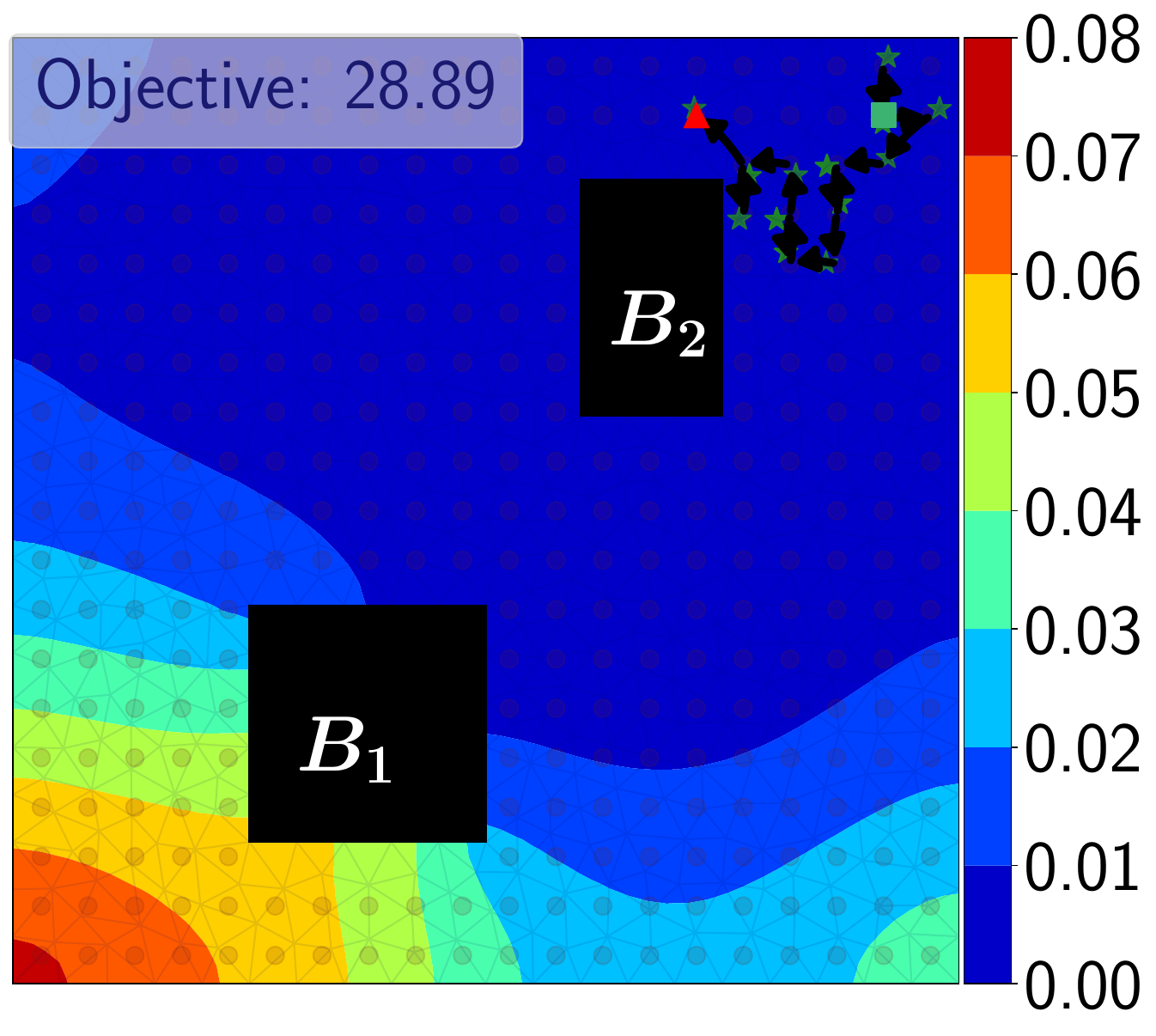}
          \caption{
            Results of \Cref{alg:probabilistic_path_optimization} with the first-order 
            policy model (\Cref{defn:first_order_path_model}) applied to the fine
            navigation mesh (\Cref{fig:navigation_meshes}, right) with unspecified
            starting point of the trajectory.
            The optimality criterion used is the A-optimality, 
            that is, the trace of the posterior covariance matrix.
            The number of sensors is set to $s=1$, the observation frequency is $f=1$, 
            and the trajectory length is $n=19$.
          }\label{sup:fig:a_opt_unspecified_start_1_sensor}
        \end{figure}
        \begin{figure}[H]
          \centering
          \includegraphics[width=0.53\linewidth]{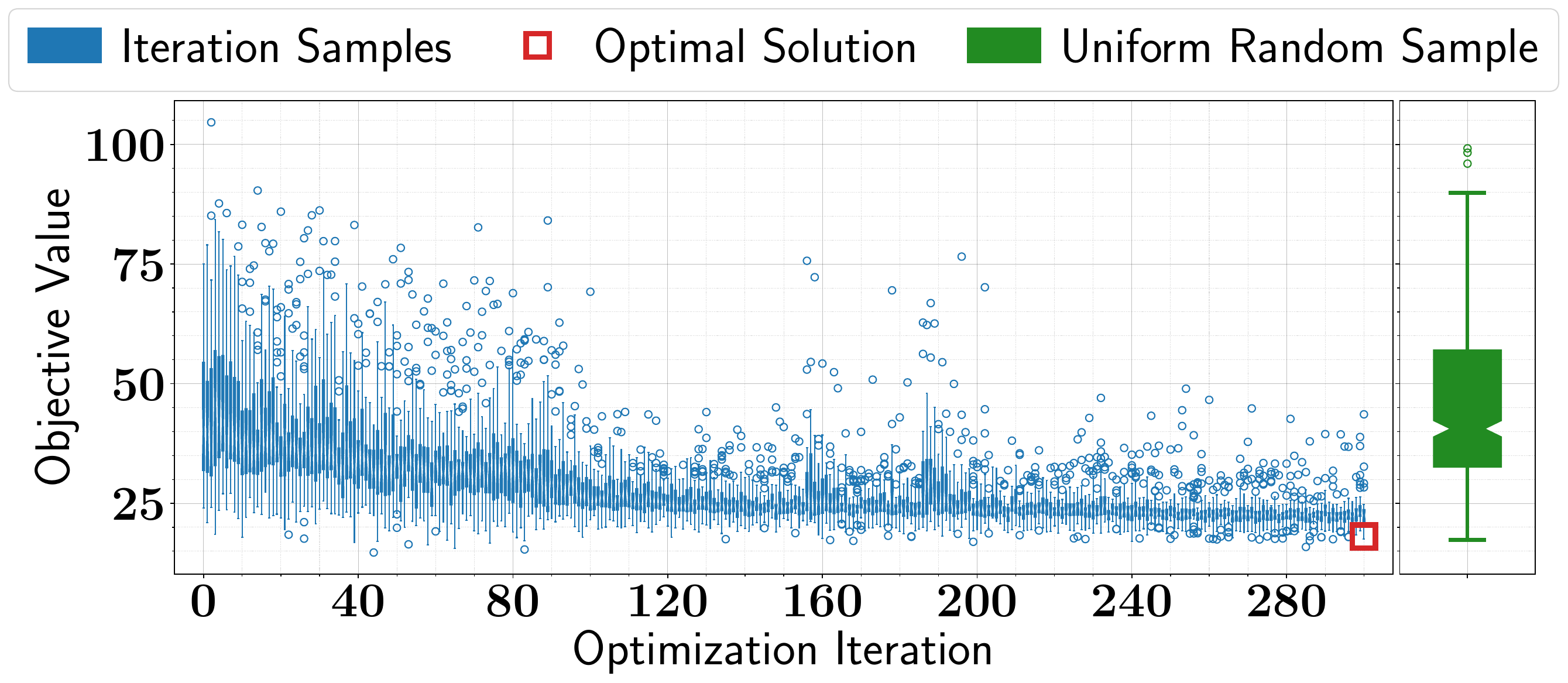}
          \includegraphics[width=0.215\linewidth]{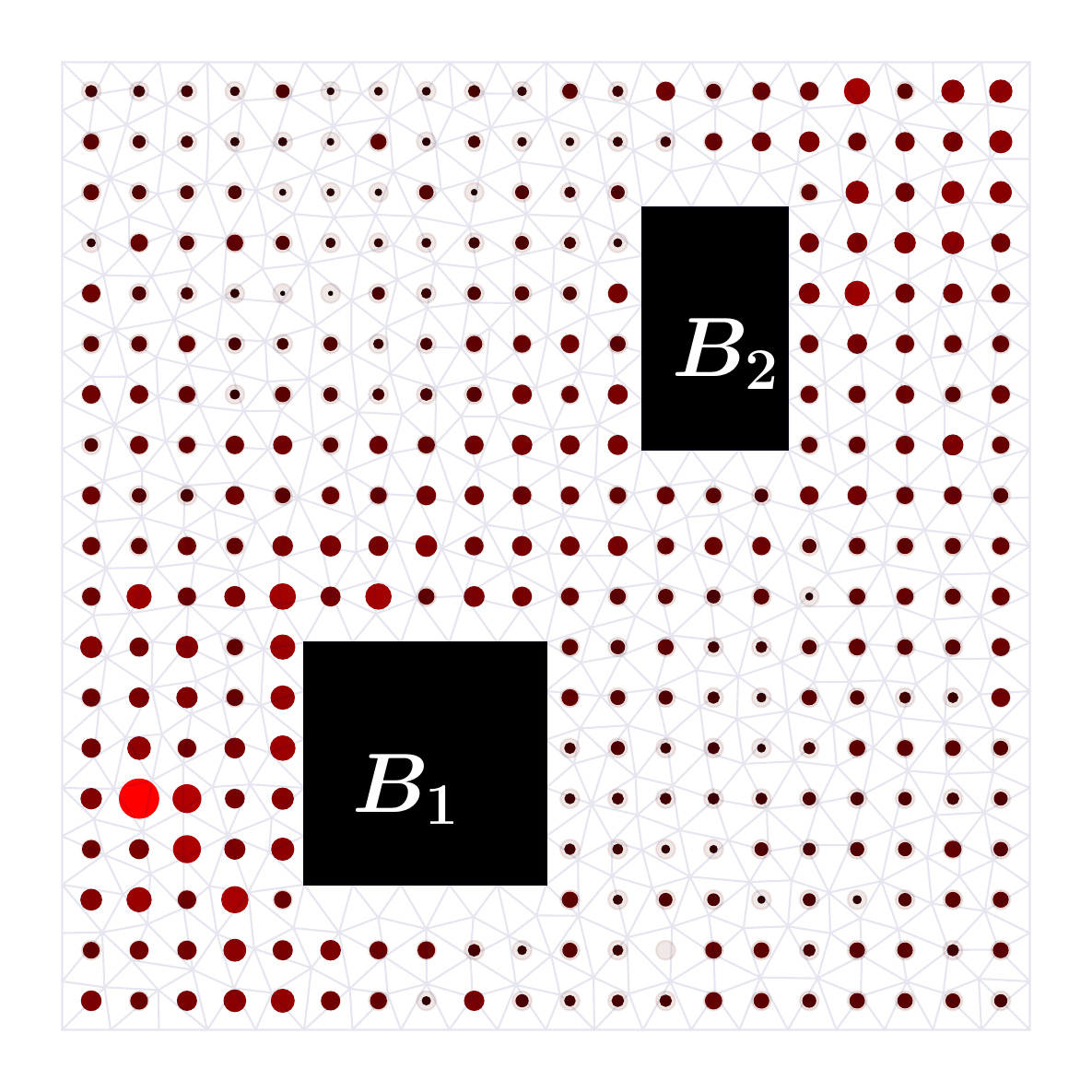}
          \includegraphics[width=0.235\linewidth]{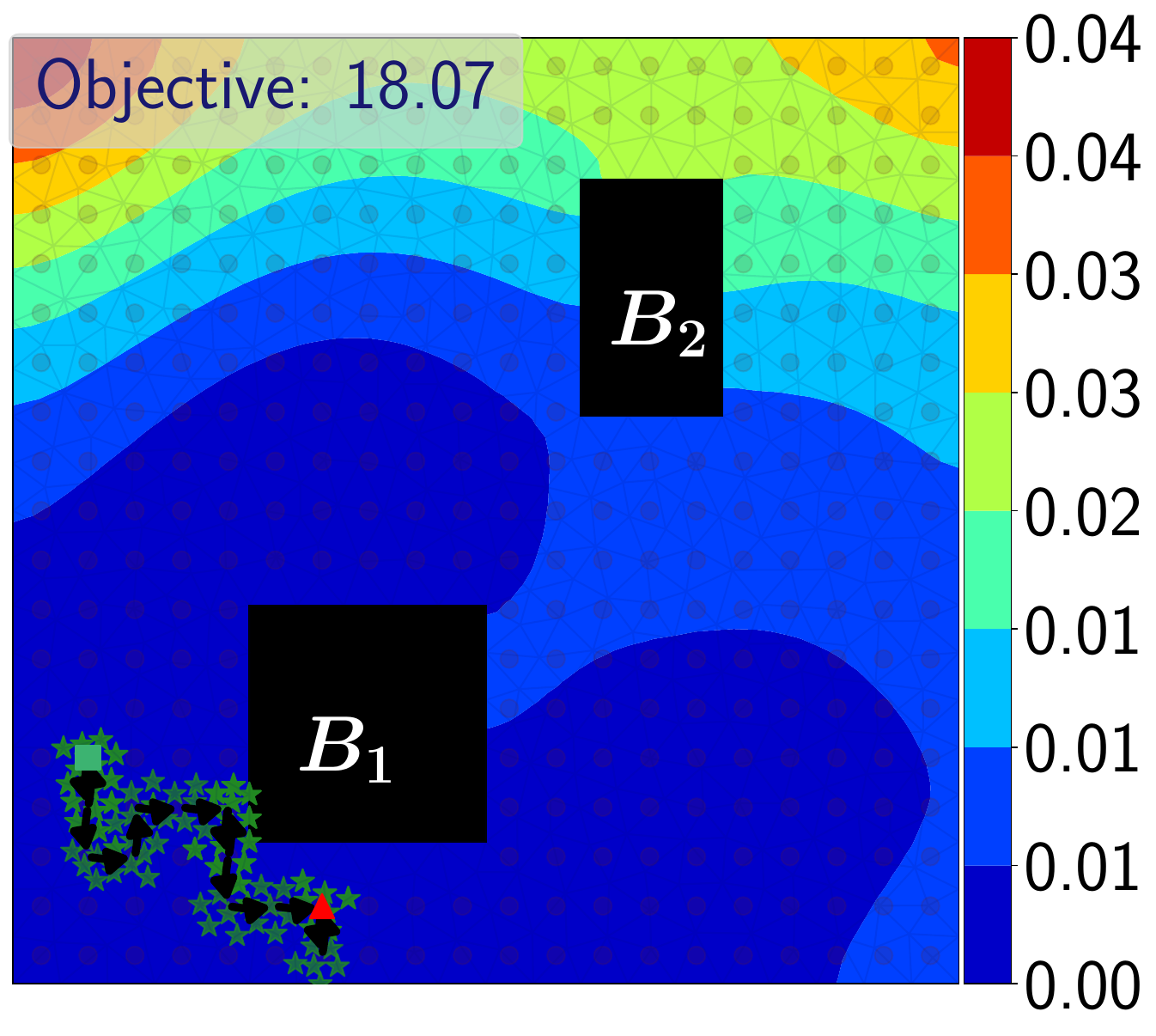}
          \caption{
            Similar to \Cref{sup:fig:a_opt_unspecified_start_1_sensor}. 
            Here the number of moving sensors is set to $s=7$.
          }\label{sup:fig:a_opt_unspecified_start_7_sensors}
        \end{figure}

    \subsubsection{Results with E-Optimality Criterion}
    \label{sup:subsubsec:E-optimality_results}
      For the E-optimality, the objective is to minimize the maximum eigenvalue of 
      posterior covariance matrix, that is, $\lambda_{max}\left(\Cparampost(\designvec)\right)$.
      This function (maximum eigenvalue) is not typically used in classical model-based OED and 
      in general in gradient-based optimization because of challenges in developing 
      its derivative \cite{andrew1993derivatives,rogers1970derivatives}.
      In these approaches the derivative of the utility function $\utilityfunc$ 
      with respect to the design $\designvec$ is needed, and thus the discrete design 
      is relaxed to allow values in the real-valued space.
      The optimal relaxed design is then rounded to retrieve an estimate of the solution 
      of the original discrete optimization problem; see, for example, \cite{attia2022optimal}.

      Since the proposed algorithms treat the objective as a black box, 
      this function can be used in the proposed framework out of the box without 
      the need for such complications.
      The probabilistic OED optimization
      (\Cref{prbl:probabilistic_path_oed_problem}) in this case takes the form
      \begin{equation}\label{eqn:E-opt_probabilistic_optimization}
        \hyperparamvec\opt \in \argmin_{\designvec\sim\CondProb{\designvec}{\hyperparamvec}}
          \Expect{\designvec\sim\CondProb{\designvec}{\hyperparamvec}}{\utilityfunc(\designvec)}
          \,; \qquad \utilityfunc(\designvec) := \lambda_{max}\left({\Cparampost(\designvec)}\right)
            \,,
      \end{equation}
      where $\Cparampost(\designvec)$ is the posterior covariance matrix \eqref{eqn:Posterior_Params}.

      \paragraph{Results with fixed starting point}
        \Cref{sup:fig:e_opt_fixed_start_1_sensor} 
        shows the results of \Cref{alg:probabilistic_path_optimization} with the first-order 
        policy defined by \Cref{defn:first_order_path_model} and with the number of 
        sensors set to $s=1$. 
        Results obtained with $s=7$ moving sensors are shown in 
        \Cref{sup:fig:e_opt_fixed_start_7_sensors} 
        \begin{figure}[H]
          \centering
          \includegraphics[width=0.53\linewidth]{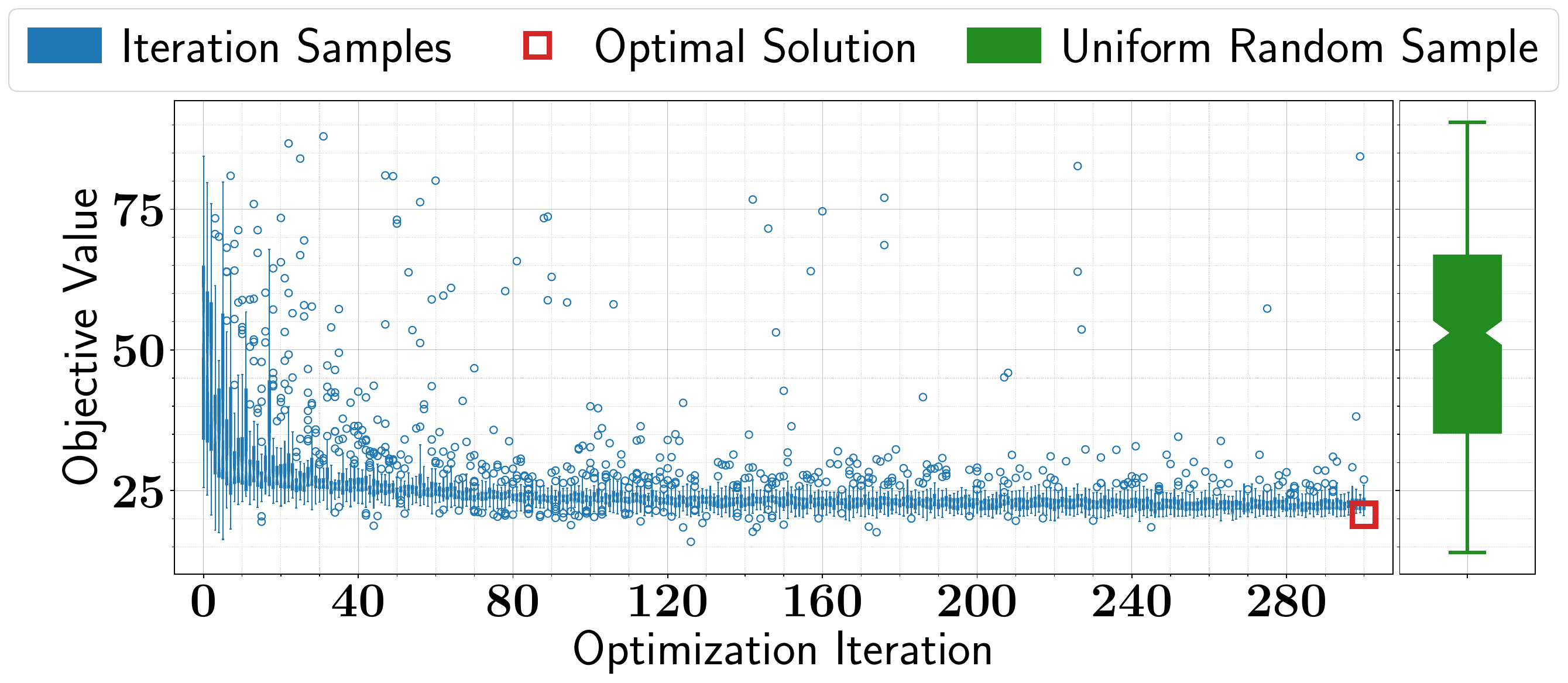}
          \includegraphics[width=0.215\linewidth]{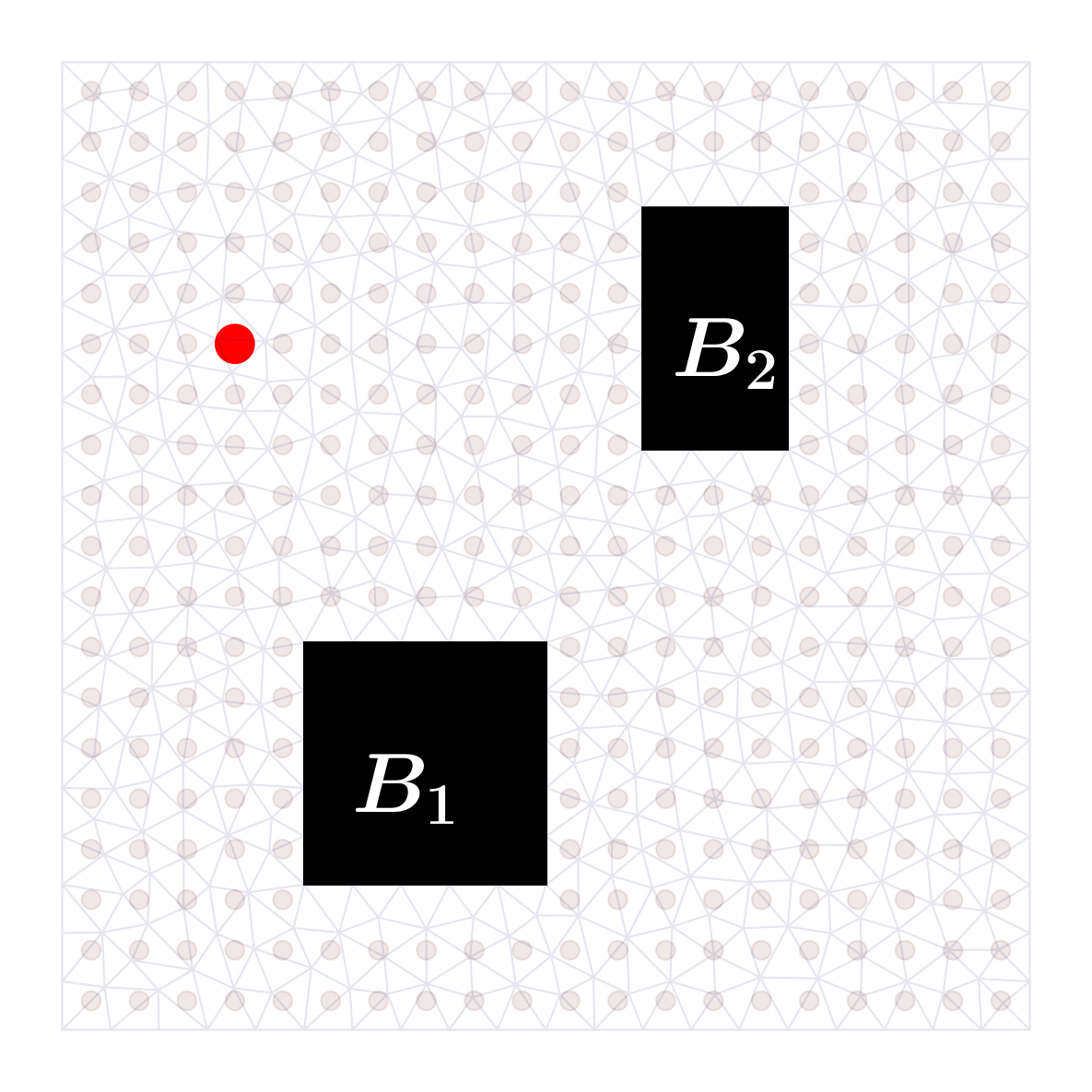}
          \includegraphics[width=0.235\linewidth]{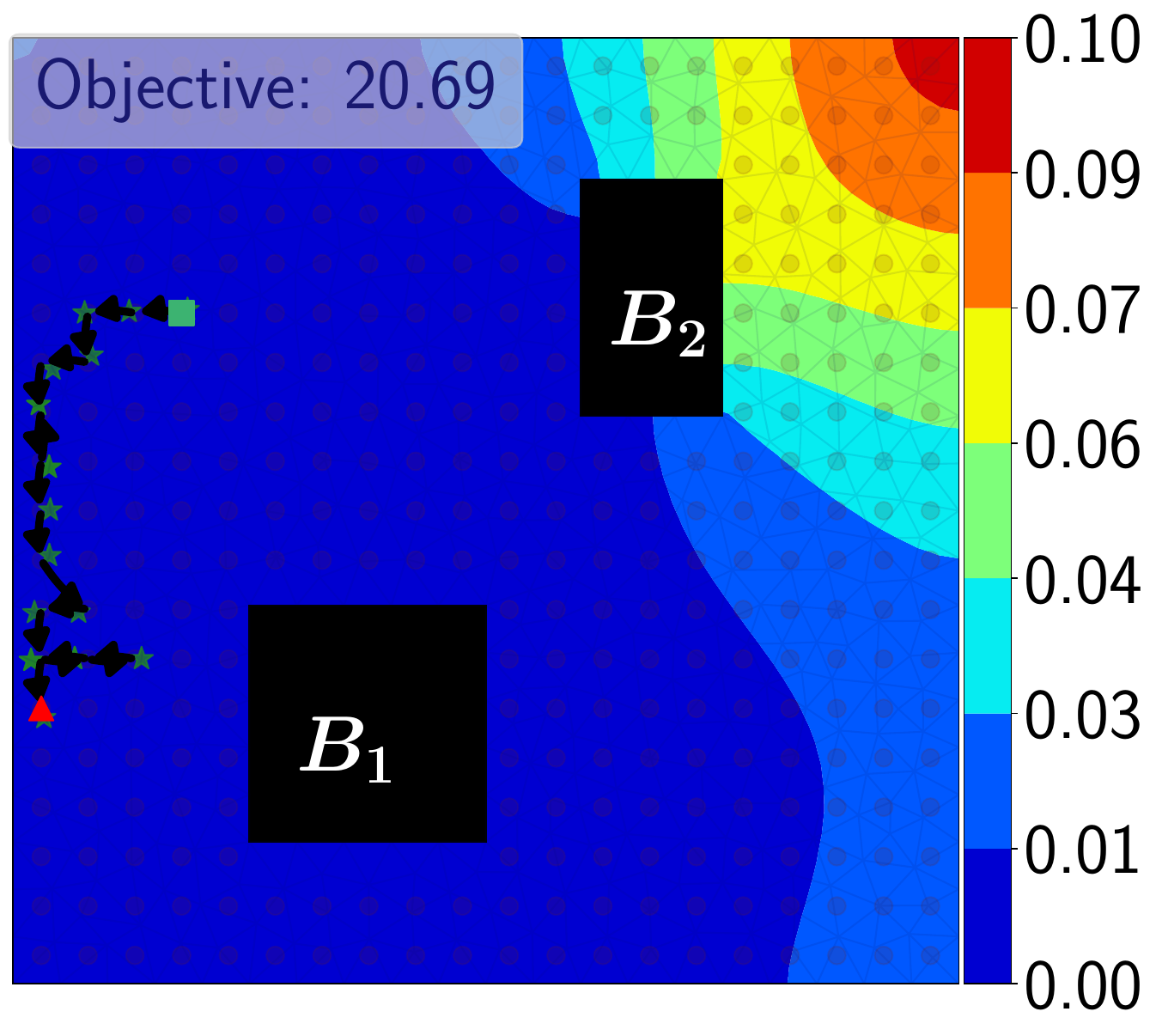}
          \caption{
            Results of \Cref{alg:probabilistic_path_optimization} with the first-order 
            policy model (\Cref{defn:first_order_path_model}) applied to the fine
            navigation mesh (\Cref{fig:navigation_meshes}, right) with 
            the starting point of the trajectory fixed to $(x, y)=(0.2, 0.7)$.
            The optimality criterion used is the E-optimality, 
            that is, the trace of the posterior covariance matrix.
            The number of sensors is set to $s=1$, the observation frequency is $f=1$, 
            and the trajectory length is $n=19$.
          }\label{sup:fig:e_opt_fixed_start_1_sensor}
        \end{figure}
        \begin{figure}[H]
          \centering
          \includegraphics[width=0.53\linewidth]{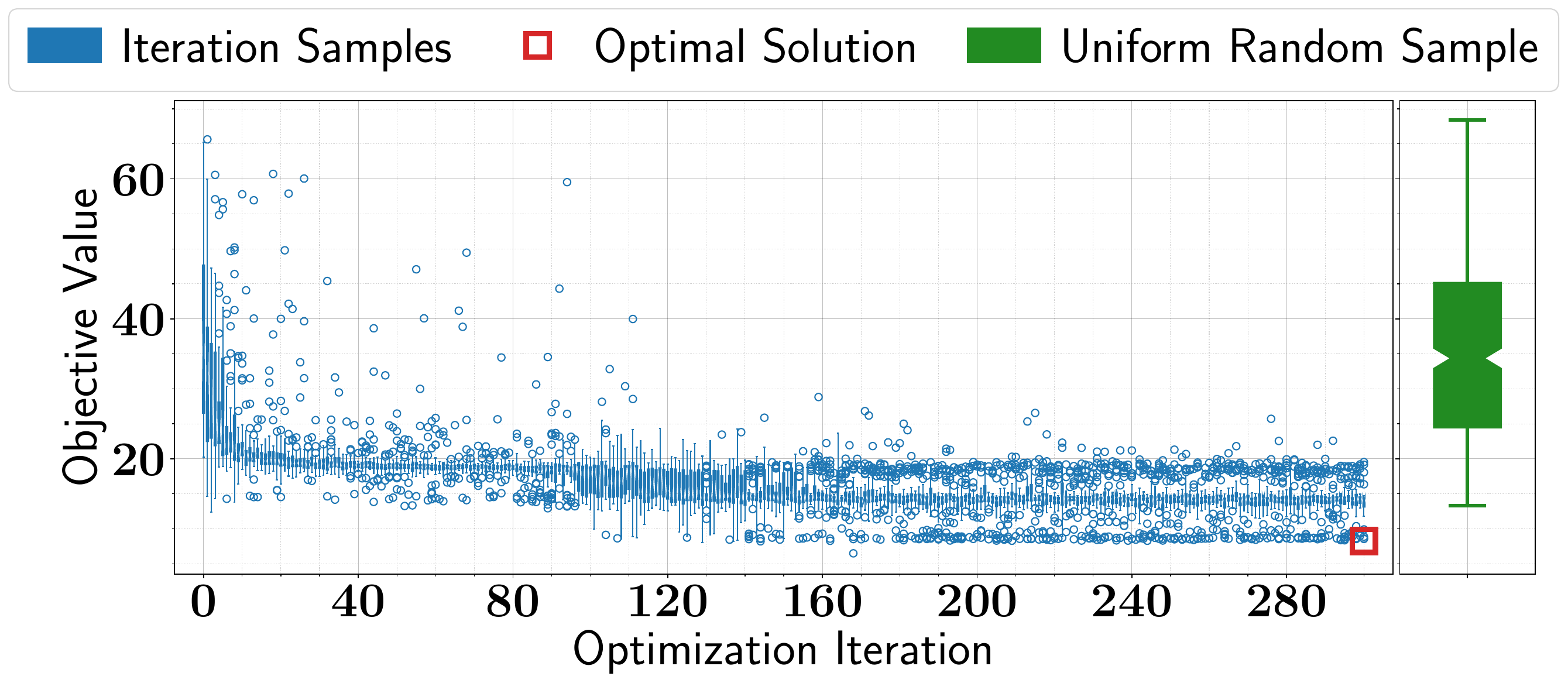}
          \includegraphics[width=0.215\linewidth]{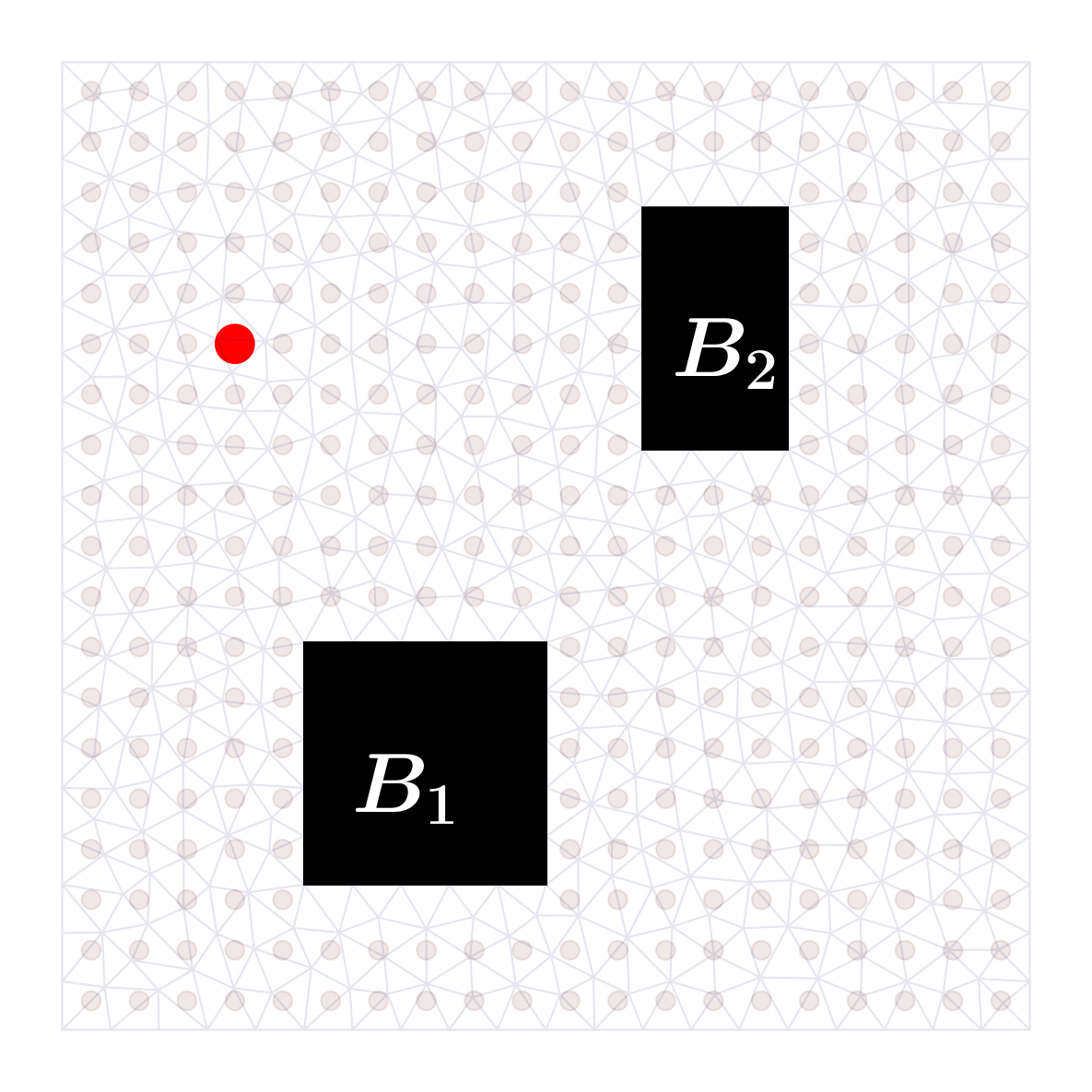}
          \includegraphics[width=0.235\linewidth]{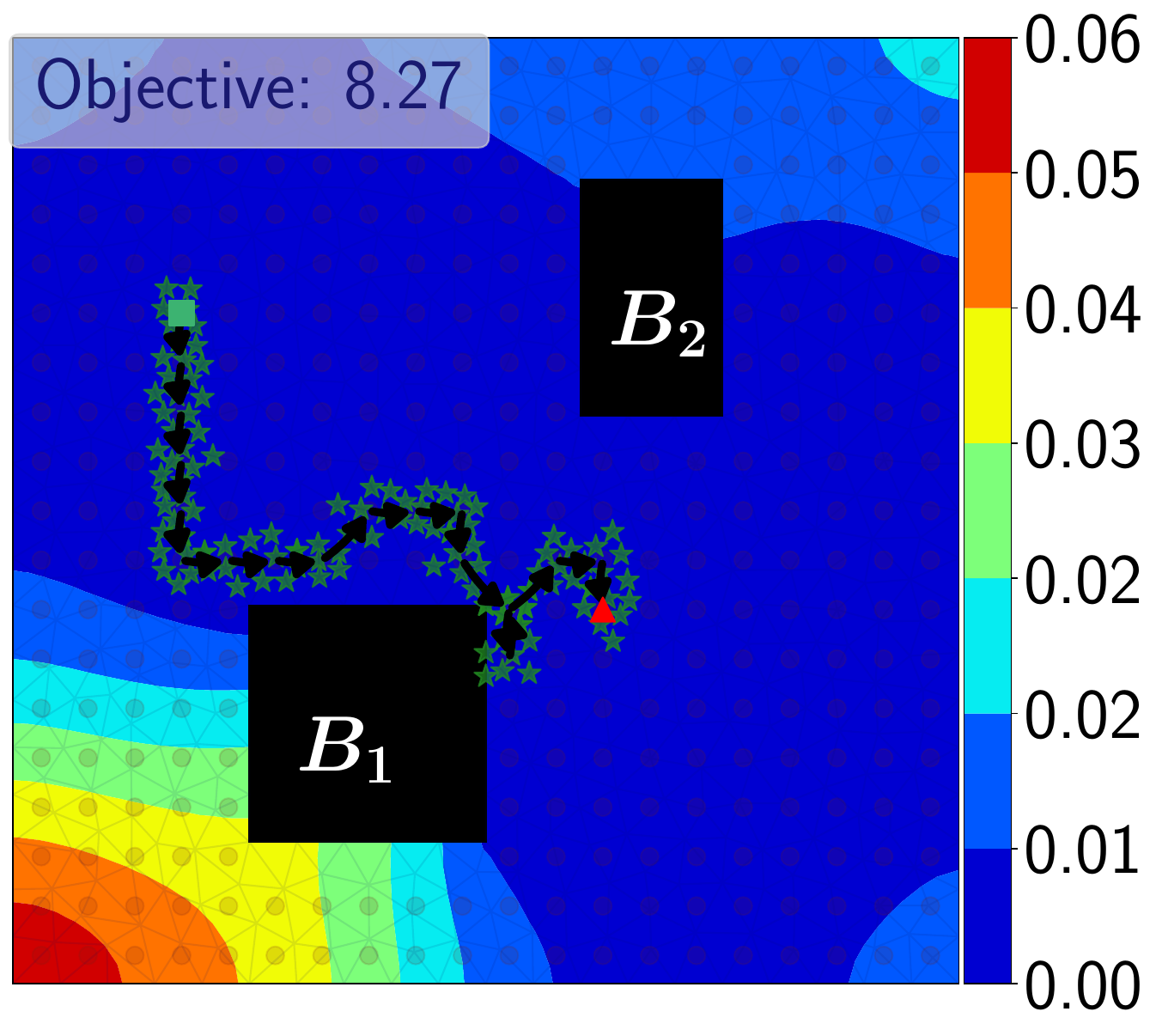}
          \caption{
            Similar to \Cref{sup:fig:e_opt_fixed_start_1_sensor}. 
            Here the number of moving sensors is set to $s=7$.
          }\label{sup:fig:e_opt_fixed_start_7_sensors}
        \end{figure}

      \paragraph{Results with unspecified starting point}
        \Cref{sup:fig:e_opt_unspecified_start_1_sensor} 
        shows the results of \Cref{alg:probabilistic_path_optimization} with the first-order 
        policy defined by \Cref{defn:first_order_path_model} and with the number of 
        sensors set to $s=1$.
        \Cref{sup:fig:e_opt_unspecified_start_7_sensors} 
        shows the results obtained by setting the number of moving sensors to $s=7$.
        \begin{figure}[H]
          \centering
          \includegraphics[width=0.53\linewidth]{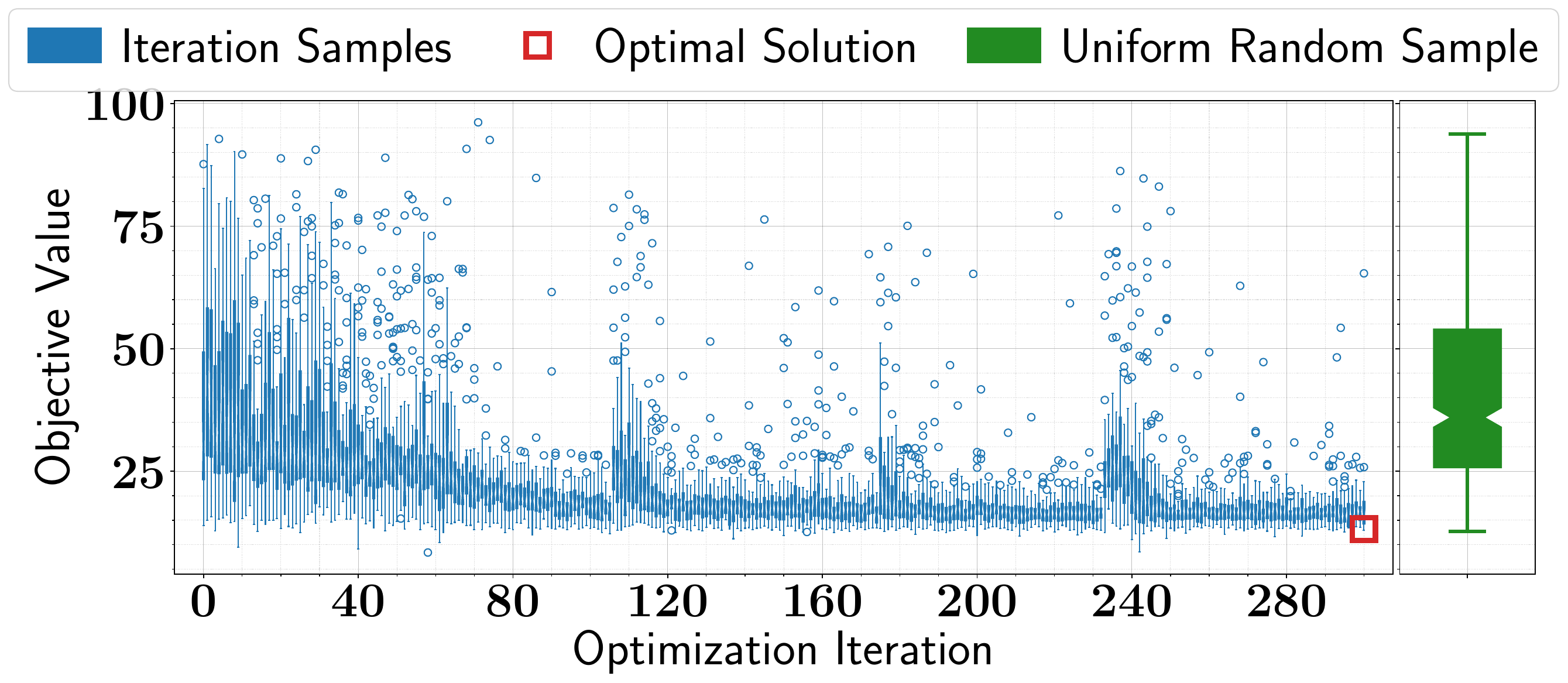}
          \includegraphics[width=0.215\linewidth]{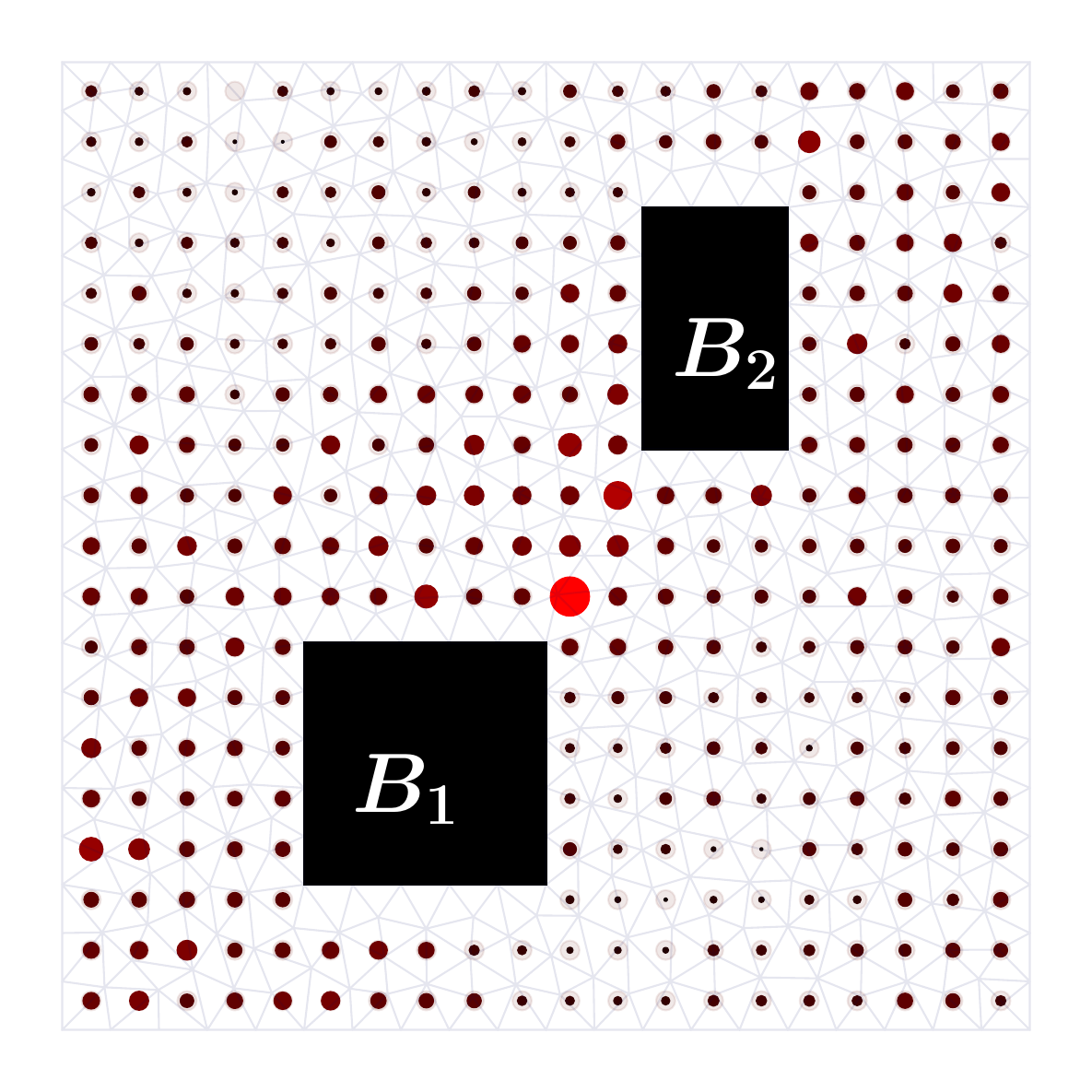}
          \includegraphics[width=0.235\linewidth]{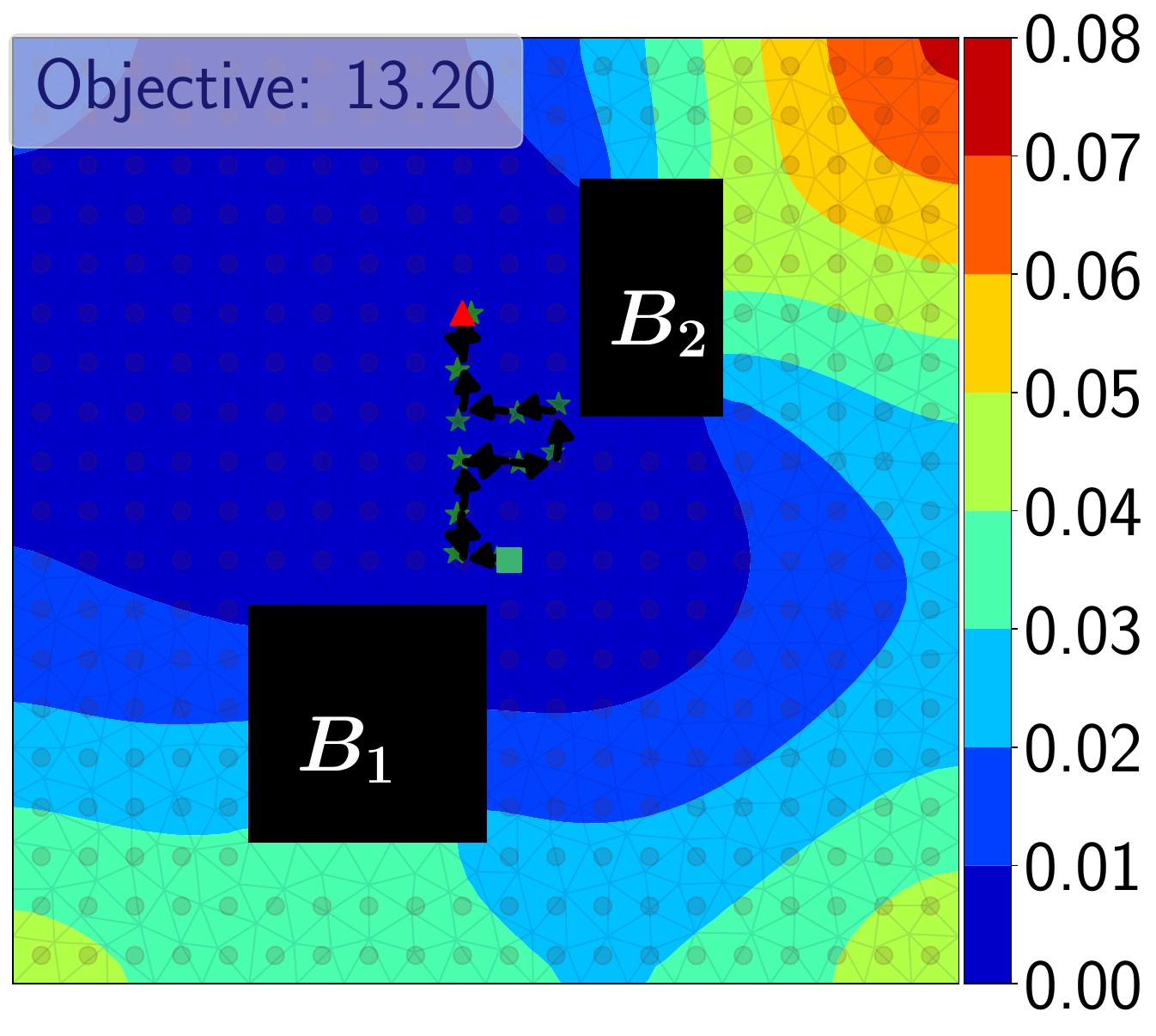}
          \caption{
            Results of \Cref{alg:probabilistic_path_optimization} with the first-order 
            policy model (\Cref{defn:first_order_path_model}) applied to the fine
            navigation mesh (\Cref{fig:navigation_meshes}, right) with unspecified
            starting point of the trajectory.
            The optimality criterion used is the E-optimality, 
            that is, the maximum eigenvalue of the posterior covariance matrix.
            The number of sensors is set to $s=1$, the observation frequency is $f=1$, 
            and the trajectory length is $n=19$.
          }\label{sup:fig:e_opt_unspecified_start_1_sensor}
        \end{figure}
        \begin{figure}[H]
          \centering
          \includegraphics[width=0.53\linewidth]{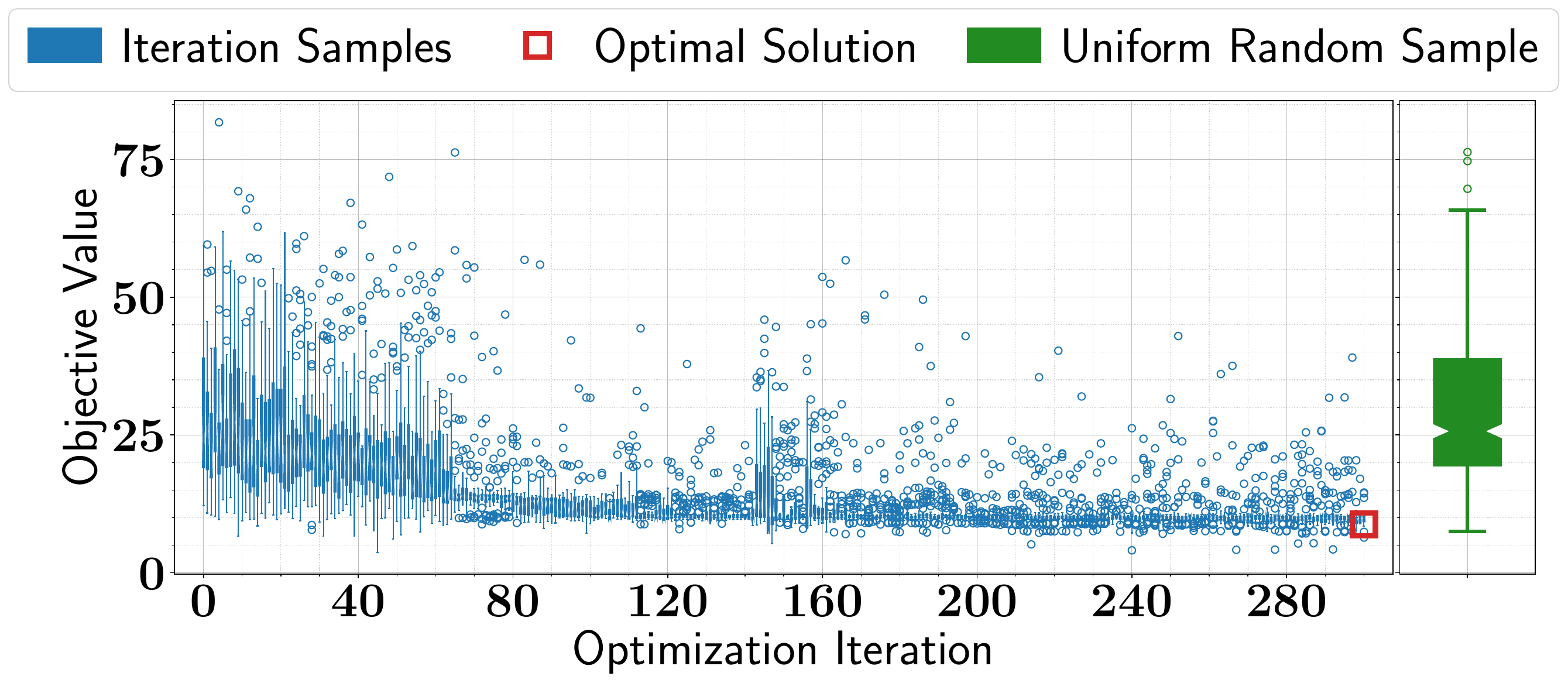}
          \includegraphics[width=0.215\linewidth]{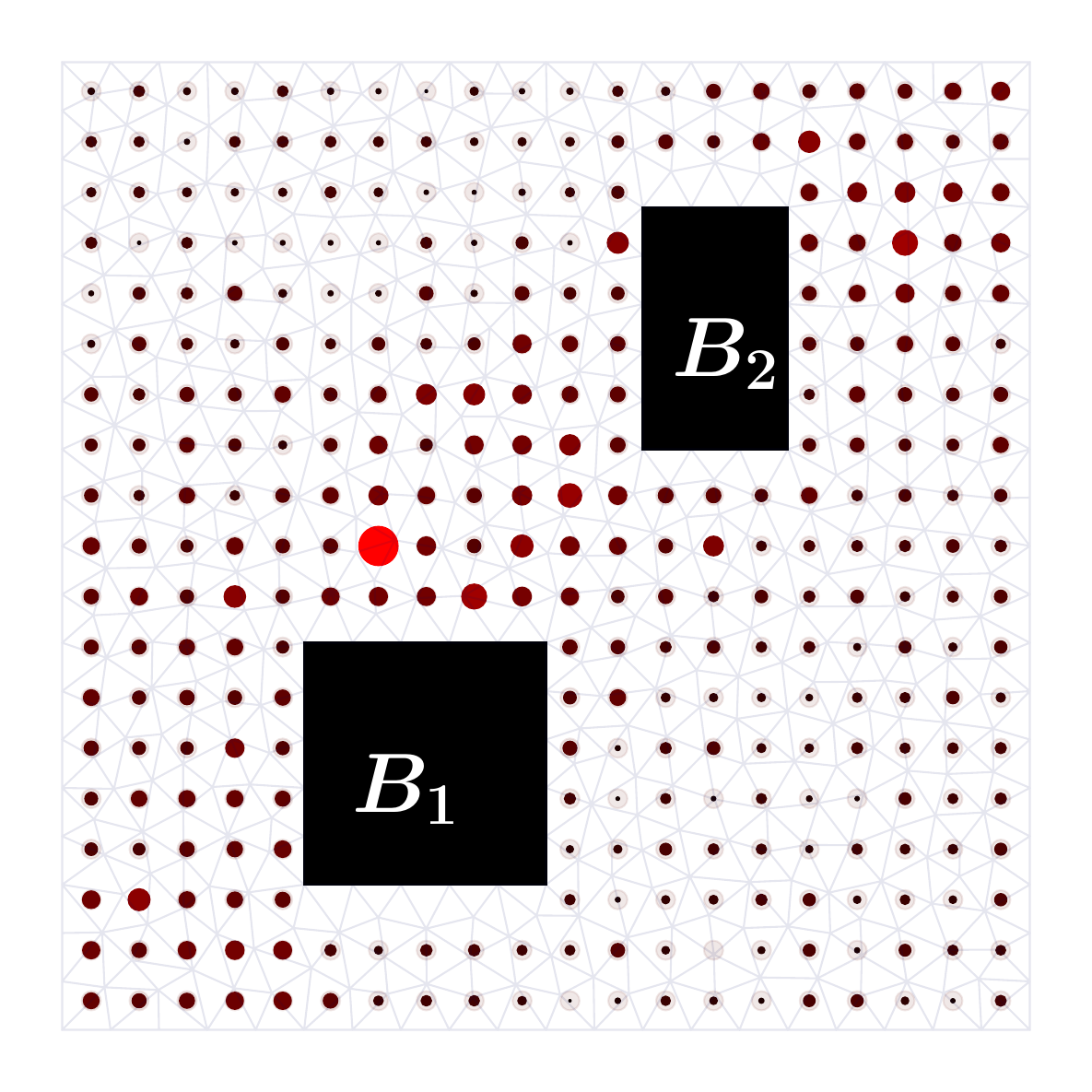}
          \includegraphics[width=0.235\linewidth]{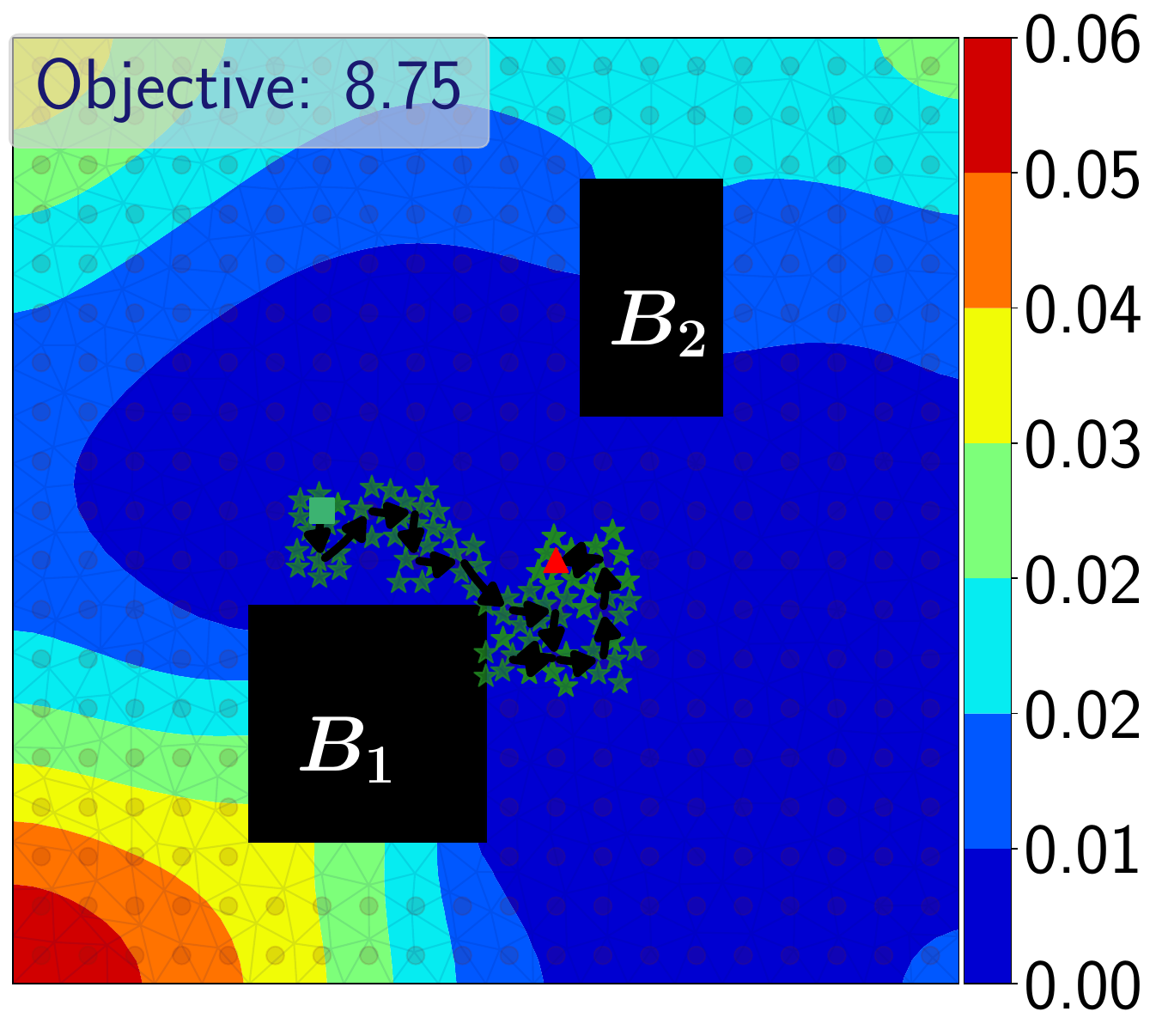}
          \caption{
            Similar to \Cref{sup:fig:e_opt_unspecified_start_1_sensor}. 
            Here the number of moving sensors is set to $s=7$.
          }\label{sup:fig:e_opt_unspecified_start_7_sensors}
        \end{figure}

  The results presented here show that the optimization procedure behaves similarly for various choices
  of the optimality criterion and are thus consistent with the experiment carried out with the D-optimality 
  objective.
  The resulting optimal trajectories, however, are potentially different in shape, exploring relatively 
  different parts of the space.
  Nevertheless, for all choices of the criteria here, the optimal trajectory falls in regions with minimum 
  activity because these locations are associated with minimum uncertainty due to the way the 
  synthetic data and noise covariance matrix is constructed.
  The choice of the utility function, however, is independent from the proposed approach and
  should be the user's choice. 

  For all the utility functions employed here, overall 
  the optimizer quickly reduces the average value of the objective/utility function, converging
  to the tail of the utility function distribution.
  This enables exploring the design space near global optima. Reaching the global  optimal value, however, 
  is not guaranteed.


  \section*{Acknowledgments}
    Thanks to Alen Alexanderian (NCSU) for discussion and feedback, 
    and to Gail Piper (ANL) for editing this manuscript.

  \bibliography{main_reference}{}
  \bibliographystyle{siamplain}

  \null \vfill
    \begin{flushright}
      \scriptsize 
      \framebox{\parbox{5.0in}{
        The submitted manuscript has been created by UChicago Argonne, LLC,
        Operator of Argonne National Laboratory (``Argonne"). Argonne, a
        U.S. Department of Energy Office of Science laboratory, is operated
        under Contract No. DE-AC02-06CH11357. The U.S. Government retains for
        itself, and others acting on its behalf, a paid-up nonexclusive,
        irrevocable worldwide license in said article to reproduce, prepare
        derivative works, distribute copies to the public, and perform
        publicly and display publicly, by or on behalf of the Government.
        The Department of Energy will provide public access to these results 
        of federally sponsored research in accordance with the DOE Public Access Plan. 
        http://energy.gov/downloads/doe-public-access-plan. 
      }}\normalsize
    \end{flushright}

\end{document}